\documentclass[10pt,reqno]{amsart}
\usepackage{amsmath}
\usepackage{amssymb, latexsym, amsfonts, amscd, amsthm, mathrsfs, enumerate, esint}
\usepackage[usenames,dvipsnames]{color}
\textwidth 16.8cm
\textheight 21.6cm
\topmargin -1.5cm
\oddsidemargin 0cm
\evensidemargin 0cm

\usepackage{amssymb}
\usepackage{comment}

\numberwithin{equation}{section}

\definecolor{purple}{rgb}{0.9,0,0.8}

\definecolor{gray}{rgb}{0.7,0.7,0.7}

\newcommand{\abbr}[1]{{\sc\lowercase{#1}}}

\newcommand{\req}[1]{(\ref{#1})}

\def\cpt{\mathsf{CP}}
\def\err{{\sf Err}}
\def\sM{{\sf M}}

\def\cro{{\rm cr }}
\def\fdt{{\rm fdt}}
\def\ep{{\epsilon}}
\def\b{{\beta}}
\def\SN{{\Bbb S}_N}
% = {\Bbb S}^{N-1}(\sqrt N)}
\def\SNq{q\SN}
% = {\Bbb S}^{N-1}(\sqrt N q)}
\def\qs{q_\star}
\def\SNqs{\qs \SN}
% = {\Bbb S}^{N-1}(\sqrt N q_\star)}

\def\N{{\Bbb N}}
\def\bJ{{\bf J}}
\def\R{{\Bbb R}}
\def\P{{\Bbb P}}
\def\E{{\Bbb E}}

\def\BO{{\bf O}}
\def\BB{{\bf B}}
\def\BZ{{\bf Z}}

\def\BB{{\bf B}}
\def\BZ{{\bf Z}}

\def\BJ{{\bf J}}

\def\BR{{\bf R}}
\def\BD{{\bf D}}
\def\BJ{{\bf J}}

\def\bmu{{\boldsymbol \mu}}
\def\bsigma{{\boldsymbol \sigma}}
\def\bphi{{\boldsymbol \varphi}}

\def\bq{{\bf q}}

\def\Bx{{\bf x}}
\def\bx{{\bf x}}

\def\by{{\bf y}}
\def\By{{\bf y}}
\newcommand{\reals}{{\Bbb{R}}}

\def\IJ{{\bf I}}

\def\bx{\Bx}
\def\bn{\Bx_\star}

\def\Es{{E_\star}}
\def\tEs{{\widehat E_\star}}
\def\Gs{{G_\star}}
\def\mus{\mu_\star}
\def\qinf{\alpha \qs}
\def\wqinf{\widetilde \alpha \qs}

\def\CC{{\mathcal C}}
\def\CL{{\mathcal L}}

\def\CC{{\mathcal R}}
\def\CC{{\mathcal C}}
\def\CL{{\mathcal L}}

\def\Aa{{\mathcal A}}
\def\Ba{{\mathcal B}}
\def\Ca{{\mathcal C}}

\def\Ea{{\mathcal E}}
\def\Fa{{\mathcal F}}

\def\Sa{{\mathcal S}}

\def\Ua{{\mathcal U}}

\def\ep{{\epsilon}}
\def\b{{\beta}}

\def\N{{\Bbb N}}
\def\bJ{{\bf J}}
\def\R{{\Bbb R}}
\def\P{{\Bbb P}}
\def\E{{\Bbb E}}

\def\BB{{\bf B}}
\def\BZ{{\bf Z}}

\def\BB{{\bf B}}
\def\BZ{{\bf Z}}

\def\BJ{{\bf J}}

\def\BR{{\bf R}}
\def\BD{{\bf D}}
\def\BJ{{\bf J}}

\def\bmu{{\boldsymbol \mu}}

\def\bq{{\bf q}}

\def\Bx{{\bf x}}
\def\bx{{\bf x}}

\def\bv{{\mathsf v}}

\def\by{{\bf y}}
\def\By{{\bf y}}
\def\wQ{{V}}
\def\wH{\widehat{H}}

\def\bx{\Bx}

\def\CC{{\mathcal C}}
\def\CL{{\mathcal L}}

\def\CC{{\mathcal R}}
\def\CC{{\mathcal C}}
\def\CL{{\mathcal L}}

\def\Aa{{\mathcal A}}
\def\Ba{{\mathcal B}}
\def\Ca{{\mathcal C}}

\def\Ea{{\mathcal E}}
\def\Fa{{\mathcal F}}

\def\Sa{{\mathcal S}}

\def\Ua{{\mathcal U}}
\def\Uas{{\mathcal U}^{\star}}
\def\Uao{{\mathcal U}^{\dagger}}

\def\C{{\Bbb C}}
\def\R{{\Bbb R}}

\def\N{{\Bbb N}}
\def\E{{\Bbb E}}
\def\CB{{\Ba}}

\def\Var{{\rm Var}}
\def\Cov{{\rm Cov}}
\def\half{\frac{1}{2}}
\def\oneN{\frac{1}{N}}

\def\nn{\noindent}

\def\hh{\widehat}

\def\b{\beta}
\def\d{\delta}
\def\e{\epsilon}

\def\s{\sigma}

\def\NC{{\mbox{NC}}}
\def\D{\Delta}

\def\L{\Lambda}

\def\part{\partial}

\def\ts{\times}

\def\ra{\rightarrow}

\def\tilde{\widetilde}
\def\hh{\widehat}

\cleardoublepage
\newtheorem{prop}{Proposition}[section]

\newtheorem{remark}[prop]{Remark}
\newtheorem{lem}[prop]{Lemma}

\newtheorem{cor}[prop]{Corollary}
\newtheorem{theo}[prop]{Theorem}

\def\C{{\Bbb C}}
\def\R{{\Bbb R}}

\def\N{{\Bbb N}}
\def\E{{\Bbb E}}

\def\CB{{\Ba}}

\def\Var{{\rm Var}}
\def\half{\frac{1}{2}}
\def\oneN{\frac{1}{N}}

\def\nn{\noindent}

\def\hh{\widehat}

\def\b{\beta}
\def\d{\delta}
\def\e{\epsilon}

\def\s{\sigma}

\def\NC{{\mbox{NC}}}
\def\D{\Delta}

\def\L{\Lambda}

\def\part{\partial}

\def\ts{\times}

\def\ra{\rightarrow}

\def\tilde{\widetilde}
\def\hh{\widehat}

\newcommand{\wP}{\widetilde{\P}}
\newcommand{\wE}{\widetilde{\E}}
\newcommand{\Ps}{\P_\star}
\newcommand{\Est}{\E_\star}

\cleardoublepage
%%%%%%%%%%%%%%%%%%%%%%%%%%%%%%%%%%%%%%%%%%%%%%%%%%%%%%%%%%%%

\begin{document}              

\title[Spherical spin glass dynamics]
{Dynamics for spherical spin glasses: \\
Disorder dependent initial conditions}
\author{Amir Dembo}
\address{Department of Statistics and Department of Mathematics\\
Stanford University\\ Stanford, CA 94305.}
\email{adembo@stanford.edu}

\author{Eliran Subag}
\address{Courant Institute, New York University\\ 
New York, NY 10012.}
\email{esubag@cims.nyu.edu}

\thanks{\noindent Research partially supported by BSF grant 2014019 (A.D. \& E.S.), 
NSF grants \#DMS-1613091,  \#DMS-1954337 (A.D), and the Simons Foundation (E.S.).
\newline
{\bf AMS (2100) Subject Classification:}
Primary: 82C44 Secondary:  82C31, 60H10, 60F15, 60K35.
\newline
{\bf Keywords:} Interacting random processes, Disordered systems,
Statistical mechanics, Langevin dynamics, Aging, spin glass models.}

\date{May 22, 2020}

\begin{abstract}
We derive the thermodynamic limit of the 
empirical correlation and response functions 
in the Langevin dynamics for spherical mixed $p$-spin 
disordered mean-field models, starting uniformly 
within one of the spherical bands on which the 
Gibbs measure concentrates at low temperature for the pure $p$-spin models and mixed perturbations of them. 
We further relate
the large time asymptotics of the resulting 
coupled non-linear integro-differential equations, to 
the geometric structure 
of the Gibbs measures (at low temperature),
and derive their \abbr{FDT} solution (at high temperature). 
\end{abstract}

\maketitle
%%%%%%%%%%%%%%%%%%%%%%%%%%%%%%%%%%%%%%%%%%%%%%%%%%%%
%%%%%%%%%%%%%%%%%%%%%%%%%%%%%%%%%%%%%%%%%%%%%%%%%%%%
\section{Introduction\label{s.introduction}}

The thermodynamic limits of a wide class of Markovian dynamics with random interactions,
exhibit complex long time behavior, which is of much interest in 
out of equilibrium statistical physics 
(c.f. the surveys 
\cite{BB,BKM,LesHouches} and the references therein). 
This work is about the thermodynamic ($N \to \infty$), 
long time ($t \to \infty$), behavior of a certain class of
systems composed of $N$ Langevin particles
$\bx_t=(x_t^i)_{1\le i\le N}\in\R^N$, 
interacting with each other through a random
potential. More precisely, one considers a diffusion of the form
\begin{equation}\label{diffusion}
 d\bx_t=-f'(\vert\vert \bx_t\vert\vert^2/N)\bx_t dt
  -\beta\nabla H_\BJ(\bx_t) dt +d\BB_t,
  \end{equation}
where $\BB_t$ is an $N$-dimensional Brownian motion, $\vert \vert \bx \vert
\vert$ denotes the Euclidean norm of $\bx \in \R^N$ and differentiable 
fast growing functions $f=f_L$ such that $e^{-f_L(r)}$ approximates
as $L \to \infty$ the indicator on $r=1$, effectively 
restricting $\bx_t$ to the sphere 
$\SN := {\Bbb S}^{N-1}({\sqrt{N}})$  
of radius  $\sqrt{N}$. 
%in $\R^N$. 
In particular, the spherical, mixed $p$-spin model (with $p \leq m$), 
has a centered Gaussian potential $H_\BJ:\ \R^N\longrightarrow \R$ 
of non-negative definite covariance structure
\begin{equation}\label{eq:nudef}
\Cov\big(H_{\BJ}(\bx),H_{\BJ}(\by)\big) 
= N \nu \big( N^{-1} \langle \bx,\by \rangle \big)\,, \qquad
\nu(r):=\sum_{p=2}^m b_p^2 r^p 
\end{equation}
(see Remark \ref{rem:m-infty} on a possible extension to $m=\infty$).
Hereafter we shall realize this potential as
\begin{equation}\label{potential}
  H_\BJ(\bx)=\sum_{p=2}^m b_p \sum_ {1\le i_1\le \cdots\le
i_p\le N}J_{i_1\cdots i_p}x^{i_1}\cdots x^{i_p},\quad b_m\neq 0
\end{equation}
%b_p \sqrt{p!} = a_p of \cite{DGM}
for independent centered Gaussian coupling constants 
$\BJ=\{J_{i_1\cdots i_p}\}$, such that 
\begin{equation}\label{eq:vardef}
\Var(J_{i_1\ldots i_p}) = N^{-p+1} \frac{p!}{\prod_k l_k!} \,,
\end{equation}
where $(l_1,l_2,\ldots)$ are
the multiplicities of the different elements of the set      
$\{i_1,\ldots,i_p\}$ (so having $i_1 \neq i_2 \cdots \neq i_p$ 
yields variance larger by a factor $p!$ from 
the variance in case $i_1=i_2=\cdots = i_p$).

Given a realization of the coupling constants, the dynamics of 
\eqref{diffusion} is invariant (and moreover, reversible), for 
the (random) Gibbs measure $\mu^N_{2\b,\BJ}$ on $\reals^N$, where
$\mu^N_{\b,\BJ}$ has the density 
\begin{equation}\label{Gibbs}
\frac{d \mu^N_{\b,\BJ}}{d\Bx} = Z_{\b,\BJ}^{-1}
e^{-\b H_{\BJ} (\Bx) - N f(N^{-1} \|\Bx\|^2)} 
\end{equation}
(with respect to Lebesgue measure).
The normalization factor $Z_{\b,\BJ}= \int
e^{-\b H_{\BJ} (\Bx) -N f(N^{-1} \|\Bx\|^2)} d\Bx$ 
is finite if
\begin{equation}\label{eq:fcond}
\inf_{r \geq 0} \{ f'(r) - A r^{2k-1}  \} > -\infty 
\end{equation}
for some $A>0$ and $k>m/4$. Similar random measures have been
extensively studied in mathematics and physics over
the last three decades (see e.g.
\cite{Chen,Talagrand}, for the rigorous analysis of the asymptotic of
$N^{-1} \log Z_{\b,\BJ}$ for the 
%measure with a 
hard spherical constraint
of having $\vert\vert \bx\vert\vert^2=N$).

Large dimensional Langevin or Glauber dynamics often exhibit 
very different behavior at various time-scales (as functions 
of system size, c.f. \cite{ICM} and references therein). 
Following the physics literature (see
\cite{BKM,CHS,LesHouches,CK}), 
we study \eqref{diffusion} for the potential
$H_{\BJ}(\bx)$ of \eqref{potential} at the shortest possible 
time-scale, where $N \to \infty$ first, holding $t \in [0,T]$.
While it is too short to allow any escape from meta-stable states,
considering the hard spherical constraint, Cugliandolo-Kurchan
have nevertheless predicted a rich picture for the 
limiting dynamics when starting out of equilibrium, 
say at $\bx_0$ distributed uniformly over $\SN$.
Such limiting dynamics involve the coupled 
%(non-linear)
integro-differential equations relating the non-random 
limits $C(s,t)$ and 
\begin{equation}\label{eq:Rdef}
\chi(s,t)=\int_0^t R(s,u) du \,,
\end{equation}
of the {\it empirical covariance function}
\begin{equation}\label{empiricalcovariance}
%\label{eq:cdef}
C_N(s,t)=\frac{1}{N} \langle \bx_s, \bx_t \rangle =
\frac{1}{N} \sum_{i=1}^N x_s^i x_t^i,\qquad  s\ge t
\end{equation}
and the {\it integrated response function}
\begin{equation}\label{integrated}
%\label{eq:chidef}
 {\chi}_N(s,t)= \frac{1}{N} \langle \bx_s,\BB_t \rangle
 = \frac{1}{N}\sum_{i=1}^N x_s^i B_t^i\,,
\end{equation}
respectively. Specifically, it is predicted that for large $\b$ the 
asymptotic of $C(s,t)$ strongly depends on the way 
$t$ and $s$ tend to infinity, exhibiting 
{\it aging} behavior (where the older it gets, the longer the system takes to forget its current state,
see e.g. \cite{CK,Alice}). A detailed analysis of such aging properties is given in \cite{2001}
for the case of $m=2$ in \eqref{potential} 
(noting that $\{J_{ij}\}$ form the \abbr{GOE} random matrix, 
whose semi-circle limiting spectral measure determines the 
asymptotic of $C(s,t)$). For $m > 2$, assuming hereafter that 
$f'$ is locally Lipschitz, satisfying \eqref{eq:fcond} and such that 
for some $\kappa <\infty$, 
\begin{equation}\label{eq:fcondu}
\sup_{r \geq 0} |f'(r)| (1+r)^{-\kappa} < \infty \,,
\end{equation}
we have from \cite[proof of Proposition 2.1]{BDG2}
that for each $N$, any finite disorder $\BJ$ and initial condition $\Bx_0$, 
there exists a unique strong solution in $\Ca (\R^+,\R^N)$ of 
\eqref{diffusion} (for a.e. path $t \mapsto \BB_t$). For such $f$ the 
closed equations for $C$ and $R$ are rigorously derived in \cite{BDG2} 
when {\it $\Bx_0$ is independent of $\BJ$} and 
satisfies the concentration of measure property of \cite[Hypothesis 1.1]{BDG2}, 
provided in addition $N \mapsto \E [ C_N(0,0)^k ]$ is uniformly bounded for 
each fixed $k<\infty$, the limit
\begin{equation}\label{eq:x0cond}
\lim_{N \to \infty} \E C_N(0,0) = C(0,0) \,,
\end{equation}
exists and 
%the tail probabilities 
$\P(|C_N(0,0)-C(0,0)|>x)$ decay exponentially fast in $N$.
Building on it, \cite[Proposition 1.1]{DGM} proves that for 
integer $k>m/4$ and $\bphi=1$, in the limit $L \to \infty$, the resulting 
equations of \cite{BDG2} for 
\begin{align}
f_L(r) &:= L(r-1)^2 + \frac{\bphi}{4k} r^{2k} \,,
\label{eq:fdef}
\end{align}
coincide for the pure $m$-spin case $\nu(r)=\frac{1}{8} r^m$ with the \abbr{CKCHS}-equations, derived independently 
by Cugliandolo-Kurchan \cite{CK} (who consider instead $C(2\cdot,2\cdot)$ and $R(2\cdot,2\cdot)$), 
and by Crisanti-Horner-Sommers \cite{CHS}.

\smallskip
The \abbr{CKCHS}-equations are for the Langevin dynamics 
of $\bx_t$ on the sphere $\SN$,
reversible with respect to the pure spherical $m$-spin Gibbs 
measure $\tilde{\mu}^N_{2\b,\BJ}$ of
density $\tilde{Z}_{2 \b,\BJ}^{-1} e^{-2 \b H_{\BJ}(\bx)}$
with respect to the uniform measure on $\SN$.
According to the Thouless-Anderson-Palmer (\abbr{TAP}) approach \cite{TAP}, the local magnetizations of each pure state \cite{MPV,TalagrandPS} approximately minimize the mean-field \abbr{TAP} free energy. For the pure spherical $m$-spin models \cite{CS,KPV} and $\beta$ in the low temperature phase, the (stable) minimizers $\bsigma$ of the \abbr{TAP} free energy roughly have radius $\sqrt N\qs$ with $\qs^2=q_{\rm EA}$  the 
Edwards-Anderson parameter, i.e. the right-most point in the support of the Parisi measure. As 
the \abbr{TAP} free energy only depends on $\|\bsigma\|$, such $\bsigma$ also approximately 
minimize the energy 
\begin{equation}
\label{eq:minE}
H_{\BJ}(\bsigma) \approx \min_{\bsigma'\in \qs\SN
	%\mathbb{S}^{N-1}(\sqrt{N})
} \{ H_{\BJ}(\bsigma') \} \,.
\end{equation}
More generally, it was recently rigorously proved \cite{SubagFlandscape} that for all spherical mixed $p$-spin models and $\beta$ in the low temperature phase,  for any $\qs\in(0,1)$ such that $\qs^2$ belongs to the support of the Parisi measure, $\bsigma\in \qs \SN$ satisfies \eqref{eq:minE} if and only if the probability under the  Gibbs measure  
$\tilde{\mu}^N_{\b,\BJ}$ of sampling many (slowly diverging with $N$) i.i.d. points $\bsigma^i$ from the narrow
band
\[
\Big\{ 
\bsigma'\in \SN:\, \frac{1}{N}\langle \bsigma'-\bsigma,\bsigma\rangle\approx 0
\Big\}
\]
such that 
$\frac1N\langle \bsigma^i-\bsigma,\bsigma^j-\bsigma\rangle\approx 0$ for $i\neq j$ is not exponentially small. 
Moreover, any point $\bsigma$  in the ultrametric tree \cite{MPSTV,PanchenkoUlt}, and not only the barycenters of pure states,  satisfies \eqref{eq:minE} with $\qs = \|\bsigma\|/\sqrt N$.
In fact, even for models with Ising spins \cite{CPS,CPS2}, the above holds if one adds an appropriate deterministic correction depending on the empirical measure $N^{-1}\sum_{i\leq N}\delta_{\sigma_i}$ to the Hamiltonian in both sides of \eqref{eq:minE}. 

For the pure $m$-spin models \cite{SubagGibbs} and their \abbr{1rsb} mixed perturbations \cite{BSZ} with $\beta\gg1$ an explicit pure states decomposition was proved by an investigation of the local structure around critical points. In particular, it was shown there that the Gibbs measure $\tilde{\mu}^N_{\b,\BJ}$ of the complement of the bands of small macroscopic width around all critical points with energy within small macroscopic distance from the minimal energy is exponentially small in $N$. Hence, in steady state the path $\bx_t$ spends an exponentially small in $N$ proportion of the time outside of those bands,
hinting that they play the role of meta-stable states in the conjectured aging picture (see also \cite{BJ,GJ} for spectral gap estimates and what they reveal about 
the Langevin dynamical phase transition parameter).
If the initial distribution is independent of the disorder $\BJ$, one may
expect an exponentially in $N$ long time to reach bands around deep critical 
points and a plausible aging mechanism is having the path $\bx_t$ decompose 
to time intervals spent in bands around deeper and deeper critical points,
connected by excursions of much shorter length, having typically
$\bx_t$ within the deepest band it has yet reached by time $t \gg 1$.
With initial distribution independent of the disorder $\BJ$, the \abbr{CKCHS}-equations discussed above concern (fixed) times not long enough (exponential in $N$) to be relevant to such  meta-stability induced aging. However,
to investigate the short-time dynamics as $\bx_t$ enters meta-stable states (of different levels) it is
natural to consider initial conditions that depend on 
$\BJ$. Specifically, having a random starting point at a fixed distance 
on the sphere from a critical point, which by itself is chosen randomly. Restricting to critical points at which $H_{\BJ}$ is near 
a fixed deep energy level $-\Es$ allows us to probe the
different `layers' of wells in the landscape as we vary $\Es$.

Provided that the number of such critical points is within a fixed factor off
its mean (currently proved only for pure $m$-spin \cite{Subag2nd} and small mixed perturbation of them \cite{BSZ}), 
the Kac-Rice formula (see \cite{AT}), allows us to translate  
the study of dynamics under such disorder dependent random initial distribution
to an investigation of dynamics driven by a modified, conditional Hamiltonian 
and deterministic initial distribution. To this end, 
our first result extends \cite[Theorem 1.2]{BDG2} 
to the latter initial measures and conditional potentials.\footnote{The conditioning on \eqref{eq:cond-J} is interpreted in the usual way: the conditional law of $\BJ$ has density given, up to normalization, by the restriction of its original density to the appropriate affine subspace, and the conditional law of the independent $\BB$ is identical to the unconditional one.} Specifically, fixing $\qs>0$ and 
$\bsigma\in \SNqs$ (around which we center the law of $\bx_0$), let
\begin{equation}\label{eq:bnq} 
H_N(s) = - \frac{1}{N} H_{\BJ}(x_s) \,, \qquad \qquad 
q_N^{\bsigma}(s) =\frac{1}{N}\langle \Bx_s,\bsigma\rangle = \frac{1}{N} 
\sum_{i=1}^N x_s^i \sigma^i \,.
\end{equation}
For $|q| \le \qs$ denote by $\mu_\bsigma^{q}$  
the uniform measure on the sub-sphere
\begin{equation}
\label{eq:subsphere}
{\Bbb S}_\bsigma (q) :=
\{\Bx \in \SN : 
%\|\Bx\|=\sqrt N, 
\quad \frac{1}{N}\langle \Bx,\bsigma \rangle=q \}\,,
\end{equation} 
with $\P^{N,q}_{\BJ,\bsigma}$ denoting the joint
law (on $\Ca(\R^+,\R^{2N})$), of the Brownian motion $\BB$ and the corresponding  
strong solution $\bx$ of \eqref{diffusion} for
$\Bx_0$ of law $\mu_{\bsigma}^{q}$ and given $\BJ$, $\bsigma$
(see Proposition \ref{existN} for the existence of such a solution).
\begin{theo}\label{thm-macro}
For $\bsigma \in \qs \SN$, $\qs>0$,  
consider $\BJ$ conditional upon the event\footnote{\label{ft:purecdn}In the pure case, i.e. having 
$\nu(r)=b_m^2 r^m$, one has that 
$\partial_{\perp} H_{\BJ} (\bsigma) = \frac{m}{\|\bsigma\|} H_{\BJ}(\bsigma)$,
hence necessarily $\Gs=m \Es/\qs^{2}$, whereas
in the mixed case the vector $(\Es,\Gs)$ can take any value.}
\begin{equation}\label{eq:cond-J}
\cpt(\Es,\Gs,\bsigma) :=\{ H_{\BJ}(\bsigma)=-N \Es \,, 
\nabla_{\mathrm{sp}} H_{\BJ}(\bsigma)={\bf 0},\,\partial_{\perp} H_{\BJ}(\bsigma) =  
-  \|\bsigma \| \Gs \} \,,
\end{equation}
where $\nabla_{\mathrm{sp}}$ and $\partial_{\perp}$ denote, respectively,
the gradient \abbr{wrt} the standard differential structure on 
$\qs \SN$, and the 
directional derivative normal to $\qs \SN$.\footnote{\label{ft:grad-cond} Alternatively 
$\nabla H_{\BJ} (\bsigma) = - \Gs \bsigma$.}
Setting $q_o \in [-\qs,\qs]$ let $\bx_0$ 
be distributed according to $\mu_{\bsigma}^{q_o}$.
% which satisfies \cite[Hypothesis 1.1]{BDG2}.
Then, for fixed $T<\infty$, as $N \to\infty$ the
random functions $(C_N,\chi_N,q_N^{\bsigma},H_N)$ 
converge uniformly on $[0,T]^2$, almost surely 
and in $L_p$ with respect to $(\Bx_0,\BJ,\BB)$,
to non-random functions $C(s,t)=C(t,s)$, 
$\chi(s,t)=\int_0^t R(s,u) du$, $q(s)$ and $H(s)$,
such that $q(0)=q_o$, $C(0,0)=1$, $R(s,t)=0$ for $t>s$, $R(s,s) \equiv 1$,
and for $s > t$ the absolutely continuous functions 
$C$, $R$, $q(s)$, $H(s)$ and $K(s)=C(s,s)$
are the unique solution in the space of bounded, continuous functions,
of the integro-differential equations
\begin{align}
\partial_s R(s,t) = &
- f'(K(s)) R(s,t) + \b^2 \int_t^s
R(u,t) R(s,u) \nu''(C(s,u)) du ,\label{eqR}\\
\partial_s C(s,t)= & 
 - f'(K(s)) C(s,t) + \b^2 \int_0^s R(s,u) \Big[ \nu''(C(s,u)) C(u,t) 
 -  \frac{q(t) \nu'(q(u)) \nu''(q(s))}{\nu'(\qs^2)} \Big] 
 du
\nonumber \\
& \qquad \qquad \qquad \quad + \b^2 \int_0^t R(t,u) \Big[\nu'(C(s,u)) - \frac{\nu'(q(s)) \nu'(q(u))}{\nu'(\qs^2)} \Big] \, du + \b q(t) \bv'_\star (q(s))  \,,
\label{eqC}\\
\partial_s q(s) = & -f'(K(s)) q(s) +  \b^2 \int_0^s R(s,u) 
\Big[ q(u) \nu''(C(s,u)) 
- \frac{\qs^2 \nu'(q(u)) \nu''(q(s))}{\nu'(\qs^2)} \Big] 
du 
+ 
\b \qs^2 \bv'_\star (q(s))  \,,
\label{eqq}\\
\partial_s K(s) = &  
% 1 + 2 \partial_s C(s,t) \mid_{t=s} =
1 - 2 f'(K(s)) K(s) 
+ 2 \b^2 \int_0^s R(s,u) \Big[ \psi(C(s,u)) 
 -  \frac{\psi(q(s)) \nu'(q(u))}{\nu'(\qs^2)} 
\Big] \, du + 2 \b q(s) \bv'_\star (q(s))  \,, \label{eqZ}\\
H(s)= & \wH(s)+ \bv_\star(q(s)),  \qquad 
\wH(s) =  \b \int_0^s R(s,u) \Big[ \nu'(C(s,u)) 
 -  \frac{\nu'(q(s)) \nu'(q(u))}{\nu'(\qs^2)} \Big] \, du  
 \,, \label{eqH}
\end{align}
where  $\psi(r):=
% (r \nu'(r))' =
r \nu''(r) + \nu'(r)$ and  
\begin{align}
	\label{def:vt}
	\bv (r) := \sum_{p=2}^m \, b_p^2 \langle \bv_p, (E,G) \rangle \, r^{p} \,, \qquad 	
	\bv_p &:=  \begin{bmatrix}
	\qs^2 \nu(\qs^2)       & \qs^2 \nu'(\qs^2) \\
	\qs^2 \nu'(\qs^2)  & \psi(\qs^2)  
	\end{bmatrix}^{-1} \begin{bmatrix} \qs^2 \\ p \end{bmatrix} \,,
	\end{align}
using $\bv_\star(\cdot)$ to denote the case 
of $(E,G)=(\Es,\Gs)$.\footnote{\label{footnote4}It is easy to verify 
that in the mixed case the matrix in \eqref{def:vt} is positive definite for
any $\qs>0$, while in the pure case taking $G=m E/\qs^2$ yields 
$b_m^2 \langle \bv_m, (E,G) \rangle = \qs^{-2m} E$.} 
\end{theo}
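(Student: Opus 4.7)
The plan is to absorb the conditioning $\cpt(\Es,\Gs,\bsigma)$ into a Gaussian regression decomposition of the Hamiltonian, reducing the problem to a \cite{BDG2}-type diffusion in a modified Gaussian environment with an additional deterministic drift depending on $q_N^\bsigma$. Treating $\bsigma\in\qs\SN$ as a fixed parameter, write the field $H_\BJ$ conditioned on $\cpt$ as
\[
H_\BJ(\bx) \;=\; -N\bv_\star(q_N^\bsigma(\bx)) \,+\, \widetilde H_\BJ(\bx),
\]
obtained by regressing $H_\BJ(\bx)$ against the Gaussian vector $(H_\BJ(\bsigma),\,\partial_\perp H_\BJ(\bsigma),\,\nabla_{\mathrm{sp}}H_\BJ(\bsigma))$. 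Using $\Cov(H_\BJ(\bx),H_\BJ(\bsigma))=N\nu(q_N^\bsigma(\bx))$, $\Cov(H_\BJ(\bx),\partial_\perp H_\BJ(\bsigma))=\sqrt{N}\,q_N^\bsigma(\bx)\,\nu'(q_N^\bsigma(\bx))/\qs$, the variance block $\bigl(\begin{smallmatrix}N\nu(\qs^2)&\sqrt N\qs\nu'(\qs^2)\\ \sqrt N\qs\nu'(\qs^2)&\psi(\qs^2)\end{smallmatrix}\bigr)$ for the scalar conditioning, and observing that $\nabla_{\mathrm{sp}} H_\BJ(\bsigma)=0$ contributes only to the covariance reduction (its conditioning value is zero), a direct computation identifies the deterministic mean with $-N\bv_\star(q)$ for $\bv_\star$ defined via \eqref{def:vt}. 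The residual field $\widetilde H_\BJ$ is centered Gaussian, independent of the conditioning block, with covariance equal to $N\nu(C)$ minus the regression subtractions; the gradient conditioning in particular contributes the factor $\nu'(q_x)\nu'(q_y)/\nu'(\qs^2)$ that will propagate into the correction terms of \eqref{eqR}--\eqref{eqH}.

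Substituting into \eqref{diffusion} and using $\nabla_\bx[N\bv_\star(q_N^\bsigma(\bx))]=\bv_\star'(q_N^\bsigma)\bsigma$, the diffusion under the conditional law reads
\[
d\bx_t = -f'(K_N(t))\bx_t\,dt + \b\,\bv_\star'(q_N^\bsigma(t))\,\bsigma\,dt - \b\nabla\widetilde H_\BJ(\bx_t)\,dt + d\BB_t,
\]
with $\widetilde H_\BJ$, $\BB$ and the initial law $\bx_0\sim\mu_\bsigma^{q_o}$ mutually independent given $\bsigma$. Since $\mu_\bsigma^{q_o}$ is uniform on the sub-sphere ${\Bbb S}_\bsigma(q_o)$ it inherits the Lipschitz concentration and moment bounds of \cite[Hypothesis 1.1]{BDG2}, so the problem has been reduced to a \cite{BDG2}-type setup. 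I would next apply Ito's formula to $C_N,\chi_N,q_N^\bsigma,K_N,H_N$, separating three classes of contributions: the spherical-confinement drift, which produces the $-f'(K)$ prefactors in \eqref{eqR}--\eqref{eqZ}; the deterministic drift $\b\bv_\star'(q_N^\bsigma)\bsigma$, whose contractions $\langle\bsigma,\bx_t\rangle/N\to q(t)$, $\|\bsigma\|^2/N=\qs^2$ and $\langle\bsigma,\bx_s\rangle/N\to q(s)$ yield the forcings $\b q(t)\bv_\star'(q(s))$, $\b\qs^2\bv_\star'(q(s))$, and $2\b q(s)\bv_\star'(q(s))$ in \eqref{eqC}, \eqref{eqq}, \eqref{eqZ}, and the additive $\bv_\star(q(s))$ in \eqref{eqH}; and the disorder drift $-\b\nabla\widetilde H_\BJ$, which via Gaussian integration by parts (the core technical input from \cite{BDG2}) generates the $R$-integrated kernels with $\nu,\nu',\nu''$ now read off the modified covariance of $\widetilde H_\BJ$, producing the $-\nu'(q(s))\nu'(q(u))/\nu'(\qs^2)$ and $-q(t)\nu'(q(u))\nu''(q(s))/\nu'(\qs^2)$ correction terms inside the integrals.

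The main obstacle will be the algebraic bookkeeping in this last step: verifying that the regression subtraction from the three conditioning variables reorganizes, after two differentiations and appropriate contractions, into exactly the correction terms stated in \eqref{eqR}--\eqref{eqH} and nothing else. Positivity of the $2\times 2$ matrix in \eqref{def:vt} (footnote \ref{footnote4}) ensures the modified covariance of $\widetilde H_\BJ$ is legitimate, while the pure-spin degeneracy $\Gs=m\Es/\qs^2$ (footnote \ref{ft:purecdn}) simply reduces the regression to a single constraint, consistently with the simplification $b_m^2\langle\bv_m,(E,G)\rangle=\qs^{-2m}E$ noted there. The $N\to\infty$ convergence then follows the almost-sure and $L_p$ strategy of \cite[Theorem 1.2]{BDG2}, using Gaussian concentration for functionals of $\widetilde H_\BJ$, concentration of $\mu_\bsigma^{q_o}$ on the sub-sphere, and moment bounds propagated through \eqref{eq:fcond}--\eqref{eq:fcondu}. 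Uniqueness of the limiting system in bounded continuous functions on $[0,T]^2$ is a Gronwall argument on differences of solutions, since on compact sets all the nonlinearities ($\nu,\nu',\nu'',f',\bv_\star'$) are Lipschitz.
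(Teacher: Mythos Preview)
Your plan is correct and essentially matches the paper's proof: the regression decomposition $H_\BJ=-N\bv_\star(q_N^\bsigma)+\widetilde H_\BJ$ is exactly Remark~\ref{rem:cond-H} and Lemma~\ref{kval}, the modified diffusion with the extra drift $\b\,\bv_\star'(q_N^\bsigma)\,\bsigma$ is \eqref{eq:integ}, concentration for $\mu_\bsigma^{q_o}$ is Proposition~\ref{Hyp1.1}, and uniqueness via Gronwall is Proposition~\ref{uniqueness}. The one point worth flagging is that what you call ``Gaussian integration by parts'' is, in both \cite{BDG2} and here, the Girsanov-based computation of $\Est[G^i(\bx_s)\mid\Fa_\tau]$ (Lemma~\ref{condexp}), which is where the response $R$ actually enters and which forces one to track a larger family $\Ua_N$ of auxiliary empirical functionals, first closing integral equations for them (Proposition~\ref{prop-macro}) and only then differentiating to reach \eqref{eqR}--\eqref{eqH} (Proposition~\ref{lem-diff}).
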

\begin{remark}\label{rem:beta-1}
The conditional on $\cpt(E,G,\bsigma)$ solution of 
\eqref{diffusion} at $\beta>0$, is unchanged by 
embedding $\beta$ into the coefficients $\{b_p\}$ of \eqref{potential}
while taking $(E,G) \mapsto \beta (E,G)$ and setting $\beta=1$
in \eqref{diffusion}. This modifies 
$\nu \mapsto \beta^2 \nu$, while 
% $\bv_p \mapsto \beta^{-2} \bv_p$ and 
$\bv \mapsto \beta \bv$, preserving the stated 
limiting dynamics of Theorem \ref{thm-macro}, apart from multiplying 
$H(s)$ (and its derivatives) by $\beta$. It thus suffices to 
establish Theorem \ref{thm-macro} for $\beta=1$. 
\end{remark}
\begin{remark}\label{rem:rot-sym} From \eqref{eq:nudef} we see that 
for any non-random orthogonal matrix $\BO$, the covariance 
and hence the law of the Gaussian field 
$\bx \mapsto (H_{\BJ}(\BO^{-1} \bx), \BO \nabla H_{\BJ}(\BO^{-1} \bx))$ matches
that of $\bx \mapsto (H_{\BJ}(\bx),\nabla H_{\BJ} (\bx))$. 
When combined with  
$\bsigma \mapsto \BO \bsigma$ the same applies for the law of this field
conditional on $\cpt(\Es,\Gs,\bsigma)$. 
By the rotational symmetry of the Brownian motion $t \mapsto \BB_t$ 
and of the law $\mu^{q_o}_{\bsigma}$ of $\bx_0$, the law of 
$\{\bsigma,\bx_t,\BB_t,  t \in [0,T]\}$ in Theorem \ref{thm-macro}, 
matches that of $\{ \BO \bsigma, \BO \bx_t,\BO \BB_t, t \in [0,T]\}$.
In particular, the joint law of $(C_N,\chi_N,q_N^{\bsigma},H_N)$ is 
invariant under the mapping $\bsigma \mapsto \BO \bsigma$, and so  
it suffices to prove Theorem \ref{thm-macro} only for 
$\bsigma=\bn=(\sqrt{N}\qs,0,\ldots,0)$.
\end{remark} 
\begin{remark}\label{rem:cond-H}
Conditional on $\cpt(E,G,\bsigma)$, an easy Gaussian computation (see \eqref{eq:Gtilde} in case $\bsigma=\bn$), yields 
\begin{equation}\label{eq:Htilde}
H_{\BJ}(\bx) = H_{\BJ_o}(\bx)- N \bv \big(N^{-1} \langle \bx, \bsigma \rangle \big) \,,
% \,,  \qquad \bar{H}(\Bx) :=\E [ H_{\BJ} (\Bx)\,|\,\cpt(E,G,\bsigma) ] 
% = - N \bv \big(N^{-1} \langle \bx, \bsigma \rangle \big) 
\end{equation}
for the centered Gaussian vector $\BJ_o$ the corresponds to conditioning by $\cpt(0,0,\bsigma)$. Thus, 
$(E_\star,G_\star)$ only affects \eqref{diffusion} by adding a deterministic drift, which gives rise to the terms 
involving $\bv_\star(\cdot)$, or $\bv'_\star(\cdot)$, in \eqref{eqR}-\eqref{eqH}.  
The law of $\BJ_o$ is, for $N \gg 1$, well approximated 
by the Gaussian law of $\BJ$ conditional only on 
$\nabla_{\mathrm{sp}} H_{\BJ}(\bsigma)={\bf 0}$.
It is not hard to verify that the latter law has the covariance  
\begin{equation}\label{eq:mod-nu}
 N  \nu(N^{-1} \langle \bx,\by \rangle) - 
 \big[ \langle \bx,\by \rangle - 
\|\bsigma\|^{-2}  \langle \bx, \bsigma
  \rangle \langle \by,\bsigma \rangle \big]
 \frac{\nu'(N^{-1} \langle \bx, \bsigma \rangle)
 \nu'(N^{-1} \langle \by, \bsigma \rangle)}{\nu'(N^{-1} \langle \bsigma,\bsigma \rangle)}  
\end{equation}
(c.f. \eqref{eq:kval} for essentially such computation when $\bsigma=\bn$). This change from \eqref{eq:nudef} to \eqref{eq:mod-nu}
is behind the modification \abbr{wrt} the \abbr{ckchs} equations, in the square brackets within the integral terms of \eqref{eqC}-\eqref{eqH}.
\end{remark}

For $I,\,I'\subset\mathbb{R}$, denote by 
\begin{equation}\label{eq:CrtPts}
\mathscr{C}_{N,q}(I,I')=\left\{ \bsigma \in \SNq
%\in\mathbb{S}^{N-1}(\sqrt{N}q)
:\,\nabla_{\mathrm{sp}} H_{\BJ}(\bsigma)={\bf 0},\,H_{\BJ}(\bsigma)\in -NI,\,\partial_{\perp} H_{\BJ}(\bsigma)\in -\sqrt NqI'\right\} 
\end{equation}
the set of critical points of the Hamiltonian $H_{\BJ}(\bsigma)$
on the sphere of radius $\sqrt N q$ with value in $-NI$ and with directional derivative normal to the sphere $\partial_{\perp} H_{\BJ}(\bsigma)$ in $-\sqrt NqI'$.
Our next result relates the dynamics of the unconditional model with 
random initial measure centered at such a critical point 
with the limiting dynamics of Theorem \ref{thm-macro}. Specifically, denoting by $\|U_N\|_\infty$ 
the supremum of $|U_N(s,t)|$ over $s,t \in [0,T]$, 
we associate to $\bsigma\in \SNqs$ around which we center  a `band', 
the (random) error 
\begin{align}
\label{eq:err} 
\err_{N,T} (\bsigma) := 
% \sup_{s,t\in[0,T]}\Big\{\vphantom{{1^2}^2} 
\|C_N-C\|_\infty \wedge 1 
+\|\chi_N-\chi\|_\infty \wedge 1 
% \nonumber \\
+\|q_N^\bsigma-q\|_\infty \wedge 1
+\|H_N-H\|_\infty \wedge 1 
% \Big\},
\end{align}
for the non-random functions $(C,R,q,H)$ from 
Theorem \ref{thm-macro}, which depend only on $\Es$, $\Gs$, $\qs$, $q_o$
and the model parameters $f(\cdot)$, $\beta$ and $\nu(\cdot)$. 
\begin{theo}\label{thm-uncond}
Let $\Es,\,\Gs,\,T>0$ and suppose $I_{N}=(a_N,b_N)$ and $I'_{N}=(a'_N,b'_N)$ with $a_N\,,b_N\to \Es$ and $a'_N\,,
b'_N\to \Gs>2 \sqrt{\nu''(\qs^2)}$. Then, for any $\epsilon>0$, 
\begin{equation}\label{eq:uncond2}
\lim_{N\to\infty}\frac{1}{\E \#\mathscr{C}_{N,\qs}(I_N,\,I'_{N})}\E\bigg\{ \sum_{\bsigma\in\mathscr{C}_{N,\qs}(I_N,\,I'_{N})} \P^{N,q_o}_{\BJ,\bsigma}\left\{\err_{N,T}(\bsigma)>\ep\right\}\bigg\} = 0.
\end{equation}
Further assuming that
\begin{equation}\label{eq:conc0}
\lim_{a\to0^+}\liminf_{N\to\infty}\mathbb{P}\left\{ \#\mathscr{C}_{N,\qs}\left(I_{N},\,I'_{N}\right)>a\mathbb{E}\left\{ \#\mathscr{C}_{N,\qs}\left(I_{N},\,I'_{N}
\right)\right\} \right\} =1,
\end{equation}
we have that $\lim_{N\to\infty}\P \{ \mathscr{C}_{N,\qs}\left(I_{N},\,I'_{N}\right) \ne \emptyset \} = 1$, and, for any $\epsilon>0$, conditionally on this event,
\begin{equation}\label{eq:uncond}
\frac{1}{\#\mathscr{C}_{N,\qs}(I_N,\,I'_{N})}\sum_{\bsigma\in\mathscr{C}_{N,\qs}(I_N,\,I'_{N})} \P^{N,q_o}_{\BJ,\bsigma}\left\{\err_{N,T}(\bsigma)>\ep\right\} \stackrel{N\to\infty}{\longrightarrow}0\,,\quad \mbox{in prob.}
\end{equation}
\end{theo}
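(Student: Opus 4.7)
I will reduce both \eqref{eq:uncond2} and \eqref{eq:uncond} to a uniform-in-$(E,G)$ version of Theorem~\ref{thm-macro} over the shrinking windows $I_N\times I'_N$, by means of the Kac--Rice formula \cite{AT}. Using the rotational invariance of Remark~\ref{rem:rot-sym}, for any bounded measurable $F:\qs\SN\times\BJ\to\reals$,
\begin{equation*}
\E\Big[\sum_{\bsigma\in\mathscr{C}_{N,\qs}(I_N,I'_N)}F(\bsigma,\BJ)\Big]
=\mathrm{Vol}(\qs\SN)\int_{I_N\times I'_N}\rho_N(E,G)\,\E\!\big[F(\bn,\BJ)\,\big|\,\cpt(E,G,\bn)\big]\,dE\,dG,
\end{equation*}
where $\rho_N(E,G)>0$ is the Kac--Rice density (the joint density at $(-NE,-\sqrt{N}\qs G,0)$ of $(H_\BJ(\bn),\partial_\perp H_\BJ(\bn),\nabla_{\mathrm{sp}} H_\BJ(\bn))$ multiplied by the conditional mean of $|\det\nabla_{\mathrm{sp}}^2 H_\BJ(\bn)|$ given this triple). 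Taking $F(\bsigma,\BJ)=\P^{N,q_o}_{\BJ,\bsigma}\{\err_{N,T}(\bsigma)>\epsilon\}$ in the numerator of \eqref{eq:uncond2} and $F\equiv 1$ in the denominator, the ratio becomes a $\rho_N$-weighted average over $(E,G)\in I_N\times I'_N$ of $\P[\err_{N,T}(\bn)>\epsilon\mid \cpt(E,G,\bn)]$. Hence \eqref{eq:uncond2} reduces to showing this conditional probability tends to zero uniformly in $(E,G)\in I_N\times I'_N$.

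\textbf{Uniform reduction to Theorem~\ref{thm-macro}.} For each $(E,G)\in\reals^2$, let $(C^{E,G},R^{E,G},q^{E,G},H^{E,G})$ denote the unique bounded continuous solution of \eqref{eqR}--\eqref{eqH} obtained by replacing $(\Es,\Gs)$ with $(E,G)$, and write $\mathrm{err}^{E,G}_{N,T}$ for the quantity \eqref{eq:err} defined relative to this solution. Since $(E,G)$ enters \eqref{eqR}--\eqref{eqH} only through the drift $\bv_\star$, which is the polynomial $\bv$ of \eqref{def:vt} whose coefficients depend linearly on $(E,G)$, a standard Gronwall argument applied to the differences of the integral equations shows that $(E,G)\mapsto(C^{E,G},R^{E,G},q^{E,G},H^{E,G})$ is continuous in the uniform norm on $[0,T]^2$. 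Since $a_N,b_N\to\Es$ and $a'_N,b'_N\to\Gs$, for $N$ large we therefore have $\{\err_{N,T}(\bn)>\epsilon\}\subseteq\{\mathrm{err}^{E,G}_{N,T}(\bn)>\epsilon/2\}$ uniformly over $(E,G)\in I_N\times I'_N$. It then suffices to prove that $\P[\mathrm{err}^{E,G}_{N,T}(\bn)>\epsilon/2\mid \cpt(E,G,\bn)]\to 0$ uniformly in $(E,G)\in I_N\times I'_N$; for each fixed $(E,G)$ this is exactly the statement of Theorem~\ref{thm-macro}.

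\textbf{Quenched conclusion and main obstacle.} Set $M_N=\#\mathscr{C}_{N,\qs}(I_N,I'_N)$ and $S_N=\sum_{\bsigma\in\mathscr{C}_{N,\qs}(I_N,I'_N)}\P^{N,q_o}_{\BJ,\bsigma}\{\err_{N,T}(\bsigma)>\epsilon\}$; the already-proven \eqref{eq:uncond2} reads $\E S_N=o(\E M_N)$. Given $\eta>0$, assumption \eqref{eq:conc0} supplies $a>0$ with $\liminf_N \P\{M_N>a\,\E M_N\}\geq 1-\eta$. On this event $M_N\geq 1$, so $\mathscr{C}_{N,\qs}(I_N,I'_N)\neq\emptyset$, and $S_N/M_N\leq S_N/(a\,\E M_N)$, so Markov's inequality yields $\P\{S_N/M_N>\delta,\,M_N>a\,\E M_N\}\to 0$ for every $\delta>0$; letting $\eta\downarrow 0$ gives both $\P\{\mathscr{C}_{N,\qs}(I_N,I'_N)\neq\emptyset\}\to 1$ and \eqref{eq:uncond}. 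The main obstacle is upgrading the pointwise convergence of Theorem~\ref{thm-macro} to the locally uniform convergence in $(E,G)$ needed above. I expect this to follow by revisiting the proof of that theorem: its $(E,G)$-dependence enters only through the smooth deterministic drift identified in \eqref{eq:Htilde}, so the Gaussian concentration and martingale estimates driving that proof should yield rates that are locally uniform in $(E,G)$; alternatively, a soft compactness argument combining the pointwise convergence with equicontinuity of the conditional Gaussian laws in $(E,G)$ should suffice.
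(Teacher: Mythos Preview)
Your Kac--Rice identity is stated incorrectly, and the error hides exactly the place where the hypothesis $\Gs>2\sqrt{\nu''(\qs^2)}$ is needed. The Kac--Rice formula for a general observable $F(\bsigma,\BJ)$ gives
\[
\E\Big[\sum_{\bsigma\in\mathscr{C}_{N,\qs}(I_N,I'_N)}F(\bsigma,\BJ)\Big]
=\mathrm{Vol}(\qs\SN)\int_{I_N\times I'_N}\varphi_N(E,G)\,
\E\!\big[\,|\det\nabla_{\mathrm{sp}}^2 H_\BJ(\bn)|\cdot F(\bn,\BJ)\ \big|\ \cpt(E,G,\bn)\big]\,dE\,dG,
\]
where $\varphi_N$ is the density of $(H_\BJ(\bn),\partial_\perp H_\BJ(\bn),\nabla_{\mathrm{sp}}H_\BJ(\bn))$ at the relevant point. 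You have absorbed $\E[|\det|\mid\cpt]$ into $\rho_N$ and then written $\E[F\mid\cpt]$ outside, i.e.\ you have replaced $\E[|\det|\cdot F\mid\cpt]$ by $\E[|\det|\mid\cpt]\,\E[F\mid\cpt]$. There is no conditional independence here: $F(\bn,\BJ)=\P^{N,q_o}_{\BJ,\bn}\{\err_{N,T}(\bn)>\epsilon\}$ depends on the full disorder $\BJ$, and so does the Hessian at $\bn$. Consequently the ratio in \eqref{eq:uncond2} is \emph{not} a $\rho_N$-weighted average of the conditional probabilities, and your reduction to uniform convergence of those probabilities does not go through as written.

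The paper closes this gap by a Cauchy--Schwarz step: it bounds
\[
\frac{\E[|\det|\cdot \mathbf 1_{\{g_\BJ(\bn)\in\bar I_0\}}\mid\cpt]}{\E[|\det|\mid\cpt]}
\le \Big(\frac{\E[|\det|^2\mid\cpt]}{(\E[|\det|\mid\cpt])^2}\Big)^{1/2}
\big(\P\{g_\BJ(\bn)\in\bar I_0\mid\cpt\}\big)^{1/2},
\]
and then shows that the ratio of moments of $|\det|$ stays bounded in $N$ uniformly over $I_N\times I'_N$. This is precisely where $\Gs>2\sqrt{\nu''(\qs^2)}$ enters: conditional on $\cpt(E,G,\bn)$ the Hessian is a scaled GOE shifted by $G$, so for $G$ in a neighborhood of $\Gs$ its spectrum is bounded away from zero, and a GOE determinant moment estimate (e.g.\ \cite[Corollary~23]{Subag2nd}) yields the required uniform bound on $\E[|\det|^2]/(\E[|\det|])^2$. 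Your argument never invokes $\Gs>2\sqrt{\nu''(\qs^2)}$, which is a strong signal that this decoupling step is missing. Once you insert this Cauchy--Schwarz/second-moment argument, your ``uniform reduction'' paragraph becomes the correct next step (and is essentially what the paper does, via the Lipschitz control of the dynamics in $\BJ$ on the good event $\CL_{N,M}$); your quenched conclusion for \eqref{eq:uncond} matches the paper's.
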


The asymptotics of the expected number of critical points $\E \#\mathscr{C}_{N,\qs}(I_N,\,I'_{N})$ were computed for the pure $m$-spin models in \cite{ABAC} and for general mixed models in \cite{ABA}. However, currently the concentration property of 
 \eqref{eq:conc0} is proved only for pure $m$-spin \cite{Subag2nd} with 
$
%m \Es / \qs^2 = 
\Gs > 2 \sqrt{\nu''(\qs^2)}$
(i.e. $\Es > 2 b_m \qs^{m} \sqrt{1-1/m}$, see Footnote \ref{ft:purecdn}), 
or for mixed small perturbation of them \cite{BSZ} with large enough $\Es$, $\Gs$, $\qs$, 
and for $I_N,I_N'$ of length asymptotically larger than $1/N$. In both cases,
for large $\b$ the model is \abbr{1rsb} and the Gibbs measure concentrates 
on the set of spherical bands around the points in $\mathscr{C}_{N,\qs}\left(I_{N},\,I'_{N}
\right)$, provided that $\qs^2$ is set to be at the position of the non-zero atom of the Parisi measure, 
$-\Es$ is set for  the minimal normalized energy, and $\Gs$ chosen appropriately. 
%for any $\epsilon>0$ and large enough $c$, if the intervals $I_N, I_N'$ are centered at  $\Es, \Gs$ of width $c/N$, then with probability at least $1-\epsilon$ the Gibbs measure of the corresponding bands is at least $1-\epsilon$, and therefore \eqref{eq:uncond}  holds with $\bsigma$ chosen according 
%to \emph{any} distribution on $\mathscr{C}_{N,\qs}(I_N,\,I'_{N})$.

	For arbitrary $\bsigma\in \qs \SN$, conditional on $\cpt(\Es,\Gs,\bsigma)$ the eigenvalues of the spherical covariant Hessian of  $H_{\BJ}$ at $\bsigma$ have the same distribution as those of a \abbr{GOE} matrix, scaled by $\sqrt{\nu''(\qs^2)(N-1)/N}$ and shifted by $\Gs$. The value $2 \sqrt{\nu''(\qs^2)}$ is the threshold beyond which the Hessian is typically positive definite, i.e., $\bsigma$ is a local minimum. 
	Consequently, as can be checked by an application of the Kac-Rice formula,  if $
	\Gs>2 \sqrt{\nu''(\qs^2)}$ then the ratio of the expected number of minima and the expected number of critical points of all indices in $\mathscr{C}_{N,\qs}\left(I_{N},\,I'_{N}
	\right)$ goes to $1$. 
	In the two situations mentioned above \cite{BSZ,Subag2nd} where \eqref{eq:conc0} holds, the latter also occurs with high probability and not just in expectation. On the other hand, if $
	\Gs<2 \sqrt{\nu''(\qs^2)}$ then the expected number of minima in $\mathscr{C}_{N,\qs}\left(I_{N},\,I'_{N}
	\right)$ decays exponentially fast in $N^2$.

Considering Theorem \ref{thm-uncond} with $q_o=\qs=1$, corresponds to starting
at a critical point $\Bx_0=\bsigma$. This is related to some of the results of \cite{BGJ},
where qualitative information about the limiting dynamics is gained from an approximate 
evolution for (only) the pair $(H_N(s),|\nabla_{\mathrm{sp}} H_{\BJ}(\Bx_s)|/\sqrt{N})$.

Extending \cite[Proposition 1.1]{DGM} to our context, we next establish  
the ``hard spherical constraint'' equations corresponding 
to the limit $L \to \infty$ and $f_L(\cdot)$ of \eqref{eq:fdef}.
\begin{prop}\label{prop-sphere}
For any $T<\infty$ the solutions $(R^{(L)}, C^{(L)},q^{(L)},H^{(L)})$ of 
% the system 
(\ref{eqR})--(\ref{eqH})
for potential $f_L(\cdot)$ as in (\ref{eq:fdef}) with positive $\bphi=1+2\b q_o \bv'_\star (q_o)$,
converge
as $L \to \infty$, uniformly in $[0,T]^2$, towards 
$(R,C,q,H)$, for $H(\cdot)$ of \eqref{eqH}. Further, 
$q(0)=q_o \in [-\qs,\qs]$, $R(t,t)=C(t,t)= 1$ for all $t \geq 0$,
$R(s,t)=0$ and $C(s,t)=C(t,s)$ when $s < t$, while $(R,C,q)$ 
is for $T \ge s \ge t \ge 0$ the unique bounded solution of
\begin{align}
\partial_s R(s,t) =
& - \mu(s) R(s,t) + \b^2 \int_t^s
R(u,t) R(s,u) \nu''(C(s,u)) du ,\label{eqRs}\\
\partial_s C(s,t)= & 
 - \mu(s) C(s,t) + 
\b^2 \int_0^s R(s,u) \Big[ \nu''(C(s,u)) C(u,t) 
 -  \frac{q(t) \nu'(q(u)) \nu''(q(s))}{\nu'(\qs^2)} \Big] \, du
\nonumber \\
& \qquad \qquad \qquad + \b^2 \int_0^t R(t,u) \Big[\nu'(C(s,u)) - \frac{\nu'(q(s)) \nu'(q(u))}{\nu'(\qs^2)} \Big] \, du + 
\b q(t) \bv'_\star (q(s))  \,,
 \label{eqCs}\\
\partial_s q(s) = & -\mu(s) q(s) + \b^2 \int_0^s R(s,u) 
\Big[ q(u) \nu''(C(s,u)) 
- \frac{\qs^2 \nu'(q(u)) \nu''(q(s))}{\nu'(\qs^2)} \Big] du + 
\b \qs^2 \bv'_\star (q(s)) \,,
\label{eqqs}
\end{align}
\begin{equation}\label{eqZs}
\mu(s)= \frac{1}{2} + \b^2
\int_0^s R(s,u) \Big[ \psi(C(s,u)) -
\frac{\psi(q(s)) \nu'(q(u))}{\nu'(\qs^2)} \Big] 
\, du + 
\b q(s) \bv'_\star(q(s)) \,.
\end{equation}
In addition, $\bar C(s,t) := C(s,t)-q(s)q(t)/\qs^2$ is a 
%symmetric, 
non-negative definite kernel, and
\begin{equation}\label{eq:rbd-new}
\Big|\int_{t_1}^{t_2} R(s,u) du \Big|^2 \le t_2-t_1 \,,   
\qquad 0 \leq t_1 \leq t_2  \leq s < \infty \,.
\end{equation}
\end{prop}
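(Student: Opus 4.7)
My plan is to follow the soft-to-hard spherical limit procedure of \cite[Proposition 1.1]{DGM}, adapted to handle the disorder-dependent drift contributions $\bv_\star(\cdot)$ and $\bv'_\star(\cdot)$ that are new to \eqref{eqR}-\eqref{eqH}. Writing $(R^{(L)}, C^{(L)}, q^{(L)}, K^{(L)}, H^{(L)})$ for the solution of Theorem \ref{thm-macro} with $f = f_L$, I would proceed as follows: (i) derive $L$-uniform a priori bounds on $(C^{(L)}, q^{(L)}, K^{(L)})$ and integrated bounds on $R^{(L)}$ over $[0,T]^2$; (ii) show $\sup_{[0,T]} |K^{(L)}(s) - 1| \to 0$ as $L \to \infty$; (iii) extract subsequential limits and identify the Lagrange multiplier $\mu(s) := \lim_{L \to \infty} f_L'(K^{(L)}(s))$ from the consistency $\partial_s K \equiv 0$, yielding \eqref{eqRs}-\eqref{eqZs}; (iv) prove uniqueness for the limiting hard-constraint system; (v) obtain the auxiliary properties from their microscopic interpretations via Theorem \ref{thm-macro}.

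\textbf{Key calculations.} For Step (i), the coercivity of the $\frac{\bphi}{4k} r^{2k}$ term in $f_L$ with $k > m/4$ dominates the polynomial growth $\nu''(C) \sim C^{m-2}$ in \eqref{eqR}-\eqref{eqZ}, giving $L$-uniform $L^\infty$-bounds exactly as in \cite{DGM}; the new drift contributions are polynomials in $q^{(L)}$ of degree at most $m$ and are absorbed into the same Gronwall estimates via $|q^{(L)}(s)| \leq \qs \sqrt{K^{(L)}(s)}$ (a consequence of non-negative definiteness). For Step (ii), since $f_L'(r) = 2L(r-1) + \frac{\bphi}{2} r^{2k-1}$, equation \eqref{eqZ} rewrites as
\begin{equation*}
\partial_s \big(K^{(L)}(s) - 1\big) = -4L\, K^{(L)}(s) \big(K^{(L)}(s) - 1\big) + G_L(s),
\end{equation*}
with $G_L$ uniformly bounded on $[0,T]$ by Step (i). The choice $\bphi = 1 + 2\b q_o \bv'_\star(q_o)$ is calibrated precisely so that $G_L(0) = 0$, making $K^{(L)}(0) = 1$ compatible at leading order with the stationary value $K \equiv 1$ of the limiting equation; a standard ODE comparison using the strong restoring force $-4L K^{(L)}(K^{(L)}-1)$ then yields $\sup_{[0,T]} |K^{(L)} - 1| \to 0$. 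For Step (iii), passing to the limit in \eqref{eqZ} under $\partial_s K = 0$ identifies $\mu(s)$ as the right-hand side of \eqref{eqZs}, and substitution into \eqref{eqR}-\eqref{eqH} gives \eqref{eqRs}-\eqref{eqqs}. Step (iv) is a Gronwall argument on the differences of two bounded solutions, using local Lipschitz continuity of $\nu, \nu', \nu'', \psi$. For Step (v), the non-negative definiteness of $\bar C(s,t) = C(s,t) - q(s) q(t)/\qs^2$ is inherited from its interpretation as the $N \to \infty$ limit of $\frac{1}{N} \langle \Bx_s - \frac{q_N^{\bsigma}(s)}{\qs^2} \bsigma,\, \Bx_t - \frac{q_N^{\bsigma}(t)}{\qs^2} \bsigma \rangle$ (an inherently non-negative definite kernel) via Theorem \ref{thm-macro}; for \eqref{eq:rbd-new}, note that $\chi_N(s,t_2) - \chi_N(s,t_1) = \frac{1}{N} \langle \Bx_s, \BB_{t_2} - \BB_{t_1} \rangle$, so Cauchy-Schwarz gives $|\chi_N(s,t_2) - \chi_N(s,t_1)|^2 \leq C_N(s,s) \cdot \frac{1}{N} \|\BB_{t_2} - \BB_{t_1}\|^2$, and passing to the limit (using $K \equiv 1$ and $\frac{1}{N}\|\BB_{t_2} - \BB_{t_1}\|^2 \to t_2 - t_1$) yields the claim.

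\textbf{Main obstacle.} The principal technical point is the coupling in Step (ii): the bound on $G_L$ depends on a bound on $q^{(L)}$, which depends in turn on a bound on $K^{(L)}$. This loop is closed by the strong confining term $2L(r-1)^2$ in $f_L$ together with the positivity hypothesis $\bphi > 0$, without which the calibration $G_L(0) = 0$ fails and the discrepancy $K^{(L)} - 1$ could persist at an $O(1)$ scale, obstructing the passage to the hard-constraint limit.
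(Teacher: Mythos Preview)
Your proposal is correct and follows essentially the same route as the paper's proof: uniform a priori bounds, $K^{(L)}\to 1$ via the strong restoring force, Arzela--Ascoli compactness with identification of $\mu(s)$ as the limit of $f_L'(K^{(L)}(s))$, Gronwall uniqueness for the limiting system, and the auxiliary properties inherited from the pre-limit $\bar C_N$ and $\chi_N$. Two minor points of alignment with the paper are worth noting. First, the circularity you flag as the main obstacle is resolved in the paper not through a purely macroscopic Gronwall loop but by invoking the microscopic bound $|{\sf A}^{(L)}(s,s)|^2\le \kappa(K^{(L)}(s)+K^{(L)}(s)^m)$, obtained from Cauchy--Schwarz on the pre-limit quantities $G_N,K_N$ together with \eqref{eq:ger2} and Corollary~\ref{cor-self}; this renders the differential inequality for $K^{(L)}$ self-contained and yields $|K^{(L)}-1|\le B/(2L)$ directly (Lemma~\ref{lem-bdd}), without yet invoking the calibration of $\bphi$. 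Second, the role of the choice $\bphi=1+2\b q_o\bv'_\star(q_o)$ is slightly different from what you describe: it enters one step later (Lemma~\ref{lem-tight}), where it forces $\hat h^{(L)}(0)=\partial_s K^{(L)}(0)=0$, so that the representation $\hat h^{(L)}(s)=\int_0^s e^{-4L(s-u)}\kappa_L(u)\,du$ gives $|\hat h^{(L)}|\le \alpha(T)/(4L)$; this is what controls $\partial_s\mu^{(L)}=f_L''(K^{(L)})\hat h^{(L)}\approx 2L\,\hat h^{(L)}$ uniformly and hence yields the equi-continuity of $\mu^{(L)}$ needed for the compactness step (cf.\ Remark~\ref{rem:m-infty}).
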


\begin{remark}\label{rem:CK}
Since $\bv(0)=\bv'(0)=0$, taking $q_o=0$ yields the solution 
$q(s) \equiv 0$ in both \eqref{eqq} and \eqref{eqqs}. 
The values of $(\Es,\Gs,\qs)$ are then irrelevant, and the system
of equations \eqref{eqR}-\eqref{eqZ}, \eqref{eqRs}-\eqref{eqZs}
reduces to the \abbr{CKCHS}-equations, as in \cite[Theorem 1.2]{BDG2} and 
\cite[Proposition 1.1]{DGM}, respectively. All terms involving 
$\bv_\star(\cdot)$ disappear also when $\Es=\Gs=0$, but for
$q_o \ne 0$ the equations \eqref{eqq} and \eqref{eqqs} nevertheless
yield non-zero solutions. Unlike the special case 
of \cite[Proposition 1.1]{DGM}, here $(R,C,q)$ may take 
negative values, but with $C(s,s)=1$ and $\bar C (\cdot,\cdot)$ 
non-negative definite, necessarily $|q(\cdot)| \le \qs$ 
and $|C(\cdot,\cdot)| \le 1$. 
\end{remark}

\begin{remark}\label{rem:m-infty}
Any $\bphi \in (0,\infty)$ in \eqref{eq:fdef} result with 
equations \eqref{eqRs}--\eqref{eqZs} when $L \to \infty$,  but since 
$\mu(0)=\bphi/2$, taking $\bphi=1+2\b q_o \bv'_\star (q_o)$ (when it is positive), 
simplifies our derivation (otherwise, one merely has to use $\mu(0^+)$ when $s=0$).
The representation \eqref{potential} with $m=\infty$ applies for any real-analytic $\nu(\cdot)$ 
such that $\nu(0)=\nu'(0)=0$, $\nu^{(p)}(0) \ge 0$, $p \ge 2$, with a unique strong solution to
% the diffusion
\eqref{diffusion} for locally Lipschitz $f'(r)$ growing fast enough as $r \to \infty$.
% likely need $f'(r) r^{1-\delta} - \sqrt{\psi(r) r} $ uniformly bounded below.  
While not pursued here, we expect Theorem \ref{thm-macro} to hold for any 
such $f(\cdot)$ and upon considering $f_L(r) = L(r-1)^2+f(r)$, to further arrive at
the conclusions of Proposition \ref{prop-sphere}.
\end{remark}
\begin{remark} Whenever $\nu(\cdot)$ is an even polynomial, so is $\bv_\star(\cdot)$, resulting 
with \eqref{eqR}-\eqref{eqH} invariant under $(C,R,q,H) \mapsto (C,R,-q,H)$.
The same applies to \eqref{eqRs}-\eqref{eqZs} and in such cases $q_o \mapsto -q_o$
yields the same solution apart from a global sign change in $q(s)$.
Indeed, our realization is such that 
an even $\nu(\cdot)$ results with an even potential 
$H_{\BJ}(-\bx)=H_{\BJ}(\bx)$ per given $\BJ$, hence also 
with $\cpt(E,G,\bsigma) = \cpt(E,G,-\bsigma)$ and thereby a sign change 
$q_o \mapsto -q_o$ being equivalent to $\bsigma \mapsto - \bsigma$.
\end{remark}

In Section \ref{sec-asymp} we study the large time asymptotic of the solution 
$(R,C,q,\mu)$ of \eqref{eqRs}-\eqref{eqZs}, establishing the
\abbr{fdt} regime at high temperature (ie $\b$ small), and further analyzing 
the plausible \abbr{fdt} solutions for somewhat lower temperatures. While doing 
so, we observe a sharp distinction between the $m$-pure case and the mixed case, in terms of the 
emergence of aging. Such distinction was realized recently in \cite{FFT}, by a numerical solution of  the \abbr{CKCHS}-equations for initial conditions from the Gibbs measure at different temperatures, suggesting, for example, more than one dynamical phase transition  in the mixed case only.
In Section \ref{sec:strongsolutions} we prove Theorem \ref{thm-macro} 
by adapting \cite[Section 2]{BDG2} to our more challenging setting (where 
$\bx_0$ is related to $\bJ$ via \eqref{eq:subsphere}--\eqref{eq:cond-J}).
The key to our derivation are Propositions \ref{prop-macro} and \ref{lem-diff}, whose
proofs are deferred to Subsections \ref{sec:der} and \ref{sec:diff}
(adapting \cite[Section 3]{BDG2} and \cite[Section 4]{BDG2}, respectively).
From Proposition \ref{prop-macro} one further has the limit dynamics (as $N \to \infty$),
for other functions of interest (such as those given in \eqref{eq:hfdef}--\eqref{eq:wHNdef}).
Section \ref{sec:eliran} is devoted to proving our main result, Theorem \ref{thm-uncond}, whereas
Proposition \ref{prop-sphere} and Proposition \ref{prop:fdt} are  
established in Sections  \ref{sec:hard-sphere} and \ref{sec:fdt}, respectively,
by adapting \cite[Section 2]{DGM} and \cite[Section 4]{DGM},
to our more involved setting.

\section{Large time asymptotic: the \abbr{fdt} regime}\label{sec-asymp}

At high enough temperature one has that $q(s) \to 0$ for $s \to \infty$.
Our next proposition (which is comparable to \cite[Theorem 1.3]{DGM}), 
shows that the \abbr{FDT} regime of the solution of \eqref{eqRs}--\eqref{eqZs} 
then coincides with that of the 
\abbr{ckchs}-equations.
\begin{prop}\label{prop:fdt}
For $\beta$ small enough and $\alpha=0$,  the solution of \eqref{eqRs}--\eqref{eqZs} 
is such that $\varliminf \{ \mu(\tau) \} >0$,  
$(R(t+\tau,t),\bar C(t+\tau,t),q(\tau)) \to (0,0,\alpha \, \qs)$ 
exponentially fast in $\tau \to \infty$, uniformly in $t$, and for any $\tau\ge 0$,
\begin{equation}\label{dfn:cS-space}
\lim_{t\ra\infty} (R(t+\tau,t),C(\tau+t,t),q(t)) = (R_{\fdt} (\tau),C_{\fdt} (\tau), \alpha \, \qs) \,.
\end{equation}
In such case, necessarily $R_{\fdt} (\tau) = - 2 C_{\fdt}'(\tau)$. Further, setting  $\gamma=1/2$ and
\begin{equation}\label{dfn:phi-FDT}
\phi(x):=\gamma + 2 \b^2 \nu'(x) \,,
\end{equation}
we have that $\mu(t) \to \phi(1)$, 
and $C_{\fdt}(\cdot)$ is the unique $[0,1]$-valued, continuously differentiable solution of 
%the equation
\begin{equation}\label{FDTDb}
 D'(s)=-\int_0^s\phi(D(v)) D'(s-v) dv - \frac{1}{2}\,, \qquad D(0)=1 \,.
\end{equation}
More generally, if the solution $(R,C,q)$ of \eqref{eqRs}--\eqref{eqZs} is uniformly 
bounded, with $\{R(t+\cdot,t), t \ge T_0\}$ uniformly integrable (\abbr{wrt} 
Lebesgue measure), $\varliminf \{\mu(\tau)\} >0$ and \eqref{dfn:cS-space} holds 
for some $\alpha \in [-1,1]$, 
then necessarily $\mu(t) \to \mu$ such that  
$(R_{\fdt},C_{\fdt},\mu)$ satisfy \cite[(4.15)-(4.17)]{DGM}, 
%in $\widetilde \Ba:=\{(R,C) \in \CB(\R_+) \ts \CB(\R) : C(0)=R(0)=1$, $C(\tau)=C(-\tau) \}$
with
% $\IJ$ and $\alpha$   
\begin{align}
\label{eq:Qa-const}
\mu \, \qinf &= \b \qs^2 \bv'_\star (\qinf)  
- \b^2 \qs^2 \frac{\nu''(\qinf) \nu'(\qinf)}{\nu'(\qs^2)} \kappa_2 
+ \b^2 \qinf \kappa_1
 \,, \quad 
\kappa_1 := \int_0^\infty R (\theta) \nu''(C(\theta)) d\theta \,, \\
\IJ &= \b  \qinf \bv'_\star(\qinf) - \b^2 \frac{\psi(\qinf) \nu'(\qinf)}{\nu'(\qs^2)} \kappa_2  + \b^2 \kappa_3 \,,
\qquad\quad  \kappa_2 := \int_0^\infty R (\theta) d\theta  \,, \quad \kappa_3 := 0 \,. \label{eqIFDT} 
\end{align}
One such solution is $
%(R_{\fdt},C_{\fdt},\mu)=
(-2D'(\cdot),D(\cdot),\phi(1))$ for $(\phi,D)$ of
\eqref{dfn:phi-FDT}-\eqref{FDTDb} and $D_\infty \in [0,1)$,  $\gamma \in \R$ such that 
\begin{align}\label{eq:I-gamma}
\IJ & =\gamma- \frac{1}{2} + 2 \b^2 D_\infty \nu'(D_\infty) \,, \\
\label{eq:gamma-cons}
D_\infty &= \sup \{ x \in [0,1] : (\gamma + 2 \b^2 \nu'(x)) (1-x) \ge 1/2 \} \,,
\end{align}
yielding in turn the values 
$\kappa_1 = 2(\nu'(1)-\nu'(D_\infty))$ and 
$\kappa_2 = 2(1-D_\infty)$.
\end{prop}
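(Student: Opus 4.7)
The plan is to adapt the approach of \cite[Section 4]{DGM}, with the main new ingredient being the additional terms involving $q(\cdot)$ and $\bv'_\star(\cdot)$ in \eqref{eqRs}--\eqref{eqZs} that are absent from the CKCHS equations. The FDT ansatz $R_{\fdt}(\tau)=-2C_{\fdt}'(\tau)$ and the reduction to \eqref{FDTDb} will follow the DGM template once we show that the $q$-dependent corrections either vanish (when $\alpha=0$) or collapse to explicit algebraic expressions in $\alpha\qs$ (when $\alpha\in[-1,1]$).

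For the first (small-$\beta$, $\alpha=0$) assertion, I would first establish that $\mu(\cdot)$ stays bounded away from $0$ and that $q(s)\to 0$ exponentially fast. Inspecting \eqref{eqZs}, we see $\mu(s)\to 1/2$ up to $O(\beta^2)$-sized integrals, hence $\mu(s)\geq 1/4$ uniformly for $\beta$ small. Viewing \eqref{eqqs} as a linear Volterra equation for $q$, the pointwise bound $|\bv'_\star(q)|=O(|q|)$ near $q=0$ together with $|R(s,u)|\leq \|R\|_\infty$ and the a priori bound $|C|\leq 1$ yield via Gronwall a bound of the form $|q(s)|\leq |q_o|\,e^{-cs}$ once $\beta$ is small enough that the gain from $-\mu(s)q(s)$ dominates. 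Plugging this exponential decay back into \eqref{eqRs}--\eqref{eqCs}, the subtracted kernels $\nu'(q(u))\nu'(q(s))/\nu'(\qs^2)$ and $\nu''(q(s))$-terms are seen to be integrable error terms, and the DGM-style ``cutting the past'' argument, which shows that $(R(t+\tau,t),\bar C(t+\tau,t))$ becomes translation-invariant as $t\to\infty$, goes through with the error terms absorbed. The limit equation is precisely the CKCHS-FDT equation of DGM, so \eqref{FDTDb} and the identification $\mu\to\phi(1)$ with $R_{\fdt}=-2C_{\fdt}'$ follow from their analysis.

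For the more general second part, assume only uniform boundedness of $(R,C,q)$, uniform integrability of $\{R(t+\cdot,t)\}$, $\varliminf\mu(\tau)>0$, and \eqref{dfn:cS-space} with some $\alpha\in[-1,1]$. I would send $s\to\infty$ in each of \eqref{eqRs}--\eqref{eqZs} along $s=t+\tau$. Splitting every integral $\int_0^s$ at a fixed time $T_0$ and using uniform integrability, the tail contribution converges to integrals of $R_{\fdt},C_{\fdt}$ over $[0,\infty)$, while the prefix contributes a vanishing amount thanks to $R(s,u)\to 0$ uniformly in $u\le T_0$. The terms of the form $\int_0^s R(s,u) q(u)\nu''(C(s,u))\,du$ pass to $\alpha\qs\,\kappa_1$ (replacing $q(u)$ by its limit inside the tail, and using translation invariance in the FDT regime), while the subtracted kernels collapse to $\nu'(\alpha\qs),\nu''(\alpha\qs),\psi(\alpha\qs)$ times $\kappa_2$. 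Matching \eqref{eqqs} and \eqref{eqZs} in the limit then produces exactly \eqref{eq:Qa-const} and \eqref{eqIFDT}; the fact that $\kappa_3=0$ is inherited from DGM (the non-FDT ``aging'' contribution in $\nu'(C(s,u))$ integrates to zero against a pure-FDT response by the same argument used there).

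Finally, to exhibit the explicit solution: I substitute $R_{\fdt}=-2D'$, $C_{\fdt}=D$, $\mu=\phi(1)$ into the DGM equations \cite[(4.15)--(4.17)]{DGM}, so that \eqref{FDTDb} follows from an integration-by-parts identical to the one in DGM. The constraint \eqref{eq:gamma-cons} characterizes the largest plateau value $D_\infty=\lim_{\tau\to\infty}C_{\fdt}(\tau)$ for which the right-hand side of \eqref{FDTDb} admits a bounded monotone solution starting from $D(0)=1$, and \eqref{eq:I-gamma} then fixes $\gamma$ through consistency with \eqref{eqIFDT}. A direct computation yields $\kappa_1=2(\nu'(1)-\nu'(D_\infty))$ and $\kappa_2=2(1-D_\infty)$. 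The main obstacle I anticipate is the first, dynamical step: carefully combining the Volterra-type Gronwall estimate for $q(s)$ with the DGM translation-invariance argument, so that the two a priori unrelated exponential rates (decay of $q$ and decay of $R(t+\tau,t)$ in $\tau$) can be linked in a single uniform-in-$t$ convergence; equivalently, proving the uniform integrability of $\{R(t+\cdot,t)\}$ in the presence of the new drift $\bv'_\star(q)$ is the delicate point on which the rest of the argument rests.
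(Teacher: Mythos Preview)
Your proposal is essentially correct and follows the same template as the paper---adapting \cite[Section 4]{DGM} with the extra $q$-terms. The passage to the limit in the general-$\alpha$ part and the identification of the explicit solution via \cite[Proposition 5.1]{DGM} are handled in the paper exactly as you describe.

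The one organizational difference worth noting concerns the small-$\beta$ part. You propose a two-step bootstrap: first a Volterra--Gronwall estimate on \eqref{eqqs} alone to force $q(s)\to 0$ exponentially, then feed this into \eqref{eqRs}--\eqref{eqCs} and invoke the DGM translation-invariance argument for $(R,\bar C)$. The paper instead treats all three functions at once via a contraction mapping $\Psi:(R,C,q)\mapsto(\widetilde R,\widetilde C,\widetilde q)$ on a single closed set
\[
\Aa=\big\{\,|R(t+\tau,t)|\le\rho(r\tau+1)^{-3/2}e^{-\delta\tau},\ |C(t+\tau,t)|\le c\,e^{-\delta\tau},\ |q(\tau)|\le\kappa\,e^{-\eta\tau}\,\big\},
\]
showing $\Psi$ preserves the subset $\Sa\subset\Aa$ of triples with the \abbr{FDT} limit \eqref{dfn:cS-space}. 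This buys exactly the linkage you flag as the delicate point: the decay rates for $q$ and for $R(t+\cdot,t)$ are imposed simultaneously as part of the domain of the contraction, so there is no separate step where one rate must be fed into an estimate for the other. Your sequential approach should also work, but would require the extra bookkeeping you anticipate; the paper's packaging avoids it.

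A small correction: your explanation of $\kappa_3=0$ is slightly off. It is not that an aging contribution ``integrates to zero''; rather, under the stated hypotheses (uniform integrability of $R(t+\cdot,t)$ and pointwise \abbr{FDT} convergence) there simply is no aging regime, so the entire integral $\int_0^s R(s,u)\psi(C(s,u))\,du$ in \eqref{eqZs} is captured by the \abbr{FDT} window and yields only the $\kappa_1,\kappa_2$-type terms. The $\kappa_3$ slot is a placeholder for the aging correction discussed later in \eqref{eq:mod-coef}, which is absent here by assumption.
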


\begin{remark} Our proof of \eqref{dfn:cS-space} relies on $\Psi(\cdot)$ of 
\eqref{eqRs-Psi}-\eqref{eqqs-Psi} being a contraction on a suitable set $\Aa$
(and for uniqueness of $(R_{\fdt},C_{\fdt})$ we require that the induced
map $\Psi_{\fdt}(\cdot)$ be a contraction at the given $\alpha$). In particular, 
% for $\Psi(\cdot)$ to be 
a global contraction 
%(ie. for any $|q_o| \le \qs$), 
requires that $\alpha=0$ be the unique solution of \eqref{eq:Qa-const}, which in turn 
depends not only on $\b$ and $\qs$ but also on $(\Es,\Gs)$. Nevertheless, at least when
$b_2=0$ (so  $\bv''_\star(0)=0$), we expect the \abbr{FDT} solution of Proposition \ref{prop:fdt} 
with $\alpha=0$, $\gamma=1/2$, to apply for all $\b < \b_c$ of \cite[(1.23)]{DGM},
provided $q_o=q_o(\b,\Es,\Gs)$ is small enough. 
\end{remark} 
\begin{remark} For pure $m$-spins, \cite{BBM} consider the diffusion \eqref{diffusion}
starting at $\bx_0$ of 
% the Gibbs 
law $\mu^N_{2\b',\BJ}$ for various choices of $\b' \in [0,\infty)$.
Employing the mathematically non-rigorous replica method (in particular, 
its \abbr{1rsb} picture for the Gibbs measure), they predict
the resulting limit equations for $(R,C)$ and their solution in 
the \abbr{fdt} regime. Building on it (and using again the replica 
method), \cite{BF} considers in this setting 
also the limit dynamics of the overlap $q(t)$.
\end{remark} 
\begin{remark}\label{rem:alpha} 
The limit $\alpha$ of $q(t)/\qs$ provides information on the state $\bx_t$ 
in the limit $N \to \infty$, at  $t \gg 1$ which does not scale with $N$. The case $\alpha=0$ 
represents an escape from the energy well about the critical point $\bsigma$ to a point which is 
orthogonal to $\bsigma$. In contrast, $\alpha=1$ implies convergence to the 
projection $\qs^{-1} \bsigma \in \SN$ of the critical point around which the state was initialized. 
Note also 
that for $\alpha=\qs$ the eventual support ${\Bbb S}_\bsigma (\qs^2)$ of the state, is
precisely the sphere of co-dimension $1$ and radius $\sqrt{N(1-\qs^2)}$, \emph{centered 
at} the critical point $\bsigma$.
\end{remark}

While Proposition \ref{prop:fdt} is limited to small $\b$,  we do 
expect \eqref{dfn:cS-space} to hold at all $\b$, albeit having 
$\alpha \ne 0$ for some $(\Es,\Gs)$ and $q_o$ close enough to $\qs$, 
as soon as $\b > \b_+(\Gs)$, where as we detail in the sequel,
$\b_+$ is in general \emph{lower than $\b_c$} of \cite[(1.23)]{DGM}.
To this end, we first briefly review the physics prediction for the
(large time) asymptotic for the \abbr{CKCHS}-equations, namely when 
$q_o=0$, or alternatively, when all terms involving $q(\cdot)$ are
omitted from \eqref{eqRs}-\eqref{eqZs} (see Remark \ref{rem:CK}).
Recall that for this limiting \abbr{CKCHS} dynamics, aging amounts to having  
a non-identically constant $C_{\rm aging}(\cdot)$ such that $C(\tau+t,t) \to C_{\rm aging}(0)$ as
$t \to \infty$ followed by $\tau \to \infty$, whereas
$C(s,\lambda s) \to C_{\rm aging}(\lambda)$ as $s \to \infty$. Now, in
the absence of aging, such prediction 
is given by the \abbr{fdt} solution from Proposition \ref{prop:fdt}, for
$\alpha=0$ and parameters which solve 
\eqref{eqIFDT}-\eqref{eq:gamma-cons} assuming the limit 
$D_\infty$ of $C_{\fdt}(\tau)$ as $\tau \to \infty$ is zero. As explained 
before, doing so amounts to setting $\IJ=0$ and $\gamma=1/2$, whereas 
\eqref{eq:gamma-cons}  holds for such values iff $\b < \b_c$ of 
\cite[(1.23)]{DGM}. 

In contrast, when $\b > \b_c$ the limit $D_\infty$ of $C_{\fdt}(\tau)$ must be strictly positive,
which for $\alpha=0$ indicates the onset of aging and in particular 
having $R_{\fdt}(\tau) \to 0$ at a sub-exponential rate.  Such slow decay is expected
in turn to require the additional relation 
\begin{equation}\label{eq:gamma-aging}
\gamma = 2 \b^2 [ \nu''(D_\infty) (1-D_\infty) - \nu'(D_\infty)] 
\end{equation}
(see \cite[(1.22)]{DGM}), which together with \eqref{eq:gamma-cons} dictate the values
of $\gamma>1/2$ and of $D_\infty = D_\star(\b)>0$, with 
\begin{equation}\label{dfn:D-star}
D_\star (\b) := \sup\{ x \in [0,1] : 4 \b^2 g(x) \ge 1 \} , \qquad {\rm for } \qquad g(x) := \nu''(x)  (1-x)^2 
\end{equation}
(as in \cite[(1.24)]{DGM}). While  \eqref{eq:I-gamma} thereby determines 
$\IJ$, our expressions for $\kappa_i$ in \eqref{eqIFDT} (and in \eqref{eq:Qa-const}),
relied on the uniform in $t$, integrability of $\tau \mapsto R(t+\tau,t)$, which is no longer 
expected. To rectify this, at $\b \ge \b_c$ one adds to these formulas the 
contribution from the aging regime, namely having $\lambda=u/s$ 
bounded away from zero and one, to the integrals on 
the \abbr{rhs} of \eqref{eqCs}-\eqref{eqZs}. As
explained after \cite[(1.24)]{DGM}, the physics ansatz of a single 
aging regime with $R_{\rm aging}(\lambda)=A C'_{\rm aging}(\lambda)$ 
starting at $C_{\rm aging}(1)=D_\infty$ and ending at $C_{\rm aging}(0)=\alpha^2$ 
(ie, having $\bar C_{\rm aging}(0)=0$), implies the increase
\begin{equation}\label{eq:mod-coef}
\begin{aligned}
\kappa_1  &\leftarrow \kappa_1 + A (\;\;\; \;\; \nu'(D_\infty) - \;\;\;\; \nu'(\alpha^2)) \,,\\
\kappa_2  &\leftarrow \kappa_2 + A (D_\infty \qquad \quad - \; \alpha^2 \qquad \;\; ) \,,\\
\kappa_3 &\leftarrow \kappa_3 + A(D_\infty \nu'(D_\infty) - \alpha^2 \nu'(\alpha^2)) \,,
\end{aligned}
\end{equation}
of the coefficients in the identity \eqref{eqIFDT}, which in turn determines the value of $A$. Finally,
should the self-consistency requirement of $A>0$ and $\bar C_{\rm aging}(0)=0$ fail, one 
moves from the latter ansatz into the richer hierarchy of multiple aging regimes.

Recall Remark \ref{rem:alpha}, that for $\alpha=0$ and $\beta>\beta_c$ aging occurs for 
a state which is already orthogonal to the critical point $\bsigma$ around which we initialized the 
system, i.e. after the escape from the energy well about it.
Here we consider another alternative, of having a still localized state, namely a solution with $\alpha \ne 0$
that in addition satisfies  \eqref{eq:Qa-const}. Indeed, recall \cite[Proposition 6.1]{DGM} that the \abbr{FDT}
regime of the \abbr{CKCHS}-equations must be given by \eqref{FDTDb} as soon as a key integral
$I(t+\cdot,t)$ converges for $t \to \infty$ (uniformly on compacts), to some constant
(which in terms of our notations, turns out to be $\widehat \IJ := \gamma - \frac{1}{2} - \IJ + \b^2 \kappa_3$).
Assuming in addition that such convergence to constants $(\IJ^{(q)}_1,\IJ^{(q)}_2)$ applies also for the integrals 
\[
I^{(q)}_1(s) := \int_0^s R(s,u) q(u) \nu''(C(s,u)) du \,, \qquad \qquad I^{(q)}_2(s) := \int_0^s R(s,u)\nu'( q(u)) du \,, 
\]
we have in \eqref{eqqs}, we can approximate the latter dynamics (at $s \gg 1$),
by the much simpler \abbr{ode} 
\begin{equation}\label{dfn:ode}
\begin{aligned}
q'(s) & = - \mu(s) q(s) + {\sf Q}( q(s) )\,, \qquad \qquad {\rm for } \quad \qquad \mu(s)={\sf P} ( q(s) )\,, \\
% for polynomials of degree $m+1$,
{\sf Q} (x) &= \b \qs^2 \bv'_\star(x) - \b^2 \qs^2 \frac{\nu''(x)}{\nu'(\qs^2)} \IJ^{(q)}_2 + \b^2 \IJ^{(q)}_1 \,, 
\qquad 
{\sf P} (x) = \b x \bv'_\star(x) - \b^2 \frac{\psi(x)}{\nu'(\qs^2)} \IJ^{(q)}_2 + \rho + \widehat \IJ \,.
\end{aligned}
\end{equation}
Such an \abbr{ode} has no limit sets beyond its finitely many limit points, which are at the 
% at most $(m+1)$ 
isolated solutions of 
\begin{equation}\label{eq:fp}
{\bf P}(x) x = {\bf Q}(x)\,,  \qquad \qquad x \in [-\qs,\qs] \,.
\end{equation}
Hence our earlier prediction that \eqref{dfn:cS-space} 
remains valid at all $\b$. Further, a convergence of $q(u)$ to some limit point 
$x=\alpha \qs$ 
implies by  self-consistency the values $\IJ_1^{(q)} = \alpha \qs \kappa_1$ and 
$\IJ_2^{(q)} = \nu'(\alpha \qs) \kappa_2$, which upon substitution in \eqref{dfn:ode}-\eqref{eq:fp} 
yield the requirements \eqref{eq:Qa-const}-\eqref{eqIFDT} on $\alpha$ and $\IJ$.

The analysis of the \abbr{fdt} regime in the presence of aging starts precisely as for \abbr{ckchs}-equations  
with $\b > \b_c$, $D_\infty=D_\star(\b)>0$ of \eqref{dfn:D-star}
and the corresponding values of $(\gamma,\IJ)$ 
(as determined by \eqref{eq:I-gamma}-\eqref{eq:gamma-aging}). The only 
difference is that now we can try beyond the \abbr{ckchs}-solution $\alpha=0$ and
$\IJ = \b^2 \kappa_3$, also any $A>0$ and 
$\alpha^2 = C_{\rm aging}(0) < D_\infty$ which satisfy 
\eqref{eq:Qa-const}-\eqref{eqIFDT} for $\kappa_i$ of \eqref{eq:mod-coef}. Since
$D_\star(\b) \uparrow 1$, 
%as $\b \to \infty$, 
taking $\b$ large provides 
access to all solutions of \eqref{eq:fp} (but we do not expect a
simple, explicit way to determine which interval of $q_o$ values is attracted
to each  stable solution).

The most interesting case is that of a localized state with \emph{no-aging at $\alpha \ne 0$.} 
Specifically, seeking $(R_{\fdt} (\tau),C_{\fdt} (\tau),\mu)$ as in Proposition \ref{prop:fdt} for 
$\gamma \ne 1/2$ such that $\bar C_{\fdt} (\tau) \to 0$, i.e. 
with $D_\infty=\alpha^2$. Plugging such a solution in \eqref{eq:Qa-const} gives 
%\begin{equation}\label{eq:FDT-ident1} 
%\gamma \alpha  = - 2 \b^2 \alpha  \nu'(D_\infty)
% - 2 \b^2 \frac{\qs \nu'(\qinf) \nu''(\qinf)}{\nu'(\qs^2)} (1-D_\infty)  + 
% \b \qs \bv'_\star (\qinf) \,.
% \end{equation}
% Similarly, plugging it in  \eqref{eqIFDT} and comparing with \eqref{eq:I-gamma} results with  
%\begin{equation}\label{eq:FDT-ident2}
% \gamma = \frac{1}{2} - 2 \b^2 D_\infty \nu'(D_\infty) 
 %- 2 \b^2 \frac{\psi(\qinf) \nu'(\qinf)}{\nu'(\qs^2)} (1-D_\infty) + \b  \qinf \bv'_\star(\qinf) \,.
%\end{equation}
%Finally, recall \eqref{eq:gamma-cons} that in addition  $D_\infty$ is the largest solution in $[0,1]$, of
%\begin{equation}\label{eq:FDT-ident3}
%\gamma = \frac{1}{2(1-D_\infty)} - 2 \b^2 \nu'(D_\infty) \,.
%\end{equation}
\begin{equation}\label{eq:FDT-ident1} 
\gamma  \alpha  = \b \qs \bv'_\star (\qinf) 
- 2 \b^2 \frac{\qs \nu''(\qinf) \nu'(\qinf)}{\nu'(\qs^2)} (1-\alpha^2)  - 2 \b^2  \alpha \nu'(\alpha^2)
\,.
\end{equation}
Similarly, plugging it in  \eqref{eqIFDT} and comparing with \eqref{eq:I-gamma} results with  
\begin{equation}\label{eq:FDT-ident2}
\gamma - \frac{1}{2} = \b  \qinf \bv'_\star(\qinf)
 - 2 \b^2 \frac{\psi(\qinf) \nu'(\qinf)}{\nu'(\qs^2)} (1-\alpha^2) 
- 2 \b^2 \alpha^2 \nu'(\alpha^2)  
  \,.
\end{equation}
Recall \eqref{eq:gamma-cons}, that having $D_\infty = \alpha^2$ requires in addition to the preceding that 
\begin{equation}\label{eq:FDT-ident3}
( \gamma + 2 \b^2 \nu'(\alpha^2) ) (1-\alpha^2) - \frac{1}{2} =  
\frac{2 \b^2}{\nu'(\qs^2)} [ \nu'(\alpha^2) \nu'(\qs^2) - \nu'(\alpha \qs)^2 ] (1-\alpha^2) = 0  \,.
\end{equation}
In the pure case the \abbr{rhs} of \eqref{eq:FDT-ident3} always holds, while otherwise 
% its \abbr{rhs} is $\nu'(\alpha^2) \nu'(\qs^2) - \nu'(\alpha \qs)^2$ strictly positive when $\alpha \ne \qs$,
it holds only\footnote{except for $\alpha = - \qs$ equivalently holding whenever $\nu(\cdot)$ 
is an even polynomial 
%(for such $\alpha<0$ replace $\Gs = \bv'_\star(\qs^2)$ by $-\bv'_\star(-\qs^2)$ 
% in \eqref{eq:mixed-no-aging}), which in even \nu is the same!
} 
for $\alpha=\qs$. 
Proceeding first with the $m$-pure case, utilizing 
Footnotes \ref{ft:purecdn} and \ref{footnote4},
we get that both \eqref{eq:FDT-ident1} and \eqref{eq:FDT-ident2} hold for $\alpha \ne 0$ iff 
\begin{equation}\label{eq:pure-no-aging}
4 \b^2 g(\alpha^2) = y^2 \qquad {\rm and } \qquad 
\Gs = \sqrt{\nu''(\qs^2)} ( y + y^{-1} )  
% y^2 - \frac{\Gs}{\sqrt{\nu''(\qs^2)}} y + 1  = 0 
\,. 
\end{equation}
In view of \eqref{eq:gamma-cons}, only the  smaller positive 
root $y \in (0,1]$ for the \abbr{rhs} of \eqref{eq:pure-no-aging} is relevant, with the  
condition $\Gs > 2 \sqrt{\nu''(\qs^2)}$ for existence of such $y  \in (0,1)$
matching our assumption in Theorem \ref{thm-uncond}
(alternatively, the latter inequality amounts to $\tEs > 2 \sqrt{1-\frac{1}{m}}$ 
where $\tEs := \Es / (b_m \qs^m)$ denotes the given energy level,  
measured in standard deviations of $H_{\BJ}(\bsigma)$).
% setting $x=r \alpha^2$ in \eqref{eq:gamma-cons} 
%and substituting the value of $\gamma$ from \abbr{lhs} of \eqref{eq:FDT-ident3}, we see that
%such $\alpha$ be valid iff also
%\begin{equation}\label{eq:FDT-ineq}
%y^2 \,h_\alpha (r) < 1 \,,   \quad \forall r \in (1,\alpha^{-2}] \,, \qquad
% h_\alpha(r) := \frac{(1-\alpha^2 r)}{(1-\alpha^2)}
% here times: 
%\frac{1}{m-1} \sum_{k=0}^{m-2} r^k ; 
% more generally times:
% \frac{(\nu'(\alpha^2 r)-\nu'(\alpha^2))}{\nu''(\alpha^2) (\alpha^2 r-\alpha^2)} \,.  
% \end{equation}
Moreover, the \abbr{lhs} of \eqref{eq:pure-no-aging} can not hold for some $y<1$, unless
\begin{equation}\label{eq:tap-cond}
\frac{1}{\b} > 2 \sqrt{\nu''(\alpha^2)} (1-\alpha^2) \,,
\end{equation}
which is precisely the stability condition for \abbr{TAP} solutions on $\alpha \, \SN$
% on ${\Bbb S}_{\bsigma}(\qinf)$
(see \cite[Eq. (25)]{KPV}). Fixing $\tEs$ as above, namely $y \in (0,1)$ via the \abbr{rhs} of 
\eqref{eq:pure-no-aging}, here
$g(\cdot)$ attains its maximum over $[0,1]$ at $\alpha_m^2 := 1-\frac{2}{m}$, and 
by the same reasoning as for the \abbr{ckchs}-equations, one should choose 
the larger solution $\alpha^2$ in \eqref{eq:pure-no-aging}, namely take
% \eqref{eq:pure-no-aging} admits 
%  two solutions\footnote{since $\alpha_2=0$ only $\alpha_+$ exists when $m=2$, whereas 
% we have additional negative solutions $- \alpha_{\pm}$ in case $m$ is even} 
% denoted $0<\alpha_-\le \alpha_m \le \alpha_+ < 1$, iff}
\begin{equation}\label{eq:beta-crit-pure}
D_\infty = D_\star(\b/y)  \qquad {\rm provided } \qquad  
\b > \b_+ := y/(2 \sqrt{g(\alpha_m^2)})\,, 
\end{equation}
where $\b_+ < \b_c$ of \cite[(1.23)]{DGM}, for any $m \ge 2$ and all $\tEs$ as above. 
%It is easy to see that  the maximum of $h_\alpha(r)$ must be attained at $r_o \ge 1$ for which 
%$h_\alpha(r_o) = g(r_o \alpha^2)/g(\alpha^2)$. Further, in the $m$-pure case we 
%have that $r_o=1$ iff $\alpha \ge \alpha_m$. 
%Hence, \eqref{eq:FDT-ineq}
%always holds for the choice $\alpha_+$, which increases to one as $\Gs \to \infty$ 
%or $\b \to \infty$. On the other hand, for $\alpha_-$ one must have that
%$4 \b^2 g(r_o \alpha_-^2) < 1$ which fails for $\b \ge \b'(y)$ 
%(and some $\b' (y) - \b_+(y) \downarrow 0$ as $y \uparrow 1$).

Turning to the mixed case, first note that $\bv_\star'(\qs^2) = \Gs$ (see 
% \eqref{def:vt}, or 
\eqref{eq:Gtilde} at
%i=1 and 
$\bx_t = \bn$).
Upon plugging the generic solution $\alpha= \qs$ of \eqref{eq:FDT-ident3} 
into \eqref{eq:FDT-ident1}, it follows that no-aging with $\alpha \ne 0$ requires 
% \qs and  \Gs so  
the \abbr{rhs} of \eqref{eq:pure-no-aging} 
to hold for $y \le 1$ and 
$\qs^2 = D_\star(\b/y)$ of \eqref{dfn:D-star}.
Taken together, we see that \eqref{eq:pure-no-aging} must hold at $\alpha=\qs$, yielding the
relation 
\begin{equation}\label{eq:G-alpha}
\Gs= \Gs(\alpha,\b) := 2 \b \nu''(\alpha^2) (1-\alpha^2) + \frac{1}{2 \b (1-\alpha^2)} \,,
\end{equation}
where the restriction to  $y < 1$ amounts to the inequality \eqref{eq:tap-cond}.
\newline
It is easy to check that having such $(R_{\fdt} (\tau),C_{\fdt} (\tau),\mu)$ as
in Proposition \ref{prop:fdt}, except for possibly $\gamma \ne 1/2$, and with 
the no-aging condition $D_\infty=\alpha^2$ in place, implies the convergence of 
$H(s)$ of \eqref{eqH} as $s \to \infty$, to the limiting (macroscopic) energy
\begin{equation}\label{eq:pre-H-inf}
H(\infty) := \bv_\star(\alpha \qs) + 2 \b \theta(\alpha^2) \,, \qquad \mathrm{where} \qquad 
\theta(q) :=  \nu(1) - \nu(q) - \nu'(q) (1-q) 
\end{equation}
(and to arrive at \eqref{eq:pre-H-inf} we also use the \abbr{rhs} of \eqref{eq:FDT-ident3}).  

For $\bsigma\in \alpha \SN$, similarly to the proof of Lemma \ref{kval}, one can check that
conditionally on $\cpt(\Es,\Gs,\bsigma)$
the Gaussian field $H_\BJ(\bx)$ has expectation $-N\Es$ and variance $N\theta(\alpha^2)$
at any $\bx$ in the sub-sphere ${\Bbb S}_\bsigma(\alpha^2)$ of \eqref{eq:subsphere}.
% If the Gibbs measure $\mu^N_{2\b,\BJ}$ under which \eqref{diffusion} is invariant 
Using this conditional field, one has the spherical model \abbr{wrt} the uniform measure 
$\mu_\bsigma^{\alpha^2}(\bx)$ on ${\Bbb S}_\bsigma(\alpha^2)$, its Gibbs measure
$\mu_{\beta_0,\BJ}^{\bsigma}$  of density $(Z_{\b_0,\BJ}^{\bsigma})^{-1} e^{-\beta_0 H_\BJ(\bx)}$ 
and the corresponding free energy $F_{\beta_0}(\bsigma)$ to which $N^{-1}\log Z_{\b_0,\BJ}^{\bsigma}$ 
converges.
%Viewing $H_\BJ(\bx)$ with the conditional disorder as a spherical model on ${\Bbb S}_\bsigma(\qs^2)$, one can define the corresponding free energy  $F_{N,\beta_0}(\bsigma):=N^{-1}\log \int_{{\Bbb S}_\bsigma(\qs^2)}e^{-\beta_0 H_\BJ(\bx)}d\mu_\bsigma^{\qs^2}(\bx)$ at inverse-temperature $\beta_0$ (see \eqref{eq:subsphere}) and Gibbs measure $\mu_{\beta_0,\BJ}^{\bsigma}$ with density proportional to $e^{-\beta_0 H_\BJ(\bx)}$.
If for any  $\beta_0$ near $2\beta$ this model is replica symmetric, then 
$F_{\beta_0}(\bsigma)= \beta_0 \Es +\frac{\beta_0^2}{2} \theta(\alpha^2)$ 
and
% (differentiating in $\beta_0$),
most of the mass of $\mu_{2\beta,\BJ}^{\bsigma}$ is indeed typically carried at 
the energy $\Es+2\beta \theta(\alpha^2)$.  In the mixed case we know that 
$\alpha=\qs$ hence the state $\bx_t$ is supported for $t \gg 1$ on that same sub-sphere 
${\Bbb S}_\bsigma(\alpha \qs)=
{\Bbb S}_\bsigma(\alpha^2)$ (see Remark \ref{rem:alpha}). Further, in the $m$-pure case 
$\cpt(\Es,\Gs,\bsigma)=\cpt(r^m \Es,r^{m-2} \Gs, r \bsigma)$ for any $r>0$, with  
% (since $H_{\BJ}(r \bx)=r^m H_{\BJ}(\bx)$), 
$r=\alpha/\qs$ eliminating the effect of $\qs$ and allowing us to 
take again \abbr{wlog} $\|\bsigma\|=\alpha \sqrt{N}=\qs \sqrt N$. Recall that
$\bv_\star(\alpha \qs)=\bv_\star(\qs^2) = \Es$
(see \eqref{eq:Gtilde} at $\bx_t=\bn$),  so the energy $\alpha^m \tEs + 2 \b \theta(\alpha^2)$
carrying most 
of the mass of the spherical model $\mu_{2\beta,\BJ}^{\bsigma}$ is for such $\bsigma$ 
precisely the limit $H(\infty)$ of \eqref{eq:pre-H-inf}. Further, re-writing the conditional Gaussian
field of $\mu^\bsigma_{2 \b,\BJ}$ as a polynomial in the re-centered coordinates 
$\bx - \bsigma$ gives a new spherical mixed model, see \cite[Lemma 7.1]{BSZ}, 
whose $2$-spin interaction part is in the replica symmetric regime precisely when
\eqref{eq:tap-cond} holds (c.f. \cite[(7.6) and (8.8)]{BSZ}). Finally,
% it is easy to check that
in the $m$-pure case, the relation
\eqref{eq:G-alpha} determines from the energy $\tEs$ a limiting sub-sphere height $\alpha$ 
which is a local maximum of the free energy $F_{2 \b}(\bsigma)$ plus the entropy
$\frac{1}{2} \log (1-\alpha^2)$.

\subsection{Limiting dynamics for spherical SK-model}
While of less interest from the physics  point of view, for the spherical {\bf SK}-model, 
namely $m=2$, one can solve \eqref{eqRs}-\eqref{eqZs} and thereby confirm our 
predictions. Specifically, for $\nu(x)=\frac{x^2}{8}$ (hence $\psi(x) = 2 \nu'(x)=\frac{x}{2}$,
$\nu''(x)=\frac{1}{4}$, $\bv'_\star(x) = \frac{\Gs}{\qs^2} x$), starting at $R(s,s)=1$, 
$\bar C(s,s)= 1 - q(s)^2/\qs^2$ and $q(0)=q_o$ these equations are for 
$s>t$,
\begin{align}\label{eq:SSSK-Rq}
&\partial_s R(s,t) = -  \mu(s) R(s,t) + \frac{\b^2}{4} \int_t^s R(s,u) R(u,t) du \,,  \qquad  
q'(s) =  - (\mu(s)  - \b \Gs) q(s) \,,\\
&\label{eq:SSSK-CZ}
\begin{aligned} 
\partial_s \bar C(s,t) &= -  \mu(s) \bar C(s,t) + \frac{\b^2}{4} \Big[ \int_0^s R(s,u) \bar C(u,t) du 
+  \int_0^t R(t,u) \bar C(u,s) du \Big] \,, 
 \\
\mu(s)  &= \frac{1}{2} + \frac{\b^2}{2} \int_0^s R(s,u) \bar C(s,u) du + \b \Gs \frac{q^2(s)}{\qs^2}  \,.
\end{aligned}
\end{align}
Further, in this case we get from \eqref{eqH} and \eqref{eq:SSSK-CZ} that 
\begin{equation}\label{eq:SSK-H}
H(s)= \frac{1}{2\b} \Big[ \frac{\b^2}{2} \int_0^s R(s,u) \bar C(s,u) du +  \b \Gs \frac{q^2(s)}{\qs^2} \Big] 
= \frac{\mu(s)}{2\b} - \frac{1}{4\b} \,.
\end{equation}
Setting $\Lambda(s) := \qs e^{\int_0^s (\mu(u) - \b \Gs) d u}$
the solution of \eqref{eq:SSSK-Rq} must be 
\[
q(s) = \frac{\qs q_o}{\Lambda(s)}\,, 
\qquad \qquad R(s,t)=\frac{\Lambda(t)}{\Lambda(s)} \CL_{\Gs} (s-t) \,,
\]
where $\CL_{G} (\theta) =  e^{-\b G \theta} \CL(\theta)$ for  
$\CL(\theta) := \frac{2}{\pi} \int_{-1}^1 e^{\b \theta x} \sqrt{1-x^2} \, dx$
(see  \cite[(4.9)]{DGM}).
Substituting this in \eqref{eq:SSSK-CZ}, the symmetric  
%\[
%\bar C(s,t) := \frac{\sM(s,t)}{\sqrt{q_o^2+\sM(s)}\sqrt{q_o^2+\sM(t)}} \,, 
%\]
$\sM(s,t) :=  \bar C(s,t) \Lambda(s) \Lambda(t)$, is the positive, 
unique solution of 
% the system  
\begin{equation}\label{sol:SSSK}
\begin{aligned}
\partial_s \sM(s,t) &= - \b \Gs \sM(s,t) + \frac{\b^2}{4} \Big[ \int_0^s \CL_{\Gs} (s-u) \sM (u,t) du 
+  \int_0^t \CL_{\Gs} (t-u) \sM(u,s) du \Big] \,, \quad \forall s>t \,, \\\
\sM'(t) &= q_o^2  + (1-2\b \Gs) \sM(t) + \b^2 \int_0^t \CL_{\Gs} (t-u) \sM(t,u) du \,, \quad \qquad \qquad 
\sM(t,t) = \sM(t) \,,
\end{aligned}
\end{equation}
starting at $\sM(0) = \qs^2 - q_o^2$, and with $\Lambda(t) = \sqrt{q_o^2 + \sM(t)}$. 
By the super-position principle for this linear system 
% \eqref{sol:SSSK}, we deduce that 
\begin{equation}\label{eq:super-pos}
\sM(s,t) = (\qs^2 - q_o^2) e^{-\b \Gs (s+t)} \, \sM_{\rm ck}(s,t) + q_o^2 \, \sM_{\Gs} (s,t) \,,
\end{equation}
where $\sM_{\rm ck}$ denotes the \abbr{ckchs}-type solution of 
\eqref{sol:SSSK} with $q_o=\Gs=0$, starting at $\sM_{\rm ck}(0)=1$, while
$\sM_{\Gs}$ is the solution of \eqref{sol:SSSK} for $q_o^2=1$ and  
% initial condition 
$\sM_{\Gs}(0)=0$. The spherical {\bf SK}-model is somewhat degenerate, as in
view of \eqref{eq:super-pos}, having
$q(t) \to \alpha \ne 0$, or equivalently a finite limit for $\sM(t)$ as $t \to \infty$,
does not depend on the value of $0<|q_o|<\qs$ and when such non-zero limit
exists, the same invariance to $q_o$ applies to the issue of no-aging
(i.e. having $\sM(t+\tau,t) \to 0$ as $t \to \infty$ followed by $\tau \to \infty$). 
The analog of $\sM_{\rm ck}$
for \eqref{eqR}-\eqref{eqZ} at $q(\cdot) \equiv 0$ and linear $f'(x)=c x$,
is studied in \cite[Section 3]{2001}. A similar 
% to \cite[Lemma 3.3]{2001}, 
but finer analysis
shows that $\sM_{\rm ck}(s,t)$ grows as $s,t \to \infty$,
up to some polynomial pre-factors, at the exponential rate $\mus (s+t)$, 
where $\mus=\b$ for $\b>1$ and otherwise $\mus=(1+\b^2)/2$.
Focusing on the case of a stable energy well around the critical point $\bsigma$, namely 
$\Gs>1$ as in Theorem \ref{thm-uncond}, we have that 
$\b \Gs > \mus$ 
% = \frac{\b}{2} (y + 1/y) > \frac{\b}{2} (\b + 1/b) when \b \in [y,1] and > \b otherwise. 
iff $\b>y$, with $y \in (0,1)$ as in the \abbr{rhs} 
of \eqref{eq:pure-no-aging}. We thus have the dichotomy predicted earlier, 
that $q_o=0$ requires $\alpha=0$, with the onset of aging at $\b_c$ determined
by the asymptotic of $\sM_{\rm ck}(s,t)/ \sqrt{\sM_{\rm ck} (s) \sM_{\rm ck} (t)}$,
whereas for any $q_o \ne 0$, $\Gs > 1$ and $\b > y$ we have a localized state, 
with $\alpha^{-2} - 1$ given by the finite
limit of $\sM_{\Gs}(t)$, and $C_{\fdt}(\tau)$ being the limit as $t \to \infty$ of 
$(1+\sM_{\Gs}(t+\tau,t))/(1+\sM_{\Gs}(t))$. We get these limits by replacing $\sM_{\Gs}(s,t)$
with the stationary solution $\sM^{(\rm st)}_{\Gs}(s,t)$ of \eqref{sol:SSSK}
when all the integrals start at $-\infty$ (instead of at zero). By translation invariance,
% and symmetry 
$\sM^{(\rm st)}_{\Gs}(s,t)$
must be of the form $\Gamma(s-t)$ for symmetric  $\Gamma(\cdot)$ such that
\begin{equation}\label{sol-st:SSSK}
\begin{aligned}
\Gamma'(\tau) &=  - \b \Gs \Gamma (\tau) + \frac{\b^2}{4} \big[ \int_0^\infty 
\CL_{\Gs} (u) \Gamma (u+\tau) du +  \int_0^\infty \CL_{\Gs} (u) \Gamma (u-\tau) du \big]  \,, \\
0 &= 1  + (1-2\b \Gs) \Gamma(0) + \b^2 \int_0^\infty \CL_{\Gs} (u) \Gamma(u) du \,.
\end{aligned}
\end{equation}
Next, recall that $y \in (0,1)$ on the \abbr{rhs} of \eqref{eq:pure-no-aging} satisfies
\begin{equation}\label{dfn:y-alt}
1 - 2 \Gs y + y^2 = 0 \,, \qquad \mbox{that is} \qquad y =  \Gs - \sqrt{G^2_\star -1} 
\end{equation}
%  $\CL_{\Gs}(\theta) \to 0$ exponentially in $\theta \to \infty$, 
and hence (see  \cite[Page 16]{2001}), also   
\begin{equation}\label{dfn:int-CL}
y = \frac{1}{2\pi} \int_{-2}^2 \frac{\sqrt{4-x^2}}{(2\Gs) -x} \, dx = 
\frac{\b}{2} \int_0^\infty \CL_{\Gs}(\theta) d\theta \,.
\end{equation}
Further, utilizing \eqref{dfn:y-alt}, \eqref{dfn:int-CL}, with $\CL_{\Gs}(0)=1$ and having 
\begin{equation}\label{eq:ODE-CL}
\CL_{G}' (\tau) = - \b G \CL_{G}(\tau) + \frac{\b^2}{4} \int_0^\infty \CL_G(u) \CL_{G} (\tau-u) du 
\end{equation}
(compare with the \abbr{lhs} of \eqref{eq:SSSK-Rq}), one can verify that 
\[
\Gamma(\tau) = \frac{1}{c} \int_\tau^\infty \CL_{\Gs}(u) du \,, 
\qquad c := 2 - \int_0^\infty \CL_{\Gs}(u) du = 2\Big(1-\frac{y}{\b}\Big) \,,
\]
satisfies \eqref{sol-st:SSSK}. Consequently, in this case
\begin{equation}
\alpha^{-2} - 1 = \Gamma(0) = \frac{2}{c} - 1 \,, \qquad \mbox{that is} \qquad \alpha^2 = \frac{c}{2} = 1 - \frac{y}{\b} 
\end{equation}
in agreement with our prediction 
%$1-\alpha^2 = y/\b$ 
on the \abbr{lhs} of \eqref{eq:pure-no-aging}, whereas
\begin{equation}
C_{\fdt}(\tau) = \frac{1+\Gamma(\tau)}{1+\Gamma(0)} = 1 - \frac{1}{2} \int_0^\tau \CL_{\Gs}(u) du \,,
\end{equation}
is precisely $D(\tau)$ of \eqref{FDTDb} for 
$\phi(x)=\b \Gs  + \frac{\b^2}{2} (x-1)$, and 
%as predicted 
converges to  $D_\infty = \alpha^2$ (i.e. with no-aging). In addition, having here $\mu(s) \to \Gs \b$ we get
from \eqref{eq:SSK-H} that $H(s) \to H(\infty) = \frac{\Gs}{2} - \frac{1}{4\b}$ (matching  
the expression $H(\infty)=\alpha^2 \frac{\Gs}{2} + \frac{\b}{4} (1-\alpha^2)^2$ of \eqref{eq:pre-H-inf}).

%%%%%%%%%%%%%%%%%%%%%%%%%%%%%%%%%%%%%%%%%%%%%%%%%%%%

\section{\label{sec:strongsolutions} Proof of Theorem \ref{thm-macro} at $\bsigma=\bn$, $\beta=1$}

%%%%%%%%%%%%%%%%%%%%%%%%%%%%%%%%%%%%%%%%%%%%%%%%%%%%
In view of Remarks \ref{rem:beta-1}--\ref{rem:rot-sym}, \abbr{wlog} we fix 
throughout this section $\beta=1$ and $\bsigma=\bn=(\sqrt{N} \qs,0,\ldots,0)$. 
Fixing also $T$ and letting $d(N,m)$ be the length of the coupling 
vector $\bJ$, following \cite{BDG2} 
we equip the product space
$\Ea_N = \R^N  \times \R^{d(N,m)}\times \C([0,T],\R^N) $ with the norm 
\begin{equation}\label{eq:norm}
\|(\Bx_0, \BJ, \BB)\|^2= \sum_{i=1}^N(x_0^i )^2 
+ \sum_{p=2}^m \;\; \sum_{1 \leq i_1 \leq \ldots \leq i_{p} \leq N} 
(N^{\frac{p-1}{2}}J_{i_1\cdots i_{p}})^2
+ \sup_{0 \leq t \leq T}\sum_{i=1}^N(B_t^i)^2 
\end{equation}
and denote by $\wP = \mu_{\bn}^{q_o} \otimes \gamma_N^{(E,G,\qs)} \otimes P_N$
the product probability measure of $(\Bx_0,\BJ,\BB)$ 
on $\Ea_N$, where $\Bx_0$ follows the law
$\mu_{\bn}^{q_o}$ (defined above \eqref{eq:subsphere}), 
$\gamma_N^{(E,G,\qs)}$ denotes the (Gaussian) distribution of $\BJ$ 
conditional upon  $\cpt(E,G,\bn)$\footnote{which in the pure case is  
restricted to $G=m E/ \qs^2$; see Footnote \ref{ft:purecdn}} 
and $P_N$ stands for the distribution of $N$-dimensional Brownian motion.
Next, for
$C_N(s,t)$ of \eqref{empiricalcovariance} and 
$q_N(s) = q_N^{\bn}(s)$ of \eqref{eq:bnq}, we let
\begin{equation}
\bar{C}_N(s,t) := C_N(s,t) - \qs^{-2} q_N(s) q_N(t) = 
\frac{1}{N} \sum_{i=2}^N x^i_s x^i_t \,, 
\qquad \qquad 
q_N(s) = \frac{\qs x_s^1}{\sqrt{N}}  \,.
\label{eq:barC}
\end{equation}
Setting $G^i(\Bx):= - \partial_{x^i} H_{\BJ} (\Bx)$, the derivation of Theorem
 \ref{thm-macro} builds on the proof of \cite[Thm. 1.2]{BDG2}, which utilizes
beyond $C_N$ and $\chi_N$ of \eqref{empiricalcovariance}-\eqref{integrated}, two auxiliary functions 
$A_N$ and $F_N$ (see \cite[(1.15)]{BDG2}). Having here a
distinguished first coordinate, those four functions of \cite{BDG2}  are replaced by
$\Uao_N := \{C_N,\chi_N ,\bar C_N,\bar \chi_N, \bar A_N, \bar F_N \}$, for $\bar C_N$ of \eqref{eq:barC} and
\begin{align}\label{eq:hfdef}
\bar{\chi}_N(s,t) := 
% \chi_N(s,t) -\frac{q_N(s) q_N(t)}{\qs^2} = 
\frac{1}{N} \sum_{i=2}^N x^i_s B^i_t \,, \qquad 
\bar A_N(s,t):=\oneN \sum_{i=2}^N G^i(\Bx_s) x^i_t\,,\qquad
\bar F_N(s,t):=\oneN \sum_{i=2}^N G^i(\Bx_s) B^i_t \,.
\end{align}
Beyond $\Uao_N$, our derivation clearly has to involve $q_N$ of \eqref{eq:barC} and in addition,
the pre-limit of $\wH$ 
from \eqref{eqH}, and the (centered) contribution of the first coordinate to $A_N$, given respectively by
\begin{equation}\label{eq:wHNdef}
\wH_N(s) := - \oneN [H_{\bJ}(\bx_s)-\bar H(\bx_s) ] \,,
\qquad \qquad \qquad 
\wQ_N(s)  :=\frac{\qs}{\sqrt{N}} (G^1(\Bx_s)-\bar G^1(\Bx_s)) \,,
\end{equation}
where $\bar{G} (\Bx) := - \nabla \bar H (\Bx)$ and
$\bar{H}(\Bx) := \E [ H_{\BJ} (\Bx)\,|\,\cpt(E,G,\bn) ]$. 
Analogously to $D_N$ and $E_N$ \cite[(1.16)]{BDG2},  it is convenient to define
in addition to $\wQ_N$, 
$\bar A_N$ and $\bar F_N$, also their contribution to the incremental changes in
$q_N$, $\bar C_N$ and $\bar \chi_N$, which for $K_N(t):=C_N(t,t)$ are given respectively by
\begin{align}\label{eq:qvdef}
Q_N(s) & :=-f'(K_N(s))) q_N(s) 
+ \qs^2 \bv'(q_N(s)) 
+  \wQ_N(s) 
\,,  \\
\bar D_N(s,t)& :=-
f'(K_N(t)) \bar{C}_N(t,s) + \bar A_N(t,s) \,, \qquad
\bar E_N(s,t):=-f'(K_N(s))) \bar \chi_N(s,t) + \bar F_N(s,t) \,.
\label{eq:dedef}
\end{align}
We shall establish limit equations for
$\Ua_N = \Uao_N \cup \{ q_N,\wH_N,\wQ_N,Q_N,\bar D_N,\bar E_N,\Upsilon_N,\Phi_N,\Phi^1_N,\Psi_N,\Psi^1_N \}$,
where
\begin{equation}\label{eq:PhiN-val}
\begin{aligned}
\Upsilon_N(s,u) &:= \nu(C_N(s,u)) 
  - \bar C_N(s,u) \frac{\nu'(q_N(s))\nu'(q_N(u))}{\nu'(\qs^2)} \,, 
\\
\Phi_N(s,u) &:= \nu'(C_N(s,u)) - \frac{\nu'(q_N(u))\nu'(q_N(s))}{\nu'(\qs^2)} \,,
\\
\Phi_N^1(s,u) &:= q_N(u) \nu'(C_N(s,u)) - 
\bar C_N(s,u) \frac{\qs^2 \nu'(q_N(u))\nu''(q_N(s))}{\nu'(\qs^2)} \,,
\end{aligned}
\end{equation}
\begin{equation}\label{eq:PsiN-val}
\begin{aligned}
\Psi_N(s,u) := & 
\nu''(C_N(s,u)) \big( \bar D_N(s,u)+
\frac{q_N(s)}{\qs^2} Q_N(u) \big)  
- \frac{\nu'(q_N(s))\nu''(q_N(u))}{\nu'(\qs^2)} 
Q_N(u)  \,, \\
\Psi^1_N(s,u) :=&
\bar D_N(s,u)
\big[ \nu''(C_N(s,u)) q_N(u) -  \frac{\qs^2 \nu'(q_N(u))\nu''(q_N(s))}{\nu'(\qs^2)} \big]  \\
& + Q_N(u) 
[\nu'(C_N(s,u)) + \frac{q_N(s) q_N(u)}{\qs^2} \nu''(C_N(s,u)) 
- \qs^2  
\bar C_N(s,u) \frac{\nu''(q_N(s))\nu''(q_N(u))}{\nu'(\qs^2)}  \big] \,.
\end{aligned}
\end{equation}
The functions $\Upsilon_N$, $\Phi_N$, $\Phi_N^1$, $\Psi_N$  and $\Psi^1_N$, which
arise out of conditional covariances (see \eqref{eq:kval}, \eqref{eq:varhpi-val} and \eqref{eq:psii-val}), 
are used in approximating certain conditional expectations 
of $\wH_N$, $V_N$ and $\bar A_N$. 

For convenience we refer hereafter to all elements of $\Ua_N$ 
as functions on $[0,T]^2$, with the obvious modification in force  for $q_N,\wH_N,\wQ_N$ and $Q_N$.
Adopting this convention, our  proof of Theorem \ref{thm-macro} relies on  
pre-compactness and self-averaging of
functions from $\Ua_N$. Specifically, in Section \ref{sec:self-average} we establish 
the following analog of \cite[Prop. 2.3 and 2.4]{BDG2}.
\begin{prop}\label{tight-self}
For any $U_N \in \Ua_N$, fixed finite $T$ and $k$,
\begin{equation}\label{eq:u-mombd}
\sup_{|E|,|G| \le \alpha} \sup_N 
\wE \big[ \sup_{s,t \leq T} |U_N(s,t)|^k 
%\,|\, \cpt(E,G,\bn) 
\big] < \infty \,,
\end{equation}
with the sequence of 
continuous functions $U_N(s,t)$ being pre-compact 
almost surely and in expectation, \abbr{wrt} the uniform topology on $[0,T]^2$. 
Moreover, for  any
$U_N \in \Ua_N$, $T<\infty$ and $\rho > 0$,
\begin{equation}\label{eq:asself}
\sum_N \sup_{|E|,|G| \le \alpha} 
\wP [ \sup_{s,t \le T} |U_N(s,t)- \wE \, U_N (s,t))| \geq \rho] < \infty 
\end{equation}
and hence by \eqref{eq:u-mombd}, also 
\begin{equation}\label{eq:l2self}
 \lim_{N \rightarrow \infty} \sup_{|E|,|G| \le \alpha}
\sup_{s,t \leq T} \wE \Big[ |U_N(s,t)-\wE \, U_N (s,t)|^{2} \Big]=0\,.
\end{equation}
\end{prop}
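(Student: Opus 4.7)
The plan is to adapt the pre-compactness/self-averaging scheme of \cite[Prop.~2.3--2.4]{BDG2} in three stages: (i) establish uniform-in-$N$ a~priori bounds on the process; (ii) derive moment and continuity bounds for each $U_N\in\Ua_N$; (iii) prove exponential concentration via Gaussian isoperimetry on the norm \eqref{eq:norm}. Throughout, the reduction in Remark~\ref{rem:cond-H} is crucial: conditional on $\cpt(E,G,\bn)$ the potential decomposes as $H_{\BJ}(\bx) = H_{\BJ_o}(\bx) - N\bv(q_N^{\bn})$, where $\BJ_o$ is a centered Gaussian vector whose covariance \eqref{eq:mod-nu} differs from the unconditional one by a rank-two correction of size $O(1)$ in the appropriate norm. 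Because the only $(E,G)$-dependence resides in the bounded, smooth drift $\bv$ (which is uniform in $|E|,|G|\le\alpha$), all estimates reduce to ones for a Gaussian field whose covariance is close to that of \cite{BDG2}.

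First I would establish uniform control on $K_N(t)=\|\bx_t\|^2/N$. Applying It\^o to $K_N$, the confining drift $-f'(K_N)\bx_t$ together with the growth assumption \eqref{eq:fcond} and the polynomial bound \eqref{eq:fcondu} yields $\wE[\sup_{t\le T} K_N(t)^p]\le C_{p,T}$ uniformly in $N$, $|E|,|G|\le\alpha$. By Gaussian moment estimates for the Hamiltonian and its gradients on the Euclidean ball of radius $\sqrt N M$, this gives $\wE[\sup_{t\le T}|\wH_N(t)|^p+|G^i(\bx_t)|^p/N^{p/2}\cdots]\le C_{p,T}$. These bounds, combined with the polynomial character of $\nu$ and of $\bv$, give \eqref{eq:u-mombd} for every element of $\Ua_N$: the auxiliary fields $\Upsilon_N,\Phi_N,\Phi_N^1,\Psi_N,\Psi_N^1$ in \eqref{eq:PhiN-val}--\eqref{eq:PsiN-val} are smooth functions of $(C_N,\bar C_N,q_N,Q_N,\bar D_N)$ on a compact range.

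For pre-compactness I would check the Kolmogorov criterion on $[0,T]^2$. The semimartingale decomposition of $\bx_t$ gives $\wE[\|\bx_s-\bx_{s'}\|^{2p}/N^p]\le C|s-s'|^p$, which combined with Step~1 yields H\"older-type bounds
\[
\wE\bigl[|U_N(s,t)-U_N(s',t')|^{2p}\bigr]\le C_p\bigl(|s-s'|+|t-t'|\bigr)^{\alpha p}
\]
for some $\alpha>0$, uniformly in $N$ and $|E|,|G|\le\alpha$. Kolmogorov's continuity criterion then gives a.s.\ and in-expectation pre-compactness of the family $\{U_N\}$ in $C([0,T]^2)$ (resp.\ $C([0,T])$).

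The main obstacle is the concentration bound \eqref{eq:asself}. Following \cite[Prop.~2.4]{BDG2}, I would view each $U_N$ as a functional $\Phi_N:\Ea_N\to\R$ of $(\bx_0,\BJ_o,\BB)$ and show that, restricted to the high-probability event $\Omega_M=\{\sup_{t\le T}K_N(t)\le M\}$, its Lipschitz constant \abbr{wrt} the norm \eqref{eq:norm} is of order $N^{-1/2}$. This proceeds by (a) a Gr\"onwall-type pathwise stability estimate for \eqref{diffusion}, showing that two solutions $\bx,\bx'$ driven by $(\bx_0,\BJ,\BB)$ and $(\bx_0',\BJ',\BB')$ satisfy $\sup_{t\le T}\|\bx_t-\bx'_t\|\le e^{CT}\|(\bx_0-\bx'_0,\BJ-\BJ',\BB-\BB')\|$ on $\Omega_M$; and (b) propagating this into each $U_N=N^{-1}\sum_i(\cdots)$, where the $N^{-1}$ prefactor against an $\ell^2$-type perturbation gives the desired $N^{-1/2}$ Lipschitz scaling. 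Borell--TIS applied to the Gaussian marginal $\gamma_N^{(E,G,\qs)}\otimes P_N$ together with spherical concentration for $\mu_{\bn}^{q_o}$ (as in \cite[Hypothesis 1.1]{BDG2}, here automatic because $\bx_0$ lies on a sub-sphere) then yields a sub-Gaussian tail in $N\rho^2$, summable in $N$ and uniform in $|E|,|G|\le\alpha$. The $L^2$ statement \eqref{eq:l2self} follows from \eqref{eq:asself} and \eqref{eq:u-mombd} by uniform integrability. The principal difficulty is the stability estimate (a) in the presence of the conditional disorder: one must verify that the correction between $\BJ$ and $\BJ_o$ (a rank-two modification of the covariance) does not inflate the Lipschitz constant, which requires bounding the perturbing drift $\nabla[\bv(q_N^\bn)]$ uniformly on $\Omega_M$ and then absorbing it into Gr\"onwall.
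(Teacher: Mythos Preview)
Your proposal is correct and follows essentially the same three-stage approach as the paper: a~priori control on $K_N$ and the disorder norm $\|\BJ\|_\infty^N$ (the paper obtains the latter under the conditional law via Anderson's inequality and a deterministic bound \eqref{eq:Lip-barJ} on the mean shift $\bar\BJ_{E,G}$), pathwise Lipschitz estimates on the good set $\CL_{N,M}$ of \eqref{dfn:LMN}, and Gaussian-plus-spherical concentration (the paper handles the degenerate conditional Gaussian $\gamma_N^{(E,G,\qs)}$ by writing it as an orthogonal projection of the unconditional law). The one refinement you should be aware of is that the paper works with a \emph{pathwise} exponential tail \eqref{eq:conc} for $\sup_t K_N(t)$ in terms of $\|\BJ\|_\infty^N$, rather than moment bounds from It\^o's formula, since this is what makes the restriction to $\CL_{N,M}$ cost only $e^{-cN}$ and thereby permits the use of \cite[Lemma~2.5]{BDG2} to pass from Lipschitz-on-$\CL_{N,M}$ to genuine concentration.
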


In view of \eqref{eq:u-mombd} and \eqref{eq:l2self}
we thus deduce the following, 
exactly as in \cite[proof of Corollary 2.8]{BDG2}.
\begin{cor}\label{cor-self}
Suppose $\Psi: \R^\ell \to \R$ is locally Lipschitz with
$|\Psi(z)| \leq M' \|z \|_k^k$ for some $M',\ell,k<\infty$,
and $\BZ_N \in \R^\ell$
is a random vector, where 
for $j=1,\ldots,\ell$, the $j$-th 
coordinate of $\BZ_N$ is of the form $U_N(s_j,t_j)$,
for some $U_N \in \Ua_N$ and some $(s_j,t_j) \in [0,T]^2$. Then,
\begin{equation}\label{eq:self-av-Psi}
\lim_{N \to \infty} 
\sup_{|E|,|G| \le \alpha} \sup_{s_j,t_j} |\wE \Psi(\BZ_N) - \Psi(\wE \BZ_N)| = 0 \;.
\end{equation}
\end{cor}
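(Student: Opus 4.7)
The plan is to exploit Proposition \ref{tight-self} via a standard truncation. Bound $|\wE \Psi(\BZ_N) - \Psi(\wE \BZ_N)| \le \wE|\Psi(\BZ_N) - \Psi(\wE \BZ_N)|$ by Jensen, then split the expectation over the event $A_M := \{\max_{1\le j \le \ell} |U_N(s_j,t_j)| \le M\}$ and its complement, for a truncation radius $M$ that will eventually be sent to infinity.

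On $A_M$ both $\BZ_N$ and $\wE \BZ_N$ lie in a compact set of $\R^\ell$ (the containment of $\wE \BZ_N$ in a ball of radius independent of $N$, $E$, $G$, $(s_j,t_j)$ is automatic for large $M$ from \eqref{eq:u-mombd} with $k=1$ and Jensen). Local Lipschitz continuity of $\Psi$ on this set yields a constant $L_M$ with
\[
\wE\bigl[\mathbf{1}_{A_M}\,|\Psi(\BZ_N) - \Psi(\wE \BZ_N)|\bigr] \le L_M \sum_{j=1}^\ell \bigl(\wE[(U_N(s_j,t_j) - \wE U_N(s_j,t_j))^2]\bigr)^{1/2},
\]
and by \eqref{eq:l2self} the right-hand side tends to $0$ as $N \to \infty$, uniformly in $|E|,|G|\le \alpha$ and in $s_j,t_j \in [0,T]$.

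On $A_M^c$ I would use the growth bound $|\Psi(z)| \le M' \|z\|_k^k$ together with the triangle and Cauchy--Schwarz inequalities to obtain
\[
\wE\bigl[\mathbf{1}_{A_M^c}\bigl(|\Psi(\BZ_N)| + |\Psi(\wE \BZ_N)|\bigr)\bigr] \le C\, \wP(A_M^c)^{1/2}\, \max_j \bigl(\wE[|U_N(s_j,t_j)|^{2k}]\bigr)^{1/2} + C\,\wP(A_M^c).
\]
The moment factor is bounded by \eqref{eq:u-mombd}, while a Markov estimate using \eqref{eq:u-mombd} at any exponent $k'' > 0$ gives $\wP(A_M^c) \le C' M^{-k''}$ uniformly in $N$, $E$, $G$, and the time points. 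Sending first $N \to \infty$ and then $M \to \infty$ yields the claim.

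The argument is genuinely routine once Proposition \ref{tight-self} is in place; the only bookkeeping is to carry the uniformity in $(E,G)$ with $|E|,|G|\le\alpha$ and in $(s_j,t_j)\in[0,T]^2$ through every step. Since both \eqref{eq:u-mombd} and \eqref{eq:l2self} supply exactly this uniformity, there is no serious obstacle — the corollary is a direct application of the self-averaging and moment machinery that Proposition \ref{tight-self} delivers.
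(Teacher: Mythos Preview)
Your argument is correct and follows exactly the route the paper invokes: the paper does not spell out a proof but simply says the corollary is deduced from \eqref{eq:u-mombd} and \eqref{eq:l2self} ``exactly as in \cite[proof of Corollary 2.8]{BDG2}'', which is precisely the truncation-plus-Lipschitz-plus-moment argument you give. The uniformity bookkeeping you flag is indeed the only point requiring care, and you handle it correctly.
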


As explained in Remark \ref{rem:cond-H}, the expectation $\wE$ 
amounts to taking $\BJ=\BJ_o$ of the Gaussian law $\gamma_N^{(0,0,\qs)}$, while 
adding to \eqref{diffusion} the drift corresponding to \eqref{eq:Htilde},
provided that we add back to $(G^1,H_{\BJ})$ the relevant constant
shift $(\bar G^1,\bar H)$. For $\beta=1$, $\bsigma =\bn$, this provides an alternative representation via the diffusion
\begin{equation}\label{eq:integ}
x^i_s=x^i_0+B^i_s -\int_0^s f'(K_N(u))x^i_u du +\int_0^s
G^i(\bx_u) du 
+ {\bf 1}_{\{i=1\}} \sqrt{N} \qs \int_0^s \bv' (q_N(u)) du  \,, \quad i=1,\ldots,N,
\end{equation}
starting at $\Bx_0$ of law $\mu_{\bn}^{q_o}$ independently of $\BB$ and $\BJ$, 
while in studying $\Ua_N$ we re-adjust to have $(\bar G^1, \bar H) \equiv (0,0)$ in \eqref{eq:wHNdef}. 
Adopting hereafter the latter setting, it is  more convenient to consider the solution of 
\eqref{eq:integ} under the joint law $\Ps$ of $\bx_0$, $\BB$ and the disorder $\BJ$ conditional only upon
$\cpt_\star :=\big\{ \forall i\geq 2: 
\partial_{x^i} H_{\BJ} (\bn)=0  \big\}$ (whose covariance is given by \eqref{eq:mod-nu} at $\bsigma=\bn$).
Indeed, our next proposition, whose proof is deferred to Section \ref{sec:use-CPs}, 
relates $\wP$ to $\Ps$ and further extends the conclusions of Proposition \ref{tight-self} to $\Ps$.

\begin{prop}\label{use-CPs}
Proposition \ref{tight-self} applies
for $\Ps$ instead of 
$\wP$. Further, for $\Psi$ and $\BZ_N$ of Corollary \ref{cor-self},
\begin{equation}\label{eq:to-CPs}
\lim_{N \to \infty} \sup_{|E|,|G| \le \alpha} \sup_{s_j,t_j \le T} 
|\wE \Psi(\BZ_N) - \Est \Psi(\BZ_N)| = 0 \,.
\end{equation}
Setting hereafter for the filtration $\Fa_u=\sigma(\Bx_v : v \in [0,u])$, $U_N \in \Ua_N$ 
and $\tau \in [0,T]$,
\begin{equation}\label{def:Uhat}
U_N(s,t|\tau) := \E_\star[U_N(s,t)|\Fa_\tau] \,, 
\end{equation}
Corollary \ref{cor-self} applies for $\Est$, with coordinates of $\BZ_N$ taken from 
$\Uas_N := \Ua_N \cup \{  U_N(\cdot|\tau), U_N \in \Ua_N, \tau \in [0,T] \}$.
\end{prop}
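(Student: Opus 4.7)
The plan is to exploit the fact that $\wP$ and $\Ps$ differ only by conditioning on finitely many (at most two) additional scalar Gaussian events, and that the resulting perturbation of the dynamics is negligible as $N \to \infty$. Specifically, at $\bsigma=\bn$ the condition $\cpt(0,0,\bn)$ adds to $\cpt_\star$ exactly the two scalar constraints $Y_1 := H_\BJ(\bn) = 0$ and $Y_2 := \partial_{x^1} H_\BJ(\bn) = 0$ (in the pure case these are linearly dependent, so only one constraint is added). Both $Y_1,Y_2$ depend only on the pure self-couplings $\{J_{1\cdots 1}^{(p)}\}_{p=2}^m$, which are independent of all the conditions defining $\cpt_\star$. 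Hence the $\Ps$-orthogonal decomposition $\BJ = \BJ_o + \BJ_\parallel$, with $\BJ_\parallel$ supported on the at most two-dimensional span of $(Y_1, Y_2)$, yields independent components: $\BJ_o \sim \gamma_N^{(0,0,\qs)}$ (which is the law of the disorder under $\wE$) and $\BJ_\parallel$ a centered Gaussian whose covariance is uniformly bounded in $N$. An elementary Gaussian computation (in the spirit of \eqref{eq:mod-nu}) shows that the deterministic-in-$\bx$ shift induced by $\BJ_\parallel$ satisfies
\begin{equation*}
\sup_{\|\bx\|^2 \le c N} \Big\{ \frac{|H_\BJ(\bx) - H_{\BJ_o}(\bx)|}{N} + \frac{\|\nabla H_\BJ(\bx) - \nabla H_{\BJ_o}(\bx)\|}{\sqrt N}\Big\} \le C (1 + \|\BJ_\parallel\|) N^{-1/2}.
\end{equation*}

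With this decomposition in hand, the $\Ps$-dynamics \eqref{eq:integ} coincides with the $\wE$-dynamics \eqref{eq:integ} in which the potential is perturbed by the $O(N^{-1/2})$ shift above. A Gronwall argument (as in \cite[Section 2]{BDG2}), combined with the local Lipschitz property of $f'$ and the moment bounds on $\|\bx_t\|^2/N$ already established under $\wP$, then yields $\sup_{s,t \le T}|U_N(s,t;\BJ) - U_N(s,t;\BJ_o)| \le C(T, \|\BJ_\parallel\|) N^{-1/2}$ for every $U_N \in \Ua_N$, with polynomial dependence on $\|\BJ_\parallel\|$. Integrating this bound against the Gaussian law of $\BJ_\parallel$ (which has bounded moments uniformly in $N$) transfers \eqref{eq:u-mombd} and \eqref{eq:l2self} from $\wE$ to $\Est$, and gives the comparison \eqref{eq:to-CPs} once composed with the locally Lipschitz polynomial-growth function $\Psi$. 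For the exponential concentration \eqref{eq:asself} under $\Ps$, one applies Gaussian concentration to the Lipschitz map $(\BJ, \BB) \mapsto U_N(s,t)$ on the product of the Gaussian space governing $\BJ$ under $\Ps$ (supported on the $\cpt_\star$-affine subspace) and the Brownian space, with Lipschitz constants matching up to $O(1)$ factors those under $\wP$; the argument mirrors the one in Section~\ref{sec:self-average}. Pre-compactness in $\Ca([0,T]^2)$ transfers via the Kolmogorov criterion applied to the now-established $L^p$ increment bounds.

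The extension of Corollary~\ref{cor-self} to the enlarged class $\Uas_N$ is then straightforward. For $U_N(s,t|\tau) := \Est[U_N(s,t)|\Fa_\tau]$, Jensen's inequality gives the moment bound \eqref{eq:u-mombd} immediately; the tower property gives $\Est U_N(\cdot|\tau) = \Est U_N$, so that \eqref{eq:l2self} follows from the $\Ua_N$-case via $\Est|\Est[U_N|\Fa_\tau] - \Est U_N|^2 \le \Est|U_N - \Est U_N|^2$; and \eqref{eq:asself} transfers because $U_N(\cdot|\tau)$ is measurable \abbr{wrt} a sub-$\sigma$-algebra of the full product Gaussian/Brownian space on which the Lipschitz concentration used above operates (and conditional expectation is a contraction there). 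Pre-compactness follows again from the Kolmogorov criterion. \emph{The main obstacle} is the quantitative propagation step: establishing the $O(N^{-1/2})$ effect of the perturbation $H_\BJ - H_{\BJ_o}$ on each coordinate of $\Ua_N$ — in particular on the nonlinear functionals $\Upsilon_N, \Phi_N^1, \Psi_N, \Psi_N^1$ and on the force-type terms $\bar A_N, \bar F_N$ that involve $\nabla H_\BJ$ — with constants polynomial in $\|\BJ_\parallel\|$, so that averaging against the Gaussian law of $\BJ_\parallel$ preserves the $o(1)$ scaling uniformly on $[0,T]^2$.
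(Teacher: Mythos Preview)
Your approach is essentially the same as the paper's. Both identify that $\wP$ and $\Ps$ differ only through the extra two-dimensional constraint $\cpt_1=\{H_\BJ(\bn)=\partial_{x^1}H_\BJ(\bn)=0\}$, which involves only the self-couplings $\{J^{(p)}_{1\cdots1}\}$; both use the orthogonal projection onto this subspace and then propagate the $O(N^{-1/2})$ perturbation through the dynamics; and both use the tower property and conditional Jensen for the extension to $\Uas_N$.

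Two points where the paper is more economical. First, for the claim that Proposition~\ref{tight-self} holds under $\Ps$, the paper does not transfer the results from $\wP$; it simply observes that the only two places in the proof of Proposition~\ref{tight-self} where the conditional law of $\BJ$ enters are Anderson's inequality (for \eqref{eq:2712-02}) and the orthogonal-projection representation in Proposition~\ref{Hyp1.1}, and both apply verbatim when one conditions only on $\cpt_\star$. This is shorter than your transfer argument, though your route also works. Second, the ``main obstacle'' you flag --- the uniform $O(N^{-1/2})$ control of $U_N(\cdot;\BJ)-U_N(\cdot;\BJ_o)$ for \emph{every} $U_N\in\Ua_N$, including $\bar A_N,\bar F_N,\Upsilon_N,\Psi_N^1$, etc., with polynomial dependence on the perturbation --- is already fully contained in Lemma~\ref{U are Lipschitz}, specifically \eqref{eq:lippr}, applied on the high-probability set $\CL_{N,M}$. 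The paper simply invokes that lemma (writing $\wE\,V=\Est\,V\circ\widetilde W$ for the projection $\widetilde W$ and using $\|\widetilde W(\bx_0,\BJ,\BB)-(\bx_0,\BJ,\BB)\|\le\|\underline{\widetilde J}\|$), so no new Gronwall work is needed. Your sketch is correct; you are just rediscovering machinery already in place.
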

Our next result, whose proof is deferred to Section \ref{sec:unique}, shows that
the limiting dynamics of \eqref{eqR}--\eqref{eqH} admits at most one solution. 
\begin{prop}\label{uniqueness}
Let $T<\infty$ and $\D_T=\{s,t\in (\R^+)^2 : 0\le t\le s\le T\}$.
There exists at most one solution $(R,C,q,K,H)
\in \Ca_b^1( \D_T)^2 \ts \Ca_b^1([0,T])^3$ to 
\eqref{eqR}--\eqref{eqH} 
at $\beta=1$ with $C(s,t)=C(t,s)$ and boundary conditions
\begin{align}
R(s,s)&\equiv 1\quad \;\; \qquad\forall s\ge 0\label{bcR}\\ 
C(s,s)&=K(s) \qquad\forall s\ge 0\label{bcC}\\
K(0)&=1, \qquad q(0)=q_o \quad \mbox{ known.}\label{bcQ}
\end{align}
\end{prop}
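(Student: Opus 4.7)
\textbf{Plan of proof for Proposition \ref{uniqueness}.}
The plan is a direct Gronwall argument applied to the differences of two putative solutions. Suppose $(R_i,C_i,q_i,K_i,H_i)$, $i=1,2$, both belong to $\Ca_b^1(\D_T)^2\times \Ca_b^1([0,T])^3$ and solve \eqref{eqR}--\eqref{eqH} at $\beta=1$ subject to \eqref{bcR}--\eqref{bcQ}; write $\Delta R:=R_1-R_2$, and similarly $\Delta C,\Delta q,\Delta K,\Delta H$. Since each $(R_i,C_i,q_i,K_i)$ is bounded on its domain by some $M<\infty$, the arguments of $f'$, $\nu'$, $\nu''$, and $\bv'_\star$ lie in a fixed compact set on which $f'$ (locally Lipschitz by assumption) and $\nu'$, $\nu''$, $\bv'_\star$ (all polynomials) admit a common sup bound and a common Lipschitz constant $L<\infty$. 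Note that $K(s)=C(s,s)$ is forced by \eqref{bcC}, so $\Delta K(s)=\Delta C(s,s)$, while \eqref{eqH} expresses $H$ as an explicit algebraic/integral functional of $(R,C,q)$; it therefore suffices to prove $\Delta R\equiv\Delta C\equiv\Delta q\equiv 0$.

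Introduce the nondecreasing quantity
\[
\phi(s):=\sup_{0\le u\le s}\Big\{\sup_{0\le t\le u}|\Delta R(u,t)|+\sup_{0\le t\le u}|\Delta C(u,t)|+|\Delta q(u)|\Big\},
\]
which is bounded on $[0,T]$ and satisfies $\phi(0)=0$ thanks to $R(t,t)\equiv 1$, $C(0,0)=1$ and $q(0)=q_o$ for both solutions. Integrate \eqref{eqR} from $u=t$ to $u=s$, \eqref{eqC} from $u=t$ to $u=s$, and \eqref{eqq} from $u=0$ to $u=s$, to pass to integral form; subtracting the two solutions, each integrand at time $u$ splits into finitely many terms whose difference is bounded, via the uniform sup/Lipschitz bounds above, by $CL\,\phi(u)$. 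Inner integrals over $v\in[0,u]$ contribute at most a factor $T$; in applying this to terms of the form $R(u,v)\nu''(C(u,v))C(v,t)$, one uses the symmetry $C(v,t)=C(t,v)$ to dominate $|\Delta C(v,t)|$ by $\phi(u)$ regardless of whether $v\le t$ or $v\ge t$ (both are $\le u$). This yields, for some constant $C'=C'(M,L,T,\nu,\bv_\star)$ and every $0\le t\le s\le T$,
\[
|\Delta R(s,t)|+|\Delta C(s,t)|+|\Delta q(s)|\;\le\; C'\int_0^s \phi(u)\,du.
\]
Taking the supremum of the left-hand side over $s'\le s$ and $t\le s'$ gives $\phi(s)\le 3C'\int_0^s\phi(u)\,du$ with $\phi(0)=0$, and Gronwall's lemma forces $\phi\equiv 0$ on $[0,T]$. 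Then $\Delta K(s)=\Delta C(s,s)\equiv 0$, and substituting into \eqref{eqH} yields $\Delta H\equiv 0$ as well, proving uniqueness.

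\textbf{Main obstacle.} The substantive difficulty is purely bookkeeping: tracking that every term on the right-hand sides of \eqref{eqR}--\eqref{eqZ} is genuinely dominated by $\phi(u)$ after the inner integration, and handling the coupling between $R(s,\cdot)$, $R(t,\cdot)$, $C(s,\cdot)$, $C(\cdot,t)$, $q(\cdot)$ and $K(s)$ that appears simultaneously in \eqref{eqC}. The symmetry of $C$ and the diagonal identity $K=C|_{\rm diag}$ keep these interactions self-contained within $\phi$, so no fixed-point contraction argument on a short interval is needed—a single Gronwall closes the loop once the Lipschitz/boundedness setup is in place.
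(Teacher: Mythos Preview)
Your approach is essentially correct and is close to the paper's, but there is one genuine gap you have glossed over: the boundary term that appears when you integrate \eqref{eqC}.

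When you integrate $\partial_\theta C(\theta,t)$ from $\theta=t$ to $\theta=s$, the initial value is $C(t,t)=K(t)$, so the difference of the two solutions reads
\[
\Delta C(s,t)=\Delta K(t)+\int_t^s\big[\Delta(\text{integrand})\big](\theta,t)\,d\theta .
\]
You control the integral by $C'\int_t^s\phi(\theta)\,d\theta$ as claimed, but $\Delta K(t)=\Delta C(t,t)$ is itself one of the quantities defining $\phi(t)$. If you simply bound $|\Delta K(t)|\le\phi(t)\le\phi(s)$, then after taking suprema the inequality becomes $\phi(s)\le\phi(s)+C'\int_0^s\phi(u)\,du$, which is vacuous. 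Your remark that ``the diagonal identity $K=C|_{\rm diag}$ keeps these interactions self-contained within $\phi$'' is precisely where the circularity hides; it is not enough to know that $\Delta K$ is dominated by $\phi$---you need an \emph{integral} bound on $\Delta K$ in terms of $\phi$ at \emph{earlier} times.

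The remedy is to also integrate \eqref{eqZ} from $0$ to $t$. Since $K(0)=1$ for both solutions, the boundary term vanishes and every piece of the integrand at time $u\le t$ (involving $K(u)$, $R(u,v)$, $C(u,v)$, $q(u)$, $q(v)$ for $v\le u$) is bounded by $CL\,\phi(u)$, giving
\[
|\Delta K(t)|\le C''\int_0^t\phi(u)\,du .
\]
Substituting this back into the bound for $\Delta C(s,t)$ yields $|\Delta C(s,t)|\le(C'+C'')\int_0^s\phi(u)\,du$, and now the Gronwall loop closes exactly as you wrote. This is also what the paper does (see \eqref{eq:extra-K}--\eqref{ineqK4}): all four equations \eqref{eqR}--\eqref{eqZ} are integrated, and the bound on $\Delta K$ coming from \eqref{eqZ} is used to kill the diagonal boundary term in the $C$-inequality before invoking Gronwall. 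With this correction your argument is valid and, once cleaned up, essentially the same as the paper's proof.
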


Our next proposition, whose proof is deferred to Section \ref{sec:der}, plays here
the role of \cite[Prop. 1.3]{BDG2}.
\begin{prop}\label{prop-macro}
Let $U_N^a :=  \E_\star U_N$. Fixing $T<\infty$, any limit point of the
sequence $\Ua_N^a := \{ U_N^a, U_N \in \Ua_N \}$ with respect to
uniform convergence on $[0,T]^2$, satisfies the integral
equations in $\Ca_b([0,T]^2)$,
\begin{align}
C(s,t) =& \bar C(s,t) + \frac{q(s) q(t)}{\qs^2} \,, \qquad \qquad \qquad \qquad \qquad \qquad
\chi(s,t) = \bar\chi (s,t)  \,, 
\label{eq:ext} \\
Q(s) =& - f'(K(s)) q(s) + \qs^2 \bv'(q(s)) + V(s)\,, 
\qquad \qquad \qquad 
q(s) = q(0) + \int_0^s Q(u) du\,, 
\label{eq:qQdef} \\
\bar D(s,t)=& -f'(K(t)) \bar C(t,s) + \bar A(t,s) \,, \qquad \qquad \qquad\qquad
\bar E(s,t)= -f'(K(s)) \bar \chi(s,t) + \bar F(s,t) \,, \label{eq:dedef1} \\
\label{eq:Upsil}
\Upsilon(s,t) =& \nu(C(s,t)) 
  - \bar C (s,t) \frac{\nu'(q(s))\nu'(q(t))}{\nu'(\qs^2)} \,, 
\\
\Phi(s,t) =&  \nu'(C(s,t)) - \frac{\nu'(q(s))\nu'(q(t))}{\nu'(\qs^2)}\, 
\label{eq:Phi}\\
\Phi^1(s,u) =& q(u) \nu'(C(s,u)) - 
\bar C(s,u) \frac{\qs^2 \nu'(q(u))\nu''(q(s))}{\nu'(\qs^2)} \,,
\label{eq:Phi1}\\
\Psi(s,u) =&  \nu''(C(s,u)) \big(\bar D(s,u)+
\frac{q(s)}{\qs^2} Q(u) \big)  
- \frac{\nu'(q(s))\nu''(q(u))}{\nu'(\qs^2)} Q(u)  \,,
\label{eq:Psi}\\
\Psi^1(s,u) =&
\bar D(s,u)
\big[ \nu''(C(s,u)) q(u) -  \frac{\qs^2 \nu'(q(u))\nu''(q(s))}{\nu'(\qs^2)} \big]  
\nonumber \\
& + Q(u) 
[\nu'(C(s,u)) + \frac{q(s) q(u)}{\qs^2} \nu''(C(s,u)) 
-  
\bar C(s,u) \frac{\qs^2 \nu''(q(s))\nu''(q(u))}{\nu'(\qs^2)}  \big] \,,
\label{eq:Psi1}
\end{align}
\begin{align}
\bar C(s,t)=& \bar C(s,0)+ \bar \chi(s,t)+\int_0^t \bar D(s,u)du,
\qquad \qquad \qquad \quad \bar \chi(s,t)= s\wedge t+ \int_0^s \bar E(u,t)du,
\label{eq:C1-chi1}\\
V(s) =&  \Phi^1(s,s)-\Phi^1(s,0) - \int_0^s \Psi^1(s,u) du \,,
\label{eqQ1}\\
\bar A(t,s) =& \; \bar C(s,\tau) \Phi(t,\tau) - \bar C(s,0) \Phi(t,0)
- \int_0^\tau \Big\{ \bar D(s,u) \Phi (t,u) + \bar C (s,u) \Psi(t,u) \Big\} du \,,
\label{eqD}
\end{align}
\begin{align}
\bar F(s,t) =& \; \bar \chi (s,t) \Phi (s,s)  
- \int_0^{t \wedge s} \Phi(s,u) du  
- \int_0^s \bar E(u,t) \Phi(s,u)  du 
- \int_0^s \bar \chi(u,t) \Psi(s,u) du 
\,, \label{eqE}\\
\wH (s) =& \; \Upsilon (s,s) - \Upsilon (s,0)   
 - \int_0^s \Big\{\bar D (s,u) \Phi (u,s) + \frac{Q(u)}{\qs^2} \Phi^1 (u,s) \Big\} du \,,
\label{eqwH}
\end{align}
where $\tau=t \vee s$, 
subject to the symmetry $C(s,t)=C(t,s)$ and 
boundary conditions $q(0)=q_o$, $K(0)=1$,
$K(s)=C(s,s)$, $\bar E(s,0)=0$ for all $s$, and
$\bar E(s,t)=\bar E(s,s)$ for all $t \geq s$.
\end{prop}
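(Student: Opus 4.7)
The plan is to use Proposition~\ref{use-CPs} to guarantee that the sequence $\Ua_N^a$ is pre-compact in the uniform topology on $[0,T]^2$, so limit points exist, and then to identify the integral equations every such limit point must satisfy. Throughout, I would work under $\Ps$ and use the representation \eqref{eq:integ}, since by \eqref{eq:to-CPs} the asymptotic behavior of $\E_\star\Psi(\BZ_N)$ matches that of $\wE\Psi(\BZ_N)$ for every locally Lipschitz $\Psi$ of polynomial growth evaluated on coordinates of $\Ua_N$. It is convenient to split the proof into three groups: pointwise algebraic identities, identities obtained from Itô/stochastic calculus, and identities obtained from Gaussian integration by parts.

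For the first group — \eqref{eq:ext}, \eqref{eq:dedef1}, \eqref{eq:Upsil}--\eqref{eq:Psi1}, and the left equation in \eqref{eq:qQdef} — one applies $\E_\star$ to the defining identities \eqref{eq:barC}, \eqref{eq:qvdef}, \eqref{eq:dedef}, \eqref{eq:PhiN-val}, \eqref{eq:PsiN-val} and invokes the self-averaging statement \eqref{eq:self-av-Psi} of Corollary~\ref{cor-self} (extended via Proposition~\ref{use-CPs}) to commute $\E_\star$ past the locally Lipschitz functions $\nu,\nu',\nu'',f',\bv'$, whose polynomial growth on the range of $\Ua_N$ is controlled by \eqref{eq:u-mombd}. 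Passing to a convergent subsequence of $\Ua_N^a$ then yields each identity.

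For the second group — the right equation of \eqref{eq:qQdef} and both of \eqref{eq:C1-chi1} — one integrates \eqref{eq:integ}. For $q_N(s)=\qs x_s^1/\sqrt N$, the $i=1$ equation in \eqref{eq:integ} divided by $\sqrt N/\qs$ gives $q_N(s)=q_N(0)+\int_0^s Q_N(u)\,du$ plus a centered Brownian term vanishing by self-averaging. For $t\le s$, the telescoping $\bar C_N(s,t)-\bar C_N(s,0)=\frac{1}{N}\sum_{i\ge 2} x_s^i(x_t^i-x_0^i)$ combined with \eqref{eq:integ} for $i\ge 2$ decomposes cleanly into $\bar\chi_N(s,t)+\int_0^t \bar D_N(s,v)\,dv$. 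Itô in the $s$-variable applied to $\bar\chi_N(s,t)=\frac{1}{N}\sum_{i\ge 2}x_s^i B_t^i$ gives $\bar\chi_N(s,t)=\bar\chi_N(0,t)+\int_0^s\bar E_N(u,t)\,du+\frac{1}{N}\sum_{i\ge 2}B_s^i B_t^i$, and the first term vanishes while the last concentrates on $(1-1/N)(s\wedge t)\to s\wedge t$. Taking $\E_\star$ and passing to the limit yields \eqref{eq:C1-chi1}.

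The third group — \eqref{eqQ1}, \eqref{eqD}, \eqref{eqE}, \eqref{eqwH} — is the main obstacle, and this is where one must carry out Gaussian integration by parts for the conditioned field. Per Remark~\ref{rem:cond-H}, under $\Ps$ one writes $H_{\BJ}(\bx)=H_{\BJ_o}(\bx)-N\bv(N^{-1}\langle\bx,\bn\rangle)$, where $H_{\BJ_o}$ is a centered Gaussian field whose covariance evaluated at $(\bx_s,\bx_t)$ equals $N\Upsilon_N(s,t)$ (directly from \eqref{eq:mod-nu}). Applying the Stein/Malliavin identity to $\E_\star[G^i(\bx_t)\,\Phi(\bx_\cdot,\BB_\cdot)]$, one represents $G^i=-\partial_{x^i}[H_{\BJ_o}]+$ deterministic drift, computes derivatives of the path $\bx_\cdot$ with respect to the Gaussian disorder via \eqref{eq:integ} (exactly as in \cite[Sec.~3]{BDG2}), and substitutes back. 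Each IBP produces a boundary term plus a time integral in which the integrand involves the derivatives of $\Upsilon_N$ in its two spatial slots, giving rise precisely to the expressions $\Phi_N,\Phi^1_N,\Psi_N,\Psi^1_N$; meanwhile the deterministic drift $-N\bv(N^{-1}\langle\bx,\bn\rangle)$ contributes the $\qs^2\bv'(q(s))$ term in $Q$ and the $\Upsilon$-and-$\bv'$-pieces in $V$ and $\wH$. The main technical subtlety is tracking the correction terms arising from the $\nu'(q(s))\nu'(q(u))/\nu'(\qs^2)$ piece of \eqref{eq:mod-nu}, whose spatial derivatives in the $\bn$-direction produce the bracketed correction terms in \eqref{eqD}, \eqref{eqE}, \eqref{eqwH}, as well as the $\wQ_N$ contribution in \eqref{eq:qvdef}; this requires a careful application of Proposition~\ref{use-CPs} to the conditional $\E_\star[\,\cdot\,|\Fa_\tau]$ variants in $\Uas_N$, so that the IBP can be localized to fixed times. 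Once each identity is established pointwise in $\E_\star$-form, taking $N\to\infty$ along a convergent subsequence and again invoking \eqref{eq:self-av-Psi} to commute the limit through $\nu',\nu''$ yields the stated equations in the limit.
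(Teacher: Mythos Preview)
Your treatment of the first two groups is correct and matches the paper. The issue is the third group. You describe the method as ``Stein/Malliavin identity'' where one ``computes derivatives of the path $\bx_\cdot$ with respect to the Gaussian disorder via \eqref{eq:integ} (exactly as in \cite[Sec.~3]{BDG2})''. This mischaracterizes what \cite[Sec.~3]{BDG2} actually does, and the paper here follows the same route: one does \emph{not} differentiate the path in the disorder. Instead, one uses Girsanov (see \eqref{eq:lndf}) to pass to $\P^N_{{\bf 0},\Bx_0}$, under which $\bx_\cdot$ is a functional of $(\bx_0,\BB)$ alone and hence independent of $\BJ$; the conditional expectations $V^i_{s;\tau}=\Est[G^i(\bx_s)|\Fa_\tau]$ are then obtained by a purely Gaussian computation in $\BJ$ (Lemma~\ref{condexp}, specifically \eqref{eq:Vid} and \eqref{eq:gaid}). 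The resulting representation $V^i_{s;\tau}=[k_\tau\circ dx]^i_s-[k_\tau Q]^i_s$ is what, after applying It\^o's formula to $\widetilde k(\bx_s,\bx_u)$, produces the boundary-minus-integral structure \eqref{eq:Vi-val} that yields \eqref{eq:apxv}, \eqref{eq:apxa}, \eqref{eq:apxh}. A direct Malliavin-in-$\BJ$ approach would require controlling the Jacobian process $\partial_{\BJ}\bx_\cdot$ via a linearized SDE, which is not set up here.

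There is a second, more concrete gap concerning \eqref{eqE}. In the Girsanov calculation for $\bar F_N$ one picks up an extra term $\Gamma_N(s,t|s)=\frac{1}{N}\sum_{i\ge 2}\int_0^t\Gamma^{ii}_{sv;s}\,dv$ coming from the conditional covariance \eqref{eq:gammadef}; compare \eqref{eq:Gamma+F} and \eqref{eq:detail}. Getting from there to \eqref{eqE} requires showing that the residual $\phi_N(s,v)$ of \eqref{eq:smallphi-def} vanishes in the limit, which is the content of Lemma~\ref{cancelem}. That lemma in turn uses the second identity \eqref{eq:gaid} of Lemma~\ref{condexp} together with self-averaging of $\bar A_N$ and $V_N$ (applied via Corollary~\ref{cor-self} to $\BZ_N$ with coordinates in $\Uas_N$). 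Your proposal does not identify this cancellation, and without it the derivation of \eqref{eqE} is incomplete.
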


Our final ingredient for Theorem \ref{thm-macro} is the 
following link between \req{eq:ext}--\req{eqwH} and 
\req{eqR}--\req{eqH}, whose proof we defer to Section \ref{sec:diff}. 
\begin{prop}\label{lem-diff}
Fixing $T<\infty$, if $(C,\chi,q,\wH) \in \Ca_b([0,T]^2;\reals^4)$
satisfies \req{eq:ext}--\req{eqwH}, with $\bv_\star(\cdot)$ instead of 
$\bv(\cdot)$,
subject to the symmetry and boundary conditions of Proposition \ref{prop-macro},
% $C(s,t)=C(t,s)$, $q(0)=q_o$, $K(0)=1$, $K(s)=C(s,s)$, $E(s,0)=0$ for all $s$, and
% $E(s,t)=E(s,s)$ for all $t \geq s$. 
then  
$\chi(s,t)=\int_0^t R(s,u)du$ where
$R(s,t)=0$ for $t>s$, $R(s,s)=1$ and on $\Delta_T$ the bounded and
absolutely continuous functions $(C,R,q,\wH)$ satisfy 
the integro-differential equations \req{eqR}--\req{eqH} (at $\beta=1$).
\end{prop}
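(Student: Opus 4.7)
My plan follows the strategy of \cite[Section 4]{BDG2}, adapted to handle the additional variables $q, \wH$ and the $q$-dependent correction kernels appearing in $\Phi, \Phi^1, \Psi, \Psi^1, \Upsilon$. The core idea is to differentiate the integral equations \eqref{eq:ext}--\eqref{eqwH}, identify $R$ as the $t$-derivative of $\chi$, and recognize that after integration by parts the $q$-dependent pieces collapse to the structure of \eqref{eqR}--\eqref{eqH}.

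First I would define $R(s,t) := \partial_t \chi(s,t)$ and extract the boundary values. From \eqref{eq:ext}, $\chi = \bar\chi$. For $t \geq s$ every $u \in [0,s]$ satisfies $u \leq t$, hence the stated boundary condition gives $\bar E(u,t) = \bar E(u,u)$ and \eqref{eq:C1-chi1} yields $\chi(s,t) = s + \int_0^s \bar E(u,u) du$, independent of $t$; thus $R(s,t) = 0$ for $t > s$. For $t < s$, splitting the integral at $u = t$ gives $\chi(s,t) = t + \int_0^t \bar E(u,u) du + \int_t^s \bar E(u,t) du$, so $R(s,t) = 1 + \int_t^s \partial_t \bar E(u,t) du$ and $R(s,s^-) = 1$. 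Existence of the requisite derivatives and absolute continuity of $t \mapsto \bar\chi(\cdot,t), \bar E(\cdot,t)$ follow by reading \eqref{eqE} together with \eqref{eq:dedef1} as a Volterra equation in $s$ with kernels continuous in $t$, and applying Gronwall using the boundedness of $(C,\chi,q,\wH)$ hypothesized in the proposition and the locally Lipschitz nature of $f',\nu',\nu''$.

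To obtain \eqref{eqR}, I would differentiate \eqref{eqE} in $t$ (for $t<s$), substitute $\bar E = -f'(K)\bar\chi + \bar F$ from \eqref{eq:dedef1}, and compute $\partial_s R(s,t) = \partial_t \bar E(s,t) = -f'(K(s)) R(s,t) + \partial_t \bar F(s,t)$. Integration by parts in $u$ of the $\Phi(s,u)$ integral, combined with the key identity $\partial_u \Phi(s,u) = \nu''(C(s,u)) R(s,u) + \Psi(s,u)$ (which follows from $\partial_u C(s,u) = R(s,u) + \bar D(s,u) + q(s) Q(u)/\qs^2$, \eqref{eq:qQdef}, and the explicit formulas \eqref{eq:Phi}, \eqref{eq:Psi}), converts $\partial_t \bar F(s,t)$ into $\int_t^s R(u,t) R(s,u) \nu''(C(s,u)) du$, yielding \eqref{eqR}. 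For \eqref{eqC}, differentiate \eqref{eq:C1-chi1} in $s$, substitute \eqref{eq:dedef1} and \eqref{eqD} to eliminate $\bar D$ and $\bar A$, and use $\bar C(s,t) = \bar C(t,s)$ to merge the resulting memory integrals into the structure of \eqref{eqC}. Equation \eqref{eqZ} comes from taking $t \to s^-$ in \eqref{eqC}, with the $+1$ on the right-hand side produced by the $\frac{d}{ds}(s\wedge t)|_{t=s^-}=1$ contribution to $\bar\chi(s,s)$. Equations \eqref{eqq} and \eqref{eqH} are comparatively direct: \eqref{eqq} follows from $q'(s) = Q(s)$ via \eqref{eq:qQdef} combined with \eqref{eqQ1} and the identity $\bar D(s,u) = \partial_u \bar C(s,u) - R(s,u)$, which trades $\bar D\,du$ for $R\,du$ after one integration by parts; for \eqref{eqH}, rewrite $\Upsilon(s,s) - \Upsilon(s,0) = \int_0^s \partial_u \Upsilon(s,u) du$ using \eqref{eq:Upsil}, and after substitution into \eqref{eqwH} the $\bar D$ and $Q$ contributions combine with the $\Phi^1/\qs^2$ terms via \eqref{eq:Phi1} to produce $\int_0^s R(s,u) \Phi(s,u) du$, giving \eqref{eqH} upon adding $\bv_\star(q(s))$.

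The main obstacle is the algebraic bookkeeping in the derivation of \eqref{eqR}--\eqref{eqC}: tracking the $q$-dependent corrections $\nu'(q(s))\nu'(q(u))/\nu'(\qs^2)$ and the weights $q(\cdot)/\qs^2$ through the chain of substitutions and integrations by parts, and verifying that the boundary contributions at $u=s$ (where $R(s,s)=1$, $\Phi(s,s)=\nu'(1) - \nu'(q(s))^2/\nu'(\qs^2)$, $\bar C(s,s)=1 - q(s)^2/\qs^2$) collapse correctly onto the diagonal terms. Once these identities are verified, absolute continuity of $(C,R,q,\wH)$ on $\{t \leq s \leq T\}$ is automatic from the bounded-kernel Volterra structure of the resulting equations.
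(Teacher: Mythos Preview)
Your proposal is correct and follows essentially the same route as the paper: both hinge on the Volterra structure of \eqref{eqE}--\eqref{eq:C1-chi1} to produce $R=\partial_t\chi$, the identity $\partial_u\Phi(s,u)=\nu''(C(s,u))R(s,u)+\Psi(s,u)$ (and its $\Phi^1,\Upsilon$ analogues), and integration by parts to convert the $\bar D,Q$ terms into $R$-integrals. The paper is slightly more explicit in two places---it builds $R$ via Picard iterations rather than a generic Volterra/Gronwall appeal, and it obtains \eqref{eqZ} from the difference quotient $K(s)-K(s-h)$ (yielding $\partial_s K=2D(s,s)+1$) rather than from the limit $t\to s^-$ in \eqref{eqC}---but these are presentational rather than substantive differences.
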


%%%%%%%%%%%%%%%%%%%%%%%%%%%%%%%%%%%%%%%%%%%%%%%%%%%%
\medskip
\begin{proof}[Proof of Theorem \ref{thm-macro}:] 
Setting \abbr{wlog} $\beta=1$  and $\bsigma=\bn$,
recall from Proposition \ref{use-CPs} that all conclusions of Proposition \ref{tight-self} 
apply for $\Ps$. In particular, we thus have
pre-compactness of $(U_N^a, U_N \in \Ua_N) : [0,T]^2 \to \reals^{17}$  
in the topology of uniform convergence on $[0,T]^2$, 
implying the existence of limit points of this sequence as $N \to \infty$.
By Proposition \ref{prop-macro} any such limit point must be a solution of 
the integral equations \req{eq:ext}--\req{eqwH} with the stated symmetry and 
boundary conditions. Further, by Proposition \ref{lem-diff}, for $(E,G)=(\Es,\Gs)$
any such solution results with $(C,R,q,\wH)$ 
that satisfy the integro-differential
equations \req{eqR}--\req{eqH} (at $\beta=1$).
In view of Proposition \ref{uniqueness}
the latter system admits at most one solution per
given boundary conditions. Hence, we conclude that the sequence 
$(\chi_N^a,C_N^a,q_N^a,\wH_N^a)$ converges as $N \to \infty$, 
uniformly in $[0,T]^2$ to the unique 
solution determined by \req{eqR}--\req{eqH} subject to 
the appropriate boundary conditions. Thanks to 
Proposition \ref{use-CPs}, the same applies to $\wE [(\chi_N,C_N,q_N,\wH_N)]$.
Further, by \req{eq:asself} of Proposition \ref{tight-self}, almost surely
$|(\chi_N,C_N,q_N,\wH_N) - \wE (\chi_N,C_N,q_N, \wH_N)| \to {\bf 0}$ as $N \to \infty$, uniformly on $[0,T]^2$. 
In addition, $H_N(s)=\wH_N(s)+\bv_\star(q_N(s))$
(see \eqref{eq:Htilde} and the \abbr{lhs} of \eqref{eq:bnq}, \eqref{eq:wHNdef}).
Thus, the function $(\chi,C,q,H)$ determined from \req{eqR}--\req{eqH}
is also the unique almost sure uniform (in $s,t$)
limit of $(\chi_N,C_N,q_N,H_N)$, as stated in Theorem \ref{thm-macro}. 
The $L_p$ convergence follows by the uniform moments 
bounds of Proposition \ref{tight-self}, 
thereby completing the proof of the theorem.
\end{proof}

\subsection{Proof of Proposition \ref{tight-self}}\label{sec:self-average} 
We start by computing the covariances conditional on the event $\cpt_\star$, 
% :=\big\{ \forall i\geq 2: \partial_{x^i} H_{\BJ} (\bn)=0  \big\}$, 
which replace here
the unconditional covariances of \cite[Lemma 3.2]{BDG2}.
% (and be also the key behind the limit equations of Proposition \ref{prop-macro}).
\begin{lem}\label{kval}
For $\bv_p$, $p \ge 2$, of \eqref{def:vt} one has the 
following conditional expectations 
\begin{align}
\label{eq:bp1}
\E[J^{(p)}_{1 \cdots 1} | \cpt(E,G,\bn)] =  - b_p N^{1-\frac{p}{2}} 
\qs^{p} \, \langle \bv_p, (E,G) \rangle \,.
\end{align}	
Letting $\E_{\BJ}$ denote the expectation with respect to the Gaussian
law $\P_{\BJ}$ of the disorder $\BJ$, it follows that
for $\bv(\cdot)$ of \eqref{def:vt}, any 
$\Bx\in\Ca(\R_{+},\R^{N})$ which is independent of 
$\BJ$ and all $s,t\in[0,T]$, $i,j\in\{1,\ldots,N\}$,
	\begin{equation}
	\begin{aligned}
	\bar{H}(\Bx_t) & := 
	\E_{\BJ}\left\{ H_{\BJ} (\Bx_{t})\,|\,\cpt(E,G,\bn) \right\} =
	 - N \bv (q_N(t)) \,,\\
	\label{eq:Gtilde}
	\bar{G}^{i}(\Bx_t) & :=\E_{\BJ}\left\{ G^{i}(\Bx_{t})\,|\,\cpt(E,G,\bn) \right\} ={\bf 1}_{\{i=1\}} \sqrt N \qs \bv'(q_N(t)) \,.
	\end{aligned}
	\end{equation}	
Further, for 
% the event
$\cpt_\star =\big\{ \forall i\geq 2: 
\partial_{x^i} H_{\BJ} (\bn)=0  \big\}$, we have that  
$\E_{\BJ}\left\{G^{i}(\Bx_{t})\,|\,\cpt_\star \right\}=0$ for any $(t,i)$,
while 
\begin{equation}
	\label{eq:kval} 
\begin{aligned}
k_{ts}^{ij}(\Bx): & =\E_{\BJ}\left\{ G^{i}(\Bx_t)G^{j}(\Bx_s)\,|\,\cpt_\star \right\} 
= \partial_{x^i_t} \partial_{x^j_s} \{ \widetilde{k} (\Bx_t,\Bx_s) \}  \,, \\
	\widetilde{k} (\Bx_s,\Bx_t) & :=  
\E_{\BJ}\left\{ H_{\BJ} (\Bx_s) H_{\BJ} (\Bx_t) \,|\,\cpt_\star \right\} 
= N  \Upsilon_N(s,t)
% \Upsilon_N(s,t) = \Big[ \nu(C_N(s,t)) - \bar C_N(s,t) \frac{\nu'(q_N(s))\nu'(q_N(t))}{\nu'(\qs^2)} \Big]
 \,,
\end{aligned}
\end{equation}
for $\Upsilon_N(s,t)$ of \eqref{eq:PhiN-val}.
%\begin{equation}
%\begin{aligned}
% k_{ts}^{ij} (\Bx) & 
%	=\frac{x_{t}^{j}x_{s}^{i}}{N}\nu''(C_{N}(s,t))+{\bf 1}_{\{i=j\}}\nu'(C_{N}(s,t)) 
%	\\ 
%	& -{\bf 1}_{\{i=j=1\}} \bar C_N(t,s) \frac{\qs^2 \nu''(q_N(s))\nu''(q_N(t))}{\nu'(\qs^2)}-{\bf 1}_{\{i=j\neq1\}}\frac{\nu'(q_N(s))\nu'(q_N(t))}{\nu'(\qs^2)}\\ 
%	& -{\bf 1}_{\{i=1\}}{\bf 1}_{\{j\neq1\}}\frac{\qs x_t^{j}}{\sqrt{N}}\frac{\nu''(q_N(t))\nu'(q_N(s))}{\nu'(\qs^2)}-{\bf 1}_{\{j=1\}}{\bf 1}_{\{i\neq1\}}\frac{\qs x_s^{i}}{\sqrt{N}}\frac{\nu''(q_N(s))\nu'(q_N(t))}{\nu'(\qs^2)}\,.
%\end{aligned}
%\end{equation}
\end{lem}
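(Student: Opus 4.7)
Every assertion is a Gaussian conditional-mean or conditional-covariance identity, since (a) $\BJ$ is a centered Gaussian vector, (b) $\cpt(E,G,\bn)$ and $\cpt_\star$ impose \emph{linear} constraints on $\BJ$, and (c) $H_{\BJ}(\bx)$ and its partial derivatives in $\bx$ are linear functionals of $\BJ$. My plan is to set up the relevant covariance blocks at $\bn$, apply the standard formulas $\E[Y\mid Z]=\Cov(Y,Z)\Cov(Z)^{-1}Z$ and $\Cov(Y_1,Y_2\mid Z)=\Cov(Y_1,Y_2)-\Cov(Y_1,Z)\Cov(Z)^{-1}\Cov(Z,Y_2)$, and then match the outputs to \eqref{eq:bp1}, \eqref{eq:Gtilde} and \eqref{eq:kval}.

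The first step is a \emph{decoupling observation at $\bn=(\sqrt{N}\qs,0,\ldots,0)$}. Differentiating the covariance kernel $(\bx,\by)\mapsto N\nu(\langle\bx,\by\rangle/N)$ and using $\bn^{j}=0$ for $j\ge 2$, the tangential block $\big(\partial_{x^{j}} H_{\BJ}(\bn)\big)_{j\ge 2}$ is a centered Gaussian vector with covariance $\nu'(\qs^2)\,I_{N-1}$ that is \emph{uncorrelated} with the normal block $\big(H_{\BJ}(\bn),\partial_{x^{1}}H_{\BJ}(\bn)\big)$. The covariance of the latter equals, after conjugation by $\mathrm{diag}(\qs/\sqrt{N},1)$, exactly the $\bv_p$-matrix of \eqref{def:vt}; this rescaling is the dictionary that translates the Gaussian-conditioning output into the $\bv_p$ language. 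Completing the bookkeeping, I also record $\Cov(H_{\BJ}(\bn),H_{\BJ}(\Bx))=N\nu(q_N(\Bx))$, $\Cov(\partial_{x^{1}}H_{\BJ}(\bn),H_{\BJ}(\Bx))=\sqrt{N}q_N(\Bx)\nu'(q_N(\Bx))/\qs$, and $\Cov(\partial_{x^{j}}H_{\BJ}(\bn),H_{\BJ}(\Bx))=\nu'(q_N(\Bx))x^{j}$ for $j\ge 2$, where $q_N(\Bx):=\qs x^{1}/\sqrt{N}$.

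With this in hand, \eqref{eq:bp1} follows because only $\{J^{(p)}_{1\cdots 1}\}_{p\ge 2}$ enter the two non-tangential constraints in $\cpt(E,G,\bn)$, with coefficient vectors $(b_p N^{p/2}\qs^{p})_{p}$ and $(pb_p N^{(p-1)/2}\qs^{p-1})_{p}$; the conditional-mean formula applied to the independent $J^{(p)}_{1\cdots 1}$ of variance $N^{1-p}$ gives the stated expression after undoing the diagonal rescaling. For \eqref{eq:Gtilde}, the decoupling lets me ignore the tangential constraints (their conditioning values being $\mathbf{0}$), reducing $\bar H(\Bx_t)$ to a linear combination of $(E,G)$ whose coefficients involve $\nu(q_N(t))$ and $q_N(t)\nu'(q_N(t))$; series-expanding these in $q_N(t)$ and invoking \eqref{def:vt} yields $-N\bv(q_N(t))$. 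The identity for $\bar G^{i}(\Bx_t)$ then comes from differentiating $\bar H(\Bx_t)$ in $x^{i}_{t}$, which is legitimate because the conditioning $\sigma$-field is independent of $\Bx_t$ and $\bar H$ is smooth.

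The $\cpt_\star$ claims are the easiest. The conditional mean of the centered Gaussian $G^{i}(\Bx_{t})$ given a linear constraint with zero right-hand side vanishes by linearity of Gaussian conditional means. For $\widetilde k$, the conditional-covariance formula applied to the tangential block subtracts $\sum_{j\ge 2}\nu'(q_N(s))\nu'(q_N(t))x^{j}_{s}x^{j}_{t}/\nu'(\qs^2)=N\bar{C}_N(s,t)\nu'(q_N(s))\nu'(q_N(t))/\nu'(\qs^2)$ from $N\nu(C_N(s,t))$, matching $N\Upsilon_N(s,t)$ by \eqref{eq:PhiN-val}. Finally, $k^{ij}_{ts}(\Bx)=\partial_{x^{i}_{t}}\partial_{x^{j}_{s}}\widetilde k(\Bx_t,\Bx_s)$ by commutation of differentiation with the conditional expectation, which is justified as $H_{\BJ}(\Bx)$ is polynomial in $\Bx$ and the conditioning $\sigma$-field does not depend on $(\Bx_t,\Bx_s)$. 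The only obstacle I anticipate is purely bookkeeping: tracking the powers of $N$ and $\qs$ tightly enough to identify the $2\times 2$ covariance block with the $\bv_p$-matrix; after that, all the claims reduce to routine substitutions.
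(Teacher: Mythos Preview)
Your proposal is correct and follows essentially the same approach as the paper's proof: both exploit the independence of the tangential block $(\partial_{x^j}H_{\BJ}(\bn))_{j\ge 2}$ from the normal block $(H_{\BJ}(\bn),\partial_{x^1}H_{\BJ}(\bn))$, compute the relevant covariance entries from the kernel $N\nu(\langle\cdot,\cdot\rangle/N)$, and then apply the standard Gaussian conditional-mean and conditional-covariance formulas, with $\bar G$ obtained from $\bar H$ by differentiation. Your explicit identification of the $2\times 2$ normal-block covariance with the $\bv_p$-matrix via the $\mathrm{diag}(\qs/\sqrt{N},1)$ rescaling is exactly the bookkeeping the paper leaves implicit.
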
  
\begin{proof}
Fix two points $\bar\Bx,\bar\By\in\R^{N}$. Recall that \cite[Eq. (5.5.4)]{AT}
\begin{equation}
\label{eq:cov}
\begin{aligned}
\E\left\{ \partial_{\bar x^{i}}H_{\BJ}^{N}(\bar \Bx)H_{\BJ}^{N}(\bar \By)\right\}  & =\partial_{\bar x^{i}}\Cov\left(H_{\BJ}^{N}(\bar\Bx)H_{\BJ}^{N}(\bar\By)\right)\\
& =\bar y^{i}\nu'(N^{-1}\langle\bar\Bx,\bar\By\rangle)\,,\\
\E\left\{ \partial_{\bar x^{i}}H_{\BJ}^{N}(\bar\Bx)\partial_{\bar y^{j}}H_{\BJ}^{N}(\bar\By)\right\}  & =\partial_{\bar x^{i}}\partial_{\bar y^{j}}\Cov\left(H_{\BJ}^{N}(\bar\Bx)H_{\BJ}^{N}(\bar\By)\right)\\
& =\frac{\bar x^{j}\bar y^{i}}{N}\nu''(N^{-1}\langle\bar\Bx,\bar\By\rangle)+{\bf 1}_{\{i=j\}}\nu'(N^{-1}\langle\bar\Bx,\bar\By\rangle)\,.
\end{aligned}
\end{equation}
In particular, \ $\mathbf w = (\qs H_{\BJ}(\bn)\,,  
\sqrt N\partial_{x^1} H_{\BJ} (\bn))$ and 
$\mathbf z = \sqrt N(\partial_{x^i} H_{\BJ}(\bn))_{i>1}$ are independent. Therefore, from the well-known formula for conditional Gaussian distributions 
\cite[pages 10-11]{AT}, 
	\begin{align*}
	\bar{H}(\Bx_t) & =\Big\langle 
	\E_{\BJ}\{ H_{\BJ}(\Bx_{t}) \mathbf w \}  (\E_{\BJ}\{ \mathbf w^T \mathbf w \})^{-1}, (-N\qs E, -N\qs G) \Big\rangle 
	\end{align*}
	which by substitution yields the top line of \eqref{eq:Gtilde}. 
    Recall that $\bar G = - \nabla_{\bx} \bar H$ to complete
    the derivation of \eqref{eq:Gtilde}.
	The formula \eqref{eq:bp1}
	for the conditional expectations of $J^{(p)}_{1\cdots 1}$ is similarly verified 
	from 
	\begin{align*}
	\E\left\{ J_{1 \cdots 1}^{(p)}H_{\BJ}^{N}(\bn)\right\} &=  b_p \qs^p N^{\frac{p}{2}} \E\left\{ (J_{1 \cdots 1}^{(p)})^2 \right\}= b_p \qs^p N^{1-\frac{p}{2}}  \\
\E\left\{ J_{1 \cdots 1}^{(p)} (\partial_{x^1}H_{\BJ}^{N}(\bn)) \right\} &= 
b_p p \qs^{p-1} N^{\frac{p-1}{2}} \E\left\{ (J_{1 \cdots 1}^{(p)})^2 \right\}= b_p p \qs^{p-1} N^{\frac{1-p}{2}}.
	\end{align*}
	Next, recall that any centered Gaussian field, conditional on a
	linear map being zero, remains centered. In particular,
$\E_{\BJ}\left\{G^{i}(\Bx_{t})\,|\,\cpt_\star \right\}=0$ 	for any
choice of $\bn$ and $(t,i)$. Further, with  
$z_k =\sqrt N \partial_{x^k} H_{\BJ}(\bn)$ independent for different $k$,
the formula for the conditional covariance of $H_{\BJ}(\cdot)$, simplifies to 
		\begin{align*}
\widetilde{k}(\Bx_t,\Bx_s)  & =\E_{\BJ}\left\{ H_{\BJ}(\Bx_t) H_{\BJ} (\Bx_s) \right\} - \sum_{k=2}^{N} \E_{\BJ}\left\{ H_{\BJ} (\Bx_t) z_k \right\}
\left\{\E_{\BJ} z_k^2 \right\}^{-1}
\E_{\BJ}\left\{ H_{\BJ} (\Bx_s) z_k\right\},
		\end{align*}
		from which \eqref{eq:kval} follows by substitution (and comparison with the definition 
		 of $\Upsilon_N$  in \eqref{eq:PhiN-val}).
		\end{proof}
		
Preparing to adapt \cite[Section 2]{BDG2}, recall $K_N(t)=C_N(t,t)$ and set hereafter
$B_N(t):=\oneN \sum_{i=1}^N |B_t^i|^2$ and $G_N(t):=\oneN \sum_{i=1}^N |G^i(\Bx_t)|^2$. 
Using throughout 
the corresponding sup-norms $\|K_N\|_\infty:=\sup \{ K_N(t) : 0 \leq t \leq T \}$, 
$\|B_N\|_\infty$ and $\|G_N\|_\infty$ as well as 
the $N$-dependent disorder-norms 
\begin{equation}\label{dfn:Jnorm}
	\|\BJ\|^{N}_\infty:= \max_{1\leq p\leq m}\sup_{\|\mathbf u^i\|\leq 1, 1\leq i\leq p}\Big| \sqrt N^{-1} \sum_{1\leq i_k\leq N,1\leq k\leq p} N^{\frac{p-1}{2}}J_{i_1\cdots i_p}u_{i_1}^1\cdots  u_{i_p}^p\Big| 
\end{equation} 
of \cite[(2.1)]{BDG2}, we first mimic \cite[Proposition 2.1]{BDG2} about the 
strong solution $\Bx_t$ of \eqref{diffusion}.
\begin{prop}\label{existN} 
Assume that $f'$ is locally Lipschitz, satisfying \eqref{eq:fcond}.
Then, for any $N\in\N$, $\Bx_0$, $\BJ$ there exists a unique 
strong solution of \eqref{diffusion} for a.e. Brownian path $\BB$.
Denoting by $\P^N_{\bJ,\Bx_0}$ the (unique) law of 
$\{\BB_t,\Bx_t\}$ as $\Ca(\R^+,\R^{2N})$-valued variable, we have that 
for some $c$, $\kappa$ finite, all $N$, $z>0$, $\bJ$ and $\Bx_0$, 
\begin{equation}\label{eq:conc}
\P^N_{\bJ,\Bx_0}\Big(\sup_{t\in\R^+}
K_N(t) \ge K_N(0) + \kappa (1 + \|\bJ\|_\infty^N )^c + z \Big)
\le  e^{-z N} \,.
\end{equation}
In particular, for some $D_o(k,M)$ finite, any $k,M$ and all $N$,
\begin{equation}\label{eq:K-unif-bd}
\sup_{\{\BJ,\Bx_0 : K_N(0) + \|\BJ\|^N_\infty \le M\}} \; \Big\{ \E^N_{\BJ,\Bx_0}  \big[
\sup_{t \in \R^+} K_N(t)^k \big]  \Big\} \le D_o(k,M) \,.
\end{equation}
Further, for any finite positive $\qs$, $k$ and $\alpha$ 
\begin{equation}\label{eq:ger2}
\sup_{|E|,\,|G|\leq \alpha}
\sup_N \wE \big[ \, (||\bJ||_\infty^{N})^k \, \big] < \infty 
\end{equation}
and there exist finite $\tilde \kappa \ge 1$ such that for any $t \ge 0$,
\begin{equation}\label{eq:2712-02}
\sup_{|E|,\,|G|\leq \alpha}\sup_N \wP \big[
\|\BJ\|_\infty^N >\tilde\kappa+t \big]
	   \leq \tilde \kappa e^{-Nt^2/\tilde\kappa}\,.
\end{equation}
Consequently, for 
any $|q_o| \le \qs$ positive, finite $k$ and $\alpha$,
\begin{equation}\label{eq:conc2}
\sup_{|E|,\,|G|\leq \alpha}
\sup_{N} \wE \big[ \sup_{t\in\R^+} K_N(t)^{k} 
 \big] <\infty
\end{equation}
and for any finite $L$ there exist $z=z(L)$ finite such that
\begin{equation}\label{eq:ktail}
\sup_{|E|,\,|G|\leq \alpha} \sup_N
\wP \big[ \sup_{t\in\R^+} K_N(t) \ge z \, 
\big] \le  2 \tilde \kappa e^{-L N} \,.
\end{equation}
\end{prop}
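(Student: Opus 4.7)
The plan is to mimic the strategy of \cite[Proposition 2.1]{BDG2}, changing only the input from the unconditional Gaussian law $\P_\BJ$ to the conditional law $\wP$. For fixed $(\BJ,\Bx_0)$ the \abbr{sde} \eqref{diffusion} depends only on the value of $\BJ$ and not on its law, so the existence of a unique strong solution for a.e. Brownian path and the pathwise tail bound \eqref{eq:conc} follow directly from \cite[Prop. 2.1]{BDG2}. The steps to adopt are: (i) local Lipschitz continuity of $f'$ combined with the polynomial control $\tfrac{1}{N}|\nabla H_{\BJ}(\bx)|^2\le C\|\BJ\|_\infty^N (1+K_N)^{m-1}$ yields a maximal strong solution; (ii) It\^o's formula applied to $K_N(t)=\|\bx_t\|^2/N$ gives
\begin{equation*}
dK_N(t) = \bigl[\,1 - 2f'(K_N(t))K_N(t) - \tfrac{2}{N}\langle \bx_t,\nabla H_{\BJ}(\bx_t)\rangle\,\bigr]\, dt + \tfrac{2}{N}\langle \bx_t, d\BB_t\rangle,
\end{equation*}
and the super-polynomial coercivity in \eqref{eq:fcond} (via $k>m/4$) overwhelms the $H_{\BJ}$-drift; (iii) an exponential martingale estimate on the Brownian part then yields the sub-Gaussian tail \eqref{eq:conc}. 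Integrating this tail under $K_N(0)+\|\BJ\|_\infty^N\le M$ gives the deterministic moment bound \eqref{eq:K-unif-bd}.

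The analysis of $\|\BJ\|_\infty^N$ under $\wP$ is where the proof genuinely departs from \cite{BDG2}. Since $\cpt(E,G,\bn)$ imposes finitely many affine Gaussian constraints on $\BJ$, one decomposes
\begin{equation*}
\BJ \stackrel{\mathrm{law}}{=} \BJ_o + \bar\BJ(E,G) \qquad \text{under } \wP,
\end{equation*}
with $\BJ_o\sim \gamma_N^{(0,0,\qs)}$ centered and $\bar\BJ(E,G)$ the deterministic conditional mean from Lemma \ref{kval}, nonzero only in the coefficients $\bar J^{(p)}_{1\cdots 1}=-b_p N^{1-p/2}\qs^p\langle \bv_p,(E,G)\rangle$. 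A direct computation from \eqref{dfn:Jnorm} then gives $\|\bar\BJ(E,G)\|_\infty^N\le C(\alpha)N^{-1/2}$ uniformly in $|E|,|G|\le\alpha$. The covariance of $\BJ_o$ is dominated in the positive semi-definite sense by the unconditional covariance of $\BJ$ (by the Schur complement formula), so the Gaussian concentration argument used in \cite{BDG2} for the unconditional $\|\BJ\|_\infty^N$ carries over to give $\P(\|\BJ_o\|_\infty^N > \tilde\kappa+t)\le \tilde\kappa e^{-Nt^2/\tilde\kappa}$. Absorbing the $O(N^{-1/2})$ mean shift into $\tilde\kappa$ then produces \eqref{eq:2712-02} uniformly over $|E|,|G|\le\alpha$, and integration yields \eqref{eq:ger2}.

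Finally, \eqref{eq:conc2} and \eqref{eq:ktail} follow by conditioning on $(\Bx_0,\BJ)$ and combining the two blocks above. As $\Bx_0$ is drawn from $\mu_{\bn}^{q_o}$ on $\SN$ one has $K_N(0)\equiv 1$; the pathwise bound \eqref{eq:conc} therefore gives a sub-Gaussian tail for $\sup_t K_N(t)$ centered at $1+\kappa(1+\|\BJ\|_\infty^N)^c$, and since $\Bx_0,\BB,\BJ$ are independent under $\wP$, integrating out $\BJ$ via \eqref{eq:2712-02} produces a combined sub-Gaussian bound on $\sup_t K_N(t)$ uniform in $|E|,|G|\le\alpha$. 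Choosing $z=z(L)$ sufficiently large yields \eqref{eq:ktail}, and moment integration yields \eqref{eq:conc2}. The main obstacle is establishing \eqref{eq:2712-02} with a constant $\tilde\kappa$ uniform over $(E,G)$: this rests on the variance-domination bound $\Cov(\BJ\,|\,\cpt(E,G,\bn))\preceq \Cov(\BJ)$ together with the explicit $N^{-1/2}$ smallness of the conditional mean shift. Both ingredients are mild, but must be verified carefully before invoking the unconditional Gaussian concentration of $\|\BJ\|_\infty^N$; once granted, the remaining bounds reduce to routine integration of sub-Gaussian tails.
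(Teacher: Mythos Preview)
Your proposal is essentially correct and follows the paper's approach closely, with two points worth noting.

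First, a computational slip: you claim $\|\bar\BJ(E,G)\|_\infty^N\le C(\alpha)N^{-1/2}$, but in fact it is $O(1)$. From \eqref{eq:bp1} one has $N^{(p-1)/2}\bar J^{(p)}_{1\cdots 1}=-b_p\sqrt{N}\,\qs^p\langle\bv_p,(E,G)\rangle$, so substituting into \eqref{dfn:Jnorm} gives
\[
\|\bar\BJ_{E,G}\|_\infty^N=\max_{2\le p\le m}\,|b_p\,\qs^p\langle\bv_p,(E,G)\rangle|\,,
\]
which is bounded uniformly in $N$ and over $|E|,|G|\le\alpha$ but does not vanish. This does not damage your argument --- you still just absorb a constant into $\tilde\kappa$ via the triangle inequality --- but the paper records the exact value (see \eqref{eq:Lip-barJ}).

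Second, for transferring the unconditional tail bound to the centered $\BJ_o$, the paper invokes Anderson's inequality \cite{Anderson} directly: $\|\cdot\|_\infty^N$ is a symmetric convex function of $\BJ$, so the sub-Gaussian tail \cite[(B.7)]{BDG2} for the unconditional law automatically holds for any centered Gaussian of smaller covariance. Your route via the Schur-complement variance domination followed by re-running the Gaussian concentration argument is also valid, but Anderson's inequality packages the same comparison in one line and avoids having to revisit the Lipschitz constant of $\|\cdot\|_\infty^N$.
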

\begin{proof}
From \cite[Proposition 2.1]{BDG2} we have the existence of 
a unique strong solution as well as the bound \eqref{eq:conc} 
(while stated in \cite{BDG2} for a.e. $\bJ$, $\Bx_0$, 
% Namely, for some dense subset $A \subset \R^N\times\R^{d(N,m)}$ 
% where $\|\BJ\|^N_\infty$ and $\|\bx_0\|$ are finite, if $(\bx_0,\BJ)\in A$
% then there exists a unique strong solution of \eqref{diffusion} (for almost any $\BB$).
examining their proof we see that it holds for all $\bJ$ and $\Bx_0$). 
% by \eqref{eq:lipbound}, the solution $(\bx_0,\BJ,\BB)\mapsto\bx$ of \eqref{diffusion} 
% is locally Lipschitz in $(\bx_0,\BJ)$ on $A$. 
%Thus, for almost any $\BB$, by taking limits, there exists a strong solution 
% for all $\bx_0$ and $\BJ$, which is unique by \eqref{eq:lipbound}.
Clearly, \eqref{eq:K-unif-bd} and
\eqref{eq:ger2} are immediate consequences of \eqref{eq:conc} and
 \eqref{eq:2712-02}, respectively. Further, taking $\Bx_0 \in \SN$ amounts to $K_N(0)=1$, yielding  
\eqref{eq:conc2} and \eqref{eq:ktail} upon combining 
\eqref{eq:conc} with \eqref{eq:ger2} and \eqref{eq:2712-02}, respectively. 
Turning to the only remaining task, of proving \eqref{eq:2712-02}, 
recall \cite[(B.7)]{BDG2} that for some $\tilde \kappa$ and all $t \ge 0$,
\begin{equation}\label{eq:B7-BDG2}
\sup_N\P \big[
\|\BJ\|_\infty^N >\tilde\kappa+t
\big]
\leq \tilde \kappa e^{-Nt^2/\tilde\kappa} \,.
\end{equation}
Since $\|\BJ\|_\infty^N$ is a symmetric, convex function of $\BJ$, by Anderson's inequality \cite[Corollary 3]{Anderson}, the bound \eqref{eq:B7-BDG2} holds when $\BJ$ is 
replaced by the centered Gaussian vector $\BJ_o$ having the  
law $\gamma_N^{(0,0,\qs)}$. Further, conditionally 
on $\cpt(E,G,\bn)$, we have that $\BJ = \BJ_o+\bar{\BJ}_{E,G}$ for
the non-random vector $\bar{\BJ}_{E,G}:=\E[\BJ\,|\,\cpt(E,G,\bn)]$. The only non-zero
entries of $\bar{\BJ}_{E,G}$ correspond to $\{J^{(p)}_{1 \cdots 1}\}$ and 
are given by \eqref{eq:bp1}. Consequently,  
\begin{equation}\label{eq:Lip-barJ}
\|\bar{\BJ}_{E,G} \|_\infty^N 
% = \max_{2\leq p\leq m} \{ N^{\frac{p}{2}-1} |(\bar{J}_{E,G})^{(p)}_{1 \cdots 1}| \} 
=\max_{2\leq p\leq m} \{ | b_p  \qs^{p} 
\, \langle \bv_p, (E,G) \rangle | \} \,,
\end{equation}
is bounded, uniformly over $|E|,|G| \le \alpha$ by some 
$\widehat{\kappa}(\alpha,\qs)$ finite. In conjunction with 
the triangle inequality for $\|\cdot\|_\infty^N $, this yields  
\eqref{eq:2712-02} (upon adding $\widehat{\kappa}$ to $\tilde\kappa$).
\end{proof}

The same reasoning as in proving \cite[Proposition 2.3]{BDG2}, but with
\eqref{eq:ger2}--\eqref{eq:ktail} of Proposition \ref{existN} replacing \cite[Eqn.
(2.12), (B.7), (2.13), (2.3)]{BDG2}, respectively, yields for $U_N \in \Uao_N$ both
\eqref{eq:u-mombd} and the stated pre-compactness. Along the way we also 
find that for some
$M=M(L,T,\alpha)<\infty$ the subsets
\begin{equation}\label{dfn:LMN}
\CL_{N,M} := \{
(\Bx_0, \BJ, \BB)\in \Ea_N \,:\, 
\|\BJ\|^{N}_\infty + \|B_N\|_\infty + \| K_N\|_\infty + \| G_N \|_\infty
\le M \,\} 
\end{equation} 
of $\Ea_N$ are such that for any finite $L,T,\alpha$ and all $N$,
\begin{equation}\label{eq:BDG2-217}
\sup_{|E|,|G| \le \alpha} \wP (\CL_{N,M}^c) \le M e^{-L N} \,.
\end{equation}
Next, similarly to \cite[(2.10)]{BDG2}, 
\begin{equation}\label{eq:lipH}
\frac{1}{\sqrt{N}} |H_{\BJ}(\bx)-H_{\BJ}(\widetilde{\bx})|
\le c \, \|\BJ\|_\infty^N \big(1 + (N^{-1}\|\bx\|^2)^r \big)
\big(1 + (N^{-1} \|\widetilde{\bx}\|^2)^r \big) \|\bx - \widetilde{\bx}\| \,,
\end{equation}
for $r=(m-1)/2$, $c = m \sqrt{\nu'(1)}$ and any $\bx, \widetilde{\bx} \in \reals^N$. In particular,
\begin{equation}\label{eq:lipHN}
|H_N(t)-H_N(t')| \le c \, \|\BJ\|_\infty^N (1 + K_N(t)^r)
(1 + K_N(t')^r) \frac{\|\bx_t - \bx_{t'}\|}{\sqrt{N}} \,.
\end{equation}
The uniform moment bound \eqref{eq:u-mombd} then   
extends to all $\Ua_N$ since $q^2_N(s) \le \qs^2 C_N(s,s)$ and 
$\wQ^2_N(s) \le \qs^2 G_N(s)+\qs^4 (\bv'(q_N(s))^2$, with the locally 
Lipschitz $f'(\cdot)$, $\nu''(\cdot)$ 
and $\bv'(\cdot)$ having at most a polynomial growth.
In addition, from \cite[(2.18)]{BDG2}  
adapted to our setting of $\wP$, we have 
%that
for any $\e>0$, some  
$L'(\d,\e) \to \infty$ as $\d \to 0$, and all $N$, 
\begin{align*}
\sup_{|E|,|G| \le \alpha} \wP \big[ \sup_{|t-t'|<\delta} \{|q_N(t)-q_N(t')|\}
> \qs \sqrt{\e} \,
 \big] & \le e^{-L'(\d,\e) N} \\
\sup_{|E|,|G| \le \alpha} \sup_N \wE \big[ \sup_{|t-t'|<\delta} \,
|q_N(t)-q_N(t')|^4 \,
 \big] & \le L'(\d,\e)^{-1} \,.
\end{align*}
The same holds also for $\wH_N(\cdot)$ (see \eqref{eq:lipHN}), and for
$\wQ_N(\cdot)$ (c.f. \cite[display preceding (2.18)]{BDG2}).
Such bounds yield the equi-continuity of $q_N(\cdot)$,
$\wQ_N(\cdot)$ and $\wH_N(\cdot)$ (a.s. and in expectation), from which 
we deduce the pre-compactness, first of $q_N,\wQ_N,\wH_N$,
then of $Q_N,\bar D_N,\bar E_N$
and finally of $\Upsilon_N$,$\Phi_N$,$\Phi_N^1$,$\Psi_N$, $\Psi^1_N$ 
(by the uniform moments control \eqref{eq:u-mombd} 
and the Arzela-Ascoli theorem). In particular, this way we have further 
established that for some $\widetilde{L}(\d,\e) \to \infty$ as $\d \to 0$, any $\e>0$ and 
$U_N \in \Ua_N$
\begin{equation}\label{eq:quant-equi}
\begin{aligned}
\sup_{|E|,|G| \le \alpha} 
\wP (\sup_{|s-s'|+|t-t'|<\d} |U_N(s,t)-U_N(s',t')|>\e) &\leq e^{-
\widetilde{L}(\d,\e) N}\,, \\
\sup_{|E|,|G| \le \alpha} \sup_N 
\sup_{|s-s'|+|t-t'|<\d} |\wE \, U_N(s,t)- \wE \, U_N(s',t')| 
&\leq \widetilde{L}(\d,\e)^{-1} \,.
\end{aligned}
\end{equation}

Turning to the self-averaging property \eqref{eq:asself},  similarly to
\cite[Proposition 2.4]{BDG2} our proof relies on the
following pointwise Lipschitz estimate on $\CL_{N,M}$ of \eqref{dfn:LMN}.
% of \eqref{dfn:LMN}.
\begin{lem}\label{U are Lipschitz} 
Let $\bx,\widetilde{\bx}$ be the two strong solutions
of \eqref{diffusion} constructed from $(\Bx_0,\BJ,\BB)$
and $(\widetilde{\Bx}_0,\widetilde{\BJ}, \widetilde{\BB})$, respectively.
If $(\Bx_0,\BJ,\BB)$
and $(\widetilde{\Bx}_0,\widetilde{\BJ},\widetilde{\BB})$ are both in 
$\CL_{N,M}$, then
we have the Lipschitz estimate for each $U_N \in \Ua_N$,
\begin{equation}\label{eq:lippr}
\sup_{s,t \le T}|U_N(s,t)- \widetilde{U}_N(s,t)| 
\le \frac{D(M,T)}{\sqrt{N}} 
\|(\Bx_0, \BJ, \BB) - (\widetilde{\Bx}_0,\widetilde{\BJ},\widetilde{\BB})\|\,,
\end{equation}
where the constant $D(M,T)$ depends only on $M$ and $T$ and not on $N$. 

Further, for $e_N(s) := N^{-1} \|\bx_s- \widetilde \bx_s\|^2_2$ any $N$ and $T$, 
if $\widetilde \BB = \BB$ and $(\widetilde \Bx_0,\widetilde \BJ) \to (\Bx_0,\BJ)$, then 
\begin{equation}\label{eq:diffeo}
\E\big[ \,  1 \wedge \| e_N \|_\infty \, |\,  \widetilde \BJ, \BJ, \widetilde \Bx_0, \Bx_0 \big] \to 0 \,.
\end{equation}
\end{lem}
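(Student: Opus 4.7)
The plan is to reduce every assertion to a single pathwise Gronwall estimate on the squared difference $e_N(s)=N^{-1}\|\bx_s-\widetilde{\bx}_s\|^2$ and then propagate it through the algebraic definitions of the functions in $\Ua_N$. Writing the integral form of \eqref{diffusion} for both $\bx$ and $\widetilde{\bx}$ and subtracting, $\bx_s-\widetilde{\bx}_s$ equals $(\Bx_0-\widetilde{\Bx}_0)+(\BB_s-\widetilde{\BB}_s)$ plus a drift integral that splits into three pieces: a radial term $f'(\widetilde K_N(u))\widetilde{\bx}_u-f'(K_N(u))\bx_u$, a coupling term $\nabla H_{\BJ}(\bx_u)-\nabla H_{\BJ}(\widetilde{\bx}_u)$, and a disorder-perturbation term $\nabla H_{\BJ}(\widetilde{\bx}_u)-\nabla H_{\widetilde{\BJ}}(\widetilde{\bx}_u)$. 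On $\CL_{N,M}$ of \eqref{dfn:LMN}, local Lipschitzness of $f'$ together with $K_N,\widetilde K_N\le M$ controls the first piece; a differentiated analog of \eqref{eq:lipH} (with the same Cauchy--Schwarz bound by $\|\BJ\|_\infty^N$) controls the second by $c(M)\|\bx_u-\widetilde{\bx}_u\|$; and the multilinearity of $H_{\BJ}$ in $\BJ$, combined with the matching between the weights \eqref{eq:vardef} and those in \eqref{eq:norm}, bounds the third in $\ell^2$ by $c(M)N^{-1/2}\|\BJ-\widetilde{\BJ}\|$. Squaring, Cauchy--Schwarz and Gronwall then produce $\sup_{s\le T}e_N(s)\le D(M,T)^2 N^{-1}\|(\Bx_0,\BJ,\BB)-(\widetilde{\Bx}_0,\widetilde{\BJ},\widetilde{\BB})\|^2$.

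With this master bound, \eqref{eq:lippr} is deduced function by function. The bilinear observables $C_N,\bar C_N,\chi_N,\bar\chi_N$ and the linear $q_N$ polarise into $N^{-1}$-scaled inner products, and a Cauchy--Schwarz split combined with $K_N,B_N\le M$ converts the master estimate into the desired $N^{-1/2}$-Lipschitz property. For $\bar A_N,\bar F_N,\wQ_N,\wH_N$ one additionally invokes \eqref{eq:lipH} and its gradient analog to deal with the $G^i$- and $H_{\BJ}$-factors, and separates out the linear-in-$\BJ$ part as before. The remaining $Q_N,\bar D_N,\bar E_N,\Upsilon_N,\Phi_N,\Phi_N^1,\Psi_N,\Psi_N^1$ are Lipschitz compositions of the foregoing with $f',\nu',\nu'',\bv,\bv'$, whose Lipschitz constants, on the compact image sets determined by $\CL_{N,M}$, depend only on $M$ and $T$.

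For \eqref{eq:diffeo}, assume $\widetilde{\BB}=\BB$ and let $(\widetilde{\Bx}_0,\widetilde{\BJ})\to(\Bx_0,\BJ)$. The disorder norms and the initial radii are uniformly bounded along the conditioning, so Proposition \ref{existN}, applied to each solution separately, ensures that for every $M$ large enough the stopping time $\tau_M=\inf\{t:K_N(t)\vee\widetilde K_N(t)\vee B_N(t)>M\}$ satisfies $\P(\tau_M\le T\mid\BJ,\widetilde{\BJ},\Bx_0,\widetilde{\Bx}_0)\le e^{-z(M)N}$ with $z(M)\to\infty$ as $M\to\infty$; on $\{\tau_M>T\}$ the gradient norm $G_N$ is automatically controlled by a polynomial in $M$ and $\|\BJ\|_\infty^N$, so $(\Bx_0,\BJ,\BB),(\widetilde{\Bx}_0,\widetilde{\BJ},\BB)\in\CL_{N,M'}$ for some $M'=M'(M,\|\BJ\|_\infty^N)$. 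Splitting $\E[1\wedge\|e_N\|_\infty\mid\cdot]\le\P(\tau_M\le T\mid\cdot)+D(M',T)^2 N^{-1}(\|\Bx_0-\widetilde{\Bx}_0\|^2+\|\BJ-\widetilde{\BJ}\|^2)$ and letting first $(\widetilde{\Bx}_0,\widetilde{\BJ})\to(\Bx_0,\BJ)$ and then $M\to\infty$ yields \eqref{eq:diffeo}.

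The main obstacle is the clean execution of the first step: bounding the multilinear disorder perturbation with exactly the correct $N^{-1/2}$ scaling so that the right-hand side of \eqref{eq:lippr} matches \eqref{eq:norm} weight-for-weight. A secondary subtlety, absent from \cite[Prop.~2.4]{BDG2}, is the extra first-coordinate drift $\sqrt N\,\qs\,\bv'(q_N)$ visible in \eqref{eq:integ}: its explicit $\sqrt N$ prefactor would be fatal were $\bv'$ not smooth, but since $|q_N|\le\qs\sqrt{K_N}\le\qs\sqrt M$ on $\CL_{N,M}$, the increment $\bv'(q_N)-\bv'(\widetilde q_N)$ is $O(|q_N-\widetilde q_N|)$ and the factor $\sqrt N$ is reabsorbed by the $N^{-1}$ in $e_N$.
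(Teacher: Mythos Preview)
Your argument is correct and follows the same strategy as the paper's proof: establish the pathwise Gronwall estimate $\|e_N\|_\infty\le D(M,T)^2N^{-1}\|(\Bx_0,\BJ,\BB)-(\widetilde{\Bx}_0,\widetilde{\BJ},\widetilde{\BB})\|^2$ (which is exactly \cite[Lemma~2.6]{BDG2}), then propagate it through the algebraic definitions of $\Ua_N$ (the $\Uao_N$ part being \cite[Lemma~2.7]{BDG2}, the rest being the paper's extension via the local Lipschitz property of $f',\nu'',\bv,\bv'$ on the compact range forced by $\CL_{N,M}$). For \eqref{eq:diffeo} you and the paper both localize on the event where $\|K_N\|_\infty+\|\widetilde K_N\|_\infty$ is bounded; the paper uses the moment control \eqref{eq:K-unif-bd} and bounded convergence, whereas you invoke the sharper exponential tail \eqref{eq:conc} via a stopping time---either works.

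One remark on your last paragraph: the lemma, as stated, concerns solutions of \eqref{diffusion}, not of \eqref{eq:integ}. The extra drift $\sqrt{N}\,\qs\,\bv'(q_N)$ is an artifact of the alternative representation \eqref{eq:integ} in which the conditional mean of $\BJ$ has been subtracted off and moved into the drift; in the formulation of the lemma one simply takes $\BJ$ to be a fixed (possibly conditional) realization and solves \eqref{diffusion} with no extra term. So the ``secondary subtlety'' you flag does not actually arise here. Your resolution of it is nonetheless correct, and the observation is relevant if one later reproves the estimate directly for \eqref{eq:integ}.
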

\begin{proof} For $U_N \in \Uao_N$ the bound \eqref{eq:lippr}
is precisely the statement of \cite[Lemma 2.7]{BDG2}, while 
for $U_N=q_N$ it follows upon taking the square-root 
of the bound 
\begin{equation}\label{eq:lipbound}
\| e_N  \|_\infty 
% \frac{1}{N} \sum_{1\le i\le N} |x^i_t-\widetilde{x}^i_t|^2
\le \frac{D_1 (M,T)}{N} \|(\Bx_0, \BJ, \BB) 
- (\widetilde{\Bx}_0,\widetilde{\BJ},\widetilde{\BB})\|^2
\end{equation}
from \cite[Lemma 2.6]{BDG2}. Further, while proving \cite[Lemma 2.7]{BDG2}
it is shown that on $\CL_{N,M}$ 
\[ 
\|G(\Bx_s)-\widetilde G(\widetilde{\Bx}_s)\|_2  \le
D_2(M,T)
\|(\Bx_0, \BJ, \BB) - (\widetilde{\Bx}_0,\widetilde{\BJ},\widetilde{\BB})\|
\]
(where $\widetilde{G}(\cdot):=
- \nabla H_{\widetilde \BJ}(\cdot)$, 
see \cite[Page 636]{BDG2}). Utilizing \eqref{eq:lipH} instead of 
\cite[(2.10)]{BDG2} yields the same bound for 
$\frac{1}{\sqrt{N}} |H_{\BJ}(\bx_s)-H_{\widetilde{\BJ}}(\widetilde{\bx}_s)|$.
Recall \eqref{dfn:LMN} that 
$\|q_N\|_\infty \le \qs \|K_N\|_\infty^{1/2} \le \qs \sqrt{M}$ 
on $\CL_{N,M}$, which thus
in view of \eqref{eq:Gtilde} for the locally Lipschitz $\bv'(\cdot)$, 
thus results with \eqref{eq:lippr} holding for $U_N=\wQ_N$ and
$U_N=\wH_N$. Similarly, 
having $f'(\cdot)$, $\nu''(\cdot)$ locally Lipschitz 
and $\|K_N\|_\infty \le M$ on $\CL_{N,M}$,
extends the validity of \eqref{eq:lippr} first to $U_N \in \{Q_N,\bar D_N,\bar E_N\}$,
then also to $U_N \in \{\Upsilon_N,\Phi_N,\Phi_N^1,\Psi_N,\Psi^1_N\}$. 

In case $\widetilde \BB = \BB$  we see from \cite[Proof of Lemma 2.6]{BDG2}
that \eqref{eq:lipbound} holds
% beyond $\CL_{N,M}$ 
when $\|\BJ\|^N_\infty + \| K_N \|_\infty + \| \widetilde K_N \|_\infty \le M$.
With $(\widetilde \Bx_0,\widetilde \BJ) \to (\Bx_0,\BJ)$, the \abbr{rhs} of \eqref{eq:lipbound} decays to zero
and $\widetilde K_N(0) + \|\widetilde \BJ\|^N_\infty$ is uniformly bounded. Such uniform boundedness implies 
in view of \eqref{eq:K-unif-bd} that 
% $\E[ \,\|\widetilde K_N\|^k_\infty \,  | \,  \widetilde \Bx_0,\widetilde \BJ  ]$ is uniformly bounded and
as $M \to \infty$, 
\[
\P ( \|\BJ\|_\infty^N + \| K_N \|_\infty + \|\widetilde K_N\|_\infty  > M \,  | \, \widetilde \BJ, \BJ, \widetilde \Bx_0, \Bx_0 ) \to 0 \,,
\]
uniformly in $(\widetilde \Bx_0,\widetilde \BJ )$, from which we deduce by bounded convergence 
that  \eqref{eq:diffeo} holds. 
\end{proof}

We next verify that $\wP$ satisfies the Lipschitz concentration of 
measure, as in \cite[Hypothesis 1.1]{BDG2}, uniformly 
over $|E|,|G| \le \alpha$.
\begin{prop}\label{Hyp1.1}
For some $C>0$, any $(E,G,\qs)$, 
% Lipschitz 
function 
$V: \Ea_{N} \mapsto \R$ of Lipschitz constant $K$ and all $\rho>0$, 
\begin{equation}\label{eq:Hyp1.1}
\wP \left\{ |V-\wE V| \geq\rho\right\} 
\leq C^{-1}\exp\left(-C\rho^2/K^2\right)\,.
\end{equation}
\end{prop}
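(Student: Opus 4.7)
The plan is to exploit the product structure $\wP = \mu_\bn^{q_o} \otimes \gamma_N^{(E,G,\qs)} \otimes P_N$ on $\Ea_N$, verifying Gaussian-type Lipschitz concentration for each factor with a constant uniform in $N$ and in $(E,G)$ with $|E|,|G|\le\alpha$. Since $V$ is $K$-Lipschitz in the combined norm \eqref{eq:norm}, its restriction to any single block of coordinates (with the other two frozen) is $K$-Lipschitz in the norm on that block. Standard tensorization, either by iterating single-factor concentration inequalities or by tensorizing log-Sobolev inequalities and invoking Herbst's argument, then yields \eqref{eq:Hyp1.1} with overall constant $C^{-1}$ no worse than a small multiple of the worst single-factor constant.

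For the first factor, when $|q_o|<\qs$ the measure $\mu_\bn^{q_o}$ is uniform on the sphere of codimension two in $\R^N$ consisting of vectors with $x^1=q_o\sqrt N/\qs$ and $\sum_{i\ge 2}(x^i)^2=N(1-q_o^2/\qs^2)$; when $|q_o|=\qs$ it is a point mass and there is nothing to prove. L\'evy's classical inequality for the uniform measure on $S^{n-1}(r)$ gives Gaussian concentration with a constant proportional to $(n-1)/r^2$. In our setting $(N-3)/[N(1-q_o^2/\qs^2)]$ is bounded below uniformly in $N$ by $(1-q_o^2/\qs^2)^{-1}/2$ for $N$ large, so the required bound holds with a constant depending only on $\qs$ and $q_o$.

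For the second factor, $\gamma_N^{(E,G,\qs)}$ is the Gaussian law obtained by conditioning on the linear event $\cpt(E,G,\bn)$: the effect is to shift the mean by the deterministic vector $\bar\BJ_{E,G}$ of \eqref{eq:Lip-barJ}, irrelevant for Lipschitz concentration, and to replace the covariance by the corresponding Schur complement, which is dominated in the Loewner order by the unconditional covariance and which does \emph{not} depend on the values $(E,G)$ but only on the linear subspace cut out by the event. In the norm of \eqref{eq:norm}, the unconditional covariance of the standardized variables $N^{(p-1)/2}J_{i_1\cdots i_p}$ has variances $p!/\prod_k l_k!\le m!$ with independent entries, so its operator norm is at most $m!$. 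Borell's Gaussian concentration inequality then delivers the required bound for this factor with a constant depending only on $m$.

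For the third factor, $P_N$ is $N$-dimensional Wiener measure on $\Ca([0,T],\R^N)$, whose Cameron-Martin space consists of absolutely continuous $h$ with $h(0)=0$ and squared norm $\|h\|_H^2=\int_0^T\|\dot h(t)\|_2^2\,dt$. By Cauchy--Schwarz, $\sup_{t\le T}\|h(t)\|_2\le\sqrt T\,\|h\|_H$, so any function $K$-Lipschitz in the sup-norm component of \eqref{eq:norm} is $K\sqrt T$-Lipschitz with respect to the Cameron-Martin norm, and Borell's isoperimetric inequality on abstract Wiener space yields the concentration bound with constant depending only on $T$. The only subtle point is the uniformity in $(E,G)$ demanded by the statement: this is automatic for the first and third factors (which do not depend on $(E,G)$) and, for the second, follows from the observation that conditioning a Gaussian on a linear event alters only the mean, not the covariance operator. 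I therefore expect the main (mild) obstacle to be merely the bookkeeping required to check that the operator norm of the conditional disorder covariance in the scaled norm of \eqref{eq:norm} is $O(1)$ uniformly in $N$, which reduces to the elementary variance computation in \eqref{eq:vardef}.
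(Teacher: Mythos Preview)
Your argument is correct; it proceeds by verifying sub-Gaussian Lipschitz concentration for each of the three tensor factors separately and then tensorizing. The paper's proof is organized somewhat differently: rather than treating the three factors symmetrically, it conditions first on $\Bx_0$ and then handles the remaining Gaussian pair $\gamma_N^{(E,G,\qs)}\otimes P_N$ in a single step. For the conditional disorder, instead of the Schur-complement domination you invoke, the paper writes $\BJ$ under $\gamma_N^{(E,G,\qs)}$ as $\bar W(\BJ')$ for the orthogonal projection $\bar W$ onto the affine subspace cut out by $\cpt(E,G,\bn)$ and $\BJ'$ unconditional; since $\bar W$ is $1$-Lipschitz, $V\circ W$ inherits the Lipschitz bound and one may appeal to Gaussian concentration directly for the unconditional $(\BJ',\BB)$. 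The Brownian factor is absorbed into this joint Gaussian step (the reference to \cite{lesToulousains} covers the abstract-Wiener-space form of Borell's inequality that you spell out), whereas you isolate it and make the $\sqrt T$ embedding of the sup norm into the Cameron--Martin norm explicit. Your three-factor route is more transparent about where the $T$-dependence enters, while the paper's two-step conditioning is marginally shorter; both deliver the same conclusion with a constant $C$ independent of $N$ and of $(E,G)$.
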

\begin{proof} 
Assume first that $\wE V=0$. Recall that 
$\wP = \mu_{\bn}^{q_o} \otimes \gamma_N^{(E,G,\qs)} \otimes P_N$. 
Denoting a generic point in $\Ea_{N}$ by $(\Bx_{0},\BJ,\BB)$,
let $\E_{\Bx_0}$ denote the expectation \abbr{wrt} $\mu_{\bn}^{q_o}$
and the variable $\Bx_{0}$ only, and for fixed $\Bx_{0}$,
let $\wP_{\BJ,\BB}=\gamma_N^{(E,G,\qs)} \otimes P_N$. By 
conditioning on $\Bx_{0}$,
\begin{equation}
\begin{aligned}
\wP (V>\rho) & 
% = \E_{\Bx_0}\wP_{\BJ,\BB}(V>\rho)
% =\E_{\Bx_0}\wP_{\BJ,\BB}(V-\wE_{\BJ,\BB}V>\rho-\wE_{\BJ,\BB}V)\\ & 
\leq\E_{\Bx_0}\wP_{\BJ,\BB}(V-\wE_{\BJ,\BB} V>\rho/2)+
\P_{\Bx_0}(\wE_{\BJ,\BB}V>\rho/2).
\end{aligned}
\label{eq:0302-01}
\end{equation}
For any fixed $\Bx_{0}$, $(\BJ,\BB)\mapsto V(\Bx_{0},\BJ,\BB)$
has Lipschitz constant $K$ \abbr{wrt} the norm 
\begin{equation*}
\|(\BJ, \BB)\|^2=  \sum_{p=2}^m \;\; \sum_{1 \leq i_1 \leq \ldots \leq i_{p} \leq N} 
(N^{\frac{p-1}{2}}J_{i_1\cdots i_{p}})^2
+ \sup_{0 \leq t \leq T}\sum_{i=1}^N(B_t^i)^2.
\end{equation*}
Next, set $\P_{\BJ,\BB}:=\gamma_{N}\otimes P_{N}$
for the unconditional 
% centered 
Gaussian law $\gamma_N$ of $\BJ$,
and $W(\Bx_{0},\BJ,\BB):=(\Bx_{0},\bar{W}(\BJ),\BB)$,
for the orthogonal projection $\bar{W}$ to the affine subspace of
$\mathbb{R}^{d(N,m)}$ defined by
% the event 
$\cpt(E,G,\bn)$.
The composition $V\circ W$ necessarily has at most the Lipschitz
constant $K$. Hence, for some $C>0$, any $N$, $V(\cdot)$, $\rho>0$ and 
all $\Bx_{0}$, by the concentration of measure of the Gaussian
measure (see, e.g. \cite{lesToulousains}),
\begin{align*}
\wP_{\BJ,\BB}(V-\wE_{\BJ,\BB}V>\rho/2)  =\P_{\BJ,\BB}(V\circ W-\E_{\BJ,\BB}V\circ W>\rho/2)
\le C^{-1}\exp(-C\rho^2/K^{2})\,.
\end{align*}
Further, by Jensen's inequality,
%\[
%|\wE_{\BJ,\BB}V(\Bx_{0},\BJ,\BB)-\wE_{\BJ,\BB}V(\widetilde{\Bx}_{0},\BJ,\BB)|\leq\max_{\BJ,\BB}|V(\Bx_{0},\BJ,\BB)-
%V(\widetilde{\Bx}_{0},\BJ,\BB)|\,.
%\]
%Thus, 
$\Bx_{0}\mapsto\wE_{\BJ,\BB} V
%(\Bx_{0},\BJ,\BB)
$
has Lipschitz constant $K$ \abbr{wrt} the Euclidean norm on $\mathbb{R}^{N}$.
Moreover, $\E_{\Bx_{0}}\wE_{\BJ,\BB}V=\wE V=0$, so by the 
concentration of measure of the uniform measure on the sphere
\cite[Theorem 1.7.9]{Levyinequality}, for some $C>0$ and any $N$, $V(\cdot)$, 
$\rho>0$,
\[
\P_{\Bx_{0}}(\wE_{\BJ,\BB}V>\rho/2)<C^{-1}\exp(-C\rho^{2}/K^{2})\,.
\]
Combining the above we deduce from (\ref{eq:0302-01}) that for some $C>0$
any $K$-Lipschitz $V$ and $\rho>0$, 
\[
\wP (V>\rho) \le C^{-1}\exp(-C\rho^{2}/K^{2}) \,.
\]
Considering this bound for $\pm (V-\wE V)$ yields \eqref{eq:Hyp1.1}.
\end{proof}

Equipped with Lemma \ref{U are Lipschitz} and Proposition \ref{Hyp1.1} 
we establish  \eqref{eq:asself} via the same reasoning as in  
\cite[proof of Proposition 2.4]{BDG2}. Specifically, fixing $(s,t) \in [0,T]^2$, 
we use \cite[Lemma 2.5]{BDG2} to extend (thanks to \eqref{eq:BDG2-217}),
the tail control of Proposition \ref{Hyp1.1} to $V=U_N(s,t)$ for $U_N$ satisfying 
only \eqref{eq:u-mombd} and \eqref{eq:lippr}. With 
constants $C$, $K$, $M(L)$, $D=D(M(L),T)$ in \cite[(2.21)]{BDG2} which are
independent of $s,t$, $\rho$, $N$ (and uniform over $|E|,|G| \le \alpha$),
we get by the union bound that \eqref{eq:asself} holds whenever
the supremum is restricted to $s,t$ in some (arbitrary) finite 
subset $\Aa$ of $[0,T]^2$. The preceding
quantitative equi-continuity control of \eqref{eq:quant-equi},
further allow for strengthening to the full summability result 
\eqref{eq:asself} by considering a finite $\d$-net
$\Aa$ of $[0,T]^2$ (say with $\d>0$ small, so  
$\widetilde{L}(2\d,\rho/3) > 3/\rho$).

\subsection{Proof of Proposition \ref{use-CPs}.}\label{sec:use-CPs}
Under both $\wP$ and $\Ps$ the vector $\bJ$ has the Gaussian law $\P_\BJ$ 
of independent coordinates, conditioned on $\cpt_\star$. Indeed, 
the only difference between $\wP$ and $\Ps$ is that $\wP$ imposes on $\BJ$ 
an \emph{additional} conditioning via $\cpt_1:=\{H_{\BJ}(\bn)=\partial_{x^1} H_{\BJ}(\bn)=0\}$.
Having a conditional law for $\BJ$ enters twice throughout the 
whole derivation of Proposition \ref{tight-self} (via Propositions \ref{existN} and 
\ref{Hyp1.1}):
first in upgrading \eqref{eq:B7-BDG2} from $\P$ to $\wP$ 
via Andreson's inequality, then in proving Proposition \ref{Hyp1.1}
by representing the conditional disorder as $\bar{W}(\BJ)$ 
(for some orthogonal projection $\bar{W}$). Both arguments are
applicable also for $\Ps$ (namely, without 
conditioning on $\cpt_1$), hence so are all the conclusions 
of Proposition \ref{tight-self} (and of Proposition \ref{existN}).

Turning to \eqref{eq:to-CPs}, we set 
$\widetilde{J}_p := N^{\frac{p-1}{2}} J^{(p)}_{\{1 \cdots 1\}}$, noting 
that $\cpt_\star$ is independent of the standard Gaussian vector 
$\underline{\widetilde{J}} := (\widetilde{J}_p, 2 \le p \le m)$, whereas
\begin{equation}\label{eq:cpt1}
\cpt_1 = \bigg\{\underline{\widetilde{J}} : \sum_{p=2}^m b_p \widetilde{J}_p \qs^p =
\sum_{p=2}^m b_p p \widetilde{J}_p \qs^{p-1} = 0 \bigg\} \,.
\end{equation}
Denoting by $\widetilde{W}$ the orthogonal projection 
sending $\underline{\widetilde{J}}$ to the linear subspace 
%of $\R^{m-1}$ 
determined by \eqref{eq:cpt1}, leaving the remainder of 
$(\bx_0,\BJ,\BB)$ unchanged, we thus have that 
% as in the proof of Lemma \ref{Hyp1.1}
$\wE V = \Est V \circ \widetilde{W}$
for any $V: \Ea_N \mapsto \R$. Further, with 
\[
\| \widetilde{W}(\bx_0,\BJ,\BB)- (\bx_0,\BJ,\BB) \| \le  
\|\underline{\widetilde{J}}\| \le \frac{\sqrt m}{\sqrt N} \|\BJ\|^N_\infty \,,
\]
we deduce from \eqref{eq:lippr} that when $(\Bx_0,\BJ,\BB)$
and $\widetilde{W}(\Bx_0,\BJ,\BB)$ are both in $\CL_{N,M}$
\begin{equation}\label{eq:lipp-psi}
\sup_{E,G} \sup_{s_j,t_j \le T} \|\BZ_N \circ \widetilde{W} - \BZ_N\|_2
\le \frac{D'}{N} \,,
\end{equation}
where $D':= \sqrt{\ell m} D(M,T) M$. 
With $|\Psi(z)| \leq M' \|z \|_k^k$ and
$c_r$ denoting the finite Lipschitz constant of $\Psi(\cdot)$
(with respect to $\|\cdot\|_2$), on the compact set 
$\Gamma_r := \{ z: \|z\|_k \le r\}$, we thus have that for any 
$E,G$, $M,r<\infty$ and $s_j,t_j \le T$,
\begin{align*}
& |\wE \Psi(\BZ_N) - \Est \Psi(\BZ_N)| \le 
\Est |\Psi(\BZ_N \circ \widetilde{W}) - \Psi(\BZ_N)| \\
&\le M' \wE [\|\BZ_N\|_k^k ({\bf 1}_{\CL_{N,M}^c} + {\bf 1}_{\|\BZ_N\|_k > r})]
+ M' \Est [\|\BZ_N\|_k^k ({\bf 1}_{\CL_{N,M}^c} + {\bf 1}_{\|\BZ_N\|_k >r})]
+ c_r \frac{D'}{N} \,,
\end{align*}
The last term on the \abbr{rhs} vanishes when $N \to \infty$. 
Recall \eqref{eq:u-mombd}, that both
$\wE \|\BZ_N\|_k^{2k}$ and $\Est \|\BZ_N\|_k^{2k}$ are bounded,
uniformly over $|E|,|G| \le \alpha$ and $s_j,t_j \le T$.  
Thus, by Cauchy-Schwartz, 
considering \eqref{eq:BDG2-217}
for $\wP$ and $\Ps$, the contribution to the \abbr{rhs} 
from the pair of terms with $\CL_{N,M}^c$ also vanishes as $N \to \infty$.
Now, to arrive at \eqref{eq:to-CPs}, simply combine \eqref{eq:u-mombd} 
% (for $\wP$ and $\Ps$), 
with Markov's inequality, to deduce that $\wP(\|\BZ_N\|_k > r)+\Ps(\|\BZ_N\|_k > r) \to 0$
as $r \to \infty$, uniformly in $N$, $|E|,|G| \le \alpha$ and $s_j,t_j \le T$.
Finally, combining \eqref{eq:self-av-Psi} and \eqref{eq:to-CPs} we deduce that
\begin{equation}\label{eq:self-av-star}
\lim_{N \to \infty} 
\sup_{|E|,|G| \le \alpha} \sup_{s_j,t_j} |\Est \Psi(\BZ_N) - \Psi(\Est \BZ_N)| = 0 
\end{equation}
whenever the coordinates of $\BZ_N$ are from $\Ua_N$. Clearly,   
$\Est | U_N(\cdot|\tau) - \Est U_N| \le \Est |U_N - \Est U_N|$ and 
$\Est U_N(\cdot|\tau) = \Est U_N$ for any $U_N \in \Ua_N$, $\tau \in [0,T]$,
thereby extending the 
validity of \eqref{eq:self-av-star} to coordinates of $\BZ_N$ from 
$\Uas_N$.

\subsection{Proof of Proposition \ref{uniqueness}.}\label{sec:unique}
Fixing $T<\infty$ note that $H(\cdot)$ does not affect $(R,C,q,K)$.
With $H(\cdot)$ uniquely determined by $(R,C,q)$
via \eqref{eqH}, it suffices to prove the uniqueness of the solution 
$(R,C,q,K)$ of the reduced system 
(S):=(\ref{eqR},\ref{eqC},\ref{eqq},\ref{eqZ}). To this end, fixing 
two solutions $(R,C,q,K)$, $(\tilde R,\tilde C,\tilde q,\tilde K)$,
of (S) at $\beta=1$ 
of the same boundary condition (BC):=(\ref{bcR},\ref{bcC},\ref{bcQ}), 
let
\[
\underline{\Delta U}
:= (\Delta R,\Delta C,\Delta q,\Delta K) = |(R,C,q,K)-(\tilde R,\tilde C,\tilde q,
\tilde K)|\,.
\]
From (BC) we have that $\Delta C(s,s) = \Delta K(s)$ and 
%the zero boundary conditions
$\Delta R(s,s) \equiv 0$,
$\Delta K(0) = \Delta q(0)=0$. 
Denoting all constants by $M$ (which may depend on $T$ and the 
uniform bound on both solutions), even though
they may change from line to line, we arrive at 
$\underline{\Delta U} \equiv 0$ by adapting the
Gronwall's type argument leading to \cite[Proposition 4.2]{BDG2}.
To this end, \eqref{eqR} yields, exactly as in 
\cite[(4.9)]{BDG2} that for all $(s,t) \in \Delta_T$,
\begin{align}\label{ineqR}
\Delta R (s,t) & \le M \int_t^s \Delta K(u) du +
M \int_{t\le t_2\le t_1\le s} \Delta C(t_1,t_2)
dt_1 dt_2 := I_2(s,t) + I_8(s,t) \,.
\end{align}
Next, integrating \eqref{eqq} yields that
\begin{align*}
q(t) = q(0) -& \int_0^t f'(K(u)) q(u) du  + \int_0^t \qs^2 \bv'_\star (q(u)) du \\
+& \int_0^t du \int_0^u R(u,v) \Big[ q(v) \nu''(C(u,v)) - 
 \frac{\qs^2 \nu'(q(v)) \nu''(q(u))}{\nu'(\qs^2)}
\Big] dv 
 \,.
\end{align*}
The same identity holds for $(\tilde R,\tilde C,\tilde q,\tilde K)$. With
$f'(\cdot)$, $\bv'_\star(\cdot)$ locally Lipschitz, considering the
difference between that identity for our two uniformly bounded 
on $\D_T$ solutions of (S), yields that
\begin{equation*}
%\label{ineqq}
\Delta q(t) \le M \Big[ \int_0^t \Delta q(u) du 
+ \int_0^t du \int_0^u \Delta R(u,v) dv
+ \int_0^t du \int_0^u \Delta C(u,v) dv 
+ \int_0^t \Delta K(u) du \Big] \,.
\end{equation*}
By Gronwall's lemma, upon suitably increasing the value of $M$ 
we can eliminate the first term on the \abbr{rhs}, whereas  
% of \eqref{ineqq}. 
by \eqref{ineqR} the second term on the \abbr{rhs}
% of \eqref{ineqq} 
is controlled by the remaining two terms. Hence,
\begin{equation}\label{ineqq2}
\Delta q(t) \le I_2(t,0)+I_8(t,0)  
%   M \int_0^t \Delta K(u) du 
% + M \int_{0\le t_2\le t_1\le t} \Delta C(t_1,t_2) dt_1dt_2
\,, \qquad \forall t \in [0,T] \,.
\end{equation} 
Likewise, integrating \eqref{eqC} yields that each solution 
of (S) satisfies for $s \ge t$,
\begin{align}
C(s,t)=K(t)&-\int_t^s f'(K(u)) C(u,t) du
+\int_t^s du \int_0^t dv \nu'(C(u,v)) R(t,v)
\nonumber \\
&+
\int_t^s du\int_0^t dv R(u,v) \nu''(C(u,v)) C(t,v)
+\int_t^s du \int_t^u dv R(u,v) \nu''(C(u,v)) C(v,t) \nonumber \\
&- q(t) \int_t^s du \frac{\qs^2 \nu''(q(u))}{\nu'(\qs^2)}
 \int_0^t dv R(u,v) \nu'(q(v)) \nonumber \\
& - \int_t^s du \frac{\nu'(q(u))}{\nu'(\qs^2)} \int_0^t dv \nu'(q(v))  R(t,v)   
+  q(t) \int_t^s \bv'_\star (q(u)) du  \,.
\label{eq:extra-q}
\end{align}
By \eqref{ineqq2}, the terms on the \abbr{rhs} which involve $q(\cdot)$, 
contribute to $\Delta C(s,t)$ at most
% using the fact that the bound I_0(s,0)+I_2(s,0) on \Delta q(s) increases in s.
\begin{align*}
& M \Big[ \Delta q(t) + \int_t^s \Delta q(u) du + 
\int_t^s du \int_0^t \Delta q(v) dv + \int_t^s du\int_0^t dv \Delta R (u,v) 
+ \int_t^s du \int_0^t dv \Delta R(t,v) \Big] 
\\ & 
\le I_2(s,0) + I_8(s,0) + I_7(s,t)+I_6(s,t) 
\end{align*}
(see \eqref{b1} for $I_6$ and $I_7$). Utilizing 
\cite[(4.10)]{BDG2} to bound the effect on $\Delta C(s,t)$ from the
rest of \eqref{eq:extra-q}, yields
\begin{align}
\Delta C (s,t) \le 
& M \Big[ \Delta K(t)+\int_0^s \Delta K(u)du
+\int_t^s \Delta C (u,t) du
+\int_t^s du\int_0^t dv \Delta C (u,v)\nonumber\\
&+\int_t^s du\int_0^t dv \Delta C (t,v)
+\int_t^s du\int_0^t dv \Delta R (t,v)
+\int_t^s du\int_0^t dv \Delta R(u,v)\nonumber\\
&+\int_0^s du\int_0^u dv \Delta C (u,v)+
\int_t^sdu\int_t^u dv \Delta C (v,t)+
\int_t^sdu\int_t^u dv \Delta R (u,v)\Big]\nonumber\\
:= & I_1(s,t)+I_2(s,0)+I_3(s,t) + \cdots + I_7(s,t) + I_8(s,0) + I_9(s,t) + I_{10}(s,t) \,.\label{b1}
\end{align}
Similarly, by \req{eqZ} we have for each solution of (S) and any $t \in [0,T]$, 
\begin{align}\label{eq:extra-K}
K(t) - K(0) - t &= - 2\int_0^t f'(K(u)) K(u)du +
2\int_0^t du \int_0^u dv \psi(C(u,v)) R(u,v) \nonumber \\
& - \frac{2}{\nu'(\qs^2)} \int_0^t du \, \psi(q(u)) \int_0^u dv \, \nu'(q(v)) R(u,v)  
+  2 \int_0^t q(u) \bv'_\star (q(u))  du \,. 
\end{align}
Clearly, the terms involving $q(\cdot)$ on the \abbr{rhs} 
contribute to $\Delta K(t)$ at most
$
M \int_0^t \Delta q(u) du 
% + \int_0^t du \int_0^u \Delta q(v) dv
 + I_{10}(t,0)$.  Further, with $\Delta K(0)=0$, utilizing \eqref{ineqq2} 
and bounding the effect of the rest of \eqref{eq:extra-K} as in 
 \cite[(4.11)]{BDG2}, yields here 
\begin{eqnarray}
\Delta K (t) \le I_2(t,0) + I_8(t,0) + I_{10}(t,0) \,.
\label{ineqK}
\end{eqnarray}
We follow the derivation of \cite[(4.13)]{BDG2}, by first plugging \eqref{ineqR} 
into \eqref{ineqK} to eliminate $I_{10}(t,0)$, then by Gronwall's lemma
eliminating $I_2(t,0)$. 
% (while increasing $M$), 
% $h(t)\le g(t)+A\int_0^t h(s)ds$ for $A, h \ge 0$ and $g \ge 0$ non-decreasing, 
% implies $h(t)\le e^{At} g(t)$
% here take h=\Delta K, A=M, g = I_8(t,0).
Setting $\mathsf{D} (s):=\int_0^s \Delta C (s,v) dv$, we thereby
get, as in \cite[(4.13)]{BDG2}, that 
\begin{equation}
\Delta K (t) \le  I_8(t,0) = M \int_0^t \mathsf{D}(u) du  \,.
\label{ineqK4}
\end{equation}
Plugging \eqref{ineqK4}
into \eqref{ineqR} and \eqref{ineqq2}, yields in turn that 
\begin{equation}\label{ineqR4}
\Delta R(s,t) \le M \int_0^s \mathsf{D}(u) du \,, \qquad
\Delta q(s) \le M \int_0^s \mathsf{D}(u) du \,, \qquad \forall (s,t) \in \D_T \,. 
\end{equation}
With \eqref{b1} differing from \cite[(4.10)]{BDG2} only in having
$I_2(s,0)+I_8(s,0)$ instead of $I_2(s,t)+I_8(s,t)$, upon integrating
both sides of \eqref{b1} with respect to $t \in [0,s]$, 
we deduce from \eqref{ineqK4}-\eqref{ineqR4}, exactly as in \cite[Page 652]{BDG2}, that 
\[
\mathsf{D} (s) \le M \int_0^s \mathsf{D}(u) du \,, \qquad \forall s \in [0,T] \,.
\]
Recall that $s \mapsto \mathsf{D}(s)$ is non-negative and non-decreasing. 
Hence, by yet another Gronwall argument we conclude that $\mathsf{D} \equiv 0$. 
In particular, $\Delta C(s,t) = 0$ for almost every $(s,t) \in \D_T$, while 
from \eqref{ineqK4}--\eqref{ineqR4} 
$$
\Delta K \equiv 0, \qquad \Delta R \equiv 0, \qquad 
\Delta q \equiv 0, \qquad \mbox{on} \quad \D_T \,.
$$
Going back to 
\eqref{b1}, this suffices for its \abbr{rhs} to be zero at any 
$t \le s \le T$, thereby having $\Delta C \equiv 0$ on $\D_T$.

\section{Proof of Propositions \ref{prop-macro} and \ref{lem-diff}}\label{sec:der+diff}

\subsection{Proof of Proposition \ref{prop-macro}}\label{sec:der}
Consider the limit $N \to \infty$ of the $\Ps$-expectation of the identities
\[
C_N(s,t) = \bar C_N(s,t) + \frac{q_N(s) q_N(t)}{\qs^2} \,, \qquad
\chi_N(s,t) = \bar \chi_N(s,t) + q_N(s) \frac{B_t^1}{\qs \sqrt{N}} \,.
\]
From \eqref{eq:self-av-star} we see 
that any limit point $(C,\chi,q,\bar C,\bar \chi)$
must satisfy \eqref{eq:ext} (with $\chi=\bar \chi$ as both $\E_\star [q^2_N(s)]$ 
and $\E [|B_t^1|^2]$ are bounded uniformly 
in $N$ and 
on $[0,T]$). The $\Ps$-expectation of 
\eqref{eq:integ} at $i=1$, amounts in view of \eqref{eq:qvdef}, to  
$q_N^a(s) = q_N^a(0) + \int^s_0 Q_N^a(u)  du$,
from which, by utilizing again \eqref{eq:self-av-star} as $N \to \infty$, 
we deduce the validity of the \abbr{rhs} of 
\eqref{eq:qQdef}. By the same reasoning, each  
limit point of the $\Ps$-expectation of \eqref{eq:qvdef}--\eqref{eq:PsiN-val} 
must satisfy \eqref{eq:qQdef}--\eqref{eq:Psi1}, 
respectively. Observing that $\bar \chi_N^a(0,t)=0$, and having 
as in \cite[Eqn. (3.2)-(3.3)]{BDG2}, 
\begin{align}\label{eq:BDG32-33}
\bar C_N(s,t) &= \bar C_N(s,0) + \bar \chi_N(s,t)  
+ \int_0^t \bar D_N(u,s) d u \,, \nonumber \\
\bar \chi_N(s,t) &= \bar \chi_N(0,t) + \frac{1}{N} \sum_{i=2}^N B_s^i B_t^i  + 
\int_0^s \bar E_N(u,t) d u 
\end{align}
(recall the definition \eqref{eq:dedef} of $\bar D_N$ and $\bar E_N$), we likewise 
deduce that  \eqref{eq:C1-chi1} holds. 
Recall that by the $\Ps$-independence of the standard Brownian increments
\begin{equation}\label{eq:FEchi-triang}
U_N^a(s,0)=0, \qquad U_N^a(s,t)=U_N^a(s,t \wedge s),
\qquad \qquad U_N \in \{ \bar F_N,\bar \chi_N,E_N \} 
\end{equation} 
(c.f. \cite[Page 638]{BDG2}),
hence our stated boundary conditions on the limit point.
% With $\BB_0=0$ so $\bar F_N^a(s,0)=0$, for all $N$ and $s$. Further,
% with $\Gga_s=\sigma(\BJ,\Bx_0,\BB_u, u \leq s)$, we have by
% \req{eq:hfdef} that for all $N$ and  $t > s$,
% $$\bar F_N^a(s,t)=\bar F_N^a(s,s) + \oneN \sum_{i=2}^N \E [ G^i(\Bx_s) 
% \E (B^i_t-B^i_s | \Gga_s)] = \bar F_N^a(s,s) $$
% (recall the independence of $\BB_t-\BB_s$ and $\Gga_s$). By the same 
% reasoning also $\bar \chi_N^a(s,0)=0$, 
% $E_N^a(s,0)=0$, $E_N^a(s,t)=E_N^a(s,s)$ and $\bar \chi_N^a(s,t)=\bar \chi_N^a(s,s)$
% for all $N$ and any $t \geq s \geq 0$ (cf. \req{eq:chidef} and 
% \req{eq:dedef}). Likewise, by definition $C_N^a(t,s)=C_N^a(s,t)$.
The key to the proof is Proposition \ref{comp1}, which 
approximates 
$(\wQ_N^a,\bar A_N^a,\bar F_N^a, \wH_N^a)$
for $N \to \infty$, 
by a combination of functions from $\Ua_N^a$ 
(where expressions involving $\nu$, $\nu'$ and $\nu''$ 
% in these equations (and hence in Theorem \ref{thm-macro}),
emerge via the covariance 
kernels of Lemma \ref{kval}).
% by integrating over the conditional disorder $\BJ$, 
% given an independent path $\Bx_t$). 
Indeed, with Proposition \ref{comp1} replacing \cite[Prop. 3.1]{BDG2}, 
we get \eqref{eqQ1}--\eqref{eqwH} (and thereby establish
Proposition \ref{prop-macro}), by following the derivation of 
\cite[Prop. 1.3]{BDG2}, while utilizing \eqref{eq:self-av-star} 
and the pre-compactness results of Proposition \ref{tight-self} (for $\Ps$), 
instead of \cite[Cor. 2.8]{BDG2} and \cite[Prop. 2.3]{BDG2}, respectively.
%Let $\Phi(C,\bar \chi,q,\bar D,\bar E): \Ca ([0,T]^2)^5 \to \Ca ([0,T]^2)^5$ denote the 
%difference between the two sides of equations \req{eqC1}--\req{eqE}.
%In view of the above boundary and symmetry conditions, 
%comparing \req{eqCa}--\req{eq:apxf} with \req{eqC1}--\req{eqE} we
%see that $\Phi(C_N^a,\bar \chi_N^a,q_N^a,\bar D_N^a,\bar E_N^a) \to 0$ as $N \to \infty$,
%uniformly on $[0,T]^2$. It is not hard to check that $\Phi(\cdot)$ is 
%continuous with respect to the topology of uniform convergence, 
%hence $\Phi(\cdot)=0$ at 
%any limit point of $(C_N^a,\bar \chi_N^a,q_N^a,\bar D_N^a,\bar E_N^a)$, which 
%also necessarily satisfies the same boundary and symmetry conditions,
%thus completing the proof of Proposition \ref{prop-macro}
% Our next proposition, approximates the terms
% $\bar A_N^a$ and $\bar F_N^a$ which we need in order to compute the limits
% of \req{eqCa} and \req{eqchi} as $N \to \infty$.
\begin{prop}\label{comp1}
Set $a_N \simeq b_N$ when $|a_N(\cdot)-b_N(\cdot)| \to 0$ 
as $N \to \infty$, uniformly on $[0,T]^2$. Then, for $\tau=t \vee s$, 
\begin{align}\label{eq:apxv}
V_N^a(s) \simeq &\;
\Phi_N^{1,a}(s,s) - \Phi_N^{1,a}(s,0) - \int_0^s \Psi_N^{1,a}(s,u) du 
\,,\\
\label{eq:apxa}
\bar A_N^a(t,s)\simeq & \;  
 \bar C_N^a(s,\tau) \Phi^a_N(t,\tau) - \bar C_N^a(s,0) \Phi^a_N(t,0)
- \int_0^\tau \Big\{ \bar D_N^a(s,u) \Phi^a_N (t,u) + \bar C_N^a (s,u) 
\Psi^a_N(t,u) \Big\} du 
\,,\\
\label{eq:apxf}
 \bar F_N^a (s,t)
 \simeq & \; \bar \chi_N^a (s,t) \Phi^a_N (s,s)  
- \int_0^{t \wedge s} \Phi^a_N(s,v) dv  
- \int_0^s \Big\{
 \Phi^a_N(s,u) \bar E_N^a(u,t) +  \bar \chi_N^a(u,t) \Psi^a_N(s,u) \Big\} du  \;,
 \\
 \label{eq:apxh}
\wH_N^a(s) \simeq & \; \Upsilon^a_N (s,s) - \Upsilon^a_N(s,0)   
 - \int_0^s \Big\{\bar D^a_N(s,u) \Phi^a_N(u,s) + \frac{Q^a_N(u)}{\qs^2} 
 \Phi_N^{1,a}(u,s) \Big\} du  \,.
\end{align}  
\end{prop}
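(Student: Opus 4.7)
The plan is to adapt \cite[Section 3]{BDG2} to the present conditional setting, where $\BJ$ under $\Ps$ is Gaussian with the modified covariance \eqref{eq:mod-nu} (at $\bsigma=\bn$), and the SDE \eqref{eq:integ} carries the extra drift $\sqrt{N}\qs \bv'(q_N(u))$ in the first coordinate. The analytical engine is Gaussian integration by parts for $\BJ$ (conditional on $\cpt_\star$), together with the conditional covariance kernel $\widetilde k(\bx_t,\bx_s) = N \Upsilon_N(s,t)$ from Lemma \ref{kval}. Its mixed derivatives $\partial_{x^i_t}\partial_{x^j_u}\widetilde k = k^{ij}_{t,u}(\bx)$ produce exactly the $\nu''(C_N)$ and $\nu'(C_N)$ factors of $\Phi_N, \Phi_N^1, \Upsilon_N$, while the subtracted term in \eqref{eq:mod-nu} is responsible for the $\nu'(q_N(\cdot))\nu'(q_N(\cdot))/\nu'(\qs^2)$ corrections inside those same kernels.

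First I would derive \eqref{eq:apxa} and \eqref{eq:apxf} by writing $\bar A_N(t,s)$ and $\bar F_N(s,t)$ as $N^{-1}\sum_{i\geq 2} G^i(\bx_t) Y^i_\cdot$ with $Y=x$ or $B$, expressing $Y^i_\cdot$ through \eqref{eq:integ}, and applying Gaussian IBP in $\BJ$ together with standard It\^o IBP for $\BB$. Each contraction with a $G^j(\bx_u)$ produces via Lemma \ref{kval} the kernel $k^{ij}_{t,u}(\bx)$ and, after a second IBP, the $\bar C_N \cdot \Psi_N$ or $\bar\chi_N \cdot \Psi_N$ integrands; each contraction with a direct Brownian increment yields the kernel-free integral $\int_0^{t\wedge s}\Phi_N(s,v)dv$ of \eqref{eq:apxf}; and each contraction with a response factor recovers $\bar E_N$ or $\bar D_N$ once the $-f'(K_N)$ drifts are reincorporated via \eqref{eq:dedef}. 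The identity \eqref{eq:apxv} is the analogous computation restricted to $i=1$, with $\Phi_N^1, \Psi_N^1$ of \eqref{eq:PhiN-val}--\eqref{eq:PsiN-val} playing the role of $\Phi_N \cdot q$ and its conjugate; the asymmetry between $\qs^2 \nu''(q_N(s))/\nu'(\qs^2)$ and $q_N(u) \nu'(C_N)$ encodes the fact that one factor comes from differentiating $\widetilde k$ in the $s$-argument and the other in the $u$-argument. For \eqref{eq:apxh}, I would apply It\^o's formula to $s\mapsto H_\BJ(\bx_s)-\bar H(\bx_s)$, substitute the drift from \eqref{eq:integ}, and identify, after a single IBP, the boundary piece $\Upsilon_N(s,s)-\Upsilon_N(s,0)$ together with the integrand $\bar D_N(s,u)\Phi_N(u,s) + \qs^{-2}Q_N(u)\Phi_N^1(u,s)$, the second term arising from the $i=1$ coordinate of the drift and from the contribution of $Q_N$ via \eqref{eq:qvdef}.

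Passing from these pre-limit identities to the $\simeq$ statements of the proposition is then an application of the self-averaging Corollary \ref{cor-self}, upgraded to $\Est$ via Proposition \ref{use-CPs}, combined with the uniform moment bounds \eqref{eq:u-mombd} and the equi-continuity controls of Proposition \ref{tight-self}, which together allow replacing $\Est$ of products by products of $\Est$-expectations uniformly on $[0,T]^2$. The main obstacle is the algebraic bookkeeping that is absent from \cite[Sec. 3]{BDG2}: every Gaussian IBP step now generates both a naive $\nu$-type term and its $\nu'(q_N)/\nu'(\qs^2)$ correction coming from the subtraction in \eqref{eq:mod-nu}, and these corrections must be paired consistently with the $\sqrt{N}\qs\bv'(q_N)$ drift inherited from $\bar G^1$ in \eqref{eq:Gtilde}, and with the asymmetric roles of $q_N(s)$ and $q_N(u)$ visible in $\Phi_N^1, \Psi_N^1$. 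This matching is precisely what dictates the exact structural form of $\Phi_N^1, \Psi_N^1, \Upsilon_N$ stipulated in \eqref{eq:PhiN-val}--\eqref{eq:PsiN-val}; once it is verified, the remaining estimates go through as in \cite[Sec. 3]{BDG2}.
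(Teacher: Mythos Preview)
Your outline follows the paper's route---adapt \cite[Section~3]{BDG2} using the conditional covariance kernel $\widetilde k$ of Lemma~\ref{kval}---and you correctly anticipate that the new algebraic content lies in tracking the $\nu'(q_N(\cdot))/\nu'(\qs^2)$ corrections and the distinguished first coordinate. However, two analytic ingredients are missing from your sketch, and without them the argument for \eqref{eq:apxf} does not close.

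First, the phrase ``Gaussian IBP in $\BJ$'' hides the real mechanism. Since $\bx_s$ depends on $\BJ$ through the SDE, one cannot integrate by parts directly; the paper (like \cite[Lemma~3.3]{BDG2}) instead works with the $\Fa_\tau$-conditional expectations $V^i_{s;\tau}=\Est[G^i(\bx_s)\mid\Fa_\tau]$ and uses the Girsanov density $\Lambda^N_\tau$ of \eqref{eq:lndf} to decouple $\bx$ from $\BJ$. This yields the operator identity $V^i_{s;\tau}=[k_\tau\circ dZ]^i_s=[k_\tau\circ dx]^i_s-[k_\tau Q]^i_s$ of \eqref{eq:Vid}, from which \eqref{eq:apxv}, \eqref{eq:apxa} and \eqref{eq:apxh} follow after one application of It\^o's formula \eqref{eq:HN-stoch} and the self-averaging \eqref{eq:self-av-star}. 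Your ``second IBP'' is replaced by the fact that $[k_\tau Q]^i_s$ already contains $V^j_{u;\tau}$ inside $Q^j_{u;\tau}$, so the expression closes on the conditional quantities of \eqref{def:Ahat} without iteration.

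Second, and more seriously, the derivation of \eqref{eq:apxf} requires an additional cancellation that is not purely bookkeeping. Writing $B^i_t=U^i_t-\int_0^t G^i(\bx_v)\,dv$ introduces the conditional covariances $\Gamma^{ii}_{sv;\tau}$ of \eqref{eq:gammadef} via \eqref{eq2}--\eqref{def:Fhat}, and the identity \eqref{eq:Gamma+F} leaves behind the residual $\int_0^t\phi_N(s,v)\,dv$ with $\phi_N$ defined in \eqref{eq:smallphi-def}. Showing $\phi_N^a\simeq 0$ is the content of Lemma~\ref{cancelem}: it combines the algebraic relation \eqref{eq:gaid} (which expresses $\widetilde\Gamma$ in terms of $\Gamma$ through the kernel) with the self-averaging of $\bar A_N$ and $V_N$ to kill the cross-terms $\bar\gamma_N$ and $\gamma_N^1$. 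Your account of \eqref{eq:apxf} (``contraction with a direct Brownian increment yields the kernel-free integral'') presumes this cancellation without supplying it; the term $\int_0^{t\wedge s}\Phi_N(s,v)\,dv$ emerges only after Lemma~\ref{cancelem} removes the $\Gamma$-residuals.
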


\noindent
Towards proving Proposition \ref{comp1} we fix
a continuous path $\Bx$ satisfying \eqref{eq:integ}. Then, 
for any operator $k_t$ of kernel $k_{uv}^{ij}(\Bx)$ 
on $L_2(\{1,\cdots N\}\ts[0,t])$ and 
$f\in L_2(\{1,\cdots N\}\ts[0,t])$, let
% (see \cite[Eq. (3.10)]{BDG2}).
\begin{equation}\label{eq:kfdef}
[k_t f]_u^i :=\sum_{j=1}^N \int_0^t k_{uv}^{ij} f^j_v dv\,, \qquad 
(i,u) \in\{1,\cdots,N'\} \times [0,t]\,,
\end{equation}
which is clearly in $L_2(\{1,\cdots N'\}\ts[0,t])$. Assuming that 
each $k_{uv}^{ij}(\Bx)$ is the finite sum of terms such as
$x^{i_1}_{u}\cdots x^{i_a}_{u}  x^{j_1}_v\cdots x^{j_b}_v$
(for some non-random 
% integers 
$a$, $b$ and $i_1,\ldots,i_a,j_1,\ldots,j_b$), we further extend 
\req{eq:kfdef} to stochastic integrals of the form 
\begin{equation}\label{eq:stoch-integ}
[k_t \circ dZ]_u^i=\sum_{j=1}^N \int_0^t k_{uv}^{ij} dZ^j_v\,,
\end{equation}
where $Z_v$ is a continuous $\Fa_v$-semi-martingale 
(composed for each $j$, of a squared-integrable continuous martingale 
and a continuous, adapted, squared-integrable finite variation
part). Adopting the conventions of \cite[Page 640]{BDG2} for interpreting $\int_0^t k_{uv}^{ij} dZ^j_v$ 
in terms of It\^o integrals,
% i.e. decomposing such integral
% to a finite sum, taking for each of its terms the variable $x^{i_1}_{u}\cdots x^{i_a}_{u}$ outside the integral, resulting with the usual It\^o  
% adapted stochastic integrals. 
note that $[k_t \circ dZ]_u^i \in L_2(\{1,\cdots, N'\}\ts[0,t])$ 
(recall \req{eq:conc2} that $\Bx_t$ has uniformly over time, bounded 
moments of all orders under $\Ps$,
hence so does the kernel $k_{ts}^{ij}(\Bx)$), with the following
extension of \cite[Lemma 3.3]{BDG2}.
\begin{lem}\label{condexp}
Fixing $\tau \in \R_+$ there exist a version of 
$V_{s;\tau}^i :=\Est [G^i(\Bx_s) |\Fa_\tau]$ 
and $Z_{s;\tau}^i :=\Est [B^i_s|\Fa_\tau]$ with
\begin{align}
Z^i_{s;\tau} &= x^i_s - x^i_0 - \int_0^s Q^i_{u;\tau} du \;,
\label{eq:UBdef}
\qquad
Q^i_{s;\tau}  & := V^i_{s;\tau} - f'(K_N(s)) x^i_s + 
{\bf 1}_{\{i=1\}} \sqrt{N} \qs \bv' (q_N(s))  \,,
\end{align}
such that $s \mapsto Z^i_{s;\tau}$ are
continuous semi-martingales with respect to 
the filtration $(\Fa_s, s \le \tau)$,
composed of squared-integrable continuous 
martingales and finite variation parts. If $\{S^i(\Bx)$, $i \le N'\}$ 
are linear forms in $\BJ$ with covariance kernels 
\begin{equation}\label{def:HG-k}
k_{st}^{ij}(\Bx):= \E_{\BJ}
\left\{ S^{i}(\Bx_s)G^{j}(\Bx_t)\,|\,\cpt_\star \right\}, \;\;
\widetilde{k}_{st}^{i l}(\Bx):= \E_{\BJ}
\left\{ S^{i}(\Bx_s)S^{l}(\Bx_t)\,|\,\cpt_\star \right\},
\;\; 1 \le i,l \le N', \, 1 \le j \le N \,,
\end{equation}
consisting of polynomials in $\Bx$, then 
\begin{equation}\label{eq:Vid}
Y_{s;\tau}^i := \Est[S^i(\Bx_s)|\Fa_\tau] = 
[k_\tau \circ dZ]_s^i = [k_\tau \circ dx]^i_s - [k_\tau Q]^i_s \,,
\quad \forall (i,s) \in \{1,\cdots,N'\} \times [0,\tau] \,.
\end{equation}
Further, there exist then a version of 
\begin{equation}\label{eq:gammadef}
\begin{aligned}
\widetilde{\Gamma}_{st;\tau}^{i l} :=& 
\Est \Big[(S^i(\Bx_s) - Y_{s;\tau}^i)(S^{l} (\Bx_t)- 
Y_{t;\tau}^{l})|\Fa_\tau\Big]\,, \qquad 
  i,l \in \{1,\cdots,N'\},\\
\Gamma_{s t;\tau}^{j l} :=& 
\Est \Big[(G^j (\Bx_s)- V_{s;\tau}^{j})(S^l(\Bx_t) - Y_{t;\tau}^l)
|\Fa_\tau\Big]\,, 
\qquad s,t \in [0,\tau], j \in \{1,\cdots,N\},  
\end{aligned}
\end{equation}
such that  
\begin{equation}\label{eq:gaid} 
\widetilde{\Gamma}_{st;\tau}^{i l} = \widetilde{k}_{st}^{i l} -
\sum_{j=1}^N \int_0^\tau k_{su}^{ij} \Gamma_{ut;\tau}^{j l} du \,,
\qquad 
\forall s,t \in [0,\tau], \;\; i,l \in \{1,\cdots,N'\}\,.
\end{equation}
\end{lem}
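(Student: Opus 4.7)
The plan is to adapt \cite[Lemma 3.3]{BDG2} to our conditioned setting, where the only essential change is to replace the unconditional Gaussian law of $\BJ$ by its law conditional on $\cpt_\star$, whose covariance is given by \eqref{eq:mod-nu} and whose consequent kernel identities are recorded in Lemma \ref{kval}. First I would establish \eqref{eq:UBdef} by rewriting the SDE \eqref{eq:integ} as $B^i_s = x^i_s - x^i_0 + \int_0^s [f'(K_N(u))x^i_u - G^i(\bx_u) - {\bf 1}_{\{i=1\}}\sqrt{N}\qs\,\bv'(q_N(u))]\,du$, noting that all terms other than $G^i(\bx_u)$ are $\Fa_u$-measurable (since $K_N$ and $q_N$ depend only on $\bx$), and applying $\Est[\,\cdot\mid\Fa_\tau]$ with Fubini. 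This exhibits $s\mapsto Z^i_{s;\tau}$ on $[0,\tau]$ as a continuous $\Fa_s$-semi-martingale with martingale part inherited from $B^i$ and finite-variation part $-\int_0^s Q^i_{u;\tau}\,du$, while for $s>\tau$ the same formula simply defines $Z^i_{s;\tau}$ as an $\Fa_\tau$-measurable continuous path.

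For \eqref{eq:Vid}, the argument is a Gaussian projection. Writing $S^i(\bx_s) = \langle \BJ, \alpha^i(\bx_s)\rangle$, linearity in $\BJ$ makes $Y^i_{s;\tau} = \Est[S^i(\bx_s)\mid\Fa_\tau]$ the $L^2(\Ps)$-projection of $S^i(\bx_s)$ onto the closed subspace of square-integrable $\Fa_\tau$-measurable functionals. The key observation, carried over from \cite{BDG2}, is that the $\Fa_\tau$-measurable linear combinations of $\BJ$ are precisely those captured through the semi-martingales $\{Z^j_u : j\le N,\,u\le\tau\}$ constructed in the previous step. Disintegrating $\Ps$ according to $\BJ$ (so that $\bx$ becomes a standard SDE driven by $\BB$ given $\BJ$) and applying the Gaussian conditional-expectation formula for a linear form under a jointly Gaussian observation family, this projection equals $\sum_{j=1}^N \int_0^\tau k^{ij}_{su}\,dZ^j_u$ for $k^{ij}$ the conditional covariance from \eqref{def:HG-k}, computed via Lemma \ref{kval}. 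The second form in \eqref{eq:Vid} is then immediate from $dZ = dx - Q\,du$.

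Finally, \eqref{eq:gaid} follows by a Gaussian Pythagorean identity: since $S^i(\bx_s) - Y^i_{s;\tau}$ is $L^2(\Ps)$-orthogonal to every $\Fa_\tau$-measurable square-integrable functional, we obtain $\widetilde{\Gamma}^{il}_{st;\tau} = \widetilde{k}^{il}_{st} - \Est[\,Y^i_{s;\tau}\,(S^l(\bx_t) - Y^l_{t;\tau})\mid\Fa_\tau]$. Expanding $Y^i_{s;\tau}$ via \eqref{eq:Vid} and recognizing $\Est[(G^j(\bx_u) - V^j_{u;\tau})(S^l(\bx_t) - Y^l_{t;\tau})\mid\Fa_\tau]$ as $\Gamma^{jl}_{ut;\tau}$ produces \eqref{eq:gaid} directly.

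The main obstacle, already present in \cite{BDG2}, is the rigorous justification of the Gaussian projection formula in this stochastic-integral framework: the ``observations'' $dZ^j_u$ have integrands that depend on the path $\bx$, hence implicitly on $\BJ$, so they are not a classical finite-dimensional Gaussian family. The standard route is to disintegrate $\Ps$ through the conditional law of $\bx$ given $\BJ$, treating $\bx$ as a diffusion driven purely by $\BB$, and then integrate against the Gaussian law $\gamma_N^{(0,0,\qs)}$ of $\BJ$. The polynomial structure of $S^i$ together with the moment bounds of Proposition \ref{existN} (in particular \eqref{eq:conc2}) ensures that every relevant stochastic integral is a well-defined $L^2$-object. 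The one genuinely new input relative to \cite{BDG2} is the modified kernel \eqref{eq:kval} reflecting the conditioning on $\cpt_\star$; once this is incorporated, the remaining analytic manipulations carry over verbatim from \cite[Proof of Lemma 3.3]{BDG2}.
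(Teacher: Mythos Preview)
Your treatment of \eqref{eq:UBdef} is fine and matches the paper. The gaps are in the two subsequent steps.

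For \eqref{eq:Vid} you invoke a ``Gaussian conditional-expectation formula for a linear form under a jointly Gaussian observation family,'' but $(\BJ,\bx_{[0,\tau]})$ is \emph{not} jointly Gaussian under $\Ps$: the path $\bx$ depends on $\BJ$ through the nonlinear drift $G(\bx)=-\nabla H_\BJ(\bx)$. Disintegrating as you describe only tells you that, given $\BJ$, the path is a diffusion; it does not by itself produce a Gaussian posterior for $\BJ$ given $\Fa_\tau$. The mechanism the paper uses (and which is what makes \cite[Lemma 3.3]{BDG2} work) is Girsanov: writing $\P^N_{\BJ,\Bx_0}|_{\Fa_\tau}=\L^N_\tau\,\P^N_{{\bf 0},\Bx_0}|_{\Fa_\tau}$ with $\L^N_\tau=\exp\{\sum_i\int_0^\tau G^i(\Bx_s)\,dU^i_s-\tfrac12\sum_i\int_0^\tau G^i(\Bx_s)^2\,ds\}$. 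Under $\P^N_{{\bf 0},\Bx_0}$ the process $U^i$ is Brownian, so $\bx$ is a measurable function of $(\bx_0,U)$ and hence \emph{independent} of $\BJ$; then $\L^N_\tau$ is the exponential of a linear-plus-quadratic form in $\BJ$ with $\Fa_\tau$-measurable coefficients. This is what makes the posterior of $\BJ$ given $\Fa_\tau$ Gaussian, after which $Y^i_{s;\tau}$ and $\widetilde\Gamma^{il}_{st;\tau}$ are read off from the posterior mean $\bq=(\BD^{-1}+\BR)^{-1}\bmu$ and covariance $(\BD^{-1}+\BR)^{-1}$ as in \cite[Prop.~C.1]{BDG2}, now with $\P^\star_\BJ$ (which is not a product law) diagonalized as $\{J^o_\alpha\}$. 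Without the Girsanov step your projection argument has no footing.

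Your derivation of \eqref{eq:gaid} fails concretely: since $Y^i_{s;\tau}$ is $\Fa_\tau$-measurable and $\Est[S^l(\bx_t)-Y^l_{t;\tau}\mid\Fa_\tau]=0$, one has $\Est[Y^i_{s;\tau}(S^l(\bx_t)-Y^l_{t;\tau})\mid\Fa_\tau]=0$, so your displayed identity would force $\widetilde\Gamma^{il}_{st;\tau}=\widetilde k^{il}_{st}$, contradicting the nontrivial correction term in \eqref{eq:gaid}. The point is that $\widetilde k^{il}_{st}$ is the \emph{prior} covariance $\E_\BJ[S^iS^l\mid\cpt_\star]$, not the posterior second moment $\Est[S^iS^l\mid\Fa_\tau]$; orthogonality of $S^i-Y^i$ to $\Fa_\tau$ does not relate these two. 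The paper obtains \eqref{eq:gaid} instead by expressing both $\widetilde\Gamma$ and $\Gamma$ through the posterior covariance matrix and applying the resolvent identity $(\BD^{-1}+\BR)^{-1}=\BD-\BD\BR(\BD^{-1}+\BR)^{-1}$.
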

\begin{proof} The right equality in \eqref{eq:Vid} follows from 
the relation \eqref{eq:UBdef} between $\Est [B^i_s|\Fa_\tau]$ and 
$\Est [G^i(\Bx_u)|\Fa_\tau]$, which in turn is a consequence of 
having in \req{eq:integ},
\begin{eqnarray}
U^i_s := x^i_s-x^i_0+\int_0^s f'(K_N(u)) x^i_u du - 
{\bf 1}_{\{i=1\}} \sqrt{N} \qs \int_0^s \bv' (q_N(u)) du 
=\int_0^s G^i(\Bx_u) du + B_s^i  \,.
\label{eq:Udef} 
\end{eqnarray}
The latter relation implies the stated
continuity and integrability properties of the semi-martingales 
$U^i_s$ and $Z^i_{s;\tau} = U^i_s - \int_0^s V^i_{u;\tau} du$.
% Further, 
% the \abbr{rhs} of \eqref{eq:Vid} is due to the definition \eqref{eq:UBdef} of 
% \begin{equation}
% Q^i_{s;\tau} :=  V^i_{s;\tau}   
% -f'(K_N(s)) x^i_s + {\bf 1}_{\{i=1\}} \sqrt{N} \qs \bv(q_N(s)) \,,
% \end{equation}
By Girsanov formula (see \cite[Eqn. (3.16)]{BDG2}), 
the restriction to $\Fa_\tau$ satisfies
\begin{equation}\label{eq:lndf}
\P^N_{\BJ,\Bx_0}|_{\Fa_\tau}= \L^N_\tau \,
\P^N_{{\bf 0},\Bx_0}|_{\Fa_\tau} \,, \qquad
\L^N_\tau  := \exp\Big\{ \sum_{i=1}^N \int_0^\tau G^i(\Bx_s) dU^i_s
-\half\sum_{i=1}^N \int_0^\tau (G^i(\Bx_s))^2 ds\Big\} 
\,,
\end{equation}
with $U^i_s$ a standard Brownian motion under $\P^N_{{\bf 0},\Bx_0}$.
Setting $\P^\star_{\BJ}$ for the law of $\BJ$ conditional on $\cpt_\star$, 
we thus have (as in the proof of \cite[Lemma 3.3]{BDG2}), that
\begin{equation}\label{eq:pre-C2-C3} 
Y_{s;\tau}^i = \frac{\E^{\star}_{\BJ} 
[S^i(\Bx_s) \L^N_\tau]}{\E^\star_{\BJ}[\L^N_\tau]} \,, 
\qquad 
\widetilde{\Gamma}_{st;\tau}^{i l} =
\frac{\E^\star_{\BJ} \Big[(S^{i} (\Bx_s)- Y_{s;\tau}^{i}) 
(S^l(\Bx_t) - Y_{t;\tau}^l)
\L^N_\tau \Big]}{\E^\star_{\BJ}[\L^N_\tau]} \,.
\end{equation}
The centered Gaussian law $\P^\star_\BJ$ is not a product measure, but the
arguments used in proving \cite[Proposition C.1]{BDG2} still apply. Specifically,
here $G^j(\Bx_s)=\sum_\alpha J^o_\alpha L_s^j(\alpha)$ and
$S^i(\Bx_t)=\sum_\alpha J^o_\alpha M_t^i(\alpha)$ for some independent
centered Gaussian $\{J^o_\alpha\}$ of positive variances $v_\alpha$, with 
$k^{ij}_{su} = \sum_\alpha M_s^i(\alpha) v_\alpha L_u^j(\alpha)$.
Our Radon-Nikodym derivative $\L^N_\tau$ is given 
in terms of $\BR=\{R_{\alpha \gamma}\}$ of \cite[(C.4)]{BDG2} 
and $\BJ^o:=\{J^o_\alpha\}$, by the display following 
\cite[(C.4)]{BDG2}. Under such a change of measure 
the Gaussian law of $\BJ^o$ has the covariance matrix 
$(\BD^{-1}+\BR)^{-1}$ for $\BD=$diag($v_\alpha$)
and the mean vector $\bq=(\BD^{-1} + \BR)^{-1} \bmu$
of \cite[(C.5)]{BDG2}. From the \abbr{lhs} of \eqref{eq:pre-C2-C3} we have
that $Y^i_{s;\tau} = \sum_\alpha M_s^i(\alpha) q_\alpha$ and
$V^j_{u;\tau} = \sum_\alpha L_u^j(\alpha) q_\alpha$. Further, by
definition 
$[k_\tau \circ dU]^i_s = \sum_\alpha M^i_s(\alpha) v_\alpha \mu_\alpha$ 
and 
$[k_\tau V]^i_s = \sum_{\alpha,\gamma} M^i_s(\alpha) v_\alpha R_{\alpha \gamma} 
q_\gamma$ (thanks to \cite[(C.4)]{BDG2}), with the identity 
$Y^i_{s;\tau}=[k_\tau \circ Z]^i_s$ of \eqref{eq:Vid} thus a direct consequence
of \cite[(C.5)]{BDG2}. Next, note that 
$\widetilde{k}^{il}_{st} = \sum_\alpha M_s^i(\alpha) v_\alpha M_t^l(\alpha)$, 
whereas from the \abbr{rhs} of \eqref{eq:pre-C2-C3} we have that 
\[
\Gamma_{ut;\tau}^{j l} = \sum_{\alpha,\gamma} L^j_u(\alpha) 
[(\BD^{-1}+\BR)^{-1}]_{\alpha \gamma} M^l_t(\gamma) \,, \qquad
\widetilde{\Gamma}_{st;\tau}^{i l} = \sum_{\alpha,\gamma} M^i_s(\alpha) 
[(\BD^{-1}+\BR)^{-1}]_{\alpha \gamma} M^l_t(\gamma) \,.
\]
By \cite[(C.4)]{BDG2} we thus get \eqref{eq:gaid} out of
% the matrix identity 
$(\BD^{-1}+\BR)^{-1} = \BD - \BD \BR (\BD^{-1}+\BR)^{-1}$
(as in the proof of \cite[(C.3)]{BDG2}).
\end{proof}

\noindent
\emph{Proof of Proposition \ref{comp1}.} In view of 
\req{eq:hfdef}-\eqref{eq:dedef} and \eqref{def:Uhat}, one has 
as in \cite[Pg. 642]{BDG2}, for any $\tau \in [t \vee s,T]$,   
\begin{equation}\label{def:Ahat}
\bar A_N(t,s|\tau) 
=\oneN \sum_{i=2}^N V^i_{t;\tau} x^i_s \,, \;\;
\bar D_N(s,t|\tau) = \oneN \sum_{i=2}^N Q^i_{t;\tau}  x^i_s \,,
\;
V_N(s|\tau) = \frac{\qs}{\sqrt{N}} V^1_{s;\tau}  \,,
\;
Q_N(s|\tau) = \frac{\qs}{\sqrt{N}} Q^1_{s;\tau}  \,.
\end{equation}
Recall It\^o's formula for $u \mapsto \widetilde{k}(\Bx_s,\Bx_u)$, 
\begin{equation}\label{eq:HN-stoch}
\sum_{j=1}^N \int_0^\tau 
\partial_{x^j_u} \{ \widetilde{k} (\Bx_s,\Bx_u) \} dx_u^j
= \widetilde{k}(\Bx_s,\Bx_\tau) - \widetilde{k} (\Bx_s,\Bx_0) - 
\frac{1}{2} \int_0^\tau  
\{ \Delta_{\Bx_u} \widetilde{k} (\Bx_s,\Bx_u) \} du \,.
\end{equation}
Thus, for the operator $k_\tau$ corresponding to 
$S^i \equiv G^i$ in Lemma \ref{condexp}, we get from 
the first identity of \eqref{eq:kval} that  
\begin{equation}\label{eq:k-dx}
[k_\tau \circ dx]_s^i =
\sum_{j=1}^N \int_0^\tau 
\partial_{x^i_s}\partial_{x^j_u} \{ \widetilde{k} (\Bx_s,\Bx_u) \} dx_u^j
= \varphi^i_N(s,\tau) - \varphi_N^i (s,0) - \int_0^\tau  \delta^i_N(s,u) du \,,
\end{equation}
for any $(i,s) \in [0,\tau] \times \{1,\ldots,N\}$, where
\[
\varphi^i_N(s,u) := \partial_{x^i_s} \widetilde{k}(\Bx_s,\Bx_u) \,, 
\qquad \qquad 
\delta^i_N(s,u) := \frac{1}{2} \partial_{x^i_s}
\{ \Delta_{\Bx_u} \widetilde{k} (\Bx_s,\Bx_u) \} \,.
\]
By the second identity of \eqref{eq:kval} we arrive at
\begin{align}\label{eq:varhpi-val}
\varphi_N^i(s,u) = \partial_{x^i_s} \widetilde{k}(\Bx_s,\Bx_u) 
&= x^i_u {\bf 1}_{\{i \ne 1\}} \Phi_N(s,u) + \frac{\sqrt{N}}{\qs} {\bf 1}_{\{i =1 \}} 
\Phi_N^1(s,u) \,,
% = x^i_u \nu'(C_N(s,u)) - {\bf 1}_{\{i \ne 1\}} x^i_u 
% \frac{\nu'(q_N(s))\nu'(q_N(u))}{\nu'(\qs^2)} 
% - \sqrt{N} \bar C_N(s,u) {\bf 1}_{\{i=1\}}
% \frac{\qs \nu'(q_N(u))\nu''(q_N(s))}{\nu'(\qs^2)} \,, 
\end{align}
in terms of $\Phi_N(\cdot,\cdot)$ and $\Phi_N^1(\cdot,\cdot)$ of 
\eqref{eq:PhiN-val}. Consequently,
\begin{equation}\label{eq:kval-exp}
\begin{aligned}
k_{su}^{ij} =& \partial_{x_u^j} \{ \varphi^i_N (s,u) \}  
= \frac{x_{s}^{j}x_{u}^{i}}{N}\nu''(C_{N}(s,u)) 
+{\bf 1}_{\{i=j \ne 1\}}\Phi_N(s,u) \\
&	+
	{\bf 1}_{\{i=j=1\}} \Big[
	\nu'(C_{N}(s,u)) 
	 - \bar C_N(s,u) \frac{\qs^2 \nu''(q_N(s))\nu''(q_N(u))}{\nu'(\qs^2)} \Big]\\ 
	& -{\bf 1}_{\{i=1\}}{\bf 1}_{\{j\neq1\}}\frac{\qs x_s^{j}}{\sqrt{N}}\frac{\nu''(q_N(s))\nu'(q_N(u))}{\nu'(\qs^2)}-{\bf 1}_{\{j=1\}}{\bf 1}_{\{i\neq1\}}\frac{\qs x_u^{i}}{\sqrt{N}}\frac{\nu''(q_N(u))\nu'(q_N(s))}{\nu'(\qs^2)}\,.
 \end{aligned}	
	\end{equation}	 
Similarly, 	
\begin{equation}\label{eq:HN-corr}
\Delta_{\Bx_u} \widetilde{k} (\Bx_s,\Bx_u) = K_N(s) \nu''(C_N(s,u)) 
- \frac{\qs^2 \nu'''(q_N(u))}{\nu'(\qs^2)} \nu'(q_N(s)) \bar C_N(s,u) \,,
\end{equation}
resulting after some algebra with
\begin{align}\label{eq:delta-x}
\delta^i_N(s,u)
= & \frac{x_s^i}{N}  \nu''(C_N(s,u))  
+ \frac{x^i_u}{2N} 
K_N(s) \nu'''(C_N(s,u))  
\nonumber \\ &
 - \frac{\qs^2 \nu'''(q_N(u))}{2 \nu'(\qs^2)} 
 \big[{\bf 1}_{\{i \ne 1\}} \frac{x^i_u \nu'(q_N(s))}{N} + 
 {\bf 1}_{\{i=1\}} \frac{\qs \nu''(q_N(s))}{\sqrt{N}} \bar C_N(s,u) \big] \,.
\end{align}
Next, with 
$\varphi^j_N(u,s) = \partial_{x_u^j} \{\widetilde{k} (\Bx_s,\Bx_u)\}$ 
it follows from \eqref{eq:kval} and \eqref{eq:kfdef}, that 
\begin{align}\label{eq:kQ-val}
[k_\tau Q]^i_s = \int_0^\tau \psi^i_N(s,u|\tau) du \,, \qquad \qquad 
\psi^i_N(s,u|\tau) := \partial_{x_s^i} 
\Big[\sum_{j=1}^N \varphi^j_N(u,s) Q^j_{u;\tau} \Big] \,.
\end{align}
Combining \eqref{def:Ahat} and \eqref{eq:varhpi-val}, we have 
\begin{align}\label{eq:HN-Q}
\sum_{j=1}^N \varphi^j_N(u,s) Q^j_{u;\tau} = & \; \; N 
\bar D_N(s,u|\tau) \Phi_N(u,s) 
+ \frac{N}{\qs^2} Q_N(u|\tau) \Phi_N^1(u,s) 
\,,
\end{align}
which in view of \eqref{eq:PhiN-val}, \eqref{eq:PsiN-val} 
and the symmetry of $\Phi_N(\cdot,\cdot)$
yields that 
\begin{equation}\label{eq:psii-val}
\psi^i_N(s,u|\tau) = \Big\{\!\!\!
\begin{array}{cc} 
& \qquad \quad \frac{\sqrt{N}}{\qs} \Psi^1_N(s,u |\tau) \,, \qquad \qquad i= 1 \,, \\
& Q^i_{u;\tau} \Phi_N(s,u) + x^i_u \Psi_N(s,u|\tau) \,, \quad i \ne 1 \,.
\end{array}
\end{equation}
In this case $Y^i_{s;\tau}=V^i_{s;\tau}$, so by 
\eqref{eq:Vid}, \eqref{eq:k-dx} and \eqref{eq:kQ-val} we get 
\begin{equation}\label{eq:Vi-val}
V^i_{s;\tau} =  [k_\tau \circ dx]^i_s - [k_\tau Q]^i_s 
= \varphi^i_N(s,\tau) - \varphi_N^i (s,0) - \int_0^\tau  
[\psi^i_N(s,u|\tau) + \delta^i_N(s,u)] du \,
\qquad \forall s \in [0,\tau] \,.
\end{equation}
In particular, for 
$\epsilon_N(s) := \frac{\qs}{\sqrt{N}} \int_0^s \delta^1_N(s,u) du$,
by \eqref{def:Ahat}, \eqref{eq:varhpi-val} and \eqref{eq:psii-val},
\begin{align*}
V_N(s|s) + \epsilon_N(s)  
%& = \frac{\qs}{\sqrt{N}} \Big\{ \varphi^1_N(s,s) - \varphi_N^1 (s,0)
% - \int_0^s \psi^1_N(s,u|s) du \Big\}  \,  \\ 
& = \Phi_N^1(s,s) - \Phi_N^1(s,0) 
- \int_0^s \Psi_N^1(s,u|s) du 
\,.
\end{align*}
We now consider the $\E_\star$-expected value of the preceding identity.
From \eqref{eq:delta-x} we have that $\epsilon_N^a \simeq 0$, 
% (at rate $1/N$)  
so with $U_N^a(s,t) = \Est U_N(s,t|\tau)$ we arrive at
\eqref{eq:apxv}.
Turning to the derivation of \eqref{eq:apxa}, for $\tau=t \vee s$ and 
\[
\tilde{\epsilon}_N(t,s):= 
\int_0^\tau \Big\{ \oneN \sum_{i=2}^N x^i_s \delta^i_N(t,u) \Big\} du \,,
\]
we have in view of \eqref{def:Ahat}, \eqref{eq:varhpi-val}, 
\eqref{eq:psii-val} and \eqref{eq:Vi-val}, that 
\begin{align*}
\bar A_N (t,s|\tau) &+ \tilde{\epsilon}_N(t,s) = 
\oneN \sum_{i=2}^N  \varphi^i_N(t,\tau) x_s^i 
- \oneN \sum_{i=2}^N \varphi_N^i (t,0) x_s^i 
- \int_0^\tau  \Big\{ \oneN \sum_{i=2}^N \psi^i_N(t,u|\tau) x_s^i \Big\} du 
\nonumber \\
&= \bar C_N(s,\tau) \Phi_N(t,\tau) - \bar C_N(s,0) \Phi_N(t,0)
- \int_0^\tau 
\Big\{\bar D_N(s,u|\tau) \Phi_N(t,u) + \bar C_N(s,u) \Psi_N(t,u|\tau) \Big\} du 
\,.
\end{align*}
Since $\tilde{\epsilon}^{\,a}_N \simeq 0$,
% and proceeding as before, 
we get \eqref{eq:apxa} from the 
% $\E_\star$-expectation of the
preceding identity (upon applying \eqref{eq:self-av-star} for the function 
$z_1 z_2$).

Moving to \eqref{eq:apxf}, by \eqref{eq:FEchi-triang} it suffices to consider
hereafter $t \in [0,s]$. Further,  
$B^i_t = U^i_t - \int_0^t G^i(\Bx_v) dv$ with $U^i_t$ measurable on $\Fa_\tau$
(c.f. \eqref{eq:Udef}). Hence, in view of \eqref{eq:gammadef},
\begin{equation}\label{eq2}
\E_\star[ (V^i_{s;\tau} - G^i(\Bx_s)) B^i_t | \Fa_\tau ] = 
\E_\star[ (V^i_{s;\tau} - G^i(\Bx_s)) \int_0^t (V_{v;\tau}^i-G^i(\Bx_v))
 dv | \Fa_\tau ] = 
 \int_0^t \Gamma_{sv;\tau}^{ii} dv \,.
\end{equation}
In particular, setting 
\[
\Gamma_N(s,t|\tau) := \oneN \sum_{i=2}^N \int_0^t \Gamma_{sv;\tau}^{ii} dv 
\]
we deduce that 
\begin{equation}\label{def:Fhat}
\Gamma_N(s,t|\tau)  
= \oneN \sum_{i=2}^N V_{s;\tau}^i Z_{t;\tau}^i 
 - \bar F_N(s,t|\tau) =
\oneN \sum_{i=2}^N Q_{s;\tau}^i Z_{t;\tau}^i 
 - E_N(s,t|\tau)  \,, \;\; 
\bar \chi_N(s,t|\tau) = \oneN \sum_{i=2}^N x_s^i Z_{t;\tau}^i  \,. 
\end{equation}  
From \eqref{eq:varhpi-val}, \eqref{eq:psii-val} and 
\eqref{def:Fhat} (at $\tau=s$), we also have that 
\begin{equation}\label{def:Theta-Pi}
\begin{aligned}
\Theta_N(s,u;t) &:= \oneN \sum_{i=2}^N \varphi^i_N(s,u) Z^i_{t;s}
= \bar \chi_N(u,t|s) \Phi_N(s,u) 
 \,, \\
\Pi_N(s,u;t) &:= \oneN \sum_{i=2}^N \psi^i_N(s,u|s) Z^i_{t;s} =
\big[\Gamma_N(u,t|s) + \bar E_N(u,t|s)\big] \Phi_N(s,u) 
+ \bar \chi_N(u,t|s) \Psi_N(s,u|s)  \,.
\end{aligned}
\end{equation}
Further, from \eqref{eq:Vi-val} we get 
\begin{align}\label{eq:Gamma+F}
\Gamma_N(s,t|s) + \bar F_N (s,t|s) &= 
\oneN \sum_{i=2}^N V_{s;s}^i Z_{t;s}^i =
\Theta_N (s,s;t) - \Theta_N(s,0;t) - \int_0^s \Pi_N(s,u;t) du 
 - \widehat{\epsilon}_N(s,t) 
\,,
\end{align}
where $\widehat \epsilon_N(s,t) := \oneN \sum_{i=2}^N 
\int_0^s \delta^i_N(s,u) Z^i_{t;s} du$ is such that  
$\widehat{\epsilon}^{\,a}_N \simeq 0$ (see \eqref{eq:delta-x}). 
Next, setting  
\begin{equation}\label{eq:smallphi-def}
\phi_N(s,v) := \Phi_N(s,v)  -
 \oneN \sum_{i=2}^N \Gamma_{sv;s}^{ii} 
- \oneN \sum_{i=2}^N \int_0^s \Phi_N(s,u) \Gamma_{uv;s}^{ii} du \,, 
\qquad \qquad  v \in [0,s] \,,
\end{equation}
we see that 
$$
\int_0^t \Phi_N(s,v) dv = 
\Gamma_N(s,t|s) 
+ \int_0^s \Phi_N(s,u) \Gamma_N (u,t|s) du + \int_0^t \phi_N(s,v) dv 
\,,
$$
so combining \eqref{def:Theta-Pi} and \eqref{eq:Gamma+F}
results with  
\begin{align}\label{eq:detail}
\int_0^t \Phi_N(s,v) dv + \bar F_N (s,t|s) 
=&  
\bar \chi_N(s,t|s) \Phi_N(s,s) - \bar \chi_N(0,t|s) \Phi_N(s,0)
+ \int_0^t \phi_N(s,v) dv - \widehat{\epsilon}_N(s,t) 
 \nonumber \\
& - \int_0^s \Big\{ 
\bar E_N(u,t|s) \Phi_N(s,u) + \bar \chi_N(u,t|s) \Psi_N(s,u|s)  
\Big\} du \,.
\end{align}
%Recall that by \req{eq:hfdef}, \req{def:Fhat} and the preceding 
%\begin{align}\label{eq:detail}
%\bar F_N (s,t|s) &+ \Gamma_N(s,t|s)  
%\simeq \chi_N(s,t) \nu'(C_N(s,s)) - \chi_N(0,t) \nu'(C_N(s,0)) \nonumber \\
%&- \int_0^s \nu''( C_N(s,u)) \chi_N(u,t) D_N (s,u) du 
%- \int_0^s \nu'( C_N(s,u)) E_N(u,t) du +\bar \kappa_N(s,t) \,,
%\end{align}
%where in view of \req{eq:hfdef}
%$$
%\bar \kappa_N(s,t) := \oneN \sum_{i=2}^N \int_0^s \nu'( C_N(s,u)) 
%(G_u^i-V_u^i) B^i_t du \,,
%$$
% Recall \req{eq:UBdef} that
% $B_t^i=Z_t^i-\int_0^t (G_v^i-V_v^i) dv$, hence 
% $$
% \kappa_N(s,t) = \oneN \sum_{i=1}^N \int_0^s \nu'( C_N(s,u)) 
% (G_u^i-V_u^i) Z^i_t du 
% - \oneN \sum_{i=1}^N 
% \int_0^s \nu'( C_N(s,u)) \int_0^t (G_u^i-V_u^i) (G_v^i-V_v^i) dv du \;.
% $$
% As both $\nu'(C_N(u,s))$ and $Z_i^t$ are $\Fa_s$-measurable while
% $\E(G_u-V_u|\Fa_s)=0$ for all $u \leq s$, the expectation of the
% first term on the right-side vanishes. Further, conditioning 
% on $\Fa_s$, we have 
%hence in view of \req{eq:gammadef},
%\begin{equation}\label{eq:kaexp}
%\bar \kappa^a_N(s,t) = -\E_\star \Big[ \oneN \sum_{i=2}^N \int_0^s \nu'( C_N(s,u)) 
%\int_0^t \Gamma_{uv;s}^{ii} dv du \Big] \;.
%\end{equation}
Recalling that $\bar \chi_N^a(0,t)=0$, we thus get \eqref{eq:apxf}
by employing \eqref{eq:self-av-star} 
%for $z_1 z_2$ 
on the $\E_\star$-expectation of the \abbr{rhs} of \eqref{eq:detail} and 
relying on the following analog of \cite[Lemma 3.4]{BDG2}.
\begin{lem}\label{cancelem}
For $\phi_N(s,v)$ of \eqref{eq:smallphi-def},
$$
\lim_{N \to \infty} \sup_{(s,v) \in \Delta_T} |\phi^a_N(s,v)| = 0 \,.
$$
\end{lem}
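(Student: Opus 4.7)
\textbf{Proof plan for Lemma \ref{cancelem}.} The strategy is to use the recursion \eqref{eq:gaid}, applied with $S^i \equiv G^i$ so that $\widetilde{k} = k$ and $\widetilde \Gamma = \Gamma$, at $\tau = s$. Taking the diagonal identity $\Gamma_{sv;s}^{ii} = k_{sv}^{ii} - \sum_{j=1}^N \int_0^s k_{su}^{ij} \Gamma_{uv;s}^{ji} \, du$, summing over $i=2,\ldots,N$ and dividing by $N$, I substitute back into the definition \eqref{eq:smallphi-def} of $\phi_N(s,v)$. Using the explicit expressions for $k_{sv}^{ii}$ and $k_{su}^{ij}$ from \eqref{eq:kval-exp}, the ``mean-field'' piece $\Phi_N(s,u)$ that appears as the coefficient of the Kronecker delta in \eqref{eq:kval-exp} (for $i=j\neq 1$) is exactly what one needs to cancel against the last integral in \eqref{eq:smallphi-def}. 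This algebra reduces the claim to showing $\E_\star$-expectations of four explicit residual terms vanish uniformly in $(s,v) \in \Delta_T$.

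These residuals are: (i) the obvious $O(1/N)$ discrepancy $\Phi_N(s,v) - \oneN\sum_{i=2}^N k_{sv}^{ii} = \tfrac{1}{N}\Phi_N(s,v) - \tfrac{1}{N}\bar C_N(s,v)\nu''(C_N(s,v))$; (ii) the diagonal remainder $\oneN\sum_{i=2}^N \int_0^s \tfrac{x_s^i x_u^i}{N}\nu''(C_N(s,u))\Gamma_{uv;s}^{ii}\,du$, with explicit $1/N$ prefactor; (iii) the strictly off-diagonal ``bulk'' term $\oneN\sum_{i=2}^N\sum_{j\neq i,\,j\neq 1}\int_0^s \tfrac{x_s^j x_u^i}{N}\nu''(C_N(s,u))\Gamma_{uv;s}^{ji}\,du$; and (iv) the first-coordinate contribution $\oneN\sum_{i=2}^N \int_0^s k_{su}^{i1}\Gamma_{uv;s}^{1i}\,du$, in which the pertinent entries of $k_{su}^{i1}$ carry an explicit $1/\sqrt N$ prefactor from \eqref{eq:kval-exp}.

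To control each residual I will establish uniform $\ell^2$-type moment bounds on the conditional covariance kernel $\Gamma^{ji}_{uv;s}$. From the representation of $\Gamma^{ji}$ developed in the proof of Lemma \ref{condexp} (with the post-change-of-measure Gaussian law of $\BJ^o$ having covariance $(\BD^{-1}+\BR)^{-1} \preceq \BD$), combined with the polynomial-in-$\Bx$ structure of $L^j_u(\alpha), M^i_s(\alpha)$ and the moment control \eqref{eq:conc2}, one obtains
\[
\sup_{|E|,|G|\le\alpha}\sup_N \Est\Big[\sup_{u,v\le s\le T}\Big(\oneN\sum_{i,j=1}^N (\Gamma^{ji}_{uv;s})^2\Big)^k\Big]<\infty
\]
for every finite $k$. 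Given this bound, Cauchy--Schwarz together with \eqref{eq:u-mombd} yields that residuals (ii) and (iii) contribute $O(1/N)$ in $L^1(\Ps)$, while residual (iv) is $O(1/\sqrt N)$, and (i) is trivially $O(1/N)$; uniformity in $(s,v)\in\Delta_T$ follows from the absence of $s,v$-dependence in the constants.

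The main obstacle is the uniform-in-$N$, uniform-in-$(E,G)$, and uniform-in-time moment control of $\Gamma^{ji}_{uv;s}$, since this is an entry of the inverse of the random operator $\BD^{-1}+\BR$ whose structure depends on the random path $\Bx$ through $L^j_u(\alpha)$. Here I rely on the operator inequality $(\BD^{-1}+\BR)^{-1}\preceq\BD$, together with the resulting bilinear bound $\sum_{i,j}(\Gamma^{ji}_{uv;s})^2 \le \mathrm{tr}(k_{uu}\,k_{vv})$, which in turn is a polynomial in $\Bx_u,\Bx_v$ controllable by \eqref{eq:conc2} and Proposition \ref{existN}. This parallels \cite[Lemma 3.4]{BDG2} but requires extra care at the singled-out first coordinate, for which the explicit $1/\sqrt N$ prefactors in \eqref{eq:kval-exp} supply the needed smallness.
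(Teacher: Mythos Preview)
Your algebraic reduction via \eqref{eq:gaid} and the explicit kernel \eqref{eq:kval-exp} is the same as the paper's, and the paper indeed arrives at residual terms of exactly the type you describe.  The divergence is in how these residuals are controlled, and here your argument has a genuine gap.

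The bilinear bound you invoke, $\sum_{i,j}(\Gamma^{ji}_{uv;s})^2 \le \mathrm{tr}(k_{uu}\,k_{vv})$, does \emph{not} follow from $(\BD^{-1}+\BR)^{-1}\preceq\BD$.  Writing $A=(\BD^{-1}+\BR)^{-1}$, $P_u=L_uL_u^T$, $P_v=L_vL_v^T$, your inequality reads $\mathrm{tr}(AP_uAP_v)\le\mathrm{tr}((L_u^T\BD L_u)(L_v^T\BD L_v))$, which after the change $X=\BD^{1/2}L_u$, $Y=\BD^{1/2}L_v$, $B=\BD^{-1/2}A\BD^{-1/2}\preceq I$ becomes $\|Y^TBX\|_{HS}^2\le\mathrm{tr}((X^TX)(Y^TY))$.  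This fails already for $d=N=2$, $X=e_1e_1^T$, $Y=e_2e_2^T$, $B=\tfrac12\big(\begin{smallmatrix}1&1\\1&1\end{smallmatrix}\big)$: the right side is $0$ while the left side equals $B_{21}^2=\tfrac14$.  So your pathwise bound is not available, and the Cauchy--Schwarz step for residual (iii) collapses.  (Even granting the moment bound, Cauchy--Schwarz only gives $O(N^{-1/2})$ for (iii), not the $O(N^{-1})$ you claim, though this would still suffice.)

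The paper avoids any operator-theoretic control of $\Gamma$ altogether.  It observes that the combinations of $\Gamma^{ji}_{uv;s}$ appearing in the residuals are precisely
\[
\bar\gamma_N(u,v|s)=\tfrac{1}{N^2}\!\sum_{i,j\ge2} x_s^j x_u^i\,\Gamma^{ji}_{uv;s}
=\Est\big[(\bar A_N(u,s)-\bar A_N(u,s|s))(\bar A_N(v,u)-\bar A_N(v,u|s))\,\big|\,\Fa_s\big],
\]
and an analogous $\gamma_N^1$ involving $V_N-V_N(\cdot|s)$.  By Cauchy--Schwarz this is bounded by $\Est[(\bar A_N-\bar A_N(\cdot|s))^2]$, which vanishes uniformly by the self-averaging already established in Proposition~\ref{use-CPs} (Corollary~\ref{cor-self} for $\Psi(z)=(z_1-z_2)^2$, with $\Est U_N=\Est U_N(\cdot|s)$).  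This is the missing idea: the residuals are not controlled by crude size of $\Gamma$, but by concentration of the macroscopic observables $\bar A_N$ and $V_N$ around their $\Fa_s$-conditional means.
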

\begin{proof}[Proof of Lemma \ref{cancelem}] 
Recall that $\widetilde{\Gamma}=\Gamma$ and $\widetilde{k}_s=k_s$
in our special case of Lemma \ref{condexp}. Thus, setting 
\[
\bar \gamma_N (u,v|s) := 
\frac{1}{N^2} \sum_{i,j=2}^N x^j_s x^i_u \Gamma_{uv;s}^{ji} 
\,,
\quad \qquad
\gamma_N^1(u,v|s) :=
\frac{\qs}{\sqrt{N}} \oneN \sum_{i=2}^N  x_u^i \Gamma^{1i}_{uv;s} \,,
\]
we deduce 
from \req{eq:gaid}, \eqref{eq:kval-exp} and \eqref{eq:smallphi-def} that
for any $v,u \in [0,s]$,  
\begin{align*}
0 = &  \oneN \sum_{i=2}^N \Big[ k_{sv}^{ii} - \Gamma_{sv;s}^{ii} 
- \sum_{j=1}^N \int_0^s k_{su}^{ij} \Gamma_{uv;s}^{ji} du \Big] 
=  \phi_N(s,v) +\oneN \Big[ \nu''(C_N(s,v)) \bar C_N(s,v) - \Phi_N(s,v) \Big] \\
& - \int_0^s \nu''(C_N(s,u)) \bar \gamma_N (u,v|s) du 
-\int_0^s 
\Big[ \frac{q_N(s)}{\qs^2} \nu''(C_N(s,u)) 
- \frac{\nu''(q_N(u)) \nu'(q_N(s))}{\nu'(\qs^2)} \Big]
\gamma_N^1(u,v|s) du  \,.
\end{align*}
Recalling Proposition 
\ref{use-CPs} that the uniform moment bounds 
\eqref{eq:u-mombd} apply for $\Ps$ and any $U_N \in \Ua_N$,
it thus suffices to show that $\E_\star [(\bar \gamma_N)^2] \simeq 0$
and $\E_\star [(\gamma^1_N)^2] \simeq 0$. To this end,
% since $\Bx_s$, $\Bx_u$ are $\Fa_s$-measurable, 
from the definitions of $\bar A_N$, $\Gamma^{ji}_{uv;s}$ 
(see \eqref{eq:hfdef}, \eqref{eq:gammadef}), and the \abbr{lhs} of
\eqref{def:Ahat}, we find that
\begin{align*}
\bar \gamma_N (u,v|s) & = \E_\star \Big[ 
\frac{1}{N^2} \sum_{i,j=2}^N x^j_s x^i_u 
(G^j(\Bx_u)-V^j_{u;s})(G^i(\Bx_v)-V^i_{v;s}) |\Fa_s \Big]  \\
&=
\E_\star\Big[ (\bar A_N(u,s) - \bar A_N (u,s|s)) (\bar A_N(v,u) - 
\bar A_N (v,u|s))
 |\Fa_s \Big] \,. 
\end{align*}
In particular, by Cauchy-Schwarz
\[
\sup_{u,v \in [0,s]} \Big\{ \E_\star \big[ \bar \gamma_N (u,v|s)^2 \big] \Big\}
\le  \sup_{u,v \in [0,s]} \Big\{
\E_\star\Big[ (\bar A_N(v,u) - \bar A_N (v,u|s))^2 \Big] \Big\}
\]
which goes to zero as $N \to \infty$ (apply  
Corollary \ref{cor-self} for $\Psi(z)=(z_1-z_2)^2$ and 
$\BZ_N = (\bar A_N(\cdot),\bar A_N (\cdot|s))$ with 
$\Psi(\E_\star \BZ_N) = 0$).
Similarly, we get from \eqref{eq:hfdef}, \eqref{eq:gammadef} 
and the right-most identity in \eqref{def:Ahat} that 
\begin{align*}
\gamma_N^1(u,v|s) = &
%  \frac{\qs}{\sqrt{N}} \oneN \sum_{i=2}^N  x_u^i \Gamma^{1i}_{uv;s} 
\E_\star \Big[ \frac{\qs}{\sqrt{N}} (G^1(\Bx_u)-V^1_{u;s})
\frac{1}{N} \sum_{i=2}^N x^i_u (G^i(\Bx_v)-V^i_{v;s}) |\Fa_s \Big] \\
= & 
\E_\star \Big[ (V_N(u)-V_N(u|s)) (\bar A_N(v,u) - \bar A_N (v,u|s))
|\Fa_s \Big]
\,.
\end{align*}
Thus, as before, the uniform convergence 
to zero of $\E_\star [(\gamma^1_N(u,v|s))^2]$ 
follows by combining Cauchy-Schwarz and Corollary \ref{cor-self} for $\Ps$
(taking here $\BZ_N = (V_N(u),V_N(u|s))$).
\end{proof}

Proceeding to establish \eqref{eq:apxh}, we compute $\wH_N(s|s)$
by employing Lemma \ref{condexp} for $H=\wH_N$ (with $N'=1$). This  
corresponds to having covariance kernel 
$\hat{k}^{1j}_{su}=\oneN \partial_{x^j_u} \widetilde{k}(\bx_s,\bx_u)$.
In view of our definition of $\varphi_N^j(u,s)$, we
then get from the \abbr{rhs} of \eqref{eq:Vid} at $\tau=s$, upon 
utilizing \eqref{eq:HN-stoch} and \eqref{eq:HN-Q}, that 
\begin{align*}
\wH_N(s|s) & =
\oneN \sum_{j=1}^N \int_0^s \partial_{x^j_u} 
\{ \widetilde{k}(\bx_s,\bx_u) \} dx^j_u - 
\int_0^s \oneN \big[ \sum_{j=1}^N \varphi_N^j(u,s) Q^j_{u:s} \big] du \\
&= \oneN \widetilde{k}(\Bx_s,\Bx_s) - \oneN \widetilde{k} (\Bx_s,\Bx_0)  
-\int_0^s \Big[\bar D_N(s,u|s) \Phi_N(u,s) + \qs^{-2} Q_N(u|s) \Phi_N^1(u,s) \Big] du 
- \epsilon^{\dagger}_N(s)  \,,
\end{align*}
for $\epsilon^{\dagger}_N(s) := \frac{1}{2N} \int_0^s  
\{ \Delta_{\Bx_u} \widetilde{k} (\Bx_s,\Bx_u) \} du$ such that 
$(\epsilon^{\dagger}_N)^a \simeq 0$ (see \eqref{eq:HN-corr}). 
In view of the second identity of \eqref{eq:kval}, considering 
$\E_\star \wH_N(s|s)$ yields \eqref{eq:apxh} (upon applying 
\eqref{eq:self-av-star} for the function $z_1 z_2$), thereby completing the proof of Proposition \ref{comp1}.
\qed
  
\subsection{Proof of Proposition \ref{lem-diff}}\label{sec:diff} We first show 
that $t \mapsto \chi(s,t)=\bar \chi(s,t)$
is continuously differentiable on $s \ge t$. Indeed, per fixed $t$ we have 
from \req{eqE} and the \abbr{rhs} of \eqref{eq:dedef1} 
that $\bar E(s,t)=[k_C \bar E(\cdot,t)](s) + h(s,t)$, with   
$$
h(s,t):=[\Phi(s,s)-f'(K(s))] \chi(s,t)
-\int_0^s  \chi(u,t) \Psi(s,u) du
-\int_0^{t \wedge s} \Phi(s,u) du 
$$
in $\Ca_b([0,T]^2)$, and integral operator $k_C$ on $\Ca ([0,T])$ 
% $$
% [k_{C} h](s):=-\int_0^s \Phi(s,u) h(u) du 
% $$
of uniformly bounded kernel $\Phi(s,u)$ on $[0,T]^2$. 
As in the proof of \cite[Lemma 4.1]{BDG2}, Picard iterations 
% (splitting $[0,T]$ to sufficiently
% small time intervals to guarantee convergence of the series
% $\sum_n k_C^n$), 
yield that 
\begin{equation}
\bar E(s,t)
=\sum_{n\ge 0} [k_{C}^n h(\cdot,t)](s)
= h(s,t) + \int_0^s \kappa_C (s,v) h(v,t) dv \,,
\label{blurp}
\end{equation}
with a uniformly bounded kernel $\kappa_{C}$.
Plugging \req{blurp} into the \abbr{rhs} of \req{eq:C1-chi1}, 
we find by Fubini's theorem that
$$
\chi(s,t)= 
% s \wedge t 
% + \int_0^s h(u,t) du  + \int_0^s [\int_0^u \kappa_C (u,v) h(v,t) dv] du = 
s \wedge t + \int_0^s[\int_0^{t \wedge v} \Phi(v,u) du]\kappa_1(s,v) dv
+ \int_0^s \chi(v,t)\kappa_2(s,v) dv \,,
$$
for some uniformly bounded $\kappa_1$ and
$\kappa_2$ (which depend only on $\Phi$, $\Psi$ and $f'(K(\cdot))$).
Applying Picard's iterations 
%once more, 
now with respect to the
integral operator $[\kappa_2 g](s) = \int_0^s \kappa_2(s,v) g(v) dv$,
we deduce that 
$$
\chi(s,t)= s\wedge t + \int_0^s \Big[ (u\wedge t) \kappa_3 (s,u) +
\big[ \int_0^{t \wedge u} \Phi(u,v) dv \big] \kappa_4(s,u) \Big] du \;,
$$
for some uniformly bounded $\kappa_3$ and $\kappa_4$. With 
$s \wedge t=t$ continuously differentiable on $s\ge t$, we
conclude by Fubini's theorem that $\chi(s,t)=\int_0^t R(s,u) du$, 
for the bounded continuous 
$$
R(s,t)=1+\int_t^s [\kappa_3 (s,u) + \Phi(u,t) \kappa_4(s,u)]du \;.
$$
In particular, $R(s,s)=1$ for all $s$. Next, having  
that $\bar E(s,0)=0$ for all $s$ and
$\bar E(s,t)=\bar E(s,s)$ for all $t \geq s$, imply the same 
for $\chi(s,t)$ (see the \abbr{rhs} of \eqref{eq:C1-chi1}),
and in particular $R(s,t) = (\partial_2 \chi) (s,t) = 0$ 
when $t>s$. From the \abbr{lhs} of \eqref{eq:C1-chi1} we 
see that $\partial_2 \bar C(s,t) = R(s,t) + \bar D(s,t)$, 
hence also 
$\partial_1 \bar C (s,t) = \partial_2 \bar C(t,s) = \bar D(t,s) 
+ R(t,s)$ (by the symmetry of $\bar C$).  From the \abbr{rhs} of \eqref{eq:qQdef}
we have $Q(t) = \partial q(t)$, so by the \abbr{lhs} of \eqref{eq:ext}
\begin{equation}\label{eq:part2C}
\partial_2 C(s,t) = \bar D(s,t) + R(s,t) + \frac{q(s) Q(t)}{\qs^2} \,.
\end{equation}
These imply in turn that the symmetric $\Upsilon(\cdot,\cdot)$ of \eqref{eq:Upsil}
is differentiable and by \eqref{eq:Phi}, \eqref{eq:Phi1},
\[
\partial_2 \Upsilon(s,u) = \bar D (s,u) \Phi (u,s) + \frac{Q(u)}{\qs^2} \Phi^1 (u,s)
+ R(s,u) \big[ \nu'(C(s,u))  -  \frac{\nu'(q(s)) \nu'(q(u))}{\nu'(\qs^2)} \big] \,,
\]
with \eqref{eqH} a consequence of \eqref{eqwH}. Similarly, the 
symmetric $\Phi(\cdot,\cdot)$ of \eqref{eq:Phi} is differentiable and by \eqref{eq:Psi}, 
\begin{align}\label{eq:p2-Phi}
\partial_2 \Phi(s,t) &= \Psi(s,t) + \nu''(C(s,t)) R(s,t) \,, \\
\partial_2 [\bar C(s,u) \Phi(t,u)] &= \bar D(s,u) \Phi(t,u)
+ \bar C(s,u) \Psi(t,u) + \bar C(s,u) \nu''(C(t,u)) R(t,u) + R(s,u) \Phi(t,u) \,. 
\nonumber
\end{align}
Combining the latter with \eqref{eqD}, then substituting into the \abbr{lhs}
of \eqref{eq:dedef1} we get that for all $t,s \in [0,T]^2$,
\begin{equation}\label{eq:tempD}
\bar D(s,t)=-f'(K(t)) \bar C(t,s)+
\int_0^{t \vee s} \Phi(t,u) R(s,u) du
+\int_0^{t \vee s} R(t,u) \nu''(C(t,u))
\bar C(s,u)  du \,.
\end{equation}
Similarly, comparing \eqref{eq:Phi1} and \eqref{eq:Psi1} it is easy to check that
\[
\partial_2 \Phi^1(s,u) - \Psi^1(s,u) = R(s,u) \Big[ q(u) \nu''(C(s,u)) 
- \frac{\qs^2 \nu'(q(u)) \nu''(q(s))}{\nu'(\qs^2)} \Big] \,,
\]
which together with \eqref{eqQ1} and \eqref{eq:qQdef} (with $\bv(\cdot)=\bv_\star(\cdot)$), results with \eqref{eqq} (at $\beta=1$). Further, 
combining \eqref{eqq} at $\beta=1$, \eqref{eq:part2C} and \eqref{eq:tempD} 
at $t>s$ leads to 
\begin{align}
\partial_2 C(s,t) = & R(s,t) - f'(K(t)) C(t,s)+ \int_0^t \Phi(t,u) R(s,u) du \nonumber \\
&+ \int_0^t R(t,u) \Big[ \nu''(C(t,u)) C(s,u) 
-  \frac{q(s) \nu'(q(u)) \nu''(q(t))}{\nu'(\qs^2)} \Big] 
du + \b q(s) \bv'_\star (q(t))  \,.
\label{eqCts}
\end{align}
Noting that $R(s,u)=0$ when $u>s$, whereas
$\partial_1 C(s,t) = \partial_2 C(t,s)$, 
interchanging $t$ and $s$ in \eqref{eqCts}
results for $s>t$ with \req{eqC} at $\beta=1$. 

Since $K(s)=C(s,s)$, with $C(s,t)=C(t,s)$ 
and $\partial_2 C = D+R$ for $D := \bar D + q(s) Q(t)/\qs^2$
(see \eqref{eq:part2C}),
it follows that for all $h>0$,
$$
K(s)-K(s-h) = \int_{s-h}^{s}(D(s,u) + R(s,u))du
+ \int_{s-h}^{s}(D(s-h,u) + R(s-h,u))du \,.
$$
Recall that $R(s,u)=0$ for $u > s$, hence, 
dividing by $h$ and taking $h \downarrow 0$, we thus get
by the continuity of $D$ and that of 
$R$ for $s \geq t$ that $K(\cdot)$ is differentiable,
% we can also consider [\chi(s+h,s+h)-\chi(s+h,s)]/h
% resulting with h^{-1} \int_s^{s+h} R(s+h,u) du --> 1
% hence same limit for [K(s+h)-K(s)]/h as for [K(s)-K(s-h)]/h.
with 
\begin{equation}\label{eq:K-alt}
\partial_s K(s)=2 D(s,s)+R(s,s)=2 D(s,s) + 1 \,,
\end{equation}
resulting by \req{eq:tempD} with \req{eqZ} for $\beta=1$.

From the \abbr{rhs} of \eqref{eq:C1-chi1} we know that
$(\partial_1 \bar \chi) (u,t) = \bar E(u,t)+ {\bf 1}_{\{u<t\}}$, 
which together with \eqref{eq:part2C} results for $s \ge t$, with 
\begin{align*} 
\bar \chi(u,t)\Phi(s,u) \mid^s_0 &=
\int_0^s  \Big[(\partial_1 \chi) (u,t) \Phi(s,u)
+ \bar \chi(u,t) (\partial_2 \Phi) (s,u) \Big] du \\
& = 
\int_0^t \Phi(s,u) du + 
\int_0^s  \bar E (u,t) \Phi(s,u) du 
+ \int_0^s \bar \chi(u,t) \big[ \Psi (s,u) 
+  \nu''(C(s,u)) R(s,u) 
\big] du
\,.
\end{align*}
It thus follows from \req{eqE} and the \abbr{lhs} of \eqref{eq:dedef1}
that for any $s \in [t,T]$,
\begin{equation}\label{eq:tempE}
\bar E(s,t)=-f'(K(s)) \bar \chi (s,t)+
\int_0^s \bar \chi (u,t)\nu''(C(s,u)) R(s,u) du 
\end{equation}
(recall that $\bar \chi(0,t)=0$). Thus, setting as in \cite[(4.4)]{BDG2},
\begin{equation}\label{eq:smE}
g(s,t):=-f'(K(s)) R (s,t)+\int_0^s R(u,t)\nu''(C(s,u)) R(s,u) du 
\end{equation}
for $s,t \in [0,T]^2$, we get \eqref{eqR} (at $\b=1$), by following 
\cite[Page 31]{BDG2} (now with \eqref{eq:tempE} and
the \abbr{rhs} of \eqref{eq:C1-chi1} instead of  
\cite[(4.3)]{BDG2} and \cite[(1.18)]{BDG2}, respectively).
% Recall that $\chi(s,t)=\int_0^t R(s,v) dv$, 
% so by Fubini's theorem, \req{eq:tempE} amounts to 
% $\bar E(s,t)=\int_0^t g(s,v) dv$ for all $s \geq t$. Further, with 
% $\bar E(s,t)=\bar E(s,s)$ when $t>s$, it follows that 
% $$
% \bar E(s,t)=\int_0^{t \wedge s} g(s,v) dv
% $$ 
% for all $s,t \leq T$. Putting this into 
% the \abbr{rhs} of \req{eq:C1-chi1} we have 
%by yet another application of Fubini's theorem that 
%$$
% \int_0^t R(s,u) du = \bar \chi(s,t) 
% = t + \int_0^s \int_0^{t \wedge u} g(u,v) dv du
% = t + \int_0^t \int_v^s g(u,v) du dv \,,
% $$
% for any $s \geq t$. Consequently, for every $t \leq s$,   
% $$
% R(s,t) = 1 + \int_t^s g(u,t) du \,,
% $$
% implying that $\partial_1 R = g$ for a.e. $s>t$, 
% which in view of \req{eq:smE} gives \req{eqR} for $\beta=1$, 
% thus completing the proof of the lemma. 

\section{Critical points and the conditional model}\label{sec:eliran}

%%%%%%%%%%%%%%%%%%%%%%%%%%%%%%%%%%%%%%%%%%%%%%%%%%%%

In this section, using the Kac-Rice formula, we relate the dynamics of Theorem \ref{thm-uncond} corresponding to initial conditions distributed according to $\mu_{\bsigma}^{q_o}$ around a uniformly chosen critical point $\bsigma$
from $\mathscr{C}_{N,\qs}(I_N,\,I'_N)$  to those of Theorem \ref{thm-macro} that correspond to initial conditions distributed according to $\mu_{\bn}^{q_o}$ and the  conditional disorder given  $\cpt(E,G,\bn)$. 

Setting 
\[
\omega_{N}:=\frac{2\pi^{N/2}}{\Gamma\left(N/2\right)}
\]
for the surface area of the $(N-1)$-dimensional unit sphere, we
start with the following consequence of the Kac-Rice formula (of \cite[Theorem 12.1.1]{AT}). 
\begin{prop}
	\label{prop:mean} Let $(\bsigma,\BJ) \mapsto g_\BJ(\bsigma)$
	be a continuous mapping in $\BJ$ such that $\mathbb{E} [ g_\BJ(\bsigma)^{2} ] < \infty$
	and the 
	% random 
	field
	\begin{equation}
	\nonumber
	\SNqs \ni \bsigma \mapsto (H_{\BJ}(\bsigma),\,\partial_{\perp} H_{\BJ}(\bsigma),\,g_\BJ(\bsigma)) 
	\end{equation} 
	has a.s. continuous sample functions and a law invariant to rotations. We then have  for 
	$\bn=(\sqrt N\qs,0,\ldots,0)$, $\cpt(E,G,\bn)$ of \eqref{eq:cond-J},
	$\mathscr{C}_{N,\qs}(I,I')$ of \eqref{eq:CrtPts} and open intervals
    $I,\,I',\,I_0\subset\mathbb{R}$,  that  
    \begin{equation}
	\label{eq:2706-1}
	\begin{aligned}
	& \mathbb{E}\#\left\{ \bsigma\in\mathscr{C}_{N,\qs}\left(I,\,I'\right):\,g_\BJ(\bsigma)\in I_0\right\} \leq (\sqrt N\qs)^{N-1} \omega_{N} \, \varphi_{\nabla_{\rm  sp}H_{\BJ}\left(\bn\right)}(0)\\
	 & \qquad \qquad \qquad \qquad \qquad \times\int_{I\times I'}d\eta(E,G) \mathbb{E}\left\{ \left|\det\left(\nabla^{2}_{\rm  sp}H_{\BJ}\left(\bn\right)\right)\right|\mathbf{1}\Big\{ g_\BJ(\bn)\in\bar{I}_0\Big\} \, \Big| \, \cpt(E,G,\bn) \right\} , 
	\end{aligned}
	\end{equation}
	where  $\nabla_{\rm  sp}H_{\BJ}\left(\bsigma\right)=\{F_i H_{\BJ}\left(\bsigma\right) \}_{i=1}^{N-1}$ and  
	$\nabla^{2}_{\rm  sp}H_{\BJ}\left(\bsigma\right)=\{F_i F_j H_{\BJ}\left(\bsigma\right) \}_{i,j=1}^{N-1}$ 
	for an arbitrary piecewise smooth orthonormal frame field $\{F_i\}$ on the sphere,  with
$\varphi_{\nabla_{\rm  sp}H_{\BJ}\left(\bn\right)}(0)$ denoting the Gaussian density of $\nabla_{\rm  sp}H_{\BJ}\left(\bn\right)$ at $0$, while $\eta$ denotes the joint law of $(-H_{\BJ}(\bn)/N, -\partial_\perp H_{\BJ}(\bn)/\|\bn\|)$
and $\bar{I}_0$ is the closure of $I_0$.	
\end{prop}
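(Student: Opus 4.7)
The plan is to apply the Kac-Rice formula of \cite[Theorem 12.1.1]{AT} to the smooth Gaussian field $H_{\BJ}$ on the $(N-1)$-dimensional Riemannian manifold $\SNqs$, then reduce the resulting surface integral to a single-point evaluation using the assumed rotational invariance, and finally factorize the conditioning via a direct Gaussian independence check. First I would verify the Kac-Rice hypotheses: from \eqref{eq:cov} the spherical gradient $\nabla_{\rm sp} H_{\BJ}(\bsigma)$ is a smooth, non-degenerate Gaussian field whose covariance at each $\bsigma \in \SNqs$ is a positive multiple of the identity on the tangent space, while continuity of the sample paths of $(H_{\BJ},\partial_\perp H_{\BJ},g_{\BJ},\nabla^2_{\rm sp} H_{\BJ})$ is part of the hypotheses. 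Applying the formula with the closed side conditions $H_{\BJ}(\bsigma)\in -NI$, $\partial_\perp H_{\BJ}(\bsigma)\in -\sqrt{N}\qs I'$ and $g_{\BJ}(\bsigma)\in I_0$ yields
\begin{equation}\label{eq:KR-plan}
\E\#\{\bsigma\in\mathscr{C}_{N,\qs}(I,I'): g_{\BJ}(\bsigma)\in I_0\} = \int_{\SNqs} \E\big[|\det\nabla^2_{\rm sp} H_{\BJ}(\bsigma)|\mathbf{1}_{\{\cdots\}}\,\big|\,\nabla_{\rm sp} H_{\BJ}(\bsigma)=0\big]\,\varphi_{\nabla_{\rm sp} H_{\BJ}(\bsigma)}(0)\, d\sigma(\bsigma),
\end{equation}
where $d\sigma$ is the surface measure and $\mathbf{1}_{\{\cdots\}}$ denotes the indicator of the three auxiliary conditions.

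Next, by the stated rotational invariance of the law of $(H_{\BJ},\partial_\perp H_{\BJ},g_{\BJ})$ --- together with the induced covariance of $\nabla_{\rm sp} H_{\BJ}$ and $\nabla^2_{\rm sp} H_{\BJ}$ under orthogonal change of orthonormal frame --- the integrand in \eqref{eq:KR-plan} is independent of $\bsigma$. I would then evaluate it at $\bsigma=\bn$ and pull out the surface area $(\sqrt{N}\qs)^{N-1}\omega_N$. The remaining step is to disintegrate the conditional expectation at $\bn$ over the scalars $E=-H_{\BJ}(\bn)/N$ and $G=-\partial_\perp H_{\BJ}(\bn)/\|\bn\|$. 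The crucial observation is that \eqref{eq:cov} gives, at $\bsigma=\bn=(\sqrt{N}\qs,0,\ldots,0)$,
\[
\E\big[\partial_{x^i} H_{\BJ}(\bn)\, H_{\BJ}(\bn)\big]=0, \qquad \E\big[\partial_{x^i} H_{\BJ}(\bn)\,\partial_{x^1} H_{\BJ}(\bn)\big]=0, \qquad i\ge 2,
\]
so the Gaussian vector $\nabla_{\rm sp} H_{\BJ}(\bn)$ is independent of the pair $(H_{\BJ}(\bn),\partial_\perp H_{\BJ}(\bn))$. Hence the conditional law of $(E,G)$ given $\nabla_{\rm sp} H_{\BJ}(\bn)=0$ coincides with its unconditional law $\eta$, and the disintegration produces precisely the integral $\int_{I\times I'} d\eta(E,G)$ against $\E[|\det\nabla^2_{\rm sp} H_{\BJ}(\bn)|\mathbf{1}\{g_{\BJ}(\bn)\in I_0\}\,|\,\cpt(E,G,\bn)]$.

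Combining the two steps yields the identity version of \eqref{eq:2706-1} but with $\mathbf{1}\{g_{\BJ}(\bn)\in I_0\}$ in place of $\mathbf{1}\{g_{\BJ}(\bn)\in \bar{I}_0\}$; the stated inequality then follows from $I_0\subset\bar{I}_0$. The main technical obstacle will be in cleanly verifying the Kac-Rice hypotheses in the presence of the auxiliary field $g_{\BJ}$, which I handle by treating $g_{\BJ}(\bsigma)\in I_0$ as a continuous marked observable multiplying the Kac-Rice integrand at the very last stage, so that the classical Gaussian Kac-Rice machinery (non-degeneracy of $\nabla_{\rm sp} H_{\BJ}$ and almost-sure smoothness of $H_{\BJ}$) is what actually drives the formula.
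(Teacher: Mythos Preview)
Your outline captures the overall architecture correctly --- Kac--Rice, rotational invariance, and the independence of $\nabla_{\rm sp}H_{\BJ}(\bn)$ from $(H_{\BJ}(\bn),\partial_\perp H_{\BJ}(\bn))$ --- but there is a real gap in the step you flag as ``the main technical obstacle.'' The hypotheses of \cite[Theorem 12.1.1]{AT} require, among other things, that the full vector $(\nabla_{\rm sp}H_{\BJ}(\bsigma),\nabla_{\rm sp}^2 H_{\BJ}(\bsigma),H_{\BJ}(\bsigma),\partial_\perp H_{\BJ}(\bsigma),g_{\BJ}(\bsigma))$ have a continuous joint density, and that the auxiliary field $h$ satisfy a modulus-of-continuity bound (condition (g) there). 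Under the stated assumptions, $g_{\BJ}$ is only continuous in $\BJ$ with a.s.\ continuous paths and a second moment; it is not Gaussian, need not have a density, and there is no modulus-of-continuity control. Your proposal to ``treat $g_{\BJ}(\bsigma)\in I_0$ as a continuous marked observable multiplying the integrand at the very last stage'' does not get around this: the conditional expectation on the right of \eqref{eq:KR-plan} is not even well-defined without knowing the relevant conditional law exists and is regular enough.

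The paper handles this by a regularization: replace $g_{\BJ}$ by $g_{\BJ}^\epsilon(\bsigma)=g_{\BJ}(\bsigma)+\epsilon Z$ with an independent standard Gaussian $Z$, and enlarge $I_0$ to $I_\delta$. The perturbation forces a strictly positive continuous joint density, and one then argues that for the \emph{upper bound} direction of Kac--Rice the modulus-of-continuity condition (g) is superfluous, because the only place it is used (via \cite[Lemma 11.2.12]{AT}) is to rule out zeros of $\nabla_{\rm sp}H_{\BJ}$ with $h$ on $\partial B$, and that follows here directly from the a.s.\ finiteness of critical points together with the independent continuous variable $Z$. One then lets $\epsilon\to 0$ and $\delta\to 0$, using upper semi-continuity of the indicator of the closed set $\bar I_\delta$; this is precisely why the conclusion is an inequality with $\bar I_0$ rather than an equality with $I_0$. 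Your claim that you obtain equality with $I_0$ and then trivially pass to $\bar I_0$ is therefore incorrect under the given hypotheses --- the closure and the inequality are artifacts of the approximation, not cosmetic weakenings.
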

\begin{remark} Under additional regularity conditions about $g_\BJ(\bsigma)$, the variant of the Kac-Rice
formula in \cite[Theorem 12.1.1]{AT} would have implied 
that \eqref{eq:2706-1} holds with equality and with $I_0$ instead of $\bar I_0$ on the \abbr{rhs}. 
\end{remark}
\begin{proof} Recall that in the pure case of $\nu(r)=b_m r^m$ the value of 
$\partial_\perp H_{\BJ}(\bsigma)$ is determined by $H_{\BJ}(\bsigma)$, 
whereas in the mixed case (i.e. any other $\nu(\cdot)$), the joint law of 
$(H_{\BJ}(\bsigma),\partial_\perp H_{\BJ}(\bsigma))$ is non-degenerate
(c.f. the statement of Theorem \ref{thm-macro}). 
%Due to such degeneracy a separate treatment is needed in each case, 
We assume hereafter that $\nu(\cdot)$ corresponds to a mixed case, 
leaving to the reader the modifications required for handling such
degeneracy in the pure case.

Specifically, fixing $\epsilon,\,\delta>0$ define $I_{\delta}=\left\{ x+y:\,x\in I_0, |y|<\delta \right\}$
and $g_\BJ^{\epsilon}(\bsigma)=g_\BJ(\bsigma)+\epsilon Z$,
where $Z\sim N(0,1)$ is independent of $\bsigma$ and
all other random variables. Note that $(\BJ,g_\BJ^\ep(\bsigma))$ has a continuous, strictly positive density $(J,x)\mapsto p_\BJ(J)p_Z(\frac{1}{\ep}(x-g_J(\bsigma)))$, where $p_\BJ$ and $p_Z$ are the densities of $\BJ$ and $Z$. By \cite[Section 4.1]{BSZ} the vector $(H_{\BJ}\left(\bsigma\right),\partial_\perp H_{\BJ}\left(\bsigma\right),\nabla_{\rm  sp}H_{\BJ}\left(\bsigma\right),\nabla^{2}_{\rm  sp}H_{\BJ}\left(\bsigma\right))$, 
which is measurable w.r.t $\BJ$, has a non-degenerate\footnote{In the sense that the law of this array, when 
interpreting $\nabla_{\rm sp}^{2}H_{\BJ}(\bsigma)$
as the corresponding upper triangular matrix, is absolutely continuous w.r.t. the Lebesgue measure on
$\R\times\R\times\mathbb{R}^{N-1}\times\mathbb{R}^{N(N-1)/2}$.} Gaussian joint density. 
Therefore, the vector
	\[
	(H_{\BJ}\left(\bsigma\right),\partial_\perp H_{\BJ}\left(\bsigma\right),\nabla_{\rm  sp}H_{\BJ}\left(\bsigma\right),\nabla^{2}_{\rm  sp}H_{\BJ}\left(\bsigma\right),g_\BJ^\ep(\bsigma))
	\]
	has a non-degenerate, strictly positive, continuous density.
	
	Combining this with the assumptions made on $g_\BJ(\bsigma)$, the formula \eqref{potential} for the 
	Hamiltonian and its rotation-invariant law, we conclude that with $f(\bsigma)=\nabla_{\rm  sp} H_{\BJ}(\bsigma)$,
	$\nabla f(\bsigma)=\nabla_{\rm  sp}^{2}H_{\BJ}(\bsigma)$,
	$$h(\bsigma)=(-H_{\BJ}(\bsigma)/N,-\partial_\perp H_{\BJ}\left(\bsigma\right)/(\sqrt N \qs),\,g^{\epsilon}_\BJ(\bsigma))$$
	and $B=I\times I'\times I_{\delta}$ all the conditions of \cite[Theorem 12.1.1]{AT} hold, except maybe the bound in condition (g) on the modulus of continuity of $g^{\epsilon}_\BJ(\bsigma)$. However, 
in the current setting the latter condition is not necessary in order to  conclude only the upper bound
of \cite[Eq. (12.1.4)]{AT}, i.e., an inequality in the direction $\leq$, instead of an equality. 
Indeed, going through the proof of the upper bound of \cite[Theorem 12.1.1]{AT} --- which
	is based on the Euclidean version \cite[Theorem 11.2.1]{AT} --- one
	sees that the bound on the modulus of continuity of $h(\bsigma)$
	is only used when invoking \cite[Lemma 11.2.12]{AT}
	to conclude that a.s. there is
	no point $\bsigma$ such that both $f(\bsigma)=0$
	and $h(\bsigma)\in\partial B$. However, the latter fact follows here 
	directly from the definition of $g_\BJ^{\epsilon}(\bsigma)$
	and the fact the number of points such that $\nabla_{\rm sp} H_{\BJ}(\bsigma)=0$
	is a.s. finite. Thanks to the assumed rotation-invariance, 
	%and the fact that
	%$\nabla_{\rm sp} H_{\BJ}(\bsigma)$ is a vector of i.i.d Gaussian
	%variables with variance $\nu'(1)$ (see \cite[Lemma 1]{AB}), 
	the upper bound of \cite[Eq. (12.1.4)]{AT} that we have just stated simplifies to 
		\begin{equation}
		\label{eq:3006-2}
		\begin{aligned}
		& \mathbb{E}\#\left\{ \bsigma 
		\in\mathscr{C}_{N,\qs}\left(I,\,I'\right):\,g^\epsilon_\BJ(\bsigma)\in I_\delta\right\} \leq (\sqrt N\qs)^{N-1}
		\omega_{N} \,
		\varphi_{\nabla_{\rm  sp}H_{\BJ}\left(\bn\right)}(0)\\
		& \qquad \qquad \qquad 
		\qquad   \mathbb{E}\left\{ \left|\det\left(\nabla^{2}_{\rm  sp}H_{\BJ}\left(\bn\right)\right)\right|\mathbf{1}\big\{h(\bn)\in B \big\} \, \Big| \, \nabla_{\rm  sp}H_{\BJ}\left(\bn\right)=0 \right\} .
		\end{aligned}
		\end{equation}	
		Recalling \cite[Section 4.1]{BSZ} that $(-H_{\BJ}(\bn)/N,-\partial_\perp H_{\BJ}\left(\bn\right)/(\sqrt N \qs))$ and $\nabla_{\rm  sp}H_{\BJ}\left(\bn\right)$ are independent, by further conditioning on the former we obtain from \eqref{eq:3006-2} that
\begin{equation}
\label{eq:3006-1}
\begin{aligned}
& \mathbb{E}\#\left\{ \bsigma 
\in\mathscr{C}_{N,\qs}\left(I,\,I'\right):\,g^\epsilon_\BJ(\bsigma)\in I_\delta\right\} \leq (\sqrt N\qs)^{N-1}
\omega_{N} \,
\varphi_{\nabla_{\rm  sp}H_{\BJ}\left(\bn\right)}(0)\\
& \qquad \qquad \qquad 
\qquad  \times\int_{I\times I'}d\eta(E,G) \mathbb{E}\left\{ \left|\det\left(\nabla^{2}_{\rm  sp}H_{\BJ}\left(\bn\right)\right)\right|\mathbf{1}\Big\{ g^\epsilon_\BJ(\bn)\in {I_\delta}\Big\} \, \Big| \, \cpt(E,G,\bn) \right\} .
\end{aligned}
\end{equation}		
Let $\Xi_{L}(\epsilon,A)$ and $\Xi_{R}(\epsilon,A)$, respectively, denote 
the left- and right-hand side of \eqref{eq:3006-1}, with general
$A\subset\mathbb{R}$ instead of $I_{\delta}$. 
Note that  $\lim_{\epsilon\to0^+}\P\{\ep Z<\delta\}=1$ and
	\[
	\mathbb{E}\#\left\{ \bsigma \in\mathscr{C}_{N,\qs}\left(I,I'\right):\,g_\BJ(\bsigma)\in I_0\right\} \leq \frac{1}{\P\{\ep Z<\delta\}}\Xi_{L}\left(\epsilon,I_{\delta}\right).
	\]
Consequently, denoting by $\bar{I}_{\delta}$ the closure of $I_{\delta}$, it follows from \eqref{eq:3006-1} that 
	\begin{align*}
	\mathbb{E}\#\left\{ \bsigma \in\mathscr{C}_{N,1}\left(I,I'\right):\,g_\BJ(\bsigma)\in I_0\right\}   \leq\lim_{\delta\to0^+}\varlimsup_{\epsilon\to0^+}\Xi_{L}\left(\epsilon,I_{\delta}\right) & \leq  \lim_{\delta\to0^+}\varlimsup_{\epsilon\to0^+}\Xi_{R}\left(\epsilon,\bar{I}_{\delta}\right)\\
	& \leq\lim_{\delta\to0^+}\Xi_{R}\left(0,\bar{I}_{\delta}\right)=\Xi_{R}\left(0,\bar{I}_0\right) 
	\end{align*}
	where the last inequality holds since $g_\BJ^{\epsilon}(\bsigma)\stackrel{a.s.}{\to}g_\BJ\left(\bsigma\right)$,
	as $\epsilon\to0$ and the indicator function of $\bar{I}_{\delta}$
	is upper semi-continuous, while the equality holds due to monotone convergence.
	This completes the proof. 
	\end{proof}
	
For $G$ large enough, the determinant on the \abbr{rhs} of
\eqref{eq:2706-1} is uniformly integrable in $N$ and the expectation of the determinant and the indicator
can be separated, yielding the following lemma.

\begin{lem}
	\label{lem:meansratio} Assume that $g_\BJ(\bsigma )$ satisfies \eqref{eq:2706-1}. Let  
	$\bn =(\sqrt N\qs,0,\ldots,0)$,
	$I_{N},I_{N}^{\prime}\in\mathbb{R}$ be a pair of open intervals as in Theorem \ref{thm-uncond} 
and $I_0\subset\R$ a fixed open interval. If it holds that 
	\begin{equation}
\lim_{N\to\infty}	\sup_{E\in I_N}\sup_{G\in I'_N}\mathbb{P}\left\{  g_\BJ(\bn)\in\bar{I}_0 \, \Big| \, \cpt(E,G,\bn) \right\}
	=0,\label{eq:2706-2}
	\end{equation}
	then in addition 
	\begin{equation}
	\label{eq:3112-01}
	\lim_{N\to\infty}\frac{\mathbb{E}\#\left\{ \bsigma \in\mathscr{C}_{N,\qs}\left(I_N,I_N'\right):\,g_\BJ(\bsigma)\in I_{0}\right\} }{\mathbb{E}\#\mathscr{C}_{N,\qs}\left(I_N,I_N'\right) }=0.
\end{equation}
\end{lem}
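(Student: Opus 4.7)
The plan is to combine the Kac--Rice upper bound \eqref{eq:2706-1} from Proposition \ref{prop:mean} for the numerator of \eqref{eq:3112-01} with the standard spherical Kac--Rice \emph{equality} (which holds without the regularity hypothesis on $g_\BJ$ since no such constraint is then imposed, as used in \cite{ABAC,ABA})
\[
\mathbb{E}\#\mathscr{C}_{N,\qs}(I_{N},I'_{N})=(\sqrt N\qs)^{N-1}\omega_N\,\varphi_{\nabla_{\mathrm{sp}}H_{\BJ}(\bn)}(0)\int_{I_N\times I'_N}d\eta(E,G)\,D(E,G),
\]
where I set $D(E,G):=\E\{|\det\nabla^{2}_{\mathrm{sp}}H_{\BJ}(\bn)|\,|\,\cpt(E,G,\bn)\}$ and $F(E,G):=\E\{|\det\nabla^{2}_{\mathrm{sp}}H_{\BJ}(\bn)|\mathbf{1}\{g_\BJ(\bn)\in\bar I_0\}\,|\,\cpt(E,G,\bn)\}$. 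Since the Gaussian density and volume prefactors are common to numerator and denominator, they cancel in the ratio \eqref{eq:3112-01}, reducing the claim to
\[
\mathcal R_N:=\frac{\int_{I_N\times I'_N}F(E,G)\,d\eta(E,G)}{\int_{I_N\times I'_N}D(E,G)\,d\eta(E,G)}\xrightarrow[N\to\infty]{}0.
\]

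\smallskip
I would then run a truncation argument. Setting $X_{E,G}:=|\det\nabla^{2}_{\mathrm{sp}}H_{\BJ}(\bn)|/D(E,G)$ and $p(E,G):=\mathbb{P}\{g_\BJ(\bn)\in\bar I_0\,|\,\cpt(E,G,\bn)\}$, the pointwise bound
\[
X_{E,G}\mathbf{1}\{g_\BJ(\bn)\in\bar I_0\}\leq c\,\mathbf{1}\{g_\BJ(\bn)\in\bar I_0\}+X_{E,G}\mathbf{1}\{X_{E,G}>c\}\,,\qquad c>0\,,
\]
(verified by cases on whether $g_\BJ(\bn)\in\bar I_0$ and whether $X_{E,G}\leq c$) yields $F(E,G)\leq D(E,G)\bigl[c\,p(E,G)+U(c,E,G)\bigr]$ with $U(c,E,G):=\E\{X_{E,G}\mathbf{1}\{X_{E,G}>c\}\,|\,\cpt(E,G,\bn)\}$. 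Integrating against $d\eta$ and dividing by $\int D\,d\eta$ then gives $\mathcal R_N\leq c\,p_N+U(c)$, where $p_N:=\sup_{(E,G)\in I_N\times I'_N}p(E,G)$ vanishes by assumption \eqref{eq:2706-2} and $U(c):=\sup_N\sup_{(E,G)\in I_N\times I'_N}U(c,E,G)$. For any $\epsilon>0$ one first chooses $c=c_0$ large enough that $U(c_0)<\epsilon/2$, and then $N$ large enough that $c_0p_N<\epsilon/2$, to conclude $\mathcal R_N<\epsilon$.

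\smallskip
The main obstacle is thus the uniform integrability estimate $U(c)\to 0$ as $c\to\infty$. As noted in the discussion following Theorem \ref{thm-uncond}, conditional on $\cpt(E,G,\bn)$ the spherical Hessian $\nabla^{2}_{\mathrm{sp}}H_{\BJ}(\bn)$ has the distribution of $-G\,I_{N-1}+\sqrt{\nu''(\qs^2)(N-1)/N}\,W_{N-1}$, for $W_{N-1}$ an $(N-1)\times(N-1)$ \abbr{GOE} matrix with semicircle law supported on $[-2,2]$. Since $I'_N\subset[G_\star-\varepsilon,G_\star+\varepsilon]$ for $N$ large and $G_\star>2\sqrt{\nu''(\qs^2)}$, one can pick $\varepsilon>0$ so small that $G\geq 2\sqrt{\nu''(\qs^2)}+\delta_0$ holds uniformly on $I'_N$. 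The function $x\mapsto\log|G-\sqrt{\nu''(\qs^2)(N-1)/N}\,x|$ is then smooth in a fixed neighborhood of $[-2,2]$, uniformly in $G\in I'_N$, so $\log|\det\nabla^{2}_{\mathrm{sp}}H_{\BJ}(\bn)|$ is a smooth linear spectral statistic of $W_{N-1}$. The central limit theorem for smooth linear statistics of \abbr{GOE} (variance $O(1)$, in sharp contrast with sums of i.i.d.\ variables), together with standard exponential concentration for such statistics and Jensen's inequality $\log D(E,G)\geq \E\{\log|\det\nabla^{2}_{\mathrm{sp}}H_{\BJ}(\bn)|\,|\,\cpt(E,G,\bn)\}$, should then yield $\sup_{N,(E,G)\in I_N\times I'_N}\E\{X_{E,G}^{1+\delta}\,|\,\cpt(E,G,\bn)\}<\infty$ for some $\delta>0$, and this $L^{1+\delta}$-bound provides the required uniform integrability to complete the plan.
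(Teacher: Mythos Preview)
Your approach is correct and essentially matches the paper's: both cancel the common Kac--Rice prefactors and reduce the claim to a uniform integrability statement for $|\det\nabla^{2}_{\mathrm{sp}}H_{\BJ}(\bn)|/\E\{|\det\nabla^{2}_{\mathrm{sp}}H_{\BJ}(\bn)|\,|\,\cpt(E,G,\bn)\}$, invoking the shifted-\abbr{goe} representation of the conditional Hessian under the hypothesis $G_\star>2\sqrt{\nu''(\qs^2)}$. The paper replaces your truncation step by a single application of Cauchy--Schwarz, which reduces the needed estimate to $\limsup_N\sup_{(E,G)\in I_N\times I'_N}\E\{|\det|^{2}\,|\,\cpt\}/\bigl(\E\{|\det|\,|\,\cpt\}\bigr)^{2}<\infty$, and then cites \cite[Corollary~23]{Subag2nd} (at $k=2$) for this second-moment bound rather than sketching the linear-spectral-statistics concentration argument you outline.
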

\begin{proof} From \eqref{eq:2706-1} we have an upper bound for the numerator of \eqref{eq:3112-01}. By an application of the Kac-Rice formula \cite[Theorem 12.1.1]{AT}, the denominator of \eqref{eq:3112-01} is equal to the \abbr{rhs} of \eqref{eq:2706-1} with the indicator omitted. Thus, to complete the proof it suffices to show that
\[
\lim_{N\to\infty}	\sup_{E\in I_N}\sup_{G\in I'_N}\frac{\mathbb{E}\left\{ \left|\det\left(\nabla^{2}_{\rm  sp}H_{\BJ}\left(\bn\right)\right)\right|\mathbf{1}\Big\{ g_\BJ(\bn)\in\bar{I}_0\Big\} \, \Big| \, \cpt(E,G,\bn) \right\}}{\mathbb{E}\left\{ \left|\det\left(\nabla^{2}_{\rm  sp}H_{\BJ}\left(\bn\right)\right)\right| \, \Big| \, \cpt(E,G,\bn) \right\}}=0.
\]
By \eqref{eq:2706-2} and the Cauchy-Schwarz inequality, it is therefore enough to show that 
\begin{equation}
\label{eq:2712-01}
\limsup_{N\to\infty} \sup_{E\in I_N}\sup_{G\in I'_N} \frac{\mathbb{E}\left\{ \left|\det\left(\nabla^{2}_{\rm  sp}H_{\BJ}\left(\bn\right)\right)\right|^2 \, \Big| \, \cpt(E,G,\bn) \right\}}{\Big(\mathbb{E}\left\{ \left|\det\left(\nabla^{2}_{\rm  sp}H_{\BJ}\left(\bn\right)\right)\right| \, \Big| \, \cpt(E,G,\bn) \right\}\Big)^2}<\infty.
\end{equation}
To this end, recall \cite[Section 4.1]{BSZ}, 
% just below (4.1) on pg. 19 of BSZ
that conditional on $\cpt(E,G,\bn)$,
\[
\nabla^{2}_{\rm  sp}H_{\BJ}\left(\bn\right) \stackrel{d}{=} \sqrt{\frac{N-1}{N}\nu''(\qs^2)}\, \mathbf M+G \, \mathbf I,
\]
where $\mathbf M$ is a normalized $(N-1)$-dimensional \abbr{GOE} matrix, i.e., a real symmetric matrix with independent centered Gaussian entries (up to symmetry), such that 
\[
\E \mathbf M_{ij}^2=
\begin{cases}
2/(N-1), &i=j\\
1/(N-1), &i\neq j.
\end{cases}
\]
We have assumed that 
$\inf I_N' \to  \Gs >2\sqrt{\nu''(\qs^2)}$. Thus, 
the conditional distribution of $\nabla^{2}_{\rm  sp}H_{\BJ}\left(\bn\right)$ is identical to that of a shifted (scaled) \abbr{GOE} matrix whose eigenvalues are bounded away from $0$, uniformly in $G\in I_N'$ (and $E\in I_N$). 
% Utilizing  \cite[Corollary 23]{Subag2nd}
Considering \cite[Corollary 23]{Subag2nd} (at $k=2$),
 this yields \eqref{eq:2712-01}, thereby completing the proof.
\end{proof}

Recall the joint law $\P^{N,q_o}_{\BJ,\bsigma}$ on $\Ca(\R^+,\R^{2N})$, of $\BB_t$ 
and the corresponding strong solution 
$\bx_t$ of \eqref{diffusion} for initial conditions $\bx_0$ distributed per $\mu_{\bsigma}^{q_o}$
(see Proposition \ref{existN}), denoting by $\E^{N,q_o}_{\BJ,\bsigma}$ the corresponding expectation.
\begin{lem}\label{lem:cont_g} 
For $\err_{N,T}(\bsigma)$ of \eqref{eq:err},  the function 
\begin{equation}\label{dfn:barg}
(\bsigma,\BJ) \mapsto \bar g_\BJ(\bsigma) :=  \E^{N,q_o}_{\BJ,\bsigma}   \big[\err_{N,T}(\bsigma) \big]
\end{equation}
satisfies the conditions of Proposition \ref{prop:mean}. Further, \eqref{eq:2706-2} then 
holds  for any open intervals $I_{N}$, $I_{N}^{\prime}$ as in Theorem \ref{thm-uncond}, 
and any fixed open interval $I_0$ such that $0\notin \bar I_0$.
\end{lem}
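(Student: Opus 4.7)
My plan is to verify the two parts of the lemma in turn. For the hypotheses of Proposition \ref{prop:mean}, the definition of $\err_{N,T}$ immediately yields $0\le \bar g_\BJ(\bsigma)\le 4$, so $\E[\bar g_\BJ(\bsigma)^2]<\infty$. Continuity in $\BJ$ at fixed $\bsigma$: for $\widetilde \BJ \to \BJ$, couple the two diffusions using the same Brownian path $\BB$ and the same initial datum $\Bx_0 \sim \mu_\bsigma^{q_o}$; by \eqref{eq:diffeo} of Lemma \ref{U are Lipschitz} the corresponding paths converge in probability in $\sup$-norm on $[0,T]$, and since $\err_{N,T}$ is continuous in those paths and bounded by $4$, bounded convergence gives $\bar g_{\widetilde \BJ}(\bsigma) \to \bar g_\BJ(\bsigma)$. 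Continuity in $\bsigma$ at fixed $\BJ$ follows by parameterizing $\bsigma = \BO_\bsigma \bn$ with a smooth local choice of rotations and using the rotational covariance of $\mu_\bsigma^{q_o}$ and of the Brownian driver to rewrite $\bar g_\BJ(\bsigma)$ as $\bar g_{\BO_\bsigma^{-1}\!\cdot\!\BJ}(\bn)$, reducing to the already-established continuity in the disorder. The joint rotational invariance of the triple $(H_\BJ(\bsigma),\partial_\perp H_\BJ(\bsigma),\bar g_\BJ(\bsigma))$ then follows from the same rotational symmetry used in Remark \ref{rem:rot-sym}.

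For \eqref{eq:2706-2}, set $d_0 := \mathrm{dist}(0,\bar I_0) > 0$; since $\err_{N,T} \ge 0$, Markov's inequality gives
\[
\P\bigl\{\bar g_\BJ(\bn) \in \bar I_0 \,\big|\, \cpt(E,G,\bn)\bigr\}
\le d_0^{-1}\,\E\bigl[\bar g_\BJ(\bn) \,\big|\, \cpt(E,G,\bn)\bigr]
= d_0^{-1}\,\wE_{E,G}\bigl[\err_{N,T}(\bn)\bigr],
\]
where $\wE_{E,G}$ denotes the expectation with $\BJ$ conditioned on $\cpt(E,G,\bn)$. At $(E,G) = (\Es,\Gs)$, Theorem \ref{thm-macro} makes the right-hand side tend to $0$. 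To upgrade this to convergence uniform in $E \in I_N, G \in I'_N$, I argue by contradiction: if some $\epsilon > 0$ and sequences $E_{N_k} \in I_{N_k}, G_{N_k} \in I'_{N_k}$ satisfy $\wE_{E_{N_k},G_{N_k}}[\err_{N_k,T}(\bn)] \ge \epsilon$, the uniform-in-$(E,G)$ moment and self-averaging bounds of Propositions \ref{tight-self}--\ref{use-CPs} (valid over $|E|,|G| \le \alpha$) allow extraction of a further subsequence on which $\wE_{E_{N_k},G_{N_k}}[(C_{N_k},\chi_{N_k},q_{N_k}^{\bn},H_{N_k})]$ converges uniformly on $[0,T]^2$. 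Arguing as in the proof of Theorem \ref{thm-macro}, any such limit solves \eqref{eqR}--\eqref{eqH}, and since the coefficient $\bv_\star(\cdot)$ depends polynomially (hence continuously) on $(E,G)$ via \eqref{def:vt}, letting $E_{N_k} \to \Es$, $G_{N_k} \to \Gs$ forces the limit to coincide with the reference solution $(C,\chi,q,H)$ at $(\Es,\Gs)$, by the uniqueness Proposition \ref{uniqueness}. This yields $\wE_{E_{N_k},G_{N_k}}[\err_{N_k,T}(\bn)] \to 0$, contradicting the choice of subsequence.

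The main obstacle is exactly this uniform passage to the limit in $(E,G)$, since Theorem \ref{thm-macro} is stated for fixed conditioning parameters. The resolution rests on two points: the a priori tightness and self-averaging estimates underlying the proof of Theorem \ref{thm-macro} are already uniform over $|E|,|G| \le \alpha$ by Propositions \ref{tight-self} and \ref{use-CPs}, and the unique solution of \eqref{eqR}--\eqref{eqH} depends continuously on $(E,G)$ through the polynomial $\bv_\star(\cdot)$ of \eqref{def:vt}, which is a standard Gronwall-type consequence of Proposition \ref{uniqueness}.
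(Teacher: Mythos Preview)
Your proposal is essentially correct, but both halves take a different route from the paper's proof, and it is worth spelling out the comparison.

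\textbf{Continuity.} The paper shows \emph{joint} continuity of $(\bsigma,\BJ)\mapsto \bar g_\BJ(\bsigma)$ by a direct coupling: for $\widetilde\bsigma$ near $\bsigma$ it picks the minimal rotation $\widetilde\BO$ with $\widetilde\BO\bsigma=\widetilde\bsigma$ (identity on $\{\bsigma,\widetilde\bsigma\}^\perp$), sets $\widetilde\bx_0:=\widetilde\BO\bx_0$, keeps the same $\BB$, and bounds $|\err_{N,T}(\bsigma\,|\,\bx_0,\BJ)-\err_{N,T}(\widetilde\bsigma\,|\,\widetilde\bx_0,\widetilde\BJ)|$ by explicit constants times $\|\bsigma-\widetilde\bsigma\|_2$, $\|\BJ-\widetilde\BJ\|$ and $\sqrt{1\wedge\|e_N\|_\infty}$, then invokes \eqref{eq:diffeo} and \eqref{eq:K-unif-bd}. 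Your alternative --- writing $\bar g_\BJ(\BO_\bsigma\bn)=\bar g_{\BO_\bsigma^{-1}\cdot\BJ}(\bn)$ and reducing continuity in $\bsigma$ to the already-established continuity in the disorder --- is cleaner and also valid: the tensor action $\BO\mapsto\BO\cdot\BJ$ is polynomial in the entries of $\BO$, a local continuous section $\bsigma\mapsto\BO_\bsigma$ exists, and $C_N,\chi_N,q_N^\bsigma,H_N$ are rotation-covariant in the needed way.

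\textbf{The uniform limit \eqref{eq:2706-2}.} Here the paper avoids re-running Theorem~\ref{thm-macro} entirely. It truncates to $\widetilde g_\BJ(\bsigma):=\E^{N,q_o}_{\BJ,\bsigma}[\err_{N,T}(\bsigma)\mathbf 1_{\CL_{N,M}}]$, uses the decomposition $\BJ=\BJ_o+\bar\BJ_{E,G}$ from Proposition~\ref{existN}, observes from \eqref{eq:bp1} that $N^{-1}\|\bar\BJ_{E,G}-\bar\BJ_{\Es,\Gs}\|^2=\sum_p(b_p\qs^p\langle\bv_p,(E-\Es,G-\Gs)\rangle)^2\to 0$ uniformly over $I_N\times I_N'$, and then applies the pointwise Lipschitz bound \eqref{eq:lippr} on $\CL_{N,M}$ to get $|\widetilde g_{\BJ_o+\bar\BJ_{E,G}}(\bn)-\widetilde g_{\BJ_o+\bar\BJ_{\Es,\Gs}}(\bn)|\to 0$ for every realization of $\BJ_o$. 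This reduces everything to the single parameter $(\Es,\Gs)$, where the $L_1$-convergence of Theorem~\ref{thm-macro} applies verbatim. Your compactness/contradiction argument is also sound in principle: the equicontinuity and self-averaging bounds of Propositions~\ref{tight-self}--\ref{use-CPs} are indeed stated uniformly over $|E|,|G|\le\alpha$, and the only $(E,G)$-dependence in the limit equations is through the polynomial $\bv$. But to make it rigorous you must (a) check that the derivation in Propositions~\ref{prop-macro}--\ref{lem-diff} goes through with $(E,G)=(E_{N_k},G_{N_k})$ varying along the subsequence, and (b) combine convergence of $\wE_{E_{N_k},G_{N_k}}[U_{N_k}]$ with the uniform self-averaging \eqref{eq:asself} to conclude $\wE_{E_{N_k},G_{N_k}}[\|U_{N_k}-U\|_\infty]\to 0$. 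The paper's direct Lipschitz comparison bypasses this detour.
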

\begin{proof} Clearly $\bar g_\BJ \in [0,4]$, is uniformly bounded. 
The continuity of  $\bsigma \mapsto (H_{\BJ}(\bsigma),\,\partial_{\perp} H_{\BJ}(\bsigma))$ follows for example
from the representation \eqref{potential}. The invariance of the law of $
(H_{\BJ}(\bsigma),\,\partial_{\perp} H_{\BJ}(\bsigma),\, \bar g_\BJ(\bsigma))$  
under rotations follows by the argument  detailed in Remark \ref{rem:rot-sym}. 
Turning to show that  $(\bsigma,\BJ) \mapsto \bar g_\BJ(\bsigma)$ is a.s. continuous, 
upon fixing $N$ and the driving Brownian motion $\BB$ we have by the triangle inequality and 
Cauchy-Schwarz, that  
\[
| \err_{N,T}(\bsigma\,|\,\bx_0,\BJ) - \err_{N,T}(\widetilde \bsigma\,|\,\widetilde \bx_0, \widetilde \BJ) | \le 
L_1 \|\bsigma-\widetilde \bsigma\|_2 + L_2 \|\BJ-\widetilde \BJ\|
+ L_3 \sqrt{1 \wedge \| e_N \|_\infty} 
\]
where $e_N(s) := N^{-1} \|\bx_s- \widetilde \bx_s\|^2_2$,  
$L_1 := N^{-1/2} \|K_N\|_\infty^{1/2}$, $L_2 := \widetilde c \beta N^{-1/2} (1+ \| K_N\|_\infty^{m/2})$ and 
\[
L_3 := 4 
% replacing \qs by 4 to make sure L_3 \ge 4 in order to move the \wedge 4 all to the factor e_N
+  \|K_N\|_\infty^{1/2} + \|\widetilde K_N\|_\infty^{1/2} + \|B_N\|_\infty^{1/2}  
+ c \beta \|\widetilde \BJ\|_\infty^N (1 + \|K_N\|_\infty^r) (1 + \|\widetilde K_N\|_\infty^r) \,,
\]
for $\widetilde c = \sqrt{\nu(1)}$, the finite constants $c$, $r$ from \eqref{eq:lipH}
and with the $L_2$-norm $\|\BJ\|$ which is normalized as in \eqref{eq:norm}. 
Next, fixing $\bsigma\in\SNqs$, to jointly produce $\bar g_{\BJ}(\bsigma)$ and $\bar g_{\BJ}(\widetilde\bsigma)$ for  arbitrary $\widetilde\bsigma\in\SNqs$, let $\widetilde\BO$ be an orthogonal matrix which only rotates the space spanned by $\bsigma$ and $\widetilde\bsigma$ (i.e., $\widetilde \BO \bx=\bx$ if
$\langle\bx,\bsigma\rangle=\langle\bx,\widetilde\bsigma\rangle=0$), such that 
$\widetilde \BO \bsigma= \widetilde \bsigma$. Then,
\[
\sup_{\bx\in\SN} \| \widetilde\BO \bx - \bx\|_2 = \sup_{\bx\in\SN\cap {\rm sp}\{\bsigma,\widetilde\bsigma \}} \| \widetilde\BO\bx-\bx\|_2 =\frac{1}{\qs} \|\bsigma-\widetilde\bsigma\|_2 \,.
\]
Drawing
$\bx_0$ from law $\mu^{q_o}_{\bsigma}$, we set $\widetilde\bx_0 := \widetilde\BO \bx_0$ as the 
initial condition of laws $\mu^{q_o}_{\widetilde\bsigma}$, noting that by design 
$\|\bx_0 - \widetilde\bx_0 \|_2 \le  \| \bsigma - \widetilde \bsigma \|_2/\qs$.
 Utilizing 
this coupling and  Cauchy-Schwarz, yields that
\begin{align*}
|\bar g_{\BJ}(\bsigma) - \bar g_{\widetilde \BJ}(\widetilde \bsigma)| \le 
\int \Big\{  \|\bsigma-\widetilde \bsigma\|_2 \, \E [ L_1 | \BJ, \bx_0 ]
& +  \|\BJ-\widetilde \BJ\| \, \E [ L_2 | \BJ, \bx_0 ]
\nn \\
& + \big\{ \E [ 1 \wedge \|e_N\|_\infty | \BJ, \widetilde \BJ, \bx_0]  \,  \E [ L_3^2 | \BJ, \widetilde \BJ, \bx_0] \big\}^{1/2} 
\Big\} d\mu_{\bsigma}^{q_o} (\Bx_0) \,.
\end{align*}
From \eqref{eq:conc2} we deduce that $\int \E[L_i | \BJ,\bx_0] d\mu_{\bsigma}^{q_o} (\Bx_0)$, $i=1,2$, are a.s. finite. Further,
fixing a sequence $(\widetilde \bsigma,\widetilde \BJ) \to (\bsigma,\BJ)$, necessarily 
also $(\widetilde\bx_0, \widetilde \BJ) \to (\bx_0,\BJ)$. In view of 
\eqref{eq:K-unif-bd}, this implies a uniform, over $(\widetilde \bsigma, \widetilde \BJ)$, 
bound on $\E [ \| \widetilde K_N \|_\infty^k | \widetilde \BJ, \bx_0]$. Thereby, such uniform bound 
applies also for $\int \E [L_3^2 | \BJ, \widetilde \BJ, \bx_0] d\mu_{\bsigma}^{q_o} (\Bx_0)$, with \eqref{eq:diffeo} yielding
the a.s. continuity of $\bar g_{\BJ}(\bsigma)$.

Next, setting 
$\widetilde g_{\BJ}(\bsigma) :=\E^{N,q_o}_{\BJ,\bsigma}  [\err_{N,T}(\bsigma) {\bf 1}_{\CL_{N,M}} ]$, 
we have  in view of \eqref{eq:BDG2-217} and \eqref{dfn:barg}, that  
\[
\lim_{N \to \infty} \sup_{E \in I_N} \sup_{G \in I_N'} 
\E\Big[ | \bar g_{\BJ}(\bn) - \widetilde g_{\BJ}(\bn) | \, \big|  \cpt(E,G,\bn) \Big] =  0 \,.
\]
We thus establish \eqref{eq:2706-2} whenever $0 \notin \bar I_0$, once we show that in such a case
\begin{equation}
\lim_{N\to\infty}	\sup_{E\in I_N}\sup_{G\in I'_N}\mathbb{P}\left\{  
\tilde g_\BJ(\bn)\in\bar{I}_0 \, \Big| \, \cpt(E,G,\bn) \right\} =0.
\label{eq:2706-3}
\end{equation}
To this end, recall from our proof of Proposition 
\ref{existN}, that given $\cpt(E,G,\bn)$ one has
$\BJ = \BJ_o+\bar{\BJ}_{E,G}$ where the law of 
%the centered 
$\BJ_o$ is independent of $(E,G)$ and 
the only non-zero entries of $\bar{\BJ}_{E,G} = \E[\BJ\,|\,\cpt(E,G,\bn)]$ are given by \eqref{eq:bp1}. Hence, 
\begin{align*}
&\lim_{N\to\infty}	\sup_{E\in I_N}\sup_{G\in I'_N} \{ N^{-1} \|(\Bx_0, \BJ_o+\bar{\BJ}_{E,G}, \BB)-(\Bx_0, \BJ_o+\bar{\BJ}_{\Es,\Gs}, \BB)\|^2 \} \\
&=  \lim_{N\to\infty}	\sup_{E\in I_N}\sup_{G\in I'_N} \sum_{p=2}^m \; 
(b_p \qs^{p} \, \langle \bv_p, (E-\Es,G-\Gs) \rangle)^2 = 0.
\end{align*}
The Lipschitz property \eqref{eq:lippr} then implies that 
\[
\lim_{N\to\infty}	\sup_{E\in I_N}\sup_{G\in I'_N}
| \widetilde g_{\BJ_o + \bar \BJ_{E,G}}(\bn) - \widetilde g_{\BJ_o + \bar \BJ_{\Es,\Gs} }(\bn) |  = 0 \,,
\]
whereas from the $L_1$-convergence in Theorem \ref{thm-macro}  we deduce that
\begin{equation*}
	\lim_{N\to\infty}	\mathbb{P}\left\{  \widetilde g_\BJ (\bn)\in\bar{I}_0 \, \Big| \, \cpt(\Es,\Gs,\bn) \right\} =0.
\end{equation*}
Finally, note that combining the preceding two displays results with  \eqref{eq:2706-3}.
\end{proof}

\subsection*{Proof of Theorem \ref{thm-uncond}}
With $\bar g_\BJ \in [0,4]$, by Markov's inequality, for any $\delta,\epsilon >0$, 
\begin{align*}
&\E\Big\{ \sum_{\bsigma\in\mathscr{C}_{N,\qs}(I_N,\,I'_{N})} \P^{N,q_o}_{\BJ,\bsigma} (\err_{N,T}(\bsigma)>\ep)
\Big\} \leq \frac{1}{\epsilon}
\E\Big\{ \sum_{\bsigma\in\mathscr{C}_{N,\qs}(I_N,\,I'_{N})} \,\bar g_\BJ(\bsigma) \Big\} 
\\
&  \leq	\frac{\delta}{\epsilon} \mathbb{E}\#\left\{ \bsigma \in\mathscr{C}_{N,\qs}\left(I_N,I_N'\right) \right\} +
 	\frac{4}{\epsilon}\mathbb{E}\#\left\{ \bsigma \in\mathscr{C}_{N,\qs}\left(I_N,I_N'\right):\,\bar g_\BJ(\bsigma) > 
 	\delta \right\} .
\end{align*}
In addition, for any $\delta>0$ it follows from Lemmas \ref{lem:meansratio} and \ref{lem:cont_g}, that 
\begin{equation*}
	\lim_{N\to\infty}\frac{\mathbb{E}\#\left\{ \bsigma \in\mathscr{C}_{N,\qs}\left(I_N,I_N'\right):\,\bar g_\BJ(\bsigma) >  \delta \right\} }{\mathbb{E}\#\mathscr{C}_{N,\qs}\left(I_N,I_N'\right) }=0.
\end{equation*}
Combining the above and taking $N \to  \infty$ followed by $\delta \to 0$ results with \eqref{eq:uncond2}.

Next, denoting by $Y_a$ the indicator of the event that
\[
\#\mathscr{C}_{N,\qs}\left(I_{N},\,I'_{N}\right)>a\mathbb{E}\left\{ \#\mathscr{C}_{N,\qs}\left(I_{N},\,I'_{N}
\right)\right\},
\]
we have by Markov's inequality, that for any $\delta>0$,
\begin{align*}
&\P\Big\{ \frac{Y_a}{\#\mathscr{C}_{N,\qs}(I_N,\,I'_{N})}\sum_{\bsigma\in\mathscr{C}_{N,\qs}(I_N,\,I'_{N})} \P^{N,q_o}_{\BJ,\bsigma} (\err_{N,T}(\bsigma)>\ep) >\delta \Big\}
\\
&\leq \frac{1}{a\delta\mathbb{E}\left\{ \#\mathscr{C}_{N,\qs}\left( I_{N},\,I'_{N}
	\right)\right\}} \E\Big\{\sum_{\bsigma\in\mathscr{C}_{N,\qs}(I_N,\,I'_{N})} \P^{N,q_o}_{\BJ,\bsigma}(\err_{N,T}(\bsigma)>\ep)\Big\} \stackrel{N\to\infty}{\longrightarrow}0,\end{align*}
from which \eqref{eq:uncond} follows.
\qed

%%%%%%%%%%%%%%%%%%%%%%%%%%%%%%%%%%%%%%%%%%%%%%%%%%%%

\section{Proof of Proposition \ref{prop-sphere}}\label{sec:hard-sphere}

%%%%%%%%%%%%%%%%%%%%%%%%%%%%%%%%%%%%%%%%%%%%%%%%%%%%

As $\chi(s,t)=\int_0^t R(s,u) du$ is the limit of 
%the empirical integrated response functions 
$\chi_N(s,t)$, it follows from the definition 
\eqref{integrated} of $\chi_N$ that 
\begin{equation}\label{eq:rbd}
|\int_{t_1}^{t_2} R(s,u) du |^2 \le K(s) (t_2-t_1) \,,   
\qquad 0 \leq t_1 \leq t_2  \leq s < \infty \,.
\end{equation}
Likewise, the limit $\bar C(s,t) = C(s,t)-q(s)q(t)/\qs^2$ 
of the empirical correlation functions $\bar C_N (s,t)$
must be a non-negative definite kernel
on $\reals_+ \times \reals_+$. In particular, 
$C(s,t)^2 \leq K(s) K(t)$, whereas by \eqref{eq:conc2} 
we have that $\sup_{t \geq 0} K(t) < \infty$. Unlike the special 
case considered in \cite[Proposition 1.1]{DGM}, here the functions
$(C,R)$ may take negative values. Nevertheless, we next show 
%as in \cite[Lemma 2.2]{DGM}, 
that if $(R^{(L)},C^{(L)},q^{(L)},K^{(L)})$ are solutions of the system (\ref{eqR})--(\ref{eqZ})
with $K^{(L)} (0)=1$ and potential $f_L(\cdot)$ as in (\ref{eq:fdef}) with 
$\bphi=1+2\b q_o \bv'_\star(q_o)>0$, 
then $K^{(L)}(s) \to 1$ as $L \to \infty$, uniformly over $s \ge 0$.
\begin{lem}\label{lem-bdd}
Assuming $K^{(L)}(0)=1$, there exist $B<\infty$, such that for all $L \geq B$,
\begin{equation}\label{eq:kbdd}
\sup_{s \ge 0} |K^{(L)}(s)-1|\le \frac{B}{2L} \,.
\end{equation}
\end{lem}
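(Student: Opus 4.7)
The plan is to exploit the special role of $\bphi = 1+2\b q_o \bv'_\star(q_o)$. With $f_L'(r) = 2L(r-1)+(\bphi/2)\,r^{2k-1}$, the equation \eqref{eqZ} at $f=f_L$ rearranges as
\[
\partial_s K^{(L)}(s) \;=\; 2\mu_L(s) \;-\; 4L\,K^{(L)}(s)\bigl(K^{(L)}(s)-1\bigr) \;-\; \bphi\, K^{(L)}(s)^{2k},
\]
where $\mu_L(s)$ denotes the right-hand side of \eqref{eqZs} evaluated at $(R^{(L)},C^{(L)},q^{(L)},K^{(L)})$. At $s=0$ one has $K^{(L)}(0)=1$ and $q^{(L)}(0)=q_o$ while the integral in $\mu_L$ vanishes, so $\mu_L(0)=\tfrac12+\b q_o\bv'_\star(q_o)$; combined with $f_L'(1)=\bphi/2$, the choice of $\bphi$ is exactly what forces $\partial_s K^{(L)}(0)=0$. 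Writing $h(s):=K^{(L)}(s)-1$ and expanding near $h=0$ yields
\[
h'(s) + 4L\,h(s) \;=\; 2\bigl[\mu_L(s)-\mu_L(0)\bigr] \;-\; 2k\,\bphi\,h(s) \;+\; O\bigl((1+L)h(s)^2\bigr),
\]
so the argument will ultimately consist of Gronwall's inequality against the very strong damping coefficient $4L$.

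For this to give a useful bound I first need an a priori uniform bound $|\mu_L(s)|\le M_1$, independent of $s\ge 0$ and of $L$ (large). This is done by bootstrap. The elementary estimates $|q^{(L)}(u)|\le \qs\sqrt{K^{(L)}(u)}$, $|C^{(L)}(s,u)|^2\le K^{(L)}(s)\,K^{(L)}(u)$ and $|\bar C^{(L)}(s,u)|\le 2K^{(L)}(s)^{1/2}K^{(L)}(u)^{1/2}$, together with the Cauchy--Schwarz-type bound $|\chi^{(L)}(s,u)|^2\le K^{(L)}(s)\cdot u$ inherited at the pre-limit from \eqref{eq:rbd}, bound every argument of $\psi$, $\nu'$, $\nu''$, $\bv'_\star$ in terms of $M:=\sup_{u\le s}K^{(L)}(u)$. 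Integrating the integral term in $\mu_L$ by parts against the primitive $\chi^{(L)}(s,\cdot)$ of $R^{(L)}(s,\cdot)$, and then exploiting the Volterra structure of \eqref{eqR} (its damping coefficient $-f_L'(K^{(L)})$ controls $R^{(L)}(s,\cdot)$ in $L^1([0,s])$ uniformly in $s$ once $K^{(L)}$ is bounded), yields $|\mu_L(s)|\le C(M)$. Since $-\bphi K^{2k}$ dominates all other contributions to $\partial_s K$ as soon as $K^{(L)}$ is sufficiently large, this confines $K^{(L)}$ to some bounded interval $[0,M_0]$ uniformly in $L\ge 1$ and $s\ge 0$, yielding the desired $M_1=C(M_0)$.

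With $M_1$ in hand the argument concludes by Gronwall. On the set $\{s:\sup_{u\le s}|h(u)|\le 1/2\}$ the linearised ODE above gives $|h'(s)+4L\,h(s)|\le M_2$ for some $M_2$ independent of $L$ and $s$, hence $|h(s)|\le \tfrac{M_2}{4L}(1-e^{-4Ls})\le \tfrac{M_2}{4L}$. Taking $B:=2M_2$, for $L\ge B$ this stays below $1/4$, so a standard continuity bootstrap shows the inequality $|h|\le 1/2$ is maintained on all of $\R^+$, and \eqref{eq:kbdd} follows. The main obstacle is the uniform-in-$s$ control of the integral in $\mu_L(s)$: the naive estimates using only $|\chi^{(L)}(s,u)|=O(\sqrt u)$ grow with $s$, and one really needs to leverage the Volterra equation \eqref{eqR} to avoid this, along the lines of the analogous lemma for the \abbr{CKCHS}-equations in \cite[Section 2]{DGM}.
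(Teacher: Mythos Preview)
Your overall strategy---write $\partial_s K^{(L)}=g_L(K^{(L)})+(\text{source})$ and trap $K^{(L)}$ near $1$ by showing the source is bounded independently of $s$ and $L$---is the right one, but the mechanism you propose for bounding the source has a real gap.

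Your claim that ``the damping coefficient $-f_L'(K^{(L)})$ controls $R^{(L)}(s,\cdot)$ in $L^1([0,s])$ uniformly in $s$ once $K^{(L)}$ is bounded'' is not justified. Since $f_L'(r)=2L(r-1)+(\bphi/2)r^{2k-1}$, this coefficient is of order $-L$ whenever $K^{(L)}<1$; knowing only $K^{(L)}\in[0,M_0]$ does not make \eqref{eqR} dissipative, so no uniform $L^1$ bound on $R^{(L)}(s,\cdot)$ follows. The integration-by-parts route via $|\chi^{(L)}(s,u)|\le \sqrt{K^{(L)}(s)\,u}$ likewise produces factors growing like $\sqrt{s}$. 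Your bootstrap is therefore circular: you need $K^{(L)}$ already pinned close to $1$ (not merely bounded) to get positive damping in \eqref{eqR}, and that is exactly the conclusion you are after. (There is also a secondary issue: even if the $L^1$ bound held, it would give $|\mu_L(s)|\lesssim M_0^{\,m-1}$, so the coarse barrier via $-\bphi K^{2k}$ requires $2k>m-1$, which is stronger than the standing hypothesis $k>m/4$.)

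The paper bypasses this by going back to the microscopic system. Since $(K^{(L)},C^{(L)},R^{(L)},q^{(L)})$ arise as the limits of Theorem~\ref{thm-macro}, the source term equals $2\beta\,{\sf A}^{(L)}(s,s)$ with ${\sf A}(s,s)=\lim_N \wE\big[N^{-1}\sum_i G^i(\Bx_s)x_s^i\big]$. A Cauchy--Schwarz at the finite-$N$ level, combined with the deterministic inequality $G_N(s)\le c(\|\BJ\|^N_\infty)^2(1+K_N(s)^{m-1})$, gives the \emph{local-in-time} estimate $|{\sf A}(s,s)|^2\le \kappa\big(K(s)+K(s)^m\big)$, depending only on the instantaneous value $K^{(L)}(s)$ and uniform in $s$ and $L$. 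With this in hand a straightforward two-sided barrier argument (as in \cite[Lemma~2.2]{DGM}) traps $K^{(L)}$ in $[1-\tfrac{B}{2L},\,1+\tfrac{B}{2L}]$; the special value of $\bphi$ is not used at this stage.
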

\begin{proof} First note that  for some $B_0=B_0(\bphi,k)$ finite and any $B \in [B_0,L]$,
\begin{equation}\label{eq:gL-form}
g_L(r): =1- 2 f_L'(r) r =  1 + 4 L r(1-r)- \bphi r^{2k} 
\end{equation}
satisfies $g_L(1-B/(2L)) \ge B/2$ and $g_L(1+B/(2L)) \le -B/2$.
Further, from \eqref{eq:K-alt} and
the \abbr{lhs} of \eqref{eq:ext}--\eqref{eq:dedef1} we see that 
\begin{equation}\label{eq:hL-form}
\partial_s K^{(L)}(s) = 1+ 2 D^{(L)}(s,s) 
% = 1 + 2 \Big[ \bar D (s,s) + \frac{q (s) Q (s)}{\qs^2} \Big]
 = g_L (K^{(L)}(s)) + 2 \beta {\sf A}^{(L)} (s,s) \,,
\end{equation}
where it is easy to verify that (in terms of $V(\cdot)$ and $\bar A(\cdot,\cdot)$ of \eqref{eqQ1} and \eqref{eqD}), 
\[
{\sf A} (s,t) :=   q(t) \bv'_\star (q(s)) + \beta  \bar A(s,t) +  \beta q (t) V(s) / \qs^2 =
\lim_{N \to \infty} \wE \big[ \oneN \sum_{i=1}^N G^i(\Bx_s) x^i_t \big] \,.
\]
Recall \cite[(2.15)]{BDG2}, that for some universal constant $c<\infty$ any $s$, $\bJ$ and $N$, 
\[
G_N(s) := \oneN \sum_{i=1}^N |G^i(\Bx_s)|^2 \leq c (\| \bJ \|_\infty^N)^2  [1+ K_N(s)^{m-1} ] \,.
\]
Hence, by Cauchy-Schwarz inequality and \eqref{eq:ger2} (at $k=4$), it follows that  for some 
other universal constant $\kappa<\infty$ (which is independent of $L$), 
\[
|{\sf A} (s,s)|^2  \le  \lim_{N \to \infty}  \wE[ G_N(s) K_N(s) ] \le c  
\lim_{N \to \infty} \wE \big[ (\| \bJ \|_\infty^N)^2  (K_N(s) + K_N(s)^m ) ] 
\le \kappa  ( K(s) + K(s)^m ) 
\]
(in the last step we relied also on  Corollary \ref{cor-self}).
We thus have, similarly to \cite[(2.3)]{DGM}, that for all $s$ and $L$, 
\[
|\partial_s K^{(L)}(s) - g_L(K^{(L)}(s))|^2 \le (2 \beta)^2 \kappa [ K^{(L)}(s) + K^{(L)}(s)^m ] \,.
\]
Our claim \eqref{eq:kbdd} then follows as in \cite[proof of Lemma 2.2]{DGM} (employing
the argument used there for $K^{(L)} \ge 1$, to handle now also the case $K^{(L)} \le 1$).
\end{proof}

Adapting the proof of \cite[Lemma 2.3]{DGM}, we next establish the 
equi-continuity and uniform boundedness of $(R^{(L)},C^{(L)},K^{(L)},q^{(L)})$,
which thereby admit limit points $(R,C,K,q)$.
\begin{lem}\label{lem-tight}
Set $\mu^{(L)}(s):=f'_L(K^{(L)}(s))$ and $\hat h^{(L)}(s):=\partial_s K^{(L)}(s)$. Then
% the continuous functions 
$(R^{(L)},C^{(L)},q^{(L)},K^{(L)}, \mu^{(L)}, \hat h^{(L)})$ 
and their derivatives are bounded uniformly in $L \geq B$
(of Lemma \ref{lem-bdd}) and over $\Delta_T$.
\end{lem}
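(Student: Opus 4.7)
My plan is to bootstrap from Lemma \ref{lem-bdd}: the $\mathcal{O}(1/L)$ closeness of $K^{(L)}$ to $1$ is precisely what tames the otherwise divergent coefficient $\mu^{(L)} = f'_L(K^{(L)})$. First, $K^{(L)}(s) \in [1 - B/(2L), 1 + B/(2L)] \subset [1/2,3/2]$ for $L \ge B$, so $K^{(L)}$ is uniformly bounded. Cauchy--Schwarz applied pre-limit to $q_N$ and $C_N$ (equivalently, the non-negative definiteness of $\bar C$ together with the decomposition $C = \bar C + q(s)q(t)/\qs^2$) then yields $|q^{(L)}(s)| \le \qs \sqrt{K^{(L)}(s)}$ and $|C^{(L)}(s,t)|^2 \le K^{(L)}(s) K^{(L)}(t)$, both uniformly bounded. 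Since $\mu^{(L)}(s) = 2L(K^{(L)}(s)-1) + \tfrac{\bphi}{2} K^{(L)}(s)^{2k-1}$, the first summand is controlled by $B$ via Lemma \ref{lem-bdd} and the second by the bound on $K^{(L)}$; hence $\|\mu^{(L)}\|_\infty \le M_\mu$ independently of $L$. Analogously, for $\hat h^{(L)} = g_L(K^{(L)}) + 2\b \,{\sf A}^{(L)}$ from \eqref{eq:hL-form}, the $L$-dependent piece $4L K^{(L)}(1 - K^{(L)})$ of $g_L$ is bounded by $2B$, while $|{\sf A}^{(L)}(s,s)|^2 \le \kappa(K^{(L)}(s) + K^{(L)}(s)^m)$ has already been exhibited in the proof of Lemma \ref{lem-bdd}.

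The step I expect to be the main obstacle is bounding $R^{(L)}$ itself. Treating \eqref{eqR} as a linear inhomogeneous ODE in $s \mapsto R(s,t)$ with initial value $R(t,t) = 1$ and integrating factor $\exp(-\int_t^s \mu^{(L)}(u)\,du) \le e^{M_\mu T}$, and setting $M_{\nu''}$ to be the supremum of $|\nu''|$ on the $L$-independent range of $C^{(L)}$, I obtain
\[
|R^{(L)}(s,t)| \le e^{M_\mu T} + \b^2 e^{M_\mu T} M_{\nu''} \int_t^s \!\! \int_t^v |R^{(L)}(u,t)|\,|R^{(L)}(v,u)|\,du\,dv.
\]
Setting $\Phi_T := \sup_{0 \le t \le s \le T} |R^{(L)}(s,t)|$ reduces this to the quadratic inequality $\Phi_T \le e^{M_\mu T} + C T^2 \Phi_T^2$, with $C$ independent of $L$. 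The real obstruction is the genuine nonlinearity: one cannot simply invoke Gronwall. However, $\Phi_0 = 1$ and $T \mapsto \Phi_T$ is continuous, so a standard continuity argument pins $\Phi_T$ to the lower branch of this quadratic on a short initial interval $[0,T_0]$, with $T_0$ depending only on the $L$-independent constants $M_\mu, M_{\nu''}, \b$. One then restarts the integral equation at $s = T_0$, using the just-obtained bound as the new initial data, and iterates across finitely many sub-intervals covering $[0,T]$ to obtain a uniform-in-$L$ bound on $\Phi_T$.

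With $R^{(L)}, C^{(L)}, q^{(L)}, K^{(L)}, \mu^{(L)}$ all uniformly bounded on $\Delta_T$, the right-hand sides of \eqref{eqR}, \eqref{eqC}, and \eqref{eqq} express $\partial_s R^{(L)}$, $\partial_s C^{(L)}$, and $\partial_s q^{(L)}$ as finite combinations of such bounded quantities with integrations over $[0,T]$ of bounded integrands (using that $\nu'$, $\nu''$, $\bv'_\star$ are polynomials, hence bounded on the bounded ranges of their arguments). These derivatives are therefore uniformly bounded, which together with the bound on $\hat h^{(L)} = \partial_s K^{(L)}$ already established completes the proof.
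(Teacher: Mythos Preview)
Your argument has two substantive gaps.

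\medskip
\textbf{The bound on $R^{(L)}$.} The restart you propose does not work as written. For $(s,t)$ with $t<T_0<s$, the integral $\int_t^s R(u,t)R(s,u)\nu''(C(s,u))\,du$ involves factors $R(v,u)$ with $u<T_0<v$, i.e.\ values of exactly the type you are trying to bound, so the step is not a clean induction. If one tracks the resulting coupled estimates carefully, the bound $M_k$ after $k$ restarts satisfies a recursion of the form $M_{k}\lesssim M_{k-1}\exp(c\,T\,M_{k-1}\,T_0)$, which is super-exponential in $k$ and may blow up before the finitely many sub-intervals cover $[0,T]$; taking $T_0$ smaller only makes it worse since the number of steps $T/T_0$ grows. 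The quadratic inequality $\Phi_T\le e^{M_\mu T}+CT^2\Phi_T^2$ by itself carries no information once $4Ce^{M_\mu T}T^2\ge 1$. The paper circumvents this genuinely nonlinear obstruction by writing $R^{(L)}=\Lambda_L H_L$ as in \eqref{eq:R-H} and invoking the non-crossing partition expansion of $H_L$ from \cite[(2.2)]{DGM}; since $|\nu''(C^{(L)})|$ is uniformly bounded, that series is dominated by $\sum_{n\ge0}C_n(\b^2 M_{\nu''})^n(s-t)^{2n}/(2n)!\le\cosh\bigl(2\b\sqrt{M_{\nu''}}\,T\bigr)$, giving a direct $L$-free bound on all of $\Delta_T$.

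\medskip
\textbf{Missing derivative bounds.} The lemma asks for uniform bounds on the derivatives of all six functions, but you only treat $\partial_s R^{(L)}$, $\partial_s C^{(L)}$, $\partial_s q^{(L)}$ and $\hat h^{(L)}$ itself. You omit $\partial_t R^{(L)}$, $\partial_t C^{(L)}$, $\partial_s\hat h^{(L)}$ and, most critically, $\partial_s\mu^{(L)}$. For the latter, $\partial_s\mu^{(L)}=f_L''(K^{(L)})\hat h^{(L)}$ with $f_L''(K^{(L)})=2L+O(1)$, so an $O(1)$ bound on $\hat h^{(L)}$ is useless here: one needs $|\hat h^{(L)}(s)|\le\alpha(T)/(4L)$. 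The paper obtains this by noting that $\hat h^{(L)}(0)=0$ (precisely because of the choice $\bphi=1+2\b q_o\bv'_\star(q_o)$) and that $\partial_s\hat h^{(L)}=-4L\,\hat h^{(L)}+\kappa_L$ with $\kappa_L$ uniformly bounded, so the Duhamel formula gives \eqref{eq:hbd}. This refined $O(1/L)$ estimate is the essential mechanism and is absent from your sketch.
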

\begin{proof} With $|C^{(L)}(s,t)| \le \sqrt{K^{(L)}(s) K^{(L)}(t)}$ and $|q^{(L)}(s)| \le \sqrt{K^{(L)}(s)}$,
the bound \eqref{eq:kbdd} on $K^{(L)}$ results for $L \ge B$ 
with $C^{(L)},q^{(L)} \in [-2,2]$.  Further, then $|\mu^{(L)}(s)| \le 2B+|\bphi| 2^{k-1}$ (see
\cite[proof of Lemma 2.3]{DGM}). In view of \eqref{eq:hL-form}, 
\begin{equation}\label{hlid}
\hat h^{(L)}(s) = 1 - 2 K^{(L)}(s)  \mu^{(L)}(s) + 2 \b {\sf A}^{(L)} (s,s),
\end{equation}
yielding in turn the uniform boundedness of $\hat h^{(L)}(s)$. 

Since \eqref{eqR} matches  \cite[(1.7)]{DGM}, 
it follows that for the function $H_L (s,t)$ of \cite[(2.2)]{DGM},
\begin{equation}\label{eq:R-H}
R^{(L)}(s,t) = \Lambda_L(s,t) H_L(s,t) \,, \qquad \Lambda_L(s,t)=\exp(-\int_t^s \hat \mu^{(L)}(u) du ) \,, 
\qquad \forall (s,t) \in \Delta_T \,.
\end{equation}
Recall that $\nu''(\cdot)$ is uniformly bounded on the compact $[-2,2]$,  hence
$H_L$ of \cite[(2.2)]{DGM} is uniformly bounded over $\Delta_T$ and $L \ge B$,
% as is $\Lambda_L$. 
and thereby the same applies for $R^{(L)}$.
% Consequently,  $\{R^{(L)}(s,t), (s,t) \in \Delta_T, L \geq B\}$ is also uniformly bounded. 

Upon replacing $f_L'(K^{(L)}(s))$ by $\mu^{(L)}(s)$ in \eqref{eqR}--\eqref{eqZ}, we deduce from our
preceding statements the claimed uniform boundedness for $\partial_s q^{(L)}$, $\partial_s K^{(L)}$,
$\partial_s C^{(L)}(s,t)$ and $\partial_s R^{(L)}(s,t)$, when $s \ge t$. Following  
\cite[proof of Lemma 2.3]{DGM}, the same applies for $\partial_t H_L(s,t)$
and consequently for $\partial_t R^{(L)}(s,t)$. Further, from \eqref{eq:Phi}, 
such uniform boundedness applies to $\bar D^{(L)}(s,t)$ of \eqref{eq:tempD},
hence by \eqref{eq:part2C} also to  
\[
\partial_t C^{(L)}(s,t) = \bar D^{(L)}(s,t) + R^{(L)}(s,t) + (q^{(L)}(s)/\qs^2)  \partial_t q^{(L)}(t) \,.
\]
Next, $\partial_s \hat h^{(L)}(s) = - 4L \hat h^{(L)}(s) + \kappa_L(s)$ for 
\[
\kappa_L(s) := (g'_L(K^{(L)}(s)) + 4L) \hat h^{(L)}(s) + 2\b \partial_s {\sf A}^{(L)}(s,s) \,.
\]
In view of \eqref{eq:gL-form} we have that
$|g_L'(r) + 4L| \le 4 B + k |\bphi| 2^{2k}$ whenever $|r -1| \le B/(2L) \le 1/2$,
while $|\partial_s {\sf A}^{(L)}(s,s)|$ is bounded uniformly in $L \ge B$ 
and $s \le T$ (by \eqref{eqZ} and 
the uniform boundedness of $(R^{(L)},C^{(L)},q^{(L)})$ and  
$\partial_s (R^{(L)},C^{(L)},q^{(L)})$). In particular,  
$\alpha(T) := \sup \{ |\kappa_L(u)| : L \geq B, u \leq T\}$ is finite.
Next, recall \eqref{eq:hL-form} that $K^{(L)}(0) =1$ and $g_L(1)=1-\bphi$ (see \eqref{eq:gL-form}), 
resulting for our choice of $\bphi = 1 +2 \b q_o  \bv'_\star (q_o) = 1 +  2\b {\sf A}^{(L)}(0,0)$ with 
$\hat h^{(L)}(0)=0$. Thus, 
\[
\hat h^{(L)}(s) = \int_0^s e^{-4 L (s-u)} \kappa_L(u) du 
\]
yielding that 
\begin{equation}\label{eq:hbd}
\sup_{s \in [0,T]} |\hat h^{(L)}(s)|\le \frac{\alpha(T)}{4 L} \,, \qquad \forall L \ge B \,,
\end{equation}
from which the uniform boundedness of $|\partial_s \hat h^{(L)}|$ follows.
Finally, by definition, for our choice of $f_L(\cdot)$, 
\[
\partial_s \mu^{(L)}(s) = f''_L(K^{(L)}(s)) \hat h^{(L)}(s) 
= \big[ 2 L  + \frac{(2k-1) \bphi}{2} K^{(L)}(s)^{2k-2} \big]  \hat h^{(L)}(s) \,,
\] 
which by (\ref{eq:hbd}) provides 
the uniform boundedness of $|\partial_s \mu^{(L)}|$.     
\end{proof} 

\medskip
\noindent{\bf Proof of Proposition \ref{prop-sphere}.}
Recall Lemma \ref{lem-tight} that  
$(R^{(L)},C^{(L)},q^{(L)},K^{(L)},\mu^{(L)},\hat h^{(L)})$, $L \geq B$ are equi-continuous 
and uniformly bounded on $\Delta_T$. Hence, by the Arzela-Ascoli theorem, 
this collection has a limit point 
$(C,R,q,K,\mu,\hat h)$ with respect to uniform convergence on $\Delta_T$.

By Lemma \ref{lem-bdd} we know that the limit $K(s) \equiv 1$ on $[0,T]$, whereas 
by \eqref{eq:hbd} we have that $\hat h(s) \equiv 0$ on $[0,T]$. Considering 
%the subsequence 
$L_n \to \infty$ for which 
$(R^{(L_n)},C^{(L_n)},q^{(L_n)},K^{(L_n)}, \mu^{(L_n)}, \hat h^{(L_n)})$ converges to 
$(R,C,q,K,\mu,\hat h)$ we find that the latter must satisfy \eqref{eqZs}. 
Further, since $R^{(L)}(t,t)=1$, $C^{(L)}(t,t)=K^{(L)}(t)$ and $q^{(L)}(0)=q_o$, 
integrating \eqref{eqR}--\eqref{eqq} we see that 
$R^{(L)}(s,t)= 1 + \int_t^s {\sf A}_R^{(L)}(\theta,t) d\theta$,
$C^{(L)}(s,t)=K^{(L)}(t)+\int_t^s {\sf A}_C^{(L)}(\theta,t) d\theta$
and $q^{(L)}(s) = q_o + \int_0^s {\bf A}_q^{(L)}(\theta) d\theta$, where 
\begin{align*}
{\sf A}_R^{(L)}(\theta,t) & := -  \mu^{(L)}(\theta) R^{(L)}(\theta,t) + \b^2 \int_t^\theta
 R^{(L)}(u,t) R^{(L)}(\theta,u) \nu''(C^{(L)}(\theta,u)) du , \\ 
{\sf A}_C^{(L)}(\theta,t) &: = - \mu^{(L)}(\theta) C^{(L)}(\theta,t) + \b {\sf A}^{(L)}(\theta,t) \,, \\
{\sf A}_q^{(L)} (\theta) & := -\mu^{(L)}(\theta) q^{(L)} (\theta) +  \b^2 \int_0^\theta R^{(L)}(\theta,u) 
\Big[ q^{(L)}(u) \nu''(C^{(L)}(\theta,u)) 
- \frac{\qs^2 \nu'(q^{(L)}(u)) \nu''(q^{(L)}(\theta))}{\nu'(\qs^2)} \Big] du \\
& \qquad + 
\b \qs^2 \bv'_\star (q^{(L)}(\theta)) \,.
\end{align*}
Note that $ \mu^{(L_n)} (s) \to \mu(s)$,  while
${\sf A}_R^{(L_n)}(s,t)$, ${\sf A}_C^{(L_n)}(s,t)$ and ${\sf A}^{(L_n)}_q(s,t)$ 
converge, uniformly on $\Delta_T$,
to the right-hand-sides of \eqref{eqRs}--\eqref{eqqs}, respectively. We thus deduce that
for each limit point $(C,R,q,\mu)$, the functions $C(s,t)$, $R(s,t)$ and $q(s)$
are differentiable in $s$ on $\Delta_T$ and all 
limit points satisfy \eqref{eqRs}--\eqref{eqZs}.
Further, $C^{(L)}(s,t)$ are 
%symmetric 
non-negative definite kernels with $C^{(L)}(t,t) \to 1$ as $L \to \infty$.
Consequently, each of their limit points corresponds 
to a $[-1,1]$-valued
% symmetric 
non-negative kernel on $[0,T]^2$. Similarly, as
$R^{(L)}(t,t)=1$ and $R^{(L)}(s,t)$ satisfy \eqref{eq:rbd},  
both constraints apply for any limit point $R(s,t)$. We further 
extend $R(\cdot,\cdot)$ to a function on $[0,T]^2$ by setting $R(s,t)=R^{(L)}(s,t)=0$ whenever $s<t$. 

With $\wH(\cdot)$ a continuous functional of $(R,C,q)$, it remains only to verify
that the system of equations \eqref{eqRs}--\eqref{eqZs}
with $q(0)=q_o$, $C(s,t)=C(t,s)$, $C(t,t)=R(t,t)=1$ and $R(s,t)=0$ for $s<t$,
admits at most one bounded solution $(R,C,q)$ on $[0,T]^2$. To this end 
consider the difference between the integrated form of \eqref{eqRs}--\eqref{eqqs} 
for two such solutions $(C,R,q)$ and $(\bar C,\bar R,\bar q)$. 
Since $\nu'',\nu',\bv'_\star$ are locally Lipschitz, we get as in \cite[proof of Prop. 1.1]{DGM}, 
that $\Delta R=|R-\bar R|$,
and $\Delta C =|C-\bar C| + |q(s)-\bar q(s)| + |q(t)-\bar q(t)|$ satisfy on $\Delta_T$
\begin{align*}
\Delta R(s,t)&\le \kappa_1 
\big\{ \int_t^s [ \Delta R(v,t) + \Delta C(v,t) ] dv +\int_t^s h(v) dv \big\} \,,
\\
\Delta C(s,t)&\le \kappa_1 \big[
\int_t^s  \Delta C(v,t)  dv +h(t)+\int_t^s h(v) dv \big] \,,
\end{align*}
where $h(v) :=  \int_0^v [\Delta R(v,u) +\Delta C(v,u)] du$ and
$\kappa_1 < \infty$ depends on $T$, $\b$, $\nu(\cdot),\bv'_\star(\cdot)$ and
the maximum of $|R|$, $|C|$, $|q|$, $|\bar R|$, $|\bar C|$ and $|\bar q|$ on $[0,T]^2$.
Integrating these inequalities over $t \in [0,s]$, 
since $\Delta R(v,u)=0$ for $u \geq v$ 
and $\Delta C(v,u)=\Delta C(u,v)$, we find similarly to \cite[Page 860]{DGM}, that 
$$
0 \leq h(s) \leq 2 \kappa_2 \int_0^s h(v) dv \,, \qquad h(0)=0 \,,
$$
%\begin{eqnarray*}
%\int_0^s \Delta R(s,t) dt &\le& \kappa_2 \int_0^s h(v) dv \,, \\
%\int_0^s \Delta C(s,t) dt &\le& \kappa_2 \int_0^s h(v) dv \,,
%\end{eqnarray*}
for some finite constant $\kappa_2$ (of the same type of 
dependence as $\kappa_1$). By Gronwall's lemma we deduce that
$h \equiv 0$ on $[0,T]$, hence $\Delta R(s,t) = \Delta C(s,t) = 0$ for a.e. $(s,t) \in \Delta_T$. 
By the continuity and symmetry of these functions,  
the same applies for all $(s,t) \in [0,T]^2$, yielding the stated uniqueness and thereby completing 
the proof.
%---------
\hfill \qed
%%%%%%%%%%%%%%%%%%%%%%%%%%%%%%%%%%%%%%%%%%%%%%%%%%%%
%%%%%%%%%%%%%%%%%%%%%%%%%%%%%%%%%%%%%%%%%%%%%%%%%%%%

\begin{section}{Proof of Proposition \ref{prop:fdt}}
% \red{\cite[Sections 4-5]{DGM}, revise}}
\label{sec:fdt}

Consider the convex set $\Aa^+$ of \emph{bounded} continuous functions $(R,C,q) \in 
\CC_b(\Delta_\infty) \ts \CC_b (\R_+^2) \ts \CC_b (\R_+)$ such that $C(s,t)=C(t,s)$, 
% is symmetric,
$R(s,s)=C(s,s)=1$ and $q(0)=q_o$, equipped with the norm
\begin{equation}\label{eq:unorm}
\| (R,C,q)\|= \sup_{(s,t) \in \Delta_\infty} |R(s,t)|+ \sup_{(s,t) \in \Delta_\infty} |C(s,t)| + \sup_{s \ge 0} |q(s)| \,.
\end{equation}
Analogously to \cite[(4.1)-(4.3)]{DGM}, we recall from Proposition \ref{prop-sphere} that 
$(R,C,q)$ of \eqref{eqRs}-\eqref{eqZs} is the unique fixed point of the mapping 
$\Psi: (R,C,q) \mapsto (\widetilde R, \widetilde C,\widetilde q)$ on $\Aa^+$ 
such that for any $(s,t) \in \Delta_\infty$
\begin{align}
\partial_s \widetilde R(s,t) =
& - \mu(s) \widetilde R(s,t) + \b^2 \int_t^s
\widetilde R(u,t) \widetilde R(s,u) \nu''(C(s,u)) du ,\label{eqRs-Psi}\\
\partial_s \widetilde C(s,t)= &  - \mu(s) \widetilde C(s,t) +  \b^2 I_1 (s,t) + \b^2 I_2(s,t),
% \b^2 \int_0^s R(s,u) \Big[ \nu''(C(s,u)) C(u,t) 
% -  \frac{q(t) \nu'(q(u)) \nu''(q(s))}{\nu'(\qs^2)} \Big] \, du
% \nonumber \\
% & \qquad \qquad \qquad + \b^2 \int_0^t R(t,u) \Big[\nu'(C(s,u)) - \frac{\nu'(q(s)) \nu'(q(u))}{\nu'(\qs^2)} \Big] \, du + 
% \b q(t) \bv'_\star (q(s)),
 \label{eqCs-Psi}\\
\partial_s \widetilde q(s) = & -\mu(s) \widetilde q(s) + \b^2 I_3(s),
% \b^2 \int_0^s R(s,u) \Big[ q(u) \nu''(C(s,u)) 
% - \frac{\qs^2 \nu'(q(u)) \nu''(q(s))}{\nu'(\qs^2)} \Big] du + \b \qs^2 \bv'_\star (q(s)) \,.
\label{eqqs-Psi}
\end{align}
with $\mu(s) = \mu_{(R,C,q)} (s) = \frac{1}{2} + \b^2 I_0(s)$ of \eqref{eqZs} and
\begin{align*}
 I_0(t) &:= \int_{-t}^0 R(t,t+u) \Big[ \psi(C(t,t+u)) -
\frac{\psi(q(t)) \nu'(q(t+u))}{\nu'(\qs^2)} \Big] \, du + \b ^{-1} q(t) \bv'_\star(q(t)) \,,  \\
 I_1(t+v,t)&:= \int_{-t}^v R(t+v,t+u) \Big[ \nu''(C(t+v,t+u)) C(t+u,t) 
 -  \frac{q(t) \nu'(q(t+u)) \nu''(q(t+v))}{\nu'(\qs^2)} \Big] \, du \,, \\
 I_2(t+v,t)&:=\int_{-t}^0 R(t,t+u) \Big[\nu'(C(t+v,t+u)) - \frac{\nu'(q(t+v)) \nu'(q(t+u))}{\nu'(\qs^2)} \Big] \, du 
+ \b^{-1} q(t) \bv'_\star (q(t+v))   \,, \\
I_3(t) & := 
\int_{-t}^0 R(t,t+u) 
\Big[ q(t+u) \nu''(C(t,t+u)) 
- \frac{\qs^2 \nu'(q(t+u)) \nu''(q(t))}{\nu'(\qs^2)} \Big] du + 
\b^{-1} \qs^2 \bv'_\star (q(t)) \,.
\end{align*}
% Analogously to \cite[Proposition 4.2]{DGM}, 
We next characterize the possible limits
$(R_{\fdt},C_{\fdt})$ in \eqref{dfn:cS-space} in case we have for 
% a given 
$\beta >0$, $|q_o| \le \qs$ that:
\newline
{\bf (H1).} There exists a closed set $\Aa  \subset \{(R,C,q) \in \Aa^+ : \|(R,C,q)\| \le \rho\}$, 
where the functions $\{R(t+\cdot,t), t \ge T_0\}$ are uniformly integrable \abbr{wrt} 
Lebesgue measure on $\R$ and 
\begin{equation}\label{eq:mu-pos}
\liminf_{v \to -\infty}  \inf_{t \ge -v} \Big\{ \frac{1}{|v|} \int_{v}^0 \mu_{(R,C,q)} (t+u) du \Big\} > 0 \,.
\end{equation}
%{\bf H2).} The solution $(R,C,q)$ of \eqref{eqRs}-\eqref{eqZs} is in $\Aa$.
%\newline
{\bf (H2).} $\Psi$ is a contraction on $(\Aa,\|\cdot\|)$ and 
the subset $\Sa$ of $\Aa$ with property \eqref{dfn:cS-space} for some $|\alpha| \le 1$,
is non-empty.
\begin{prop}\label{FDT1}
Assuming {\bf (H1)-(H2)},
% hold for some $\beta$, $q_o$ and $\alpha$, 
% we have that $\Psi(\Sa) \subseteq \Sa$, 
the solution $(R,C,q)$ of \eqref{eqRs}--\eqref{eqZs} 
is the unique fixed point of $\Psi$ in $\Sa$ and 
%the corresponding 
$(R_{\fdt},C_{\fdt})$ of \eqref{dfn:cS-space} are a solution in 
$\widetilde \Ba:=\{(R,C) \in \CB(\R_+) \ts \CB(\R) : C(0)=R(0)=1$, $C(\tau)=C(-\tau) \}$
of 
% the \abbr{FDT} equations  
\cite[(4.15)-(4.16)]{DGM},  with $\mu$ as in \cite[(4.17)]{DGM}, but now for 
% \begin{align}
% R'(\tau)& = - \mus R(\tau) + \b^2 \int_0^\tau R(\tau-\theta) 
% R(\theta) \nu''(C(\theta))d\theta ,\label{eqRFDT}\\
% C'(\tau)& =  - \mus C(\tau) +\b^2 \int_0^\infty C (\tau-\theta) R (\theta) \nu''(C (\theta)) d\theta
% + \b^2 \int_\tau^\infty \nu'(C(\theta)) R(\theta-\tau)d\theta + \IJ, \label{eqCFDT}\\
%\mus & =  \frac 12 + \b^2
%\int_0^\infty \psi(C(\theta)) R(\theta) d\theta + \IJ \,, \label{eqKFDT}
%\end{align}
$(\IJ,\alpha)$ satisfying  \eqref{eq:Qa-const} and \eqref{eqIFDT}.
% In particular, if one has for some $D_\infty \in [0,1]$ and  $\gamma \in \R$, that  
% \begin{align}\label{eq:I-gamma}
% \IJ & =\gamma- \frac{1}{2} + 2 \b^2 D_\infty \nu'(D_\infty) \,, \\
%% \gamma &= \frac{1}{2} + \IJ  -  2 \b^2 D_\infty \nu'(D_\infty) \,, \\
%\label{eq:gamma-cons}
% D_\infty &= \sup \{ x \in [0,1] : (\gamma + 2 \b^2 \nu'(x)) (1-x) \ge 1/2 \} \,,
%\end{align}
% then taking $(R_{\fdt},C_{\fdt},\mu)$ 
% as in Proposition \ref{prop:fdt} (for such $\gamma$), provides a solution of \cite[(4.15)-(4.17)]{DGM}.
\end{prop}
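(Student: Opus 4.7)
Plan: First, to prove that $(R,C,q)$ is the unique fixed point of $\Psi$ in $\Sa$, I would combine the contraction property from \textbf{(H2)} with the uniqueness from Proposition \ref{prop-sphere}. By \textbf{(H1)} the set $\Aa$ is a closed subset of the Banach space of bounded continuous functions under $\|\cdot\|$ of \eqref{eq:unorm}, so $(\Aa,\|\cdot\|)$ is complete. The contraction hypothesis in \textbf{(H2)} and Banach's fixed-point theorem therefore produce a unique fixed point $(R^\dagger,C^\dagger,q^\dagger)\in\Aa$. By Proposition \ref{prop-sphere} applied on each finite interval $[0,T]$, the system \eqref{eqRs}--\eqref{eqZs} admits a unique bounded solution $(R,C,q)\in\Aa^+$, which is also the unique fixed point of $\Psi$ on $\Aa^+\supset\Aa$; hence $(R,C,q)=(R^\dagger,C^\dagger,q^\dagger)\in\Aa$. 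To place this fixed point inside $\Sa$, I pick any $(R_0,C_0,q_0)\in\Sa\subset\Aa$ (using $\Sa\neq\emptyset$ from \textbf{(H2)}) and iterate $\Psi$. Each iterate inherits property \eqref{dfn:cS-space} from the previous one, because \eqref{eqRs-Psi}--\eqref{eqqs-Psi} have coefficients and boundary data with stationary limits as $t\to\infty$; contraction forces the iterates to converge in $\|\cdot\|$ to $(R,C,q)$, and this uniform convergence together with \eqref{eq:mu-pos} passes property \eqref{dfn:cS-space} to the limit, placing $(R,C,q)\in\Sa$.

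Second, I obtain the stated FDT equations for $(R_{\fdt},C_{\fdt})$ by shifting time variables and passing to the limit $t\to\infty$ in \eqref{eqRs}--\eqref{eqZs}. Substituting $s=t+\tau$, $u=t+v$ so that the $R$-equation reads
\[
\partial_\tau R(t+\tau,t)=-\mu(t+\tau)R(t+\tau,t)+\b^2\int_0^\tau R(t+v,t)R(t+\tau,t+v)\nu''(C(t+\tau,t+v))\,dv,
\]
I note the integral has a finite range and a uniformly bounded integrand; pointwise convergence from \eqref{dfn:cS-space} and convergence $\mu(t)\to\mu$ (justified below) yield, by dominated convergence, \cite[(4.15)]{DGM} for $R_{\fdt}$. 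For the $C$- and $\mu$-equations the lower endpoint of integration extends to $-\infty$ after shifting; here I use the uniform integrability of $R(t+\cdot,t)$ from \textbf{(H1)} to dominate, combined with pointwise convergence of the bounded prefactors, to derive \cite[(4.16)]{DGM} and the limiting identity \cite[(4.17)]{DGM} defining $\mu$. The fluctuation-dissipation relation $R_{\fdt}(\tau)=-2C_{\fdt}'(\tau)$ then arises from combining $\partial_s C(s,t)$ with $\partial_t C(s,t)$ via the symmetry $C(s,t)=C(t,s)$ in the stationary regime.

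Third, I derive the consistency identities \eqref{eq:Qa-const} and \eqref{eqIFDT} relating $(\mu,\IJ,\alpha)$. Since $q(t)\to\alpha\qs$, any limit of $\partial_s q(s)$ as $s\to\infty$ must vanish. Passing to the limit in the shifted form of \eqref{eqqs} via dominated convergence, with $\kappa_1=\int_0^\infty R_{\fdt}(\theta)\nu''(C_{\fdt}(\theta))\,d\theta$ and $\kappa_2=\int_0^\infty R_{\fdt}(\theta)\,d\theta$, gives
\[
0=-\mu\,\alpha\qs+\b^2\alpha\qs\kappa_1-\b^2\frac{\qs^2\nu'(\alpha\qs)\nu''(\alpha\qs)}{\nu'(\qs^2)}\kappa_2+\b\qs^2\bv'_\star(\alpha\qs),
\]
which upon rearrangement is precisely \eqref{eq:Qa-const}. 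Similarly, the large-$t$ limit of the shifted \eqref{eqZs} expresses $\mu$ as the sum of $1/2$, an integral contribution $\b^2\int_0^\infty R_{\fdt}(\theta)\psi(C_{\fdt}(\theta))\,d\theta$, the linear term $\b\,\alpha\qs\bv'_\star(\alpha\qs)$, and the $\kappa_2$-weighted correction; matching against the form of $\mu=1/2+\IJ+\b^2\int R_{\fdt}\psi(C_{\fdt})$ in \cite[(4.17)]{DGM} identifies $\IJ$ as in \eqref{eqIFDT}, with the convention $\kappa_3=0$ reflecting that no additional (aging-regime) correction enters in this purely FDT setting.

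The main obstacle will be the joint justification of dominated convergence in the shifted integrals whose domain extends to $-\infty$. Pointwise convergence of the integrands follows from \eqref{dfn:cS-space} together with continuity of $\nu',\nu'',\psi,\bv'_\star$, but domination on the whole half-line $(-\infty,\tau]$ uses precisely the uniform integrability of $R(t+\cdot,t)$ from \textbf{(H1)}. The quantitative lower bound \eqref{eq:mu-pos} provides the complementary control on the exponential factor $\Lambda(s,t)=\exp(-\int_t^s\mu(u)\,du)$ arising in any Volterra representation of $R$, ensuring that no mass escapes to infinity along the $v\to-\infty$ direction. The $q$-dependent terms absent from the \abbr{ckchs} setting of \cite{DGM} are then handled by the same machinery, since $q(t)\to\alpha\qs$ guarantees that all $q$-prefactors are eventually uniformly continuous and bounded, with their limits supplying exactly the new $\qs$- and $\bv'_\star$-dependent corrections in \eqref{eq:Qa-const} and \eqref{eqIFDT}.
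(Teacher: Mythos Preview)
Your overall strategy matches the paper's: show $\Psi(\Sa)\subset\Sa$, use the contraction on $\Aa$ to get a Cauchy sequence of iterates starting in $\Sa$, and pass property \eqref{dfn:cS-space} to the uniform limit. The paper carries out exactly this (your ``each iterate inherits \eqref{dfn:cS-space}'' is their verification that $\Psi(\Sa)\subset\Sa$, and your ``uniform convergence passes \eqref{dfn:cS-space} to the limit'' is their Cauchy argument for $S^{(\infty)}(t+\tau,t)$).

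There is, however, a genuine gap in your second step. You pass to the limit $t\to\infty$ directly in the \emph{differential} form $\partial_\tau R(t+\tau,t)=\cdots$, concluding that $R_{\fdt}$ satisfies \cite[(4.15)]{DGM}. But \eqref{dfn:cS-space} only gives pointwise convergence of $R(t+\tau,t)$, not of its $\tau$-derivative; convergence of the right-hand side does not by itself yield that $R_{\fdt}$ is differentiable with derivative equal to the limiting right-hand side. You would need to first integrate in $\tau$, pass to the limit in the integrated identity (where dominated convergence is legitimate), and then differentiate the result. The paper sidesteps this by never writing the differential limit: it works with the explicit integral representations \eqref{La-hat}--\eqref{Rfdt-Psi} of $\Psi$ (via $\hh{\Lambda}$, $\widetilde H_{\fdt}$ and $\hh{I}_i$), from which differentiability of $\widetilde R_{\fdt},\widetilde C_{\fdt}$ is manifest, and only then records that the fixed point satisfies \cite[(4.23)--(4.24)]{DGM}.

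A second, smaller point: the relation $R_{\fdt}(\tau)=-2C_{\fdt}'(\tau)$ is \emph{not} part of Proposition~\ref{FDT1}. That identity appears only in Proposition~\ref{prop:fdt}, and there only for the particular solution $(-2D',D,\phi(1))$ built from \eqref{FDTDb}; Proposition~\ref{FDT1} asserts merely that $(R_{\fdt},C_{\fdt},\mu)$ solve \cite[(4.15)--(4.17)]{DGM} with $(\IJ,\alpha)$ constrained by \eqref{eq:Qa-const}--\eqref{eqIFDT}. Remove that claim from your argument.
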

\begin{proof} We first verify that for the given $\b$ and $q_o$,
any $S=(R,C,q) \in \Sa$ results with $\Psi(S) \in \Sa$.  To this end, proceeding similarly to 
\cite[proof of (4.7)]{DGM}, we have for $(R,C,q) \in \Sa$ that 
% $\psi(\cdot)$, $\nu''(\cdot)$ and $\nu'(\cdot)$ are continuous and $(R,C,q) \in \Sa$, 
as $t \to \infty$ the bounded integrands in the formulas for $I_i(\cdot,\cdot)$, $i=0,1,2,3$,
converge pointwise (per fixed $u=v-\theta$),
to the corresponding expression for $(R_{\fdt},C_{\fdt},\qinf)$.
%That is,
%$\psi(C(t+v,t+u)) R(t+v,t+u) \to \psi(C_{\fdt}(\theta)) R_{\fdt}(\theta)$,
%$C(t+u,t) R(t+v,t+u) \nu''(C(t+v,t+u)) \to
%C_{\fdt} (v-\theta) R_{\fdt} (\theta) \nu''(C_{\fdt} (\theta))$
%and $\nu'(C(t+v,t+u)) R(t,t+u) \to
%\nu'(C_{\fdt} (\theta)) R_{\fdt} (\theta-v)$.
Further, thanks to the uniform integrability of the collection
$\{ R(t+\cdot,t), t \ge T_0 \}$ (when $(R,C,q) \in \Aa$, see {\bf (H1)}), 
the contributions of the integrals over $[-t,-m]$ decay to zero as $m \to \infty$, uniformly in $t$.
Thus, applying the bounded convergence theorem for the integrals 
over $[-m,v]$, then taking $m \to \infty$, we deduce that
for each fixed $v \geq 0$, in analogy with \cite[(4.11)-(4.12)]{DGM},
\begin{align}
\hh{I}_0 &:= \lim_{t\ra\infty} I_0(t) =
\int_0^\infty R_{\fdt}(\theta) \Big[ \psi(C_{\fdt}(\theta))  -
\frac{\psi(\qinf) \nu'(\qinf)}{\nu'(\qs^2)} \Big] d\theta + \b ^{-1} \qinf \bv'_\star(\qinf) \,,
\label{eqI0l}
\\
\hh{I}_1(v) &:=
\lim_{t\ra\infty} I_1(t+v,t) 
= \int_0^\infty
 R_{\fdt} (\theta)  \Big[ \nu''(C_{\fdt} (\theta)) C_{\fdt} (v-\theta) 
 -  \frac{\qinf \nu''(\qinf) \nu'(\qinf)}{\nu'(\qs^2)} \Big] \, d\theta \,,
\label{eqI1l}
\\
\hh{I}_2(v) &:=
\lim_{t\ra\infty} I_2(t+v,t) 
=\int_v^\infty R_{\fdt} (\theta-v) \Big[ \nu'(C_{\fdt} (\theta)) 
 - \frac{\nu'(\qinf)^2}{\nu'(\qs^2)} \Big] \, d\theta
+ \b^{-1} \qinf \bv'_\star (\qinf)   \,,
\label{eqI2l}
\\
\hh{I}_3 &:= \lim_{t\ra\infty} I_3 (t) =  \int_0^\infty R_{\fdt} (\theta) 
\Big[ \qinf \nu''(C_{\fdt}(\theta)) 
- \frac{\qs^2 \nu'(\qinf) \nu''(\qinf)}{\nu'(\qs^2)} \Big] d\theta + 
\b^{-1} \qs^2 \bv'_\star (\qinf) \,.
\label{eqI3l}
\end{align} 
Using the notation $I_i(\cdot,t):=I_i(t)$ for $i=0,3$, we further know  by the preceding  that 
$\sup_{t,v \ge 0} \{  | I_i(t+v,t) | \} < \infty$ for $0 \le i \le 3$, yielding in particular the finiteness of   
$\sup_{t \ge 0} \sup_{v \in [0,\tau]} \{ \L(t+\tau,t+v) \}$ for $\L(\cdot,\cdot)$ of \cite[(4.8)]{DGM}.
Recall from \eqref{eqRs-Psi} that $\widetilde R(s,t)=\L(s,t) \widetilde H(s,t)$
for $\widetilde H(\cdot,\cdot)$ of \cite[(4.9)]{DGM}, hence by bounded convergence
(as in \cite{DGM}), we have for any $\tau \geq 0$,
\begin{align}
\hh{\L}(\tau-v) &:= \lim_{t \to \infty} 
\L(t+\tau,t+v) = e^{-(\tau-v)\mu} \,,  \qquad \qquad \forall v \in [0,\tau] \,, \label{La-hat}\\
\widetilde C_{\fdt} (\tau) &:= \lim_{t \to \infty} 
\widetilde C(t+\tau,t)= \hh{\L}(\tau) +
\b^2 \int_0^{\tau} \hh{\L}(\tau-v) \hh{I}_1(v) dv
+ \b^2 \int_0^\tau \hh{\L}(\tau-v) \hh{I}_2(v) dv \,, \label{Cfdt-Psi}\\
\widetilde H_{\fdt} (\tau) &:= 
\lim_{t \to \infty} \tilde H(t+\tau,t) =  1+ \sum_{n\ge 1}\beta^{2n} \sum_{\s\in\NC_n}
\int_{0\le \theta_1\le \cdots\le \theta_{2n}\le \tau}
\prod_{i\in \cro(\s)} \nu''(C_{\fdt}(\theta_i-\theta_{\sigma(i)}))
\prod_{j=1}^{2n} d\theta_j \,, \label{Hfdt-Psi}\\
\widetilde R_{\fdt} (\tau) &:= \lim_{t \to \infty} 
\widetilde R(t+\tau,t)= \hh{\L}(\tau) \widetilde H_{\fdt} (\tau) \,.\label{Rfdt-Psi}
\end{align}
Unlike \cite{DGM}, here in principle $I_i(s,t)$ might take negative values. However, 
% nevertheless 
thanks to \eqref{eq:mu-pos},
\begin{equation}\label{eq:mu-inf}
\mu := \lim_{t \to \infty} \{ \mu(t) \} = \frac{1}{2} + \b^2 \hh{I}_0 >  0 \,.
\end{equation}
With $\L(t,x) = \L(t,0) \L(0,x)$,  also
\begin{equation*}
%\label{eq:tq-Psi}
\widetilde q(t) = \L (t,0) q_0 + \b^2 \int_{-t}^0 \L(t,t+v) I_3(t+v) d v\,,
\end{equation*}
where by \eqref{eq:mu-pos} we have that $\L(t,0) \to 0$ and
% in \eqref{eq:tq-Psi}
the integral over $[-t,-m]$ decays to zero as $m \to \infty$, uniformly in $t$. Applying 
bounded convergence for the integral over $[-m,0]$, then taking $m \to \infty$, we see that 
\begin{equation}\label{q-inf-Psi}
\wqinf := \lim_{t \to \infty} \widetilde q(t) = \b^2 \hh{I}_3 \int_0^\infty \hh{\L}(v) dv 
= \frac{\b^2}{\mu} \hh{I}_3 \,.
\end{equation}
Thus, $\Psi(\Sa) \subset \Sa$, with $\Psi$ inducing on $\Sa$ the mapping  
$\Psi_{\fdt}:(R_{\fdt},C_{\fdt},\alpha) \to (\widetilde R_{\fdt},\widetilde C_{\fdt},\widetilde \alpha)$
given by \eqref{La-hat}--\eqref{q-inf-Psi}, for $\hh{I}_i$, $i=0,1,2,3$ as in the \abbr{rhs} of 
\eqref{eqI0l}-\eqref{eqI3l}. In particular, $\widetilde R_{\fdt}$ and $\widetilde C_{\fdt}$ are differentiable on $\R_+$ 
and satisfy \cite[(4.23)-(4.24)]{DGM} for $\widetilde{R}_{\fdt}(0)=\widetilde{C}_{\fdt}(0)=1$ and 
the preceding values of $\hh{I}_i$, $i=0,1,2$.

Next, recall {\bf (H2)} that 
$\Psi$ is a contraction on $(\Aa,\|\cdot\|)$, hence also on its non-empty subset $\Sa$. 
Thus, starting at any $S^{(0)}=(R^{(0)},C^{(0)},q^{(0)}) \in \Sa$ yields a Cauchy
sequence $S^{(k)}=\Psi(S^{(k-1)}) \in \Sa$, $k=1,\ldots$ for the norm $\|\cdot\|$ 
of \eqref{eq:unorm}, with $S^{(k)} \to S^{(\infty)}$ in the closed subset $\Aa$ of 
% the complete metric space 
$(\Aa^+,\|\cdot\|)$. Further, fixing $\tau \geq 0$,
with $|(x,y,z)|:=|x|+|y|+|z|$, since $S^{(k)} \in \Sa$ we have that 
$$
\lim_{T \to \infty} \sup_{t,t' \geq T} 
|S^{(\infty)}(t+\tau,t)-S^{(\infty)}(t'+\tau,t')| \leq 
% 2 \|S^{(\infty)}-S^{(k)}\| +  \lim_{T \to \infty} \sup_{t,t' \geq T} |S^{(k)}(t+\tau,t)-S^{(k)}(t'+\tau,t')| = 
2 \|S^{(\infty)}-S^{(k)}\| \,.
$$
Taking $k \to \infty$ we deduce that $\{t \mapsto S^{(\infty)}(t+\tau,t)\}$ 
is a Cauchy mapping from $\R_+$ to $|(x,y,z)| \le \rho$, hence          
$S^{(\infty)}(t+\tau,t)$ converges as $t \to \infty$. This applies for any 
$\tau \geq 0$, hence $S^{(\infty)} \in \Sa$ is the unique fixed point 
of the contraction $\Psi$ on 
%the metric space 
$(\Sa,\|\cdot\|)$. In particular, as shown in \eqref{eqI0l} this implies also that $\mu(t) \to \mu$ 
of \eqref{eq:mu-inf}. Recall that any fixed point of $\Psi$ must satisfy \eqref{eqRs}-\eqref{eqZs},
hence the unique solution of the latter equations in $\Aa^+$ 
must coincide with $S^{(\infty)}$ and in particular be in $\Sa$. 
As noted before, this yields the existence of $(R_{\fdt},C_{\fdt}) \in  \widetilde \Ba$ 
which for a suitable choice of $\alpha$ forms a fixed point of $\Psi_{\fdt}$. 
Considering \eqref{eq:mu-inf} and
\cite[(4.24)]{DGM} for $\widehat I_i(\cdot)$, $i=0,1,2$, of \eqref{eqI0l}-\eqref{eqI2l}
we arrive at \cite[(4.15)-(4.17)]{DGM}, now with the possibly non-zero $\IJ$ as given in \eqref{eqIFDT}.
Finally, in view of 
% the identities 
\eqref{q-inf-Psi} and \eqref{eqI3l}, 
our constraint \eqref{eq:Qa-const} on $\alpha$  is merely the 
fixed point condition $\widetilde \alpha =\alpha$.  
% For the relevant, yet unspecfied,  value of $\alpha$,
%Recall that $\Ba$ consists of pairs $(R,C)$ of 
%functions that are uniformly bounded, exponentially decaying to zero at infinity, and 
%of fixed values at zero. It is not hard to verify that when $\IJ=0$, any solution of
%\eqref{eqRFDT}-\eqref{eqKFDT} in this space, is a fixed point of $\Psi_{\fdt}$.
% the preceding analysis implies \red{does it??} that 
% $\Psi_{\fdt}$ is a contraction on $\Ba$ equipped with the supremum norm, hence
%the uniqueness of the solution of \eqref{eqRFDT}-\eqref{eqKFDT} in $\Ba$, for 
% the given $\IJ$ and $\alpha$, as claimed.
\end{proof}

\noindent
\emph{Proof of  Proposition \ref{prop:fdt}:} We start with our second claim, where we allow for
arbitrary $\b>0$, but assume that the unique fixed point  $(R,C,q)$ of $\Psi$ in $\Aa^+$ satisfies 
\eqref{dfn:cS-space} as well as the properties in {\bf (H1)}. While proving 
Proposition \ref{FDT1} we have showed that it results with \eqref{eqI0l}-\eqref{eqI3l}, 
and thereby with $\mu(t) \to \mu$ for $(R_{\fdt},C_{\fdt},\mu)$ a 
solution of \cite[(4.15)-(4.17)]{DGM} on $\widetilde \Ba$ with
$(\IJ,\alpha)$ satisfying \eqref{eq:Qa-const}-\eqref{eqIFDT}. To complete 
our claim, note that \eqref{eq:gamma-cons} amounts to \cite[(1.21)]{DGM}
holding for $\phi(\cdot)$ of \eqref{dfn:phi-FDT} and $b=1/2$, so by 
\cite[Proposition 5.1]{DGM} we have that $
(R_{\fdt},C_{\fdt},\mu)=
(-2D',D,\phi(1))$ 
satisfies \cite[(4.15)-(4.17)]{DGM} for $\IJ$ of \eqref{eq:I-gamma} and
the unique $D(\cdot)$ of \eqref{FDTDb}.

Turning to our first claim, note that $\alpha=0$ 
satisfies \eqref{eq:Qa-const} for any value of $\b$. Further, from 
\cite[(4.17)]{DGM} and \eqref{eqIFDT} we see that $\mu \to \frac{1}{2}$ when 
$\b \downarrow 0$ and since the finite polynomials $\nu'(x)$ and $\bv'_\star(x)$ 
are both zero at $x=0$, it is easy to check that  
$\alpha=0$ is the \emph{only} solution of \eqref{eq:Qa-const} for small $\b>0$.
In case $q_o=0$ it is also shown in \cite[Section 4]{DGM} that for small $\b$
our assumptions {\bf (H1)-(H2)} hold for $\Aa$ consisting of 
$e^{\delta |s-t|} (R,C,q) (s,t) \in [0,\rho (r|s-t|+1)^{-3/2}] \ts [0,c] \ts \{0\}$ and suitably chosen
parameters $\delta,r,\rho,c$. Leaving the details to the reader, such analysis can be 
extended to yield {\bf (H1)-(H2)}  for any  $|q_0| \le q_\star$ and 
$\beta \in [0,\beta_1)$,
again with $\alpha=0$, but  now  for 
\[
\Aa := \Big\{ (R,C,q) \in \Aa^+ : |R(t+\tau,t)| \le \rho (r \tau+1)^{-3/2} e^{-\delta \tau}, 
|C(t+\tau,t)| \le c e^{-\delta \tau}, |q(\tau)| \le \kappa e^{-\eta \tau}  \Big\}
\]
and certain positive $\delta,r,\rho,c,\kappa,\eta$ (that may depend on 
$\b$ and $q_o$). The unique fixed point of $\Psi$ in $\Sa$ one gets from 
Proposition \ref{FDT1} must then have $\IJ=\alpha=0$, with 
$(R_{\fdt},C_{\fdt},\mu)$ 
% of \eqref{dfn:cS-space} which are
the unique solution of \cite[(4.15)-(4.17)]{DGM} within a subset of
$\widetilde \Ba$ analogous to $\Ba(\delta,r,\rho,c)$ of \cite[Proposition 4.2]{DGM},
except for allowing here possible negative values of $R_{\fdt}$ or $C_{\fdt}$.  Recall 
that  for all $\b$ up to $\b_c$ of \cite[(1.23)]{DGM} both
\eqref{eq:I-gamma} and \eqref{eq:gamma-cons}  hold for 
$\gamma=1/2$ and $\IJ=D_\infty=0$. Thus, as we have seen before,
for such $\b$ the unique solution of \cite[(4.15)-(4.17)]{DGM} alluded to above
corresponds to $C_{\fdt}(\cdot)=D(\cdot)$ for the $[0,1]$-valued solution of 
\eqref{FDTDb}.
\hfill\qed

\end{section}

\end{document}